\pgfplotsset{compat=1.17}
\newcommand\similarrightarrow{\stackrel{\sim}{\smash{\longrightarrow}\rule{0pt}{0.4ex}}}
\DeclareMathOperator{\Der}{Der}
\DeclareMathOperator{\End}{End}
\DeclareMathOperator{\gr}{gr}
\DeclareMathOperator{\Hoch}{HH}
\DeclareMathOperator{\Hom}{Hom}
\DeclareMathOperator{\id}{Id}
\DeclareMathOperator{\res}{res}
\DeclareMathOperator{\SL}{SL}
\DeclareMathOperator{\Spec}{Spec}
\DeclareMathOperator{\Sym}{Sym}
\DeclareMathOperator{\tr}{tr}
\DeclareMathOperator{\zhu}{Zhu}
\DeclareMathOperator{\hh}{HH}
\newcommand{\Om}{\Omega}
\newcommand{\D}{\Delta}
\newcommand{\Z}{\mathbb{Z}}
\newcommand{\CD}{\mathcal{D}}
\newcommand{\HH}{\mathbb{H}}
\newcommand{\OO}{\mathcal{O}}
\newcommand\cA{\mathscr{A}}
\newcommand\cE{\mathscr{E}}
\newcommand\cF{\mathscr{F}}
\def\cH{\mathscr{H}}
\newcommand\cM{\mathscr{M}}
\newcommand\cO{\mathscr{O}}
\newcommand\bM{\mathbb{M}}
\newcommand\bJ{\mathbb{J}}
\newcommand{\HP}{\operatorname{HP}}
\newcommand{\HK}{\operatorname{HK}}
\newcommand\im{\Im}
\newcommand{\zuta}{\underline{\zeta}}
\newcommand{\vac}{\mathbf{1}}
\newcommand{\vir}{\text{Vir}}
\newcommand\bQM{\mathbb{QM}}
\newcommand{\mring}{\mathbb{M}_*}
\newcommand{\qmring}{\mathbb{QM}_*}
\newcommand{\g}{\mathfrak{g}}
\newcommand{\h}{\mathfrak{h}}
\newcommand{\ZZZ}{\mathfrak{Z}}
\def\ch{\text{ch}}
\def\owp{\overline{\wp}}
\newcommand{\HchiralV}{H_1^{\text{ch}}(V)}
\def\cL{\mathscr{L}}
\newtheoremstyle{exps}{\topsep}{\topsep}{}{0pt}{\bfseries}{.}{0pt}{}
\newtheorem*{thm*}{Theorem}
\newtheorem*{prop*}{Proposition}
\newtheorem*{lem*}{Lemma}
\newtheorem*{cor*}{Corollary}
\newtheorem*{rem*}{Remark}
\newtheorem{thm}{Theorem}[section]
\newtheorem{prop}[thm]{Proposition}
\newtheorem{lem}[thm]{Lemma}
\newtheorem{ex}[thm]{Example}
\newtheorem{cor}[thm]{Corollary}
\theoremstyle{definition}
\newtheorem*{defn*}{Definition}
\newtheorem*{exer*}{Exercise}
\newtheorem{defn}[thm]{Definition}
\newtheorem*{problem*}{Problem}
\newtheorem{rem}[thm]{Remark}
\newtheorem{nolabel}[thm]{ }
\theoremstyle{exps}
\numberwithin{equation}{section}
\def\thmhead@plain#1#2#3{%
\thmname{#1}\thmnumber{\@ifnotempty{#1}{ }\@upn{#2}}%
\thmnote{ {\the\thm@notefont#3}}}
\let\thmhead\thmhead@plain
\def\swappedhead#1#2#3{%
\thmnumber{#2}%
\thmname{\@ifnotempty{#2}{~}#1}%
\thmnote{ {\the\thm@notefont#3}}}
\let\swappedhead@plain=\swappedhead
\def\th@definition{
\thm@notefont{}%
\normalfont
}
\date{}
\title{The First Chiral Homology Group}
\author{Jethro van Ekeren \thanks{Instituto de Matem\'{a}tica e Estat\'{i}stica, Universidade Federal Fluminense, Niter\'{o}i RJ, Brazil. \newline \indent \indent \footnotesize{\texttt{\href{mailto:jethrovanekeren@gmail.com}{jethrovanekeren@gmail.com}}}}  \and  Reimundo Heluani \thanks{Instituto de Matem\'{a}tica Pura e Aplicada, Rio de Janeiro, RJ, Brazil \newline \indent \indent \footnotesize{\texttt{\href{mailto:heluani@potuz.net}{heluani@potuz.net}}}}}
\begin{document}
\maketitle
\begin{abstract}We study the first chiral homology group of elliptic curves with coefficients in
vacuum insertions of a conformal vertex algebra $V$. We find finiteness conditions
on $V$ guaranteeing that these homologies are finite dimensional, generalizing
the $C_2$-cofinite, or quasi-lisse condition in the degree $0$ case. We determine explicitly
the flat connections that these homologies acquire under smooth variation of the
elliptic curve, as insertions of the conformal vector and the Weierstrass
$\zeta$ function. We
construct linear functionals associated to self-extensions of $V$-modules and prove
their convergence under said finiteness conditions. These linear functionals turn out to be degree
$1$ analogs of the $n$-point functions in the degree $0$ case. As a corollary we
prove the vanishing of the first chiral homology group of an elliptic curve with
values in several rational vertex algebras, including affine $\mathfrak{sl}_2$
at non-negative integral level, the $(2,2k+1)$-minimal models and arbitrary simple affine vertex algebras
at level $1$. Of independent interest, we prove a
Fourier space version of the Borcherds formula. 
\end{abstract}
\section{Introduction}
\begin{nolabel}Let $k$ be a field, $A$ an associative $k$-algebra and $\rho_M:A \rightarrow \End_k(M)$ a finite dimensional $A$-module. The linear functional 
\begin{equation}
\varphi^0_M: A \rightarrow k, \quad a \mapsto \tr_M \rho_M(a),
\label{eq:hh0-1}
\end{equation}
defines a class in the dual of the zeroth Hochschild homology of $A$: 
\[  \hh_0(A)^* = \left( A/[A,A] \right)^*.\]
If $A$ is finite dimensional and semisimple over $k$, then every class in
$\hh_0(A)^*$ arises as a linear combination of such linear functionals. In fact, as $M$ runs over a set of representatives of the isomorphism classes of irreducible $A$-modules, the $\varphi^0_M$ provide a basis of $\hh_0(A)^*$. 
\label{no:zerohh}
\end{nolabel}
\begin{nolabel} Let 
\begin{equation} \label{eq:self-ext} 0 \rightarrow M \rightarrow E \rightarrow M \rightarrow 0, \end{equation}
be a self-extension of finite dimensional $A$-modules. We choose a $k$-linear splitting of \eqref{eq:self-ext} so that the representation $\rho_E: A \rightarrow \End_k(E)$ may be written in block matrix form as 
\[ \rho_E(a) = 
\begin{pmatrix}
\rho_M(a) & \sigma(a) \\ 
0 & \rho_M(a) 
\end{pmatrix}
\]
where $\sigma: A \rightarrow \End_k(M)$ is an $A$-derivation. 
The linear functional 
\[ \varphi^1_{E}: A \otimes A \rightarrow k, \quad a \otimes b \mapsto  \tr_M \sigma(a) \rho_M(b), \]
defines a class in the dual $\hh_1(A)^*$ of the first Hochschild homology group. This class is independent of the choice of splitting of \eqref{eq:self-ext}. 
\label{no:onehh}
\end{nolabel}
\begin{nolabel}In this article we study a \emph{chiral} or \emph{vertex} analog
of the construction in \ref{no:zerohh}--\ref{no:onehh} with the
\emph{genus $1$ chiral homology} of Beilinson and Drinfeld
\cite{beilinsondrinfeld} in place of {Hochschild homology} above. The degree zero case is due to Zhu \cite{zhu}. Let $V$ be a conformal vertex algebra of central
charge $c$ and $M$ an admissible $V$-module (that is, a positive energy $V$-module which decomposes into finite
dimensional Jordan blocks for the action of $L_0$).
If $V$ is $C_2$-cofinite, or more generally quasi-lisse (see
\cite{arakawa-kawasetsu}), then the linear functional
\begin{equation} \varphi^0_M: V \rightarrow k, \quad a \mapsto \tr_M Y^M \left( e^{2 \pi i z L_0}a,e^{2 \pi i z} \right)
e^{2 \pi i \tau (L_0 - c/24)}
\label{eq:1.exp1} \end{equation}
converges to a holomorphic function of $\tau \in \mathbb{H}$ the upper half
complex plane. It defines a class of the dual of the zeroth \emph{chiral
homology group of $V$ on the universal elliptic curve}, also known as the \emph{genus $1$ conformal block of $V$}. 
If in addition $V$ is rational then the classes $\varphi^0_M$, where $M$ runs over a set of representatives of the isomorphism classes of irreducible $V$-modules, constitute a basis for the space of conformal blocks in genus $1$. In this case, the space of conformal blocks is
finite dimensional. This is the chiral analog of \ref{no:zerohh}. 
\label{no:zhufirst}
\end{nolabel}
\begin{nolabel}Let $V$ be a conformal vertex algebra, $\tau \in \mathbb{H}$
the upper-half plane, $q = e^{2 \pi i \tau}$, and $E_\tau$ the complex elliptic curve defined as the quotient of $\mathbb C$ by the lattice $\Z + \Z \tau$ or equivalently as the quotient of $\mathbb{C}^*$ by the action
of $\mathbb{Z}$ given by $n \mapsto q^n$. In \cite{beilinsondrinfeld} Beilinson and Drinfeld define a complex of vector spaces
$C_\bullet(E_\tau, V)$ whose homology $H_\bullet^{\text{ch}}(E_\tau,V)$ is known as the \emph{chiral homology} of
$E_\tau$ with coefficients in $V$. As $E_\tau$ varies in the moduli space of
elliptic curves, the spaces $H^{\text{ch}}_k(E_\tau, V)^*$ define bundles with flat
connections. For $k=0$ the space of flat sections of this bundle coincides with
the space of conformal blocks in genus $1$ as defined by Zhu. In this article we
detail the construction of the $k=1$ case, leading to the notion of \emph{degree
$1$ conformal block} of $V$ in genus $1$. 
\label{no:somenolabel}
\end{nolabel}
\begin{nolabel}
Let $M$ be an admissible $V$-module and
\[
0 \rightarrow M \rightarrow E \rightarrow M \rightarrow 0
\]
a self-extension of $M$. Upon choosing a splitting as vector spaces, we may write the action $Y^E(\cdot, z)$ of $V$ on $E$ in block matrix form as 
\[ Y^E(a, z) = 
\begin{pmatrix}
Y^M(a,z) & \psi(a,z) \\ 
0 & Y^M(a,z)
\end{pmatrix}
.\]
The vertex analog of the construction described in \ref{no:onehh} is as follows.
\begin{thm*}[\ref{thm:convergence1}]
Let $\cF$ be the space of meromorphic functions $f(z)$ which are bi-periodic
with possible poles at $\mathbb{Z} + \tau \mathbb{Z}$.  
Define the following linear
functional on $V \otimes V \otimes \cF$
\begin{equation}
a \otimes b \otimes f \mapsto
\frac{1}{2 \pi i }\int_C dz f\left(e^{2
\pi i (z-w)}\right)  \tr_M
\psi\left( e^{2 \pi i z L_0} a, e^{2 \pi i z} \right) Y^M\left( e^{2 \pi i w L_0}b,e^{2 \pi i w} \right) e^{2 \pi i \tau (L_0 - c/24)}, 
\label{eq:1integral}
\end{equation}
where $C$ is a small contour of $z$ around $0$ not containing the point $w$.
Then if $\res_t f(t) Y[a,t]b = 0$ and under certain finiteness conditions on $V$
(described below), this linear functional converges to a holomorphic function of $\tau$ in the upper half plane (it is
independent of $w$), and it determines a flat section of the dual of the first chiral homology
group of $V$ in genus $1$. This class in independent of the chosen splitting of
$E$.  
\end{thm*}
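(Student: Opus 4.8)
The plan is to establish the theorem in three logically separate pieces: (i) well-definedness of the linear functional on the complex computing chiral homology, i.e.\ that \eqref{eq:1integral} annihilates boundaries and descends to $H_1^{\text{ch}}(E_\tau,V)^*$; (ii) convergence of the $q$-series obtained by expanding the trace; and (iii) flatness of the resulting section. For (i), I would first recall the explicit chain complex $C_\bullet(E_\tau,V)$ of Beilinson--Drinfeld in the genus $1$ case, following the degree $0$ treatment of Zhu and its reformulation in terms of the Lie algebra of Fourier modes. The chains in degree $1$ are built from $V\otimes V\otimes\cF$ modulo relations coming from the chiral bracket; the condition $\res_t f(t)\,Y[a,t]b=0$ is precisely what is needed for $a\otimes b\otimes f$ to define a cycle rather than merely a chain, so the integral \eqref{eq:1integral} must be shown to vanish on the image of the differential. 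Concretely this reduces to a residue/contour manipulation: moving the contour $C$ and using the commutator formula for $Y^M$ and $\psi$ together with the fact that $\sigma$ (equivalently $\psi$) is a derivation, so that $\psi$ satisfies the same Borcherds-type identity as $Y^M$ up to the inner derivation term. Here the Fourier-space Borcherds formula advertised in the abstract is the main computational engine.

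For (ii), convergence, I would expand $f(e^{2\pi i(z-w)})$ in its Fourier/Laurent series and $e^{2\pi i\tau(L_0-c/24)}$ in powers of $q$, reducing the claim to convergence of traces of the form $\tr_M \psi(a_{[m]})Y^M(b_{[n]})q^{L_0}$ over the graded pieces of $M$. The key point is that $\psi$ arises from the $A$-derivation $\sigma$ of the self-extension, hence $\psi(a,z)$ has the same pole structure and grading behaviour as $Y^M(a,z)$, so the relevant finiteness input is the same ``$C_2$-type'' or quasi-lisse condition that controls the degree $0$ traces in \ref{no:zhufirst}. I would run a recursion on the weight: using the Borcherds/commutator identities one expresses $\tr_M \psi(a_{[m]})Y^M(b_{[n]})q^{L_0}$ in terms of similar traces with $a$ or $b$ replaced by elements of lower conformal weight (plus derivatives in $\tau$ via the conformal vector, as in Zhu's recursion), until one lands in the span of the degree $0$ functions $\varphi^0_{M'}$ and their derivatives, whose convergence is known. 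Independence of $w$ then follows because shifting $w$ changes the integrand by something whose $z$-integral over a closed contour vanishes, again by the residue condition.

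For (iii), flatness, I would identify the flat connection on the bundle $H_1^{\text{ch}}(E_\tau,V)^*$ explicitly — as the excerpt's abstract indicates, it is given by insertion of the conformal vector $\omega$ and of the Weierstrass $\zeta$-function — and then check directly that $\partial_\tau$ of \eqref{eq:1integral} equals the action of the connection, i.e.\ differentiating the $q$-expansion term by term and matching it against the insertion of $\omega$ via the heat-type equation satisfied by the degree $0$ characters, plus the extra $\zeta$-insertion term bookkeeping the degree $1$ data. Finally, independence of the splitting: a different $k$-linear splitting changes $\psi$ by $[Y^M(a,z),T]$ for a fixed nilpotent $T\in\End_k(M)$ of degree $0$, and by cyclicity of the trace this alters \eqref{eq:1integral} by $\tr_M\big([Y^M(a,z),T]\,Y^M(b,w)\,q^{L_0-c/24}\big)$, which one shows integrates to zero — either directly by the cyclic property combined with $[q^{L_0},T]=0$, or by observing it is the image of a boundary.

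The main obstacle I expect is step (i) combined with the convergence recursion in step (ii): setting up the genus $1$ chain complex $C_\bullet(E_\tau,V)$ carefully enough that ``$a\otimes b\otimes f$ with $\res_t f(t)Y[a,t]b=0$ is a cycle'' becomes a clean statement, and then proving that the recursion on conformal weight actually terminates and produces only the known degree $0$ quantities and their $\tau$-derivatives. This is where the Fourier-space Borcherds formula has to be proved and applied with some care, since $\psi$ is only a derivation-twisted version of a module action, so the recursion generates inhomogeneous terms (the ``$\sigma$-corrections'') that must be tracked and shown to reassemble into the claimed $\zeta$-insertion.
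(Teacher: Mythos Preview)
Your steps (i) and (iii), and the identification of the Fourier-space Borcherds identity as the computational engine, are essentially correct and match the paper's approach (Propositions \ref{prop:new-deriv-generic} and \ref{prop:recursion}, Theorem \ref{thm:cor-der-dq}). The problem is step (ii).

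Your convergence argument has a genuine gap. You propose to run a weight recursion until the degree $1$ traces ``land in the span of the degree $0$ functions $\varphi^0_{M'}$ and their derivatives, whose convergence is known.'' This does not happen: the recursion formulas (Proposition \ref{prop:recursion} and Theorem \ref{thm:cor-der-dq}) express a degree $1$ trace $F_1^1$ in terms of \emph{other} degree $1$ traces, never degree $0$ ones. There is no mechanism by which a $\sigma$-insertion disappears, since $\sigma$ is not an inner derivation. Moreover, even in Zhu's degree $0$ case the recursion does not terminate at something previously known; it terminates because $C_2$-cofiniteness forces the span of all the functions appearing in the recursion to be finite dimensional over $\mring$, giving a finite-rank ODE with regular singular point at $q=0$, and convergence then comes from Frobenius--Fuchs theory.

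The paper follows exactly this pattern one degree up. The differential equation of Theorem \ref{thm:cor-der-dq} shows that $F_1^1$ is a formal flat section of the $\CD$-module $\qmring \otimes_{\mring} H_1^{\text{ch}}(V)$ with regular singular point at $q=0$. Convergence then reduces to showing that $H_1^{\text{ch}}(V)$ is finitely generated as a $\mring$-module (Proposition \ref{prop:H1.fin.gen}). This is where the ``certain finiteness conditions'' enter, and they are \emph{not} the degree $0$ conditions you invoke: one needs $\dim \HP_1(R_V) < \infty$ and $\dim \HK_1(\gr V) < \infty$, the degree $1$ analogues of quasi-lisse. The proof of finite generation proceeds via the standard filtration and a spectral sequence argument comparing with the Koszul and Poisson complexes, not via any reduction to $H_0^{\text{ch}}$. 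Once finite generation is in hand, choosing generators gives a finite system of ODEs and Theorem \ref{thm:Fuchs-theorem} delivers convergence.
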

Notice that in contrast to the situation of \ref{no:onehh}, our linear
functional takes as input an elliptic function $f$ in addition to two elements
in the algebra $V$. 
\label{no:1.deg1-gen}
\end{nolabel}
\begin{nolabel} We now describe the \emph{finiteness conditions} alluded to in Theorem \ref{thm:convergence1}. The role they play in connection with the first chiral cohomology group of $V$ in genus $1$ is analogous to that played in degree zero by the $C_2$--cofiniteness condition or more generally by the quasi-lisse
condition. In order to describe them, we recall from \cite{li05} that any
vertex algebra carries a
 decreasing filtration
$\{F_p V\}$ known as the \emph{Li filtration}. The associated graded
$A = \gr_F V$ with respect to this
filtration is a $\mathbb{Z}_+$--graded commutative algebra with a derivation
of degree $1$ which is generated (as a differential algebra) by its degree $0$
component $R_V = A^0$. The spectrum of the commutative algebra $A$ is known as the
\emph{singular support of $V$} \cite{arakawa}. The subalgebra $R_V$ in fact carries the natural structure of a
Poisson algebra, and its spectrum is known as the \emph{associated scheme of $V$}. 

The vertex algebra $V$ is said to be $C_2$-cofinite (or lisse) if $\dim R_V < \infty$, and is said to be quasi-lisse if its associated variety consists of a finite number of symplectic leaves. If $V$ is quasi-lisse then in particular the zeroth Poisson homology of its associated scheme is finite dimensional, that is,
\begin{equation} \label{eq:1hp0} 
\mathrm{HP}_0(R_V) = 
\frac{R_V}{\left\{ R_V, R_V \right\}},
\end{equation}
is finite dimensional. (We briefly recall the construction of Poisson homology of Poisson algebras in Section \ref{sec:homology}.)

It follows from the work of Zhu \cite{zhu} that for a $C_2$-cofinite conformal vertex algebra $V$ the zeroth chiral homology of $E_\tau$ with coefficients in $V$ is finite dimensional. In fact, as was observed in \cite{arakawa-kawasetsu}, it is sufficient that $V$ be quasi-lisse or more generally that $\dim \mathrm{HP}_0(R_V) < \infty$. And indeed we have
\[
\dim H^{\text{ch}}_0 (E_\tau, V) \leq \dim \mathrm{HP}_0(R_V).
\]
See Proposition \ref{prop:H0.fin.gen} below. 
In this work we prove finite dimensionality of the first chiral homology group of $E_\tau$ under two finiteness hypotheses on $V$, the first being a natural degree $1$ analog of \eqref{eq:1hp0}, namely
\begin{equation} \label{eq:1hp1}
\dim \mathrm{HP}_1(R_V) < \infty.
\end{equation}
To describe the second finiteness hypothesis we recall $J R_V$ the $\mathbb Z_+$-graded commutative 
algebra with a derivation of degree $1$, freely generated as a differential
algebra by $R_V$. Its spectrum is known as the \emph{arc space} of the
associated scheme $\Spec R_V$. By the universal property of the arc space construction, there is a surjection of
differential algebras 
\begin{equation} \label{eq:1surjection}
J R_V \twoheadrightarrow A
\end{equation}
where, we recall, $A = \gr_F V$. The kernel of the surjection \eqref{eq:1surjection} is a differential ideal of $J R_V$, and we say that the vertex algebra $V$ is \emph{classically free} if \eqref{eq:1surjection} is an isomorphism. We may now state our result on finiteness of the first chiral homology group.
\begin{thm*}[\ref{thm:convergence2}] Let $E_\tau$ be an elliptic curve and $V$ a vertex algebra that satisfies (1) the condition \eqref{eq:1hp1} and (2) that
the kernel of the surjection \eqref{eq:1surjection} is finitely generated as a differential
ideal. Then $\dim H^{\text{ch}}_1(E_\tau, V) < \infty$. 
\end{thm*}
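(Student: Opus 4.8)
The plan is to bound $\dim H^{\text{ch}}_1(E_\tau, V)$ by the dimension of an explicit finite-dimensional space built out of the two finiteness hypotheses, by filtering the chiral chain complex $C_\bullet(E_\tau, V)$ and analyzing the associated graded. Recall that $C_\bullet(E_\tau,V)$ is built from spaces of ``chiral chains'' on the Ran space of $E_\tau$ (equivalently configurations of points on $E_\tau$), with $V$-insertions; in degree $1$ the relevant terms involve one insertion at a variable point together with the $L_{-1}$-translation structure. The Li filtration $\{F_p V\}$ induces a filtration on $C_\bullet(E_\tau, V)$ compatible with the differential, so one gets a spectral sequence whose $E_0$-page is a complex built from the associated graded $A = \gr_F V$. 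Since $A$ is a graded Poisson vertex algebra (a differential graded-commutative algebra with a compatible Lie bracket), the $E_0$-complex is the ``chiral chain complex of $E_\tau$ with coefficients in the classical object $A$,'' which — after passing to the arc-space/jet description — computes a Poisson-type homology of $R_V$ along the elliptic curve.

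First I would set up the filtered complex carefully and identify the degree-$0$ and degree-$1$ pieces of the associated graded complex explicitly: the degree-$0$ piece recovers, by the argument behind Proposition \ref{prop:H0.fin.gen}, the quotient controlling $H^{\text{ch}}_0$, while the degree-$1$ piece should be expressible in terms of $A$ modulo the image of the differential $L_{-1}$ together with the Poisson bracket with insertions — i.e. a Lie-algebra (Chevalley–Eilenberg type) homology computation for the topological Lie algebra attached to $A$ on $E_\tau$. Second, I would invoke hypothesis (2), classical freeness up to a finitely generated differential ideal: this lets me replace $A$ by the jet algebra $J R_V$ (which is Noetherian in each graded degree), modulo a finitely generated differential ideal, so that all the relevant spaces of chains become finitely generated modules over coordinate rings and the homology in each weight is finite dimensional. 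Third, using hypothesis (1), $\dim \HP_1(R_V) < \infty$, together with the known $\dim\HP_0(R_V)<\infty$ that it entails, I would show that the weight-graded pieces of the $E_1$-page vanish in all but finitely many weights and are finite dimensional in the remaining ones; the elliptic curve contributes only a bounded (rank-$\le 2$, coming from $H^\bullet$ of $E_\tau$) multiplicity factor. Concretely, the spectral sequence degenerates enough that
\[
\dim H^{\text{ch}}_1(E_\tau, V) \;\le\; C\cdot\bigl(\dim \HP_1(R_V) + \dim \HP_0(R_V)\bigr)
\]
for an absolute constant $C$, which is finite.

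The main obstacle I anticipate is controlling the weight grading: a priori the Li filtration and the conformal ($L_0$-)grading interact, and the associated graded complex has infinitely many graded components, so one must prove a vanishing/boundedness statement showing that $H^{\text{ch}}_1$ receives contributions from only finitely many weights. This is where hypothesis (1) does real work — finiteness of $\HP_1(R_V)$ forces the relevant ``classical'' homology to be concentrated in bounded weight (via an Euler-vector-field argument: the derivation of degree $1$ on $A$, together with the conformal vector, acts on the homology, and on a finite-dimensional space it must act nilpotently/with bounded spectrum). Making the spectral sequence argument precise — in particular checking that the filtration is exhaustive and Hausdorff on each weight space so that convergence holds, and that the differentials on higher pages do not destroy finite-dimensionality — is the technical heart of the proof. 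The second subtlety is the passage through the surjection \eqref{eq:1surjection}: one needs that a finitely generated differential ideal in $J R_V$ remains ``finite'' after the homology functor is applied, which should follow from Noetherianity of jet algebras in bounded weight, but the bookkeeping over all weights must be done with care.
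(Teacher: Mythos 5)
Your overall strategy --- filter the chain complex by a Li-type filtration, pass to the associated graded, and reduce finiteness to invariants of $A=\gr V$ and $R_V$ --- is the same as the paper's (the paper uses Li's \emph{increasing} standard filtration $G$ relative to a strong generating set, inducing a filtration on $C_\bullet^{n=1}$ and a spectral sequence; finite generation of $H_1$ over the Noetherian ring $\mring$ is established first, and $H_1^{\text{ch}}(E_\tau,V)$ is then the specialization at $q=e^{2\pi i\tau}$). However, as written your argument does not close, for the following reasons.

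First, the heart of the paper's proof is an explicit identification of the degree--$1$ homology of the associated graded complex: $\gr_G C_\bullet$ receives a surjection from a complex $\widetilde{C}_\bullet=\widetilde{C}^{(1)}_\bullet\oplus\widetilde{C}^{(2)}_\bullet$ built from the five explicit generators $1$, $\wp(u-v)$, $\wp(u)$, $\zuta(u,v)$, $\ZZZ(u,v)$, $\wp(u)\wp(v)$ of the space of elliptic functions modulo derivatives, and one computes $H_1(\widetilde{C}^{(2)}_\bullet)\cong\HK_1(A)$ while $H_1(\widetilde{C}^{(1)}_\bullet)$ is controlled by $\Omega_{R_V}$ and, after passing to the $E^2$ page, by $\HP_1(R_V)$ together with $\HK_1(A)$ via the exact sequence \eqref{eq:coho.d1}. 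You never make this identification; without it the finiteness hypotheses have nothing to act on. In particular, your proposed ``Euler vector field / bounded weight'' argument is not how the finiteness is obtained and would not suffice: the associated graded complex genuinely has infinitely many weight components, and it is only the identification of its $H_1$ with $\HK_1(A)$ plus a Poisson-homology term that converts hypotheses (1) and (2) into a dimension bound.

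Second, your use of hypothesis (2) is off. Finite generation of the kernel of $JR_V\twoheadrightarrow A$ as a differential ideal does not make the chain spaces finitely generated modules; its role (Proposition \ref{prop:Koszul.finiteness}(b)) is precisely to give $\dim\HK_1(A)<\infty$ via the injection $\HK_1(A)\hookrightarrow I/(J\cdot TI)$. Relatedly, your final bound $\dim H_1^{\text{ch}}\le C\,(\dim\HP_1(R_V)+\dim\HP_0(R_V))$ is wrong: $\HP_0$ plays no role in degree $1$, and the bound must include $\dim\HK_1(A)$, which is an independent invariant (it is nonzero for the Ising model, for instance, even though that algebra is $C_2$-cofinite). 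The correct statement is that the rank of $H_1^{\text{ch}}(V)$ over $\mring$ is bounded by $\dim\HK_1(A)+\dim\HP_1(R_V)$.
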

\label{no:1.45}
\end{nolabel}
\begin{nolabel}Here is another description of the condition on the morphism
\eqref{eq:1surjection}. The algebra $A$ comes equipped with a derivation of
degree $1$. This derivation induces a morphism of $A$-modules $d: \Omega^1_A
\rightarrow A$, where $\Omega^1_A$ is the module of
K\"ahler differentials. This morphism can be extended by the Leibniz rule to a
differential $d$ of degree $-1$
of the free (super) commutative algebra 
$K_\bullet(A) = \wedge^\bullet \Omega^1_A$. This is the \emph{Koszul complex} of the differential algebra
$A$.  We let $\HK_i(A)$ be the $i^{\text{th}}$ homology of the complex $K_\bullet(A)$. It was shown in \cite{eh2018} that $\HK_1(A) = 0$ if and only if $V$ is classically free, and in Proposition \ref{prop:Koszul.finiteness} below that $\dim \HK_{1}(A) < \infty$ if and only if the kernel of the morphism \eqref{eq:1surjection} is finitely generated as a differential ideal.
\label{no:1kosz} 
\end{nolabel}
\begin{nolabel} Stronger results can be obtained for $V$ classically free. In \cite{eh2018} a relationship was established between the \emph{nodal curve} limit $\tau \rightarrow i\infty$ of the chiral homology of $E_\tau$, and the Hochschild homology of the Zhu algebra $\zhu(V)$ of $V$. In the case that $V$ is classically free this takes the particularly simple form
\[ \lim_{\tau \rightarrow i \infty} H_1^{\text{ch}}(E_\tau, V) \cong \Hoch_1(\zhu(V)). \]
See Proposition \ref{prop:eh2018.summary} below. In Section \ref{sec:series.exp} we exploit this result to obtain 
\begin{thm*}[\ref{thm:chiral.H1.vanishing}] Let $V$ be a classically free vertex algebra and $E_\tau$ a smooth elliptic curve. If $\zhu(V)$ is semisimple, in particular if $V$ is rational, then $H^{\text{ch}}_1(E_\tau, V) = 0$.
\end{thm*}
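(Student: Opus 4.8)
The plan is to deduce the statement from the classical vanishing of positive-degree Hochschild homology of a semisimple algebra, by degenerating the smooth curve $E_\tau$ to the nodal cubic as $\tau\to i\infty$, invoking the comparison of Proposition \ref{prop:eh2018.summary}, and then propagating the vanishing over all of $\mathbb{H}$ by means of the flat connection.

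First I would record the finiteness input. Since $V$ is classically free, \ref{no:1kosz} gives $\HK_1(A)=0$; in particular the kernel of \eqref{eq:1surjection} is finitely generated as a differential ideal, so hypothesis (2) of Theorem \ref{thm:convergence2} holds, and together with hypothesis (1) this yields $\dim H^{\text{ch}}_1(E_\tau,V)<\infty$. As recalled in \ref{no:somenolabel}, the spaces $H^{\text{ch}}_1(E_\tau,V)^*$ form a bundle with flat connection over the connected moduli space of smooth elliptic curves, so $d:=\dim H^{\text{ch}}_1(E_\tau,V)$ is independent of $\tau\in\mathbb{H}$. It therefore suffices to prove $d=0$, and for this I may work with $\tau$ near $i\infty$.

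Next I would set up the nodal degeneration. Using the series-expansion formalism of Section \ref{sec:series.exp}, the first chiral homology over a punctured neighbourhood of $i\infty$ is computed by a complex $\mathcal{C}_\bullet$ of $\C[[q]]$-modules (the relevant $q$-series converging by the finiteness hypotheses), whose homology $H_1(\mathcal{C}_\bullet)$ is finitely generated over the discrete valuation ring $\C[[q]]$; inverting $q$ recovers $H^{\text{ch}}_1(E_\tau,V)$ for $\tau$ near $i\infty$, while reduction modulo $q$ computes the nodal limit, which for classically free $V$ is $\Hoch_1(\zhu(V))$ by Proposition \ref{prop:eh2018.summary}. The universal coefficient sequence over $\C[[q]]$ then gives
\[
d=\operatorname{rank}_{\C[[q]]}H_1(\mathcal{C}_\bullet)\le \dim_{\C}\bigl(H_1(\mathcal{C}_\bullet)\otimes_{\C[[q]]}\C\bigr)\le \dim_{\C}\Hoch_1(\zhu(V)).
\]
Since a finite-dimensional semisimple $\C$-algebra is separable, hence projective as a module over its enveloping algebra, $\Hoch_n(\zhu(V))=0$ for all $n\ge 1$; thus $d=0$, and by the constancy from the previous step $H^{\text{ch}}_1(E_\tau,V)=0$ for every smooth $E_\tau$. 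The final clause follows because $V$ rational implies $\zhu(V)$ semisimple.

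I expect the main obstacle to be the degeneration step: verifying that the first chiral homology genuinely assembles into a \emph{finitely generated} $\C[[q]]$-module whose specialisation at $q=0$ is computed by (and, for classically free $V$, equals) $\Hoch_1(\zhu(V))$. Both the convergence results and classical freeness are indispensable here — without finite generation the Nakayama/universal-coefficient argument collapses, and without classical freeness the fibre at $q=0$ can carry correction terms invisible to $\Hoch_1(\zhu(V))$. The remaining ingredients are routine: the homological algebra of a DVR and the standard vanishing of Hochschild homology for semisimple algebras.
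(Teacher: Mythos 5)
Your proposal is correct and rests on the same three pillars as the paper's own argument: classical freeness kills $\HK_1(A)$, so that (together with $\dim\HP_1(R_V)<\infty$ — a hypothesis absent from the displayed statement but present in the body's Theorem \ref{thm:chiral.H1.vanishing}, and used both by you and by the paper) Proposition \ref{prop:H1.fin.gen} makes $H_1^{\text{ch}}(V)$ a finitely generated $\mring$-module with regular singular connection at $q=0$; Proposition \ref{prop:eh2018.summary} identifies the fibre at $q=0$ with $\Hoch_1(\zhu(V))$; and semisimplicity of $\zhu(V)$ kills the latter. Where you genuinely diverge is in how the vanishing is propagated from the cusp into $\mathbb{H}$. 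The paper argues on the dual side: a nonzero flat section $S\in H_1^{\text{ch}}(V)^{*\nabla}$ admits a Frobenius expansion (Theorem \ref{thm:Fuchs-theorem}) whose leading coefficient descends, by $\qmring$-linearity, to a nonzero functional on $H_1^{\text{ch}}(V,q=0)\cong\Hoch_1(\zhu(V))=0$, a contradiction. You argue on the primal side: Nakayama over $\C[[q]]$ plus the universal-coefficient injection $H_1(\mathcal{C}_\bullet)\otimes\C\hookrightarrow H_1(\mathcal{C}_\bullet\otimes\C)$ bound the generic rank by $\dim\Hoch_1(\zhu(V))$, and constancy of the fibre dimension over $\mathbb H$ finishes. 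Both work, but note that the two auxiliary facts you lean on are not free: the flatness of the terms of the complex over $\C[[q]]$ is fine (they are free modules), but the identification of the rank of the formal completion at $q=0$ with $\dim H_1^{\text{ch}}(E_\tau,V)$ at interior points requires Lemma \ref{lem:Hi.vector.bundle} (Katz's theorem that a coherent $\mathcal{D}$-module is locally free), since $\C[[q]]$ admits no evaluation maps at $q\neq 0$; the paper sidesteps exactly this by working with convergent Frobenius series of dual flat sections, which has the side benefit of simultaneously yielding the convergence statement of Theorem \ref{thm:convergence1}. Your route is somewhat more self-contained commutative algebra; the paper's is better adapted to its subsequent analytic goals.
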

\label{no:classically-free-rational}
\end{nolabel}
\begin{cor*}[\ref{cor:examples}]
The chiral homology $H_1^{\text{ch}}(E_\tau, V)$ vanishes in the following cases:
\begin{itemize}
\item[(a)] $V = \vir_{2, 2s+1}$ is a boundary Virasoro minimal model,

\item[(b)] $V = V_k(\mathfrak{sl}_2)$ is the simple affine vertex algebra at level $k \in \Z_+$,

\item[(c)] $V = V_1(\mathfrak{g})$ is the simple affine vertex algebra at level $1$, for $\mathfrak{g}$ a simple Lie algebra.
\end{itemize}
\end{cor*}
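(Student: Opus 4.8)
The plan is to obtain the Corollary as a direct application of Theorem~\ref{thm:chiral.H1.vanishing}. Thus in each of the three cases it suffices to check (i) that $\zhu(V)$ is semisimple, and (ii) that $V$ is \emph{classically free}, i.e.\ that the surjection $JR_V\twoheadrightarrow\gr_F V$ of \eqref{eq:1surjection} is an isomorphism --- equivalently, by \ref{no:1kosz}, that $\HK_1(A)=0$. Granting (i) and (ii), the vanishing $H_1^{\text{ch}}(E_\tau,V)=0$ is immediate from Theorem~\ref{thm:chiral.H1.vanishing}.

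Part (i) is routine. Each of $\vir_{2,2s+1}$, $V_k(\sl_2)$ with $k\in\Z_+$, and $V_1(\g)$ with $\g$ simple is rational --- for the boundary Virasoro minimal models by Wang, and for the simple affine vertex algebras at non-negative integral level by Frenkel--Zhu and Dong--Li--Mason --- and a rational conformal vertex algebra has finite-dimensional semisimple Zhu algebra. In particular all three families are $C_2$-cofinite, so $R_V$ is finite-dimensional and $\dim\HP_1(R_V)<\infty$ is automatic; this is not needed for the present argument, but it shows that these examples also fall within the scope of Theorem~\ref{thm:convergence2}.

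Part (ii) is the substance of the Corollary, and the step where essentially all the work lies. In each case the strategy is the same. First one identifies $R_V$ explicitly as a finite-dimensional graded algebra: for (a), $R_{\vir_{2,2s+1}}\cong\C[x]/(x^{s})$ with $x$ the symbol of $\om$; for (b), $R_{V_k(\sl_2)}$ is $\C[e,h,f]$ modulo the ideal generated by the $\sl_2$-submodule generated by the symbol of the singular vector $e(-1)^{k+1}\vac$; and for (c), one uses that $V_1(\g)$ is the lattice vertex algebra of the root lattice of $\g$ when $\g$ is simply-laced, and is obtained from such a lattice vertex algebra by the standard folding construction otherwise, which again determines $R_V$. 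Because $R_V$ is finite-dimensional, both sides of \eqref{eq:1surjection} are $\Z_+$-graded by conformal weight with finite-dimensional graded pieces, the surjection is strictly degree-preserving, and $\dim(\gr_F V)_n=\dim V_n$ for every $n$. Hence \eqref{eq:1surjection} is an isomorphism \emph{if and only if} the (conformal-weight) Hilbert series of the arc algebra $JR_V$ equals the graded character $\sum_n\dim V_n\,q^n$. This identity of $q$-series is precisely an Andrews--Gordon type identity: for (a) it is a member of the Andrews--Gordon (Rogers--Ramanujan--Gordon) family, whose product side is the normalized vacuum character of $\vir_{2,2s+1}$ and whose sum side is the Hilbert series of the arc space of the fat point $\Spec\C[x]/(x^{s})$; for (b) it is the level-$k$, $\sl_2$-graded refinement of Andrews--Gordon computing the string functions of $V_k(\sl_2)$; and for (c) it reduces, through the lattice (or free-field) realization, to the known reducedness and Hilbert-series computation for the arc space of $\Spec R_{V_1(\g)}$. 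Combining the (automatic) surjectivity of \eqref{eq:1surjection} with the relevant $q$-series identity yields classical freeness in each case.

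The main obstacle is thus the verification of part (ii): the graded-dimension identities (respectively the arc-space reducedness statements) underlying classical freeness of these particular vertex algebras. These are the genuine content and require appeal to the combinatorial/jet-scheme literature. Everything else is formal --- semisimplicity of $\zhu(V)$ follows from rationality, and the passage from classical freeness to the vanishing of $H_1^{\text{ch}}(E_\tau,V)$ is exactly Theorem~\ref{thm:chiral.H1.vanishing}.
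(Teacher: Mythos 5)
Your proposal follows exactly the paper's route: reduce to Theorem \ref{thm:chiral.H1.vanishing} by noting that rationality gives semisimplicity of $\zhu(V)$ (hence $\Hoch_1(\zhu(V))=0$) and that the only substantive point is classical freeness of $V$. The paper disposes of that point by citing \cite{eh2018}, \cite{meurman-primc} and \cite{efeigin}, which is the same external input your Hilbert-series/Andrews--Gordon sketch ultimately appeals to, so the two arguments are essentially identical.
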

\begin{nolabel}
In order to prove convergence of the linear functional \eqref{eq:1.exp1} Zhu showed that the right hand side, regarded as a formal power series, satisfies an ordinary differential equation in $q = e^{2 \pi i \tau}$ with a regular singularity at $q=0$. He then used results from the classical theory of ordinary differential equations, in particular the Frobenius-Fuchs theory, to guarantee the convergence of such a formal solution. 

We follow the
same technique in this article. We write down an explicit ordinary differential
equation that our functionals \eqref{eq:1integral} satisfy (see
\eqref{eq:flatness-0}). This differential equation arises as the connection
that the bundles of chiral homology (or their duals) acquire as $E_\tau$ moves in
the moduli space of elliptic curves. We then show that this differential equation has a regular singularity at $q=0$.
\label{no:connection1}
\end{nolabel}
\begin{nolabel} In \cite{zhu}, the space of conformal blocks is defined as a system of compatible \emph{$n$-point functions}, the trace functional \eqref{eq:1.exp1} being a special case of such for $n=1$. It is proven that one can express an $n+1$ point
function as a linear combination of $n$-point functions. Convergence of the
$n=1$ linear functionals and this ``insertion formula''  guarantees convergence
of the $n$-point functions. Coversely, a suitable
integral of an $n+1$ point function recovers an $n$-point function. 
Concretely, the $n$-point functions are given by
\[ a_1 \otimes \dots \otimes a_n \mapsto \tr_M Y^M\left(e^{2 \pi i z_1 L_0} a_1,e^{2 \pi i z_1}\right)\dots Y^M \left( e^{2 \pi i z_n L_0} a_n, e^{2 \pi i z_n} \right)
q^{L_0 - c/24}. \]

The following algebro-geometric counterpart was given in \cite{beilinsondrinfeld}. For each collection of points $t_1,\dots,t_n$ on the elliptic curve
$E_\tau$ the authors define a complex whose homology, denoted here
$H_\bullet^{ch}(V^{\otimes n},t_1,\dots,t_n)$ represents \emph{chiral
homology of $E_\tau$ with supports on the modules $V$ inserted at the points
$t_i$}. The authors moreover prove \cite[(4.4.2.1)]{beilinsondrinfeld} that these
homologies are isomorphic for all $n$. These statements remain true for arbitrary (not necessarily elliptic) curves and collections of points. In the
degree $0$ case this was carried out explicitly in \cite{frenkelzvi}, recovering
the result of Zhu in genus $1$. 

We here define explicit complexes computing the homologies
$H_\bullet^{ch}(V^{\otimes n}, t_1,\dots,t_n)$ and construct natural classes, the analog of the
$n$-point functions, associated to self extensions of modules as in
\ref{no:1.deg1-gen}. The $n=1$ version of this construction is just \eqref{eq:1integral}. These classes are linear functionals 
\[ V^{\otimes n +1} \otimes \cF_{n+1} \rightarrow \cF_{n}, \]
where $\cF_{n}$ denotes the space of meromorphic functions $f(t_1,\dots,t_n)$,
bi-periodic with possible poles at the diagonals $t_i = t_j$ modulo $\mathbb{Z}
+ \tau \mathbb{Z}$. We show that $n+1$ point functions can be described in terms
of $n$ point functions (see Proposition \ref{prop:8.15}), thus guaranteeing their
convergence. Similarly we show that
an integral of an $n+1$-point function is given by $n$-point functions (see
\eqref{eq:integral}). Concretely these linear functionals are given by
\[ 
\begin{multlined}
a_0 \otimes \dots \otimes a_n \otimes f(z_0,\dots,z_n) \mapsto 
\frac{1}{2 \pi i} \int_C dz_0 f\left( e^{2 \pi i z_0}, \dots, e^{2 \pi i z_n}
\right) \tr_M \psi \left( e^{2 \pi i z_0 L_0} a_0,e^{2 \pi i z_0}\right) \times \\ \times  Y^M\left(e^{2 \pi i z_1 L_0 a_1},e^{2 \pi i z_1} \right)\dots Y^M\left(e^{2 \pi i z_n L_0}a_n,e^{2 \pi i z_n} \right) q^{L_0 - c/24}, 
\end{multlined}
\]
where $C$ is a small contour around $0$ not containing any of the points
$z_1,\dots,z_n$. The main result is that this formal power series converges uniformly and its limit is an elliptic function in $\cF_n$ (see Theorem
\ref{thm:convergence3}).
\label{no:general-n} 
\end{nolabel}
\begin{nolabel} 
Instrumental in Zhu's construction is the identification of $E_\tau$ with the
quotient $\mathbb{C}/ (\mathbb{Z} + \tau \mathbb{Z})$ via the exponential map. Indeed the relationship between conformal blocks as defined by correation functions, and as chiral homology, requires the introduction of an auxiliary vertex algebra structure associated with the exponential change of coordinates. For a conformal vertex algebra with state--field correspondence $Y(\cdot, z)$, Zhu constructs an isomorphic vertex algebra structure on $V$ with state-field correspondence $Y[\cdot, z]$, defined by
\begin{equation}
 Y[a,z] = Y \left( e^{2 \pi i L_0} a, e^{2 \pi i z} - 1
\right). 
\label{eq:1.exp2} 
\end{equation}
This vertex algebra structure has already appeared above in the
statement of Theorem \ref{thm:convergence1}. 

A new feature that arises in our study of chiral homology in degree greater than $0$, is the appearance of elliptic functions in the description of chains and not only of the differentials. This forces us to study these functions and their Fourier expansions. Recall that for a vertex algebra $V$ and a rational function $f(s,t)$ with possible poles at $s=0$, $t=0$ and $s=t$ the following Borcherds identity holds
\begin{multline*}
\res_s \res_t Y(a,s) Y(b,t)c\, i_{s,t} f(s,t) - \res_t \res_s Y(b,t) Y(a,s) c\, i_{t,s} f(t,s) = \\ 
\res_{t} Y\left( \res_{s-t} i_{t,s-t} f(s,t) Y(a,s-t)b, t \right)c
\end{multline*}
where $i_{s,t}: \mathbb{C}(s,t) \rightarrow \mathbb{C}( (s))( (t))$ is the
canonical embedding and $\res_s: V ( (s)) \rightarrow V$ is the \emph{residue map} (coefficient of $s^{-1}$). When dealing with the exponentiated coordinates $z = e^{2\pi i s}, w = e^{2 \pi i t}$ it is useful to use the following modified vertex operators 
\[ X(a,z)  = z^{-1} Y( z^{L_0} a, z). \]
Let $f(t)$ be a meromorphic function of a complex variable $t$ which is periodic, i.e., $f(t+1) = f(t)$. Let $\tau \in \mathbb{H}$ and suppose $f$ is analytic in the domain $-\im \tau < \im t < \im \tau$ except possibly at $t=0$ where it may have a pole. Put $q = e^{2 \pi i \tau}$ and let $F_+(z)$ (resp. $F_-(z)$ ) be the Fourier expansion of $f$, in the domain $|q|<|z|<1$ (resp. $1 < |z| < |q|^{-1}$). If $f$ is \emph{quasi-periodic} with period $\tau$, that is,
 $f$ satisfies $f(t+\tau) = f(t) - \alpha$ for all $t \in \mathbb{C}$ for some constant $\alpha \in \mathbb{C}$, then we prove
\begin{thm*}[\ref{prop:borcherds-modified-fourier}]
The following identity holds
\begin{equation}\label{eq:borcherds-fourier}
2\pi i X(a_{[f]} b, w)  = \res_z F_-\left( \frac{z}{w} \right) X(a,z) X(b,w) - \res_z F_+\left( \frac{z}{w} \right) X(b,w) X(a,z)
\end{equation}
as an equality in $\Hom(V,V ((w)))$, where 
\[ a_{[f]}b = \res_t f(t) Y[a,t]b.\]
\end{thm*}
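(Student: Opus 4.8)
The plan is to derive the Fourier-space Borcherds formula \eqref{eq:borcherds-fourier} from the ordinary Borcherds identity in the $Y[\cdot,z]$ coordinates by expanding the rational (or elliptic) coefficient function $f$ in the two annuli $|q|<|z/w|<1$ and $1<|z/w|<|q|^{-1}$ and matching residues term by term.

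First I would set up the dictionary between the two coordinate systems. Write $z = e^{2\pi i s}$, $w = e^{2\pi i t}$, and recall $X(a,z) = z^{-1}Y(z^{L_0}a,z)$, so that the $Y[\cdot,z]$ operators of \eqref{eq:1.exp2} are related to the modified operators $X$ by a change of variable; concretely $2\pi i\, X(a, e^{2\pi i s})$ is, up to the Jacobian $dz/ds = 2\pi i z$, the operator $Y[a,s]$ acting in the additive variable $s$. Under this substitution a residue $\res_s$ in the additive variable becomes a contour integral $\frac{1}{2\pi i}\oint \frac{dz}{z}$, i.e. a residue $\res_z$ of $z^{-1}(\cdots)$, which is exactly the $\res_z$ appearing on the right of \eqref{eq:borcherds-fourier}. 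The key point is that the two iterated-residue terms on the left of the classical Borcherds identity, which differ by the order of expansion $i_{s,t}f$ versus $i_{t,s}f$, get translated under $z=e^{2\pi i s}$, $w=e^{2\pi i t}$ into expansions of $f$ in the annulus $|z|>|w|$ versus $|z|<|w|$ — but because $f$ is periodic with period $1$ in the additive variable, its expansion is a Laurent (more precisely Fourier) series in $z/w$, and the two expansions $i_{s,t}$ and $i_{t,s}$ become precisely the two Fourier expansions $F_+(z/w)$ (on $|q|<|z/w|<1$) and $F_-(z/w)$ (on $1<|z/w|<|q|^{-1}$). The annuli arise because $f$ is required to be analytic in the strip $-\im\tau < \im t < \im\tau$, and quasi-periodicity $f(t+\tau)=f(t)-\alpha$ controls the behaviour at the boundary of that strip; I would need to check that the constant $\alpha$ causes no obstruction here (it contributes only to the behaviour at the boundary and drops out of the residue computation, since $\res_z$ of a constant against $X(a,z)X(b,w)$ involves only the appropriate Fourier mode).

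The main steps, then, are: (1) rewrite the classical Borcherds identity with $Y\rightsquigarrow Y[\cdot,\cdot]$ and rational coefficient $f(s,t)$ replaced by a function depending only on the difference $s-t$ (allowed since $a_{[f]}b = \res_t f(t)Y[a,t]b$ only sees $f$ as a function of one variable), so that the right-hand side collapses to $\res_t Y[\,\res_{s-t} i\,f(s-t)\,Y[a,s-t]b,\,t\,] = \res_t Y[a_{[f]}b - \text{(corrections)},t]c$; (2) translate each term via $z=e^{2\pi i s}$, $w=e^{2\pi i t}$ into the $X$-operators, tracking the Jacobian factors of $2\pi i$ that account for the prefactor $2\pi i$ on the left of \eqref{eq:borcherds-fourier}; (3) identify $i_{s,t}f$ with $F_+(z/w)$ and $i_{t,s}f$ with $F_-(z/w)$ by comparing the regions of convergence; (4) observe that because we only need the formula as an identity in $\Hom(V, V((w)))$ — i.e. applied to a vector $c\in V$ and expanded in $w$ — the "third" (right-hand) term of Borcherds, which after exponentiation produces exactly $2\pi i X(a_{[f]}b,w)$, matches up, completing the proof.

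I expect the main obstacle to be step (3), the careful bookkeeping of which expansion region corresponds to which Fourier expansion, together with the role of the quasi-periodicity: one must be sure that $f$, though only quasi-periodic with period $\tau$, is genuinely periodic with period $1$ so that a bona fide Fourier expansion in $z=e^{2\pi i t}$ exists, and that the two expansions $F_\pm$ are the analytic continuations of one another across $|z/w|=1$ in the sense dictated by the $i_{s,t}$ versus $i_{t,s}$ prescriptions of the Borcherds identity. A secondary subtlety is ensuring that the substitution $s-t \mapsto$ (additive variable) followed by $\res_{s-t}$ is compatible with the exponential change of coordinates — i.e. that $\res_{s-t} i_{t,s-t}f(s-t)Y[a,s-t]b$ exponentiates correctly to a residue $\res_{z/w}$ — which is really the $n=1$ shadow of the general insertion formula and should follow from the definition of $a_{[f]}b$ once the Jacobian is accounted for. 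Everything else is a direct, if slightly tedious, translation.
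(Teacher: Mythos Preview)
Your plan has a genuine gap at step (3), and this is not the ``main obstacle'' you anticipate but rather a point where the approach fails. The Borcherds identity in the $Y[\cdot,\cdot]$ structure applies to coefficients $f(s,t)\in\mathbb{C}[[s,t]][s^{-1},t^{-1},(s-t)^{-1}]$, and the expansions $i_{s,t}$, $i_{t,s}$ are the \emph{formal algebraic} embeddings of such an $f$ into $\mathbb{C}((s))((t))$ and $\mathbb{C}((t))((s))$. The function in the theorem is a \emph{periodic meromorphic} function, not rational; it does not lie in the domain of the classical Borcherds identity, and its exponential image $\tilde f(z/w)$ has poles at every $z/w=q^n$, so it is not rational in the multiplicative variable either. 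There is no mechanism by which the algebraic prescriptions $i_{s,t}$, $i_{t,s}$ become the two \emph{analytic} Fourier expansions $F_\pm$ on the annuli $|q|<|z/w|<1$ and $1<|z/w|<|q|^{-1}$ under the substitution $z=e^{2\pi i s}$: the former depend only on the singular part of $f$ at $s=t$, while the latter encode the global behaviour of $f$ on the whole strip. Your remark that the constant $\alpha$ ``drops out of the residue computation'' is also incorrect: it is precisely the quasi-period $\alpha$ that produces the $X(a_{(-1)}b,w)$ contribution on the left-hand side, and this term is not captured by any commutator formula.

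The paper does \emph{not} proceed by translating Borcherds; it computes both sides of \eqref{eq:borcherds-fourier} directly in modes. The key analytic input is to write $\res_t\bigl[f(t)\,e^{2\pi i\deg(a)t}(e^{2\pi i t}-1)^{-n-1}\bigr]$ as a contour integral around a fundamental parallelogram $t_0\to t_0+1\to t_0+1+\tau\to t_0+\tau\to t_0$. The two vertical sides cancel by $1$-periodicity; the two horizontal sides lie in the strips where the Fourier expansions $F_-$ and $F_+$ converge, and they contribute sums over the Fourier coefficients $F^{\pm}_m$ (after expanding $(e^{2\pi i t}-1)^{-n-1}$ appropriately on each edge). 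The quasi-periodicity hypothesis enters exactly here: it forces $F^-_m-F^+_m=\alpha$ for all $m$, which is what allows the $n<0$ contributions to collapse to $\alpha\,X(a_{(-1)}b,w)$ and match the normal-ordered-product term appearing on the right-hand side. None of this is a change of variables in Borcherds; it is a separate residue computation, and it is where the actual work lies.
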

Notice that both vertex algebra structures $Y(a,z)$ and $Y[a,z]$ of $V$ appear in \eqref{eq:borcherds-fourier}, as well as both $f$ and its Fourier expansions on different domains appear. 
\label{no:borcherds-intro}
\end{nolabel}
\begin{nolabel}
This article is a self-contained introduction to chiral homology in degree $1$ for vertex algebras, aimed at the representation theorist. No background on algebraic geometry nor chiral algebras is required. It is our aim to provide a bridge between the abstract world of chiral homology on arbitrary curves and the linear algebra world in the presence of coordinates. The translation between these two was carried out in \cite{eh2018}, in this article we focus on techniques to compute these homologies. In doing so, we have had to combine tools from different areas of mathematics, the analyical proof of the Borcherds identity in Fourier coordinates \eqref{eq:borcherds-fourier}, Zhu algebras and their Hochschild homology, the Li filtration and the higher Poisson homology of the associated scheme, arc spaces and classical-freeness of vertex algebras. It would be interesting to see if these techniques have geometrical counterparts. Some of these tools make sense immediately in the geometric side, for example, Li's \emph{increasing standard filtration} makes sense as a filtration of chiral algebras. Others, like Zhu's theory, do not yet seem to be incorporated into the theory of chiral algebras. 

This article continues with the program detailed in \cite[\S 1.6]{eh2018}. In particular
we cover item a) described there confirming that certain extensions of modules
produce degree $1$ chiral homology classes by extending the result from the
limit as $q \rightarrow 0$ using the flat connection on the moduli space of
elliptic curves. The other points of the cited program include chiral homology
in higher degrees and in higher genera. 

The complexes described in Section \ref{sec:first.chiral} admit obvious
generalizations to higher degrees, and so do the finiteness conditions on
$\HK_\bullet(A)$ and $\HP_\bullet(R_V)$. By increasing the number of insertion
points, namely computing $H^{\text{ch}}_k(V^{\otimes n})$ with $n \geq k$, the
higher derived functors of global sections do not play any role, as the
varieties involved become affine. We expect the same techniques used this article
to apply to compute higher degree chiral homology of elliptic curves. 

Regarding chiral homology in higher genus. 
Recently, it was proved in a series of papers by Damiolini, Gibney and Tarasca
\cite{damiolini2020conformal,damiolini2019factorization} that the degree $0$
chiral homology of a $C_2$ cofinite vertex algebra $V$ is finite dimensional in
arbitrary curves, not only genus $1$. Their approach is to describe the chiral
homology on genus $g$ by understanding the structure in the boundary of the
moduli space, thus reducing to lower genera. It would be interesting to see if
this appoach can be generalized to higher chiral homology, and in particular if
the conditions on the Poisson homology of $R_V$ and the Koszul complex
$K_\bullet(A)$ described above, continue to play the same role in arbitrary
genus. 
\label{no:speculations}
\end{nolabel}
\begin{nolabel}[Algebro-geometric considerations] 
We collect here some observations that may help the reader acquainted with
\cite{beilinsondrinfeld}. This subsection is not needed anywhere in the text.

Let $X$ be a smooth algebraic curve of genus $g$, $V$ a conformal vertex algebra and $\cA = \cA_V$
the corresponding chiral algebra on $X$. There are different flavours of chiral
homology that one may compute. First, there is simply $H^{\ch}_\bullet(X, \cA)$,
the \emph{chiral homology of $X$ with coefficients in $\cA$}. When $X$ is an
elliptic curve $E_\tau = \mathbb{C} / \mathbb{Z} + \tau \mathbb{Z}$, these
homologies are denoted by $H^{\ch}_i(E_\tau, V)$ below, for $i=0,1$. 

For a collection of points $\{ x_i \}_{i=1}^n \subset X$  we have the chiral homology \emph{with supports} $H^{ch}_\bullet(X,
\{\cA_{x_i} \})$, that is, the chiral homology with coefficients in the collection of $\cA$-modules $\cA_{x_i}$, supported at the points $x_i$. For every $n$ and every
collection $\{ x_i\}$ these homologies are isomorphic. When $X = E_\tau$ and $x_i$
is the image of $t_i \in \mathbb{C}$. These homologies are denoted
$H^{\ch}_i(V^{\otimes n}, t_1,\dots,t_n, \tau)$ below. 

The construction works in families, that is, when $X/S$ is a curve over a base
$S$, and $\left\{ x_i \right\}$ is a collection of $S$-valued points. One
obtains a twisted $\mathcal{D}_S$-module, whose fiber at $s \in S$ equals
$H^{\ch}_\bullet(X_s, \{ \cA_{x_i(s)} \})$. Applying this to 
$S=\mathcal{M}_{g,n}$, the moduli space of $n$-marked, genus $g$ curves and $X$
is the universal curve,  the
chiral homologies  give rise to twisted $\mathcal{D}$-modules, or quasi-coherent sheaves with
projectively flat connections on $\cM_{g,n}$, denote these sheaves by
$\cH^{\ch}_\bullet(V^{\otimes n})$. In the case when $g = 1$, the
global sections of this $\mathcal{D}$-module on $\cM_{1,n}$ are denoted
$H^{\ch}_i(V^{\otimes n})$ for $i = 0,1$ below. Its zeroth de Rham homology
(coinvariants by the action of vector fields) is denoted by
$H^{\ch}_i(V^{\otimes})_{\nabla}$ below. Similarly global sections of the dual
quasi-coherent sheaf $\cH^{\ch}(V^{\otimes n})^*$ are denoted
$H_i^{\ch}(V^{\otimes n})^*$ below, and their
flat sections, or equivalently its zeroth de Rham cohomology, or the
$\mathcal{D}$-module push-forward to the point, are denoted by $H_i^{\ch}(V^{\otimes n})^{*\nabla}$. 

The moduli space of marked curves fibers over the moduli space of curves
$\pi:\mathcal{M}_{g,n} \rightarrow \mathcal{M}_{g}$ by forgetting the points (in the
case $g=1$ we do remember one point, the polarization of the elliptic curve).
Instead of pushing forward $\cH^{\ch}_\bullet(V^{\otimes n})^*$ down to the
point, we may consider $\pi_*$ of it. This is the relative $0$ de Rham
cohomology and corresponds to taking flat sections with respect to the action of
the vertical vector fields, or the relative connection corresponding to moving
the points $\{x_i\}$ in the curve $X \in \mathcal{M}_g$. The global sections of
$\pi_* \cH^{\ch}_\bullet(V^{\otimes n})^*$, when $g=1$,  are denoted
$H^{\ch}_i(V^{\otimes})^{* \text{tr}}$ below.

We can fix $X$ and move the points
$\{ x_i \}$, obtaining a $\mathcal{D}$-module on $X^n$, which is symmetric,
namely it is a pullback of a $\mathcal{D}$-module on  $\Sym^n X$, the universal degree
$n$-divisor on $X$. Its fiber at the point $\{ x_i\} \in X^n \setminus \Delta$
(where $\Delta \subset X^n$ is the diagonal divisor) being
$H^{ch}_\bullet(X, \{ A_{x_i}\})$. Denote this $\mathcal{D}_{X^n}$--module
by $\mathcal{H}^{ch}_\bullet(X,V^{\otimes n})$. 

Here is another construction $\cH^{\text{ch}}_\bullet(X, V^{\otimes n})$. When viewed
as a Lie algebra on the Ran space $\mathrm{Ran}(X)$ of $X$, the chiral algebra
$\cA$ can have modules supported not only at $X$ but at higher powers
$\Delta^{(n)}: X^n \rightarrow \mathrm{Ran}(X)$. We can consider for example the
image of $\Delta^{(n)}_* \cA^{\boxtimes n}$ as a module over $\cA$. The chiral
homology with coefficients in $\Delta^{(n)}_* \cA^{\boxtimes n}$ gives rise to a
$\mathcal{D}$-module on $\Sym^n X$, its pullback to $X^n$ coincides with
$\mathcal{H}^{ch}_\bullet(X, V^{\otimes n})$. The construction of the complex
$C^n_\bullet$ below, follows this pattern in the case when $X$ is the universal
elliptic curve and we restrict to homologies in degrees $0$ and $1$. 
\label{no:goemetric-intro}
\end{nolabel}
\begin{nolabel}The structure of this paper is as follows. In section
\ref{sec:prelim} we recall the definition of vertex algebras and their modules.
We recall the exponential change of coordinate formula. In section
\ref{sec:homology} we recall the definition of the Poisson homology of a Poisson
algebra and prove the relation between the finite generation of the kernel of
\eqref{eq:1surjection} and the first homology of the Koszul complex. In section
\ref{sec:elliptic} we define several spaces of meromorphic elliptic functions
that we will use throughout.  We study their Fourier series expansions and the
failure of ellipticity when differentiating with respect to the modular parameter.
In section \ref{sec:first.chiral} we define the complexes computing the chiral
homologies and their duals in genus $1$. We give explicit proofs that these
complexes are in fact complexes. We define the homology with
supports and study their $\mathcal{D}$-module structure when moving these
points. 
We describe explicitly the flat connection with respect to the modular
parameter, and the corresponding modular differential equation that flat
sections satisfy. In section \ref{sec:degree-1-conf} we define the notion of
\emph{degree $1$ conformal block on the torus} and prove its modular invariance. This is a generalization of the
notion of $n$-point functions in the degree $0$ case, and consists of a coherent system of
flat sections of the dual of chiral homology with coefficients in $V^{\otimes n}$ for all
$n \geq 1$. In section \ref{sec:modified.v.o} we prove the main technical tool
of the article, a version of Borcherds identity on Fourier variables (Theorem
\ref{prop:borcherds-modified-fourier}). In section \ref{sec:derivations} we
study self extensions of modules and their associated derivations. We prove a
version of the above Borcherds formula for these derivations. Section
\ref{sec:higher.traces} is the main technical section of the article. We define
the linear functionals associated to self extensions as formal power series. We
prove that they satisfy the formal differential equation corresponding to the
flat connection and we prove an \emph{insertion formula}. In section
\ref{sec:series.exp} we prove the finite generation of chiral homology under the
finiteness conditions on $V$ explained above. We use this to prove convergence
of the trace functional of section \ref{sec:higher.traces} for $n=1$. We prove here the
vanishing of chiral homology of classically free rational vertex algebras. In
section \ref{sec:formal-to-conformal} we prove the convergence of the $n$-point functions of section \ref{sec:higher.traces} for arbitrary $n$. We show
that their limit is a degree 
$1$ conformal block on the
torus.  In section \ref{sec:examples} we give some examples of vanishing of
chiral homology and some non-trivial classes obtained by the above construction.
In section \ref{sec:conclusion} we summarize some of the results obtained.
\label{no:description}
\end{nolabel}
\begin{nolabel}[Acknowledgements] This work has been carried out over an
extended period of time. We have benefited from discussion with many
mathematicians. We  would like to thank Drazen Adamovic, Tomoyuki Arakawa, David Ben-Zvi, John Francis, Dennis
Gaitsgory, Deniz Kus, Antun Milas, Sam Raskin and  Pavel Safronov for enlightening discussions. RH was
supported by CNPq Grant Number 305688/2019-7. JvE was supported by CNPq Grant Number 303806/2017-6 and by a grant from the Serrapilheira Institute (grant number Serra -- 1912-31433).
\label{no:aknowledgements}
\end{nolabel}
\section{Preliminaries on Vertex Algebras}\label{sec:prelim}
In this section we collect the basic definitions and notations on vertex algebras. We start by describing the notation for Laurent expansions of multi-variable functions. We give the definition of vertex algebra in \ref{no:definition-vertex-algebra}, and of modules in \ref{no:modules}. We recall the exponential change of coordinates in \ref{no:zhu-alter-vertex}. 
\begin{nolabel}
Let $f(z,w) \in \mathbb{C}[ [z,w]][z^{-1},w^{-1},(z-w)^{-1}]$. We denote by $i_{z,w} f(z,w) \in \mathbb{C}( (z))( (w))$ and $i_{w,z} f(z,w) \in \mathbb{C}( (w))( (z))$ the expansions of $f(z,w)$ in the domains $|z| > |w|$ and  $|w| > |z|$ respectively. More generally, for $f(z_1,\dots,z_n) \in \mathbb{C}[ [z_1,\dots,z_n]][z_i^{-1}, (z_i-z_j)^{-1}]_{1 \leq i \neq j \leq n}$ we denote by
\begin{align*}
i_{z_1,\dots,z_n} f(z_1,\dots,z_n) \in \mathbb{C}( (z_1))\dots ( (z_n)), \end{align*}
the expansion of $f(z_1,\dots,z_n)$ in the domain $|z_1| > \dots > |z_n|$. This expansion may be done iteratively; first expand all instances of $(z_i - z_n)^n$ in positive powers of $z_n$, obtaining a Laurent series in $z_n$ whose coefficients are functions of $z_1,\dots,z_{n-1}$, then repeat with $z_{n-1}$, etc. 

Sometimes it will be required to expand $f(z_1,\dots,z_n)$ as above in powers of $z_i -z_j$. We will only need such expansions in the domain $|z_i| > |z_i - z_j|$, where $1 \leq i < j \leq n$. To perform such an expansion means to replace all instances of $z_i^{-k}$, where $k \geq 0$, by $\left((z_i - z_j) + z_j\right)^{-k}$ and expand in positive powers of $z_i - z_j$. Therefore such an expansion is a series in $z_1,\dots,\hat{z_i},\dots,z_n,z_i-z_j$, where we have omitted $z_i$ from the list. 
\label{no:expansions}
\end{nolabel}
\begin{ex}
For $f = f(z,w,t)$ the expansion 
\begin{align*}
i_{w,w-t,z-w} f(z,w,t) \in \mathbb{C} ( (w))( (w-t)) ( (z-w)),
\end{align*}
is computed as follows. We take the Laurent series expansion $f(z,w,t) = \sum_{k} f_k (w,t) (z-w)^k$ of $f$ around $z-w = 0$. Then we take the Laurent series expansion $f_k(w,t) = \sum_{\ell} f_{\ell, k}(w) (w-t)^\ell$ of each $f_k$ around $w-t = 0$. Finally we write
\begin{align*}
i_{w,w-t,z-w} f(z,w,t) = \sum_{\ell, k} f_{\ell, k}(w) (w-t)^\ell (z-w)^k.
\end{align*}
\label{ex:expansion12}
\end{ex}
\begin{nolabel}
For $f(z,w) \in \mathbb{C} [ [z,z^{-1},w,w^{-1}]]$ we write $\res_z dz f(z,w)  \in \mathbb{C} [ [w,w^{-1}]]$ for its coefficient of $z^{-1}$. For $f(z,w) \in \mathbb{C}[ [z,w]][z^{-1},w^{-1},(z-w)^{-1}]$ the notation $\res_z dz f(z,w)$ is ambiguous. We must choose an embedding $\mathbb{C}[ [z,w]][z^{-1},w^{-1},(z-w)^{-1}] \subset \mathbb{C}[ [z,z^{-1},w,w^{-1}]]$ in order to compute this residue. For example
\begin{align*} \res_z dz i_{z,w} \frac{1}{z-w} = 1 \neq 0 = \res_z dz i_{w,z} \frac{1}{z-w}. \end{align*}

By $\res_{z=w}dz f(z,w)$ we mean the coefficient of $(z-w)^{-1}$ in the expansion of $f(z,w) = f( (z-w) + w, w)$ in the domain $|w| > |z-w|$. 

Notice that for any Laurent series $f(t) \in \mathbb{C}( (t))$ we have
\begin{equation}
\res_t dt  \frac{d}{dt} f(t) = 0, \qquad \res_t dt \left( t \frac{d}{dt} + 1 \right) f(t) = 0.
\label{eq:residue-trading-exp}
\end{equation}
\end{nolabel}
\begin{nolabel}Throughout this article $(V, \vac, \omega, Y(\cdot,z))$ will denote a conformal vertex algebra (or vertex operator algebra). In particular $V = \bigoplus_{n \in \Z_+} V_n$ is a $\Z_+$-graded complex vector space $V$ together with two distinguished vectors $\vac$ and $\omega$ and a bilinear map 
\begin{align*} V \otimes V \rightarrow V( (z)), \qquad a \otimes b \mapsto Y(a,z)b = \sum_{n \in \mathbb{Z}} z^{-1-n} a_{(n)}b, \qquad a_{(n)}b = 0 \quad \forall n \gg 0, \end{align*}
satisfying the following axioms
\begin{enumerate}
\item $Y(\vac, z) = \id_V$. 
\item $a_{(n-1)} \vac = \delta_{n,0} a$ for all $a \in V$ and $n \geq 0$.
\item For any $f(z,w) \in \mathbb{C}[ [z,w]][z^{-1},w^{-1},(z-w)^{-1}]$ and $a,b,c \in V$, the following Borcherds identity holds
\begin{multline}
\res_z dz Y(a,z) Y(b,w) c i_{z,w} f(z,w)
- \res_z dz Y(b,w)Y(a,z)c i_{w,z} f(z,w) \\
= Y\left( \res_{z=w}dz f(z,w) Y(a,z-w)b,w \right)c. 
\label{eq:borcherds-def}
\end{multline}
\item Put $L(z) = \sum_{n \in \mathbb{Z}} L_n z^{-n-2} = Y(\omega, z)$. Then 
\begin{align*}
[ L_m, L_n] = (m-n) L_{m+n} + \frac{c}{12} (m^3 -m) \delta_{m,-n} \id_Vm
\end{align*}
where $c \in \mathbb{C}$ is a constant called the \emph{central charge} of $V$. 
\item $L_{-1} a = a_{(-2)} \vac$ for every $a \in V$. 
\item For each $n \in \Z_+$ the restriction of $L_0$ to $V_n$ is $n \id_{V_n}$ and $V_n$ is finite dimensional.
\end{enumerate}

\label{no:definition-vertex-algebra}
\end{nolabel}
\begin{nolabel}A vertex algebra $V$ satisfies the following skew-symmetry relation:
\begin{align*} Y(a,z)b = e^{z L_{-1}} Y(b,-z) a, \end{align*}
multiplying by $z^{k}$ and taking residues we get
\begin{equation} 
 a_{(k)} b = - (-1)^k \sum_{j \geq 0} \frac{(-L_{-1})^j}{j!} b_{(k+j)}a. 
\label{eq:skew-symmetry}
\end{equation} 
\label{no:skew-syum}
\end{nolabel}
\begin{nolabel} For us a \emph{module} over the conformal vertex algebra $V$ will be what is normally called a positive energy module or a $\mathbb{N}$-gradable weak module in the literature. This is a vector space $M$ equipped with a bilinear operation 
\begin{align*} V \otimes M \rightarrow M( (z)), \quad a\otimes m \mapsto Y^M(a,z)m = \sum_{n \in \mathbb{Z}} z^{-n-1} a^M_{(n)} m, \quad a^M_{(n)}m = 0 \quad \forall n \gg 0, \end{align*}
satisfying the following analog of the Borcherds identity for $a, b \in V$, $m \in M$ and $f(z,w) \in \mathbb{C}[ [z,w]][z^{-1},w^{-1},(z-w)^{-1}]$: 
\begin{multline}
\res_z dz Y^M(a,z) Y^M(b,w)m \,i_{z,w} f(z,w) - \res_z dz Y^M(b,w)Y^M(a,z)m\, i_{w,z} f(z,w)  = \\
Y^M\left( \res_{z=w}dz f(z,w) Y(a,z-w)b,w \right)m. 
\label{eq:module-def}
\end{multline}
We put $L^M(z) = Y^M(\omega,z)$ and require that $L^M_0$ act with finite dimensional generalized eigenspaces, and that the real part of its spectrum be bounded below. That is $M = \bigoplus_{\lambda \in \mathbb{C}} M_\lambda$ with $ (L^M_0|_{M_\lambda} - \lambda)^n = 0$ for $n \gg 0$. Below we will often drop the superscript $M$ on the operators whenever no confusion should arise. 

\label{no:modules}
\end{nolabel}
\begin{nolabel} Let $\tau \in \mathbb{H}$ and $q = e^{ 2 \pi i \tau }$. For a module $M$ the operator $q^{L_0} = e^{2\pi i \tau L_0}$ is well-defined since the generalized eigenspaces of $L_0$ are finite dimmensional by hypothesis. In Section \ref{sec:higher.traces} we will deal with formal sums like $\tr_M q^{L_0}$ by which we mean, in this context, the expression
\begin{align*} \sum_\lambda q^{\lambda} \sum_{n \geq 0} \frac{(2 \pi i \tau)^n}{n!} (L_0 - \lambda)^n, \end{align*}
where $q$ is a formal variable. Convergence of such series under appropriate hypotheses is then treated in Sections \ref{sec:series.exp} and \ref{sec:formal-to-conformal}.

\label{no:ql0}
\end{nolabel}
\begin{nolabel} For a vertex operator algebra $(V, \vac, \omega, Y(\cdot,z))$ we will consider another isomorphic vertex algebra $(V, \vac, \tilde{\omega}, Y[\cdot,z])$ introduced by Zhu \cite{zhu} where $\tilde{\omega} = (2 \pi i)^2 ( \omega - \tfrac{c}{24} \vac )$ and 
\begin{align*} Y[a,z] = Y\left( e^{2 \pi i z L_0}a, e^{2 \pi i z}-1\right). \end{align*}
We have for homogeneous $a \in V$ and $m \in \mathbb{Z}$
\begin{equation}
a_{[m]} = (2 \pi i)^{-m-1} \res_z \left( Y(a,z) \left( \log(1+z) \right)^m (1+z)^{\deg a -1}  \right).
\label{eq:zhu-bracket-mode}
\end{equation}
\label{no:zhu-alter-vertex}
\end{nolabel}
\begin{nolabel}For a Laurent series $f(t) = \sum_{n \geq N} f_n t^n$, $a,b \in V$ we will use the notation 
\begin{align*} a_{(f)} b = \res_{t} f(t) Y(a,t)b = \sum_{n \geq N} f_n a_{(n)} b,  \qquad a_{[f]} b = \res_t f(t) Y[a,t]b = \sum_{n \geq N} f_n a_{[n]}b. \end{align*}
\begin{rem}There is an ambiguous situation in that for a integer number $n$, $a_{(n)}$ could mean the $n$-product as defined in \ref{no:definition-vertex-algebra} or the $f$-product for the constant Laurent series $f(t) = n t^0$. We will always mean the $n$-th product by this notation and believe that no confusion should arise. 
\label{rem:f-vs-n-product}
\end{rem}

By integration by parts and noting that the translation operator for $(V,\vac, \tilde{\omega},Y[\cdot,z])$ is given by $ (2 \pi i) (L_0 + L_{-1})$ these operations satisfy
\begin{equation}
(L_{-1} a)_{(f)} b = - a_{(f')} b
\label{eq:integration-parts}
\end{equation}
and
\begin{equation}
(2 \pi i) \left(\left( L_0 + L_{-1} \right) a \right)_{[f]} b = - a_{[f']}b.
\label{eq:integration-parts-alt}
\end{equation}
\label{no:f-product}
\end{nolabel}
We have the following 
\begin{lem}\label{lem:skew-symmetry}
Let $f \in \mathbb{C}((x))$. Then
\begin{equation}
a_{(f(x))}b + b_{(f(-x))}a = \sum_{j \geq 0} \frac{(-1)^j}{(j+1)!} T^{j+1}\left(a_{(x^{j+1} f(x))}b\right), \qquad \text{for all $a, b \in V$}.
\label{eq:lemma-sym}
\end{equation}
\end{lem}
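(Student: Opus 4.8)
The plan is to derive \eqref{eq:lemma-sym} from the skew-symmetry relation \eqref{eq:skew-symmetry} by multiplying by $f$ and taking a residue. Recall \eqref{eq:skew-symmetry} reads $a_{(k)}b = -(-1)^k \sum_{j\ge 0} \frac{(-L_{-1})^j}{j!} b_{(k+j)}a$, valid for all $k \in \mathbb{Z}$. Writing $f(x) = \sum_{k \ge N} f_k x^k$, I would compute $a_{(f(x))}b = \res_x f(x) Y(a,x)b = \sum_k f_k a_{(k)}b$ and substitute the skew-symmetry expression for each $a_{(k)}b$. This gives
\[
a_{(f(x))}b = -\sum_{k} f_k (-1)^k \sum_{j \ge 0} \frac{(-L_{-1})^j}{j!} b_{(k+j)} a = -\sum_{j \ge 0} \frac{(-L_{-1})^j}{j!} \Bigl( \sum_k f_k (-1)^k b_{(k+j)} a \Bigr).
\]
Now I recognize $\sum_k f_k (-1)^k b_{(k+j)} a$: it is $\res_x f(-x) x^{j} Y(b,x)a$ up to sign bookkeeping, since $\res_x (x^j \cdot g(x) Y(b,x)a)$ picks out the coefficient of $x^{-j-1}$ in $g(x)$ times $Y(b,x)a = \sum_m x^{-m-1} b_{(m)}a$, i.e. $\sum_m [x^{-j-1-m}\text{-coeff of }g]\,b_{(m)}a$. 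Setting $g(x) = f(-x)$ and matching indices one finds $\res_x x^j f(-x) Y(b,x) a = \sum_m (\text{coeff of }x^{m+j}\text{ in }f(-x))\, b_{(m)}a$. The coefficient of $x^{m+j}$ in $f(-x) = \sum_k f_k(-1)^k x^k$ is $f_{m+j}(-1)^{m+j}$, so this equals $\sum_m f_{m+j}(-1)^{m+j} b_{(m)}a$; re-indexing $m \mapsto k = m+j$ yields $\sum_k f_k (-1)^k b_{(k)}a$ — but I want $b_{(k+j)}a$, so the correct bookkeeping is $\res_x f(-x) Y(b,x)a$ contributes the shift directly. I would carefully redo this index chase: the cleanest route is to write $\sum_k f_k(-1)^k b_{(k+j)}a = \res_x \bigl( (-1)^j f(-x)\, x^{-j}\, Y(b,x)a\bigr)$... — the exact power of $x$ and sign must be pinned down, but the upshot should be that the $j$-th inner sum equals (up to the sign $(-1)^j$) the residue $\res_x f(-x) x^{j} Y(b,x) a$, which is exactly $b_{(x^j f(-x))}a$ in the notation of \ref{no:f-product} — wait, more precisely the term producing $b_{(f(-x))}a$ is the $j=0$ term after the sign works out, and the remaining $j \ge 1$ terms assemble into the right-hand side of \eqref{eq:lemma-sym}.

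Concretely: isolating $j=0$ gives $-\bigl(\text{sum}\bigr)_{j=0} = b_{(f(-x))}a$ (with a sign I must verify: $-\sum_k f_k(-1)^k b_{(k)}a = -\res_x f(-x)\cdot(-1)\cdot\,?$; note $\res_x f(-x)Y(b,x)a = \sum_k f_k(-1)^k \cdot [\text{coeff}]$, and one checks the overall sign is such that the $j=0$ contribution is exactly $-b_{(f(-x))}a$, i.e. it moves to the left side of \eqref{eq:lemma-sym}). For $j \ge 1$, using $(-L_{-1})^j = -(-1)^{j-1} L_{-1}^j$ and relabeling $j \mapsto j+1$, the sum becomes $\sum_{j \ge 0} \frac{(-1)^j}{(j+1)!} L_{-1}^{j+1}\bigl( b_{(x^{j+1}f(-x))}a\bigr)$ — and here I would then invoke the skew-symmetry \emph{once more}, or a direct residue identity, to convert $b_{(x^{j+1}f(-x))}a$ into $a_{(x^{j+1}f(x))}b$ modulo total-derivative terms that vanish under $\res$; alternatively, and more honestly, I suspect the lemma is stated with $a_{(x^{j+1}f(x))}b$ on the right precisely because that is what emerges when one instead applies skew-symmetry to the \emph{lower-order} products, so the correct derivation may go through the identity $b_{(g(x))}a = \sum_{j\ge 0}\frac{1}{j!}(-L_{-1})^j\bigl(a_{(g(-x)(-1)^{?})}b\bigr)$ applied in reverse. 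Writing $T = L_{-1}$, the target right-hand side $\sum_{j\ge 0}\frac{(-1)^j}{(j+1)!}T^{j+1}(a_{(x^{j+1}f(x))}b)$ then matches after the sign $(-1)^{j+1}$ from $f(-x) \leadsto (-1)^{j+1}$-even part...

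The main obstacle is purely the sign and index bookkeeping: tracking how $f(x) \mapsto f(-x)$, the shift $b_{(k)} \mapsto b_{(k+j)}$, and the alternating signs $(-1)^k$, $(-1)^j$ interact, and confirming that exactly one term ($j=0$) contributes to the left-hand side of \eqref{eq:lemma-sym} while the tail reassembles with the stated coefficients $\frac{(-1)^j}{(j+1)!}$ and the argument $x^{j+1}f(x)$ (with $f$, not $f(-x)$, which is where a second application of skew-symmetry or an integration-by-parts identity like \eqref{eq:integration-parts} is needed). A clean way to organize this is to first prove it for $f(x) = x^k$ a single monomial — where \eqref{eq:lemma-sym} becomes exactly \eqref{eq:skew-symmetry} rearranged — and then extend by linearity and continuity (completeness of the filtration by powers of $x$) to all $f \in \mathbb{C}((x))$. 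I expect no conceptual difficulty beyond this; the lemma is essentially a repackaging of \eqref{eq:skew-symmetry}.
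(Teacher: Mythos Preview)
The paper states this lemma without proof, so there is no argument to compare against. Your instinct that it is ``essentially a repackaging of \eqref{eq:skew-symmetry}'' is correct, and your final suggestion to check it on monomials $f(x)=x^k$ and extend by linearity works cleanly.

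The reason your main derivation becomes tangled is that you apply skew-symmetry in the less convenient direction. You start from $a_{(k)}b$ and expand it in terms of $b_{(k+j)}a$; this forces you to convert back to $a_{(\cdot)}b$ later, and you correctly sense that a second application of skew-symmetry would be needed. The clean route is to apply \eqref{eq:skew-symmetry} to $b_{(k)}a$ instead. Writing $f(x)=\sum_k f_k x^k$ one has $b_{(f(-x))}a=\sum_k(-1)^kf_k\,b_{(k)}a$, and substituting $b_{(k)}a=-(-1)^k\sum_{j\ge 0}\frac{(-T)^j}{j!}a_{(k+j)}b$ gives
\[
b_{(f(-x))}a \;=\; -\sum_{j\ge 0}\frac{(-T)^j}{j!}\,a_{(x^jf(x))}b,
\]
since $\sum_k f_k\,a_{(k+j)}b=a_{(x^jf(x))}b$. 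The $j=0$ term is $-a_{(f(x))}b$; moving it to the left and reindexing $j\mapsto j+1$ yields exactly \eqref{eq:lemma-sym}. No second use of skew-symmetry or of \eqref{eq:integration-parts} is required, and the sign bookkeeping becomes trivial.
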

\begin{nolabel}\label{no:C2.def}
Let $V$ be a vertex algebra. We denote by $V_{(-2)}V$ the subspace of $V$ spanned by elements of the form $a_{(-2)}b$ for $a, b \in V$. The $C_2$-algebra $R_V = V / V_{(-2)}V$ of $V$ carries the natural structure of Poisson algebra with commutative product $a \cdot b = a_{(-1)}b$ and Poisson bracket $\{a, b\} = a_{(0)}b$. The vertex algebra $V$ is said to be $C_2$-cofinite if $R_V$ is finite dimensional.
\end{nolabel}

\section{Homology}\label{sec:homology}
In this section we recall basic homological constructions that will be used in Section \ref{sec:series.exp} to codify finiteness conditions on the vertex algebra $V$ guaranteeing finite dimensionality of its chiral homology. We recall the construction of the \emph{Koszul complex of a differential algebra} and the \emph{arc algebra} of a commutative algebra. We describe a relation between the first homology group of the Koszul complex of a graded differential algebra and finite generation of a certain ideal in an associated arc algebra. We then recall the definition of Poisson homology of a Poisson algebra and give an explicit description of the differentials in low degree.  
\begin{nolabel}
Let $k$ be a field and $A$ a commutative $k$-algebra. Denote by $\Om_A = \Om_{A/k}$ the $A$-module of K\"{a}hler differentials on $A$. It is isomorphic as a $k$-vector space to the Hochschild homology group $\Hoch_1(A)$. A derivation $T$ of $A$ induces a morphism
\begin{align*}
\iota_T : \Om_{A} \rightarrow A,
\end{align*}
of $A$-modules, which in turn extends uniquely to a differential $d$ on the Koszul complex $\wedge^\bullet \Om_{A}$. We refer to this as the Koszul complex of the differential algebra $(A, T)$ and denote it by $K_\bullet(A)$. Explicitly, the differentials of $K_\bullet(A)$ in low degree are
\begin{align}
d_1 : \Om_{A} &\rightarrow A, & a \, dx &\mapsto a \cdot (Tx), \label{eq:d1.HP} \\
d_2 : \wedge^2 \Om_{A} &\rightarrow \Om_{A}, & a \, dx &\wedge dy \mapsto a
\cdot (Tx) \, dy - a \cdot (Ty) \, dx. \label{eq:d2.HP2}
\end{align}
We denote the homology groups of this complex by $\HK_i(A)$ for $i \in \Z_+$.
\end{nolabel}
\begin{nolabel}
Let $R$ be a commutative $k$-algebra. The arc algebra of $R$ is the differential algebra $(JR, \partial)$ characterized by the universal property that any morphism of algebras $R \rightarrow A$ from $R$ to a differential algebra $(A, T)$, lifts uniquely to a morphism of differential algebras $(JR, \partial) \rightarrow (A, T)$ making the following diagram commute
\begin{align} \label{eq:jet-univ}
\begin{split}
\xymatrix{
& (JR, \partial) \ar[d] \\ 
R \ar[ur] \ar[r] & (A, T). \\
}
\end{split}
\end{align}
The arc algebra is naturally $\Z_+$-graded with $JR^0 = R$, and $\partial$ has degree $+1$.
\end{nolabel}
\begin{prop}\label{prop:Koszul.finiteness}
Let $A = \bigoplus_{n \in \Z_+} A^n$ be a graded differential $k$-algebra with derivation $T$ of degree $+1$. Assume $A^0$ is of finite type as a $k$-algebra, and that $A$ is generated by $A^0$ as a differential algebra. The canonical map $\pi : JA^0 \rightarrow A$ is a surjection and we write $I$ for its kernel, which is a differential ideal of $J = JA^0$. Then
\begin{enumerate}
\item $\HK_1(A) = 0$ if and only if $\pi$ is an isomorphism, and

\item in general there exists an injection
\[
\HK_1(A) \rightarrow I / (J \cdot TI).
\]
\end{enumerate}
\end{prop}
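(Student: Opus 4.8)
The plan is to work with explicit presentations. Choose homogeneous generators $x_1,\dots,x_r$ of $A^0 = R_V$ as a $k$-algebra. Then $J = JA^0$ is generated as a differential algebra by the $x_i$, hence as an ordinary algebra by the $\partial^{(n)}x_i := \partial^n x_i/n!$ (or just $\partial^n x_i$), and $\Omega^1_A$ is generated as an $A$-module by the $dx_i$ together with $d(T^n x_i)$ — but since $A$ is generated as a differential algebra by $A^0$, and $T d = d T$ on the level of the induced map, $\Omega^1_A$ is generated as an $A$-module by $\{d x_i\}$ \emph{after} we remember the $A$-module structure already encodes $T$. More carefully: $\Omega^1_A$ is generated over $A$ by $\{d(T^m x_i) : m \ge 0, 1 \le i \le r\}$, and there is a surjection of complexes from the analogous Koszul-type object built out of $J$. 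The first step is therefore to set up the commutative diagram relating $K_\bullet(A)$ to the Koszul complex $K_\bullet(J)$ of the differential algebra $(J,\partial)$, using $\pi$. Since $J$ is a polynomial ring (a free commutative algebra on the $\partial^n x_i$), its Koszul complex $K_\bullet(J)$ is acyclic in positive degrees: $\Omega^1_J$ is $J$-free on $\{d(\partial^n x_i)\}$, the differential $d_1^J$ sends $d(\partial^n x_i) \mapsto \partial^{n+1} x_i$, and one checks directly that $\HK_i(J) = 0$ for $i \ge 1$ (the $d(\partial^n x_i)$ form a regular sequence of ``derivative'' type; alternatively $(J,\partial) = J R$ is the arc algebra and its Koszul complex computes the cotangent complex of the arc space, which is smooth). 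This is the technical input I would isolate as a lemma.

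Next, apply the right-exact functor $A \otimes_J (-)$ to the short exact sequence $0 \to I \to J \to A \to 0$ and to $\Omega^1_J$. One gets the standard conormal/cotangent exact sequence
\[
I/I^2 \xrightarrow{\ \delta\ } A \otimes_J \Omega^1_J \longrightarrow \Omega^1_A \longrightarrow 0,
\]
and moreover, because $\Omega^1_J$ is $J$-free, $A \otimes_J \Omega^1_J = \Omega^1_J \otimes_J A$ is $A$-free on $\{d(\partial^n x_i)\}$. The differential $T$ on $A$ is $\pi \circ \partial$, so under this identification $d_1^A$ on $A \otimes_J \Omega^1_J$ is just $A \otimes_J d_1^J$, i.e.\ $d(\partial^n x_i) \mapsto \overline{\partial^{n+1} x_i}$. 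Thus $K_\bullet(A)$ receives a surjection from the (acyclic in positive degrees) complex $\overline{K_\bullet(J)} := A \otimes_J K_\bullet(J)$, and part (a) together with (b) will come from the long exact sequence in homology attached to
\[
0 \longrightarrow (\text{kernel complex}) \longrightarrow \overline{K_\bullet(J)} \longrightarrow K_\bullet(A) \longrightarrow 0,
\]
where in degree $1$ the kernel is the image of $\delta: I/I^2 \to A\otimes_J \Omega^1_J$, i.e.\ $\delta(I/I^2)$, and in degree $0$ the kernel vanishes since $J \to A$ is surjective so $\overline{K_0(J)} = A = K_0(A)$.

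From the long exact sequence, using $\HK_1(\overline{K_\bullet(J)}) = 0$ (this needs that $A$ is $J$-flat \emph{or} a direct spectral-sequence/explicit argument, but in positive homological degree acyclicity of $K_\bullet(J)$ as a complex of free $J$-modules gives that $\overline{K_\bullet(J)}$ is acyclic in degrees $\ge 1$ regardless — it's a bounded-below complex of frees with a contracting homotopy in positive degrees, which survives base change), we read off an isomorphism
\[
\HK_1(A) \;\cong\; \ker\!\big(\delta(I/I^2) \to \overline{K_1(J)}\big)\big/\ \mathrm{im}\big(\overline{K_2(J)} \to \delta(I/I^2)\big),
\]
which in particular injects into $\delta(I/I^2)$, a quotient of $I/I^2$. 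To get the sharper target $I/(J\cdot TI)$ claimed in (b): the map $\delta$ sends the class of $f \in I$ to $\sum_{i,n} \frac{\partial f}{\partial(\partial^n x_i)}\, d(\partial^n x_i)$, and composing with $d_1^{\overline{J}}$ gives $\sum_{i,n}\frac{\partial f}{\partial(\partial^n x_i)}\partial^{n+1}x_i = \partial f \bmod I$ — but $f \in I$ and $I$ is a differential ideal so $\partial f \in I$, landing in $0 \in A$; the point is rather that $\ker(d_1^A|_{\delta(I/I^2)})$ is all of $\delta(I/I^2)$, and one identifies the image of $d_2$ restricted appropriately with $J\cdot TI = J\cdot \partial I$ modulo $I^2$ and the Koszul syzygies, yielding the injection $\HK_1(A) \hookrightarrow I/(J\cdot TI)$. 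For (a): $\HK_1(A)=0$ iff this quotient-of-a-subobject-of $I/I^2$ vanishes in the relevant sense iff $\delta$ is injective with image a direct summand complement — chasing the exact sequence, $\HK_1(A) = 0$ forces $I/I^2 \to \overline{K_1(J)}$ to be injective with the right cokernel, and combined with $\HK_0 = 0$ and a degree count one concludes $I = 0$, i.e.\ $\pi$ is an isomorphism; conversely if $\pi$ is an iso then $K_\bullet(A) = \overline{K_\bullet(J)} \simeq K_\bullet(J)$ has vanishing $\HK_1$.

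\textbf{Main obstacle.} The crux is proving $\HK_i(J) = 0$ for $i\ge 1$ — equivalently that the Koszul complex of the arc algebra $(JR_V,\partial)$ of \emph{any} finite-type algebra is acyclic in positive degrees — and doing the bookkeeping of the conormal sequence carefully enough to land in $I/(J\cdot TI)$ rather than merely $I/I^2$. The grading (everything is $\Z_+$-graded with $T,\partial$ of degree $+1$) is what makes the homotopy and the degree counts work, so I would keep track of it throughout; the potential subtlety is flatness of $A$ over $J$, which one sidesteps by noting $K_\bullet(J)$ is a complex of \emph{free} $J$-modules contractible in positive degrees, so the contraction base-changes.
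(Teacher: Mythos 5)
Your strategy (conormal sequence, base change of the Koszul complex of $J$, long exact sequence) is genuinely different from the paper's, but it rests on a claim that is false in the stated generality. You assert that $J = JA^0$ is a polynomial ring, so that $\Omega^1_J$ is $J$-free on the $d(\partial^n x_i)$ and $K_\bullet(J)$ admits a contracting homotopy in positive degrees that survives $A\otimes_J(-)$. This holds only when $A^0$ is itself a polynomial (or at least smooth) algebra. Here $A^0$ is merely of finite type; writing $A^0 = k[x_1,\dots,x_r]/(f_1,\dots,f_s)$ one has $JA^0 = k[x_i^{(n)}]/(\partial^n f_j)$, whose module of K\"ahler differentials is not free, and arc spaces of singular varieties are not smooth (in the intended applications $A^0 = R_V$ is typically very singular). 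So the contracting homotopy and the base-change step both collapse. The fact you actually need, $\HK_1(J)=0$, is true --- but it is precisely an instance of part (a) applied to the arc algebra itself (the canonical map $J(J^0)\to J$ is the identity), and part (a) is the nontrivial input: the paper does not reprove it, it cites {\cite[Thm.~13.7]{eh2018}} and then uses $\HK_1(J)=0$ as the exactness of the top row of a two-row diagram. Your proposed independent proof of this input does not go through, and your sketch of part (a) itself is not salvageable as written: it invokes ``$\HK_0=0$'', but $\HK_0(A) = A/(A\cdot TA)$ is nonzero in general, and the ``degree count'' concluding $I=0$ is not supplied.

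The second gap is that you never actually construct the map into $I/(J\cdot TI)$: your long exact sequence lands you in a subquotient of $I/I^2$, and the identification with $I/(J\cdot TI)$ is deferred to ``one identifies the image of $d_2$ restricted appropriately\dots''. That identification is where the content of part (b) lives. The paper's argument is a short explicit chase which you could adopt: lift a closed $\omega = \sum_i a_i\,db_i \in \Omega_A$ to $\tilde\omega = \sum_i \tilde a_i\,d\tilde b_i \in \Omega_J$ and send $\omega \mapsto d\tilde\omega = \sum_i \tilde a_i\cdot T\tilde b_i$, which lies in $I$ because $\omega$ is closed. Changing the lift or replacing $\omega$ by $\omega + d\theta$ moves $d\tilde\omega$ by an element of $I\cdot(TJ) + J\cdot(TI)$, which is contained in $J\cdot(TI)$ by the Leibniz rule together with $I$ being a differential ideal; and if $d\tilde\omega \in J\cdot TI$ then subtracting $\sum_j c_j\,dy_j$ (with $y_j\in I$) produces a closed lift, which is exact in $\Omega_J$ by $\HK_1(J)=0$ and maps to an exact form in $\Omega_A$ since $dy_j \mapsto 0$. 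That gives both well-definedness and injectivity in a few lines, without any flatness or acyclicity beyond degree one.
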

\begin{proof}
Part a) is {\cite[Thm. 13.7]{eh2018}}. Part b) is proved by a diagram chase. Indeed we have a morphism of complexes
\[
\xymatrix{
\wedge^2 \Om_J \ar[d] \ar[r] & \Om_J \ar[d] \ar[r] & J \ar[d] \ar[r] & 0 \\ 
\wedge^2 \Om_A \ar[r] & \Om_A \ar[r] & A \ar[r] & 0,
}
\]
where the upper row is exact by a). Let $\omega = \sum_i a_i \, db_i \in \Om_A$ be a $1$-cocycle, and let $\tilde{a}_i$ and $\tilde{b}_i$ be lifts of $a_i$ and $b_i$ to $J$, and write $\tilde{\omega} = \sum_i \tilde{a}_i \, d\tilde{b}_i \in \Om_J$. Clearly
\[
d\tilde{\omega} = \sum_i \tilde{a}_i \cdot T\tilde{b}_i \in I,
\]
since $\omega$ is closed. We claim that $\omega \mapsto d\tilde{\omega}$ defines a morphism
\[
\delta : \HK_1(A) \rightarrow I / (J \cdot TI).
\]
Indeed if $\omega' - \omega = d\theta$ where $\theta = \sum_i a_i \, db_i \wedge dc_i$, then
\[
\tilde{\omega}' - \tilde{\omega} - d\sum_i \tilde{a}_i \, d\tilde{b}_i \wedge d\tilde{c}_i \in I \, dJ + J \, dI,
\]
and so
\[
\delta(\omega') - \delta(\omega) \in I \cdot (TJ) + J \cdot (TI) \subset J \cdot (TI).
\]
Obviously $\delta$ is injective: $\delta(\omega) \in J \cdot TI$ implies $\omega$ possesses a lift $\tilde{\omega}$ of the form $\sum_i \tilde{a}_i \, d(\tilde{x}_i)$ which implies $\omega = 0$.
\end{proof}
\begin{nolabel}
We briefly recall the definition of Poisson homology groups. These were
introduced in \cite{lichnerowicz} in a geometric context, the presentation here
follows the algebraic formulation of \cite{Huebschmann}. First we recall the
notion of Lie-Rinehart pair. Let $k$ be a field (though the constructions work
with $k$ a commutative ring in general) and $\OO$ a commutative $k$-algebra.
Then a Lie-Rinehart pair, or a $\cO$-Lie algebroid consists of an $\OO$-module
$\cL$ together with a $k$ Lie algebra structure on $\cL$ and an action $\omega :
\cL \rightarrow \Der(\OO)$ of $\cL$ on $\OO$ by derivations. These data satisfy
the conditions:
\begin{align*}
\omega(f x)(g) = f \omega(x)(g) \quad \text{and} \quad [x, fy] = f[x, y] + (\omega(x)f) y
\end{align*}
for all $f, g \in \OO$ and all $x, y \in \cL$. 
Given a $\cO$--Lie algebroid $\cL$, we have its Chevalley-Eilenberg complex
\begin{equation} \label{eq:ce1} C(\cL)_{>0} = \wedge^{>0}_{\cO} \cL. 
\end{equation}
The differential is constructed as follows. In degree $2$ we let 
\[ d (x \otimes y) = [x,y], \]
and extend $d$ by the Leibniz rule to be a derivation of degree $-1$ of the super-commutative algebra $C(\cL)$. 

Let $A$ be a Poisson algebra over $k$. The $A$-module $\Om_{A/k} = \Hoch_1(A)$ of K\"{a}hler differentials on $A$ carries the structure of an $A$--Lie algebroid. The $k$--Lie algebra bracket is defined by
\begin{align*}
[f\,dx, g\,dy] = f\{x, b\}\,dy + g\{f, y\}\,dx + fg\,d\{x, y\}, \qquad f,g,x,y \in A,
\end{align*}
and the action is given by
\[ \omega( f dx) (g) = f \{x,g\}. \]

The Eilenberg-Chevalley complex defined in \eqref{eq:ce1} is augmented, by putting $A$ in degree $0$ and letting the differential in degree $1$, be given by 
\begin{equation}
d_1 : \Om_{A/k} \rightarrow A, \qquad  a \, dx \mapsto \{a, x\}, \label{eq:d1.HP2} 
\end{equation}
We let $C(\Omega^1_A)$ be this augmented complex. Its homology is known as the \emph{Poisson homology} $\HP_\bullet(-)$ of $A$: 
\begin{align*}
\HP_i(A) = H^{i} C(\Omega^1_A).
\end{align*}
We record here the differential in degree $2$ that will be used below:
\begin{equation}
d_2 : \wedge^2 \Om_{A/k} \rightarrow \Om_{A/k}, \qquad  a \, dx \wedge dy \mapsto \{a, x\} \, dy - \{a, y\} \, dx - a \, d(\{x, y\}). \label{eq:d2.HP}
\end{equation}
\end{nolabel}
\section{Elliptic Functions}\label{sec:elliptic}
In this section we recall some material about elliptic functions. We adopt the conventions of \cite{zhu}. We recall the Eisenstein series in \ref{no:eisenstein} and the Weierstrass elliptic functions and their Fourier series expansions in \ref{no:ellitptic-functions}. We introduce the rings of modular forms in \ref{no:modular-foms}, Jacobi forms in \ref{no:jacobi-forms} and meromorphic elliptic functions in \ref{no:meromorphic-functions}. We analyze domains of convergence of Fourier series expansions in \ref{no:fourier-n-variable} and define a derivation with respect to the modular parameter in \ref{lem:deriv-sum}.

\begin{nolabel}For $k \geq 1$ we consider the Eisenstein series
\begin{equation}
G_{2k}(q) = 2 \xi (2 k) + \frac{2 (2 \pi i)^{2k}}{(2k-1)!} \sum_{n =1}^\infty \sigma_{2k -1}(n)q^n,
\label{eq:eisenstein-definition}
\end{equation}
as a formal power series in the variable $q$. Here 
\begin{align*}
\xi(2k) = \sum_{n \geq 1} n^{-2k} \quad \text{and} \quad \sigma_k(n) = \sum_{d|n} d^k.
\end{align*}
This series converges uniformly and absolutely in the domain $|q| < 1$, and thereby defines a function $g_{2k}(\tau) = G_{2k}(e^{2 \pi i \tau})$ analytic in the upper half complex plane $\HH$. For $k \geq 2$ the function $g_{2k}(\tau)$ is a holomorphic modular form of weight $2k$. The function $g_2(\tau)$ transforms under the action of $\SL_2(\mathbb{Z})$ as 
\begin{equation}\label{eq:g2-non-mod}
g_2 \left( \frac{a \tau + b}{c \tau +d} \right) = (c\tau+d)^{2} g_2(\tau) - 2 \pi i c (c\tau+d).
\end{equation}
\label{no:eisenstein}
\end{nolabel}

\begin{nolabel} For $k \geq 1$ we introduce the formal Laurent series
\begin{equation}
\wp_k(t) = \frac{1}{t^k} + (-1)^k \sum_{n \geq 1} \binom{2n+1}{k-1} G_{2n + 2}(q) t^{2n + 2 - k},
\label{eq:wp-def}
\end{equation}
as an element of $\mathbb{C}[[q]]((t))$. These series are related to each other by
\begin{equation}
\wp_{k+1} (t)  =  - \frac{1}{k} \frac{d}{dt} \wp_k(t). 
\label{eq:differential-equation}
\end{equation} Putting $q = e^{2 \pi i \tau}$ in $\wp_k(t)$ yields a Laurent series in $t$ which converges uniformly and absolutely in a neighborhood of $t=0$. The sum can be analytically continued to a meromorphic function $\wp_k(t,\tau)$ having poles at $t = m + n \tau$ for $m, n \in \mathbb{Z}$. For $k \geq 2$ this meromorphic function is doubly periodic:
\begin{align*} \wp_k(t + m + n \tau,\tau) = \wp_k(t,\tau), \quad m,n \in \mathbb{Z}, \end{align*}
while for $k=1$ one has
\begin{equation}\label{eq:wp-not-per}
\wp_1(t + m + n \tau, \tau) = \wp_1(t,\tau) + (m +n \tau) g_2(\tau) - 2 \pi i n. 
\end{equation}
It is convenient to introduce the functions
\begin{align*}
\zeta(t, \tau) = \wp_1(t,\tau) - g_2(\tau)t
\end{align*}
which, in light of \eqref{eq:wp-not-per}, satisfies
\begin{equation}\label{eq:zeta-not-per}
\zeta(t+m+n\tau, \tau) = \zeta(t,\tau) - 2\pi i n.
\end{equation}
It will also be convenient to introduce the normalized functions
\[ 
\owp_k(t,\tau) = 
\begin{cases}
\zeta(t,\tau) + \pi i & k = 1 \\ 
\wp_2(t,\tau) + g_2(\tau) & k = 2 \\ 
\wp_k(t,\tau) & k \geq 3.
\end{cases}
\]
They satisfy the same differential equation \eqref{eq:differential-equation}. 
We will often drop $\tau$ from the notation denoting $\wp_k(t)$ instead of
$\wp_k(t,\tau)$ when no confusion should arise. 

We also consider the formal series $P_k(z,q) \in \mathbb{C}[[q]] [[z,z^{-1}]]$:
\begin{equation}
P_k(z,q) = \frac{(2 \pi i)^k}{(k-1)!} \sum_{n = 1}^\infty \left( \frac{n^{k-1}z^n}{1 -q^n} + \frac{(-1)^k n^{k-1} z^{-n} q^n}{1-q^n} \right),
\label{eq:pk-def}
\end{equation}
(where $(1-q^n)^{-1}$ stands for the geometric series $\sum_{j \geq 0} q^{nj}$). These series are related to each other by
\begin{equation} \label{eq:pkdiff}
P_{k+1}(z,q) = \frac{2 \pi i }{k} z \frac{d}{dz} P_k(z,q).
\end{equation}
The series $P_k(z, q)$ converges uniformly and absolutely in every closed subset of the domain $|q| < |z| < 1$, and can be analytically continued to a meromorphic function of $z \in \mathbb{C} \backslash \{0\}$ with poles at $z = q^{n}$ for $n \in \mathbb{Z}$. The series $P_k(z,q)$ should be thought of as the Fourier series of the function $\wp_k(t,\tau)$. More precisely one has
\begin{equation}\label{eq:Pk.wpk.relation}
P_k (e^{2 \pi i t},e^{2 \pi i \tau}) = (-1)^k \owp_k(t, \tau),
\end{equation}
whenever $0 < \Im(t) < \Im(\tau)$. The series $P_k(qz,q)$ converges uniformly and absolutely in every closed subset of the domain $1 < |z| < |q|^{-1}$. For $k \geq 2$ the analytic continuation of the sum coincides with that of $P_k(z,q)$. On the other hand, by (\ref{eq:zeta-not-per}), the analytic continuation of $P_1(qz, q)$ coincides with that of $P_1(z,q)+2\pi i$.

Purely formal analogues of these statements also hold, namely
\begin{equation}
P_1(z,q) - P_1(zq,q) + 2 \pi i = 2 \pi i \sum_{n \in \mathbb{Z}} z^n
\label{eq:delta-log}
\end{equation}
and
\begin{equation}
P_k(z,q) - P_k(zq,q) = \frac{(2 \pi i)^k}{(k-1)!} \sum_{n \in \mathbb{Z}} n^{k-1} z^n \quad \text{for $k \geq 2$},
\label{eq:higher-delta-log}
\end{equation}
as formal power series.
\label{no:ellitptic-functions}
\end{nolabel}
\begin{lem} The series $P_k(z,q)$ satisfy 
\begin{equation}
P_k\left( z^{-1},q \right) = (-1)^k P_k(zq, q).
\label{eq:P-symmetric}
\end{equation}
\label{lem:p-sym}
\end{lem}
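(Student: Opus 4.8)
The plan is to verify \eqref{eq:P-symmetric} by a direct term-by-term comparison inside the defining series \eqref{eq:pk-def}. Both sides lie in $\mathbb{C}[[q]][[z,z^{-1}]]$ once $(1-q^n)^{-1}$ is read as the geometric series $\sum_{j\geq 0}q^{nj}$, and in this setting the substitutions $z \mapsto z^{-1}$ and $z \mapsto zq$ are well defined (each coefficient of a monomial $z^a q^b$ is a finite sum), so the asserted equality makes sense formally and it suffices to match coefficients.

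First I would substitute $z \mapsto z^{-1}$ in \eqref{eq:pk-def}, which merely interchanges the roles of the two summands inside the bracket up to the sign $(-1)^k$, giving
\begin{equation*}
P_k(z^{-1},q) = \frac{(2\pi i)^k}{(k-1)!}\sum_{n\geq 1}\left(\frac{n^{k-1}z^{-n}}{1-q^n} + (-1)^k\,\frac{n^{k-1}z^{n}q^n}{1-q^n}\right).
\end{equation*}
Next I would substitute $z \mapsto zq$ in \eqref{eq:pk-def}; here $(zq)^n = z^n q^n$ while $(zq)^{-n}q^n = z^{-n}q^{-n}q^n = z^{-n}$, so the extra $q$-powers in the second summand cancel and one obtains
\begin{equation*}
P_k(zq,q) = \frac{(2\pi i)^k}{(k-1)!}\sum_{n\geq 1}\left(\frac{n^{k-1}z^{n}q^n}{1-q^n} + (-1)^k\,\frac{n^{k-1}z^{-n}}{1-q^n}\right).
\end{equation*}
Multiplying the second display by $(-1)^k$ reproduces the first display verbatim, which is precisely the claim.

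There is no genuine obstacle here: the only points that deserve a word are the legitimacy of the two formal substitutions (handled by expanding $(1-q^n)^{-1}$ geometrically) and the cancellation $q^{-n}q^n = 1$ in the shifted series. For context one may remark that \eqref{eq:P-symmetric} is the formal shadow of the parity relation $\owp_k(-t,\tau) = (-1)^k\owp_k(t,\tau)$ for the Weierstrass functions together with \eqref{eq:Pk.wpk.relation} and \eqref{eq:zeta-not-per}, since passing from $z = e^{2\pi i t}$ to $z^{-1} = e^{2\pi i(-t)}$ moves $t$ out of the strip $0 < \Im t < \Im\tau$ and returns it by a shift by $\tau$, i.e. to $zq$; but the formal computation above is self-contained and sidesteps these domain considerations entirely.
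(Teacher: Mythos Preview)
Your proof is correct. The paper states this lemma without proof, presumably regarding the direct verification from the defining series \eqref{eq:pk-def} as routine; your term-by-term computation is exactly the natural way to supply that omitted argument, and your remark connecting the identity to the parity of $\owp_k$ via \eqref{eq:Pk.wpk.relation} is a nice piece of context.
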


\begin{nolabel} As in \ref{no:f-product}, for $a,b \in V$ we have the following elements of $V[ G_4(q),G_6(q)] \subset V[[q]]$:
\begin{align*} a_{[\wp_k]} b = a_{[k]}b + (-1)^k \sum_{n \geq 1} \binom{2n+1}{k-1} G_{2n+2}(q) a_{[2n + 2 -k]}b.  \end{align*}
\label{no:wpk-propduct}
\end{nolabel}

\begin{nolabel} We will consider the graded ring of modular forms $\bM_*$ which is generated by $G_{4}(q)$ and $G_6(q)$. We will also need the ring of quasi-modular forms $\bQM_*$ generated by $G_{2}(q)$, $G_{4}(q)$ and $G_6(q)$. Both of these rings are Noetherian, indeed they are polynomial rings. We may consider these rings as subrings of $\mathbb C[[q]]$. 
\label{no:modular-foms}
\end{nolabel}
\begin{nolabel}  For each $n \geq 2$ we consider the following graded rings $\bJ^n_* = \oplus_{k \geq 0} \bJ^n_k$ of weak Jacobi modular  forms of index $0$. By definition $\bJ^n_k$ consists of functions $f(t_1,\cdots,t_n,\tau)$ satisfying  
\begin{equation}
\begin{aligned}
 f(t_1,\cdots, t_i + m + l \tau,\cdots,t_n, \tau) &= f(t_1, \cdots, t_n, \tau), & 1 \leq i \leq n, \qquad m,l \in \mathbb{Z},\\
 f\left( \frac{t_1}{c \tau + d}, \cdots, \frac{t_n}{c \tau + d}, \frac{a \tau + b}{c \tau + d} \right) &= (c \tau + d)^k f\left( t_1,\cdots,t_n,\tau \right), & ad -cb =1, \: a,b,c,d \in \mathbb{Z}. 
\end{aligned}
\end{equation}
The functions $f(t_1,\cdots,t_n,\tau)$ are meromorphic in $\mathbb{C}^n \times \mathbb{H}$ with possible poles at $t_i - t_j = m + l \tau$, $i \neq j$, $m, l\in \mathbb{Z}$. 

For every pair of non negative integers $n \geq m$ we have an inclusion $\bJ^m_* \rightarrow \bJ^n_*$ given by $f \mapsto \tilde{f}$ where $\tilde{f}(t_1,\ldots, t_n; \tau) = f(t_{n-m+1},\ldots, t_n; \tau)$. In this way $\bJ^n_*$ is an algebra over $\bJ^m_*$, and in particular all $\bJ^n_*$ are algebras over $\bM_*$.


Notice that $\bJ^0_* = \bJ^1_* = \bM_*$. The description of $\bJ^2_*$ is as follows: the function $\wp_k(t_1-t_2,\tau)$ is a weak Jacobi form of index $0$ and weight $k$ for $k \geq 2$, and $\bJ^2_*$ consists of linear combinations of $1, \wp_{k}(t_1-t_2,\tau)$, $k \geq 2$ with coefficients in $\bM_*$. It follows from \cite[Prop. 2.8]{libgober2009elliptic} that the ring  $\bJ^2_*$ is isomorphic to the polynomial ring $\mathbb{M}[ \wp_2, \wp_3, \wp_4]$.

Since every function $f(t_1,t_2,\tau)$ in $\bJ^2_*$ can be written in the form $f(t_1-t_2,\tau)$ for some $f$, we shall often abuse notation and write $f(t,\tau) \in \bJ^2_*$, it being implicit that $f(t_1,t_2,\tau) = f(t_1-t_2,\tau)$.
\label{no:jacobi-forms}
\end{nolabel}
\begin{nolabel}For $n \geq 3$ an inductive description of $\bJ^n_*$ can be given. Any $f(t_1,\dots,t_n,\tau) \in \bJ^n_*$ can be written as a linear combination of the form \cite{eh2018} 
\begin{align*}
f(t_1,\dots,t_n,\tau) &= g(t_2,\dots,t_n,\tau)  \\ &\quad +\sum_{i=3}^n g_i(t_2,\dots,t_n,\tau) \left( \wp_1(t_{i-1} - t_1,\tau) - \wp_1(t_{i} - t_1,\tau) + \wp_1 (t_{i} - t_{i-1},\tau) \right) \\  &\quad + \sum_{i=2}^n \sum_{m \geq 2} \wp_{m} (t_i - t_1,\tau) g_{im} (t_2,\dots,t_n,\tau),
\end{align*}
where the functions $g(t_2,\dots,t_n,\tau)$, $g_i(t_2,\dots,t_n,\tau)$ for $i = 3,\dots,n$, and $g_{im} (t_2,\dots,t_n,\tau)$ for $i =1,\dots,n$ and $m \geq 2$, are elements of $\bJ^{n-1}_*$. Inductively one obtains an expression for $f(t_1,\dots,t_n,\tau) \in \bJ^n_*$ as a linear combination of products of $\wp_m(t_i - t_j, \tau)$ for $1 \leq j < i \leq n$, and the functions 
\begin{align*} \zeta\left( t_i,t_{j-1},t_j,\tau \right) = \wp_1(t_{j-1} - t_i, \tau) - \wp_1\left( t_{j} - t_i,\tau \right) + \wp_1 \left( t_j - t_{j-1},\tau \right), \qquad 1 \leq i < j-1 \leq n.\end{align*}
These are weak Jacobi forms in three variables of index $0$ and weight $1$. The function $\wp_1(t,\tau)$ is not a weak Jacobi modular form, as we have seen in \ref{no:ellitptic-functions} it is not periodic, however the combination $\zeta\left( t_1,t_{2},t_3,\tau \right)$ is a Jacobi form.
\label{no:general-function}
\end{nolabel}
\begin{nolabel}
For each $n$ we shall denote by $\cF_n$ the ring of meromorphic functions $f(t_1,\dots,t_n,\tau)$ on $\mathbb{C}^n \times \mathbb{H}$ such that
\begin{align*}
f(t_1,\dots,t_i + m + l \tau, \dots,t_n,\tau) = f(t_1,\dots,t_n,\tau), \quad m,l \in \mathbb{Z},
\end{align*}
and $f$ is allowed to have poles at $t_i - t_j = m + l \tau$, $1 \leq i \neq j \leq n$, $m,l \in \mathbb{Z}$. We have $\bJ^n_* \subset \cF_n$. Notice also that a similar recursive description as in \ref{no:general-function} is valid for $\cF_n$. The difference being in the first step of the induction as $\cF_0 = \cF_1$ consists of holomorphic functions of $\tau$ as opposed to modular forms. In particular, functions in $f(t_0,t_1,\tau) \in \cF_2$ are in fact functions of one variable $f = f(t_0-t_1, \tau)$. More generally we have
\label{no:meromorphic-functions}
\end{nolabel}
\begin{lem} Let $f(t_0,\dots,t_n, \tau) \in \cF_{n+1}$, then 
\[ \sum_{i = 0}^n \frac{\partial}{\partial t_i} f(t_0,\dots,t_n,\tau) = 0. \]
\label{lem:4.sum-deriv}
\end{lem}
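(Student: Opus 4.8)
The plan is to reduce the statement to the single-variable translation-invariance of elliptic functions. The key observation is that, since $f \in \cF_{n+1}$, it is invariant under the diagonal translation $t_i \mapsto t_i + s$ for a common parameter $s$. Indeed, each $f(t_0,\dots,t_n,\tau)$ is periodic in every $t_i$ separately under $t_i \mapsto t_i + m + l\tau$, but that is not quite what we need; what we actually use is that the poles of $f$ occur only along the diagonals $t_i - t_j \in \Z + \tau\Z$, and hence $f$, regarded as a function of the $(n+1)$ variables, genuinely depends only on the $n$ differences $u_i = t_i - t_0$ for $i = 1,\dots,n$ (together with $\tau$). This is the content of the recursive description in \ref{no:general-function}--\ref{no:meromorphic-functions}: every generator $\wp_m(t_i - t_j,\tau)$ and every $\zeta(t_i,t_{j-1},t_j,\tau)$ is a function of differences alone, so the same is true of any $\C$-linear combination of their products, hence of all of $\cF_{n+1}$.

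Granting this, write $f(t_0,t_1,\dots,t_n,\tau) = g(u_1,\dots,u_n,\tau)$ with $u_i = t_i - t_0$. By the chain rule, for $i \geq 1$ we have $\tfrac{\partial}{\partial t_i} f = \tfrac{\partial g}{\partial u_i}$, while $\tfrac{\partial}{\partial t_0} f = -\sum_{i=1}^n \tfrac{\partial g}{\partial u_i}$, since $\tfrac{\partial u_i}{\partial t_0} = -1$ for every $i$. Summing over $i = 0,\dots,n$ gives
\[
\sum_{i=0}^n \frac{\partial}{\partial t_i} f = -\sum_{i=1}^n \frac{\partial g}{\partial u_i} + \sum_{i=1}^n \frac{\partial g}{\partial u_i} = 0,
\]
which is the claim. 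Equivalently and more conceptually: $\sum_{i=0}^n \partial_{t_i}$ is precisely the infinitesimal generator of the diagonal translation action $t_i \mapsto t_i + s$, under which $f$ is invariant by the previous paragraph, so its derivative along this direction vanishes.

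The only point requiring care — the "main obstacle", though it is really a bookkeeping matter rather than a genuine difficulty — is justifying that membership in $\cF_{n+1}$ forces dependence on differences only. One clean way to see it without invoking the explicit generators: fix $\tau$ and consider $h(s) = f(t_0 + s, t_1 + s,\dots,t_n + s,\tau)$ as a function of $s \in \C$ for generic fixed $t_i$. The diagonal shift leaves each difference $t_i - t_j$ unchanged, so $h$ extends to a meromorphic function on all of $\C$ with no poles at all (the polar divisor of $f$ is diagonal, hence untouched), and it is periodic under $s \mapsto s + 1$ and $s \mapsto s + \tau$ because $f$ is periodic in $t_0$ alone under those shifts while the simultaneous shift of the remaining variables only moves the evaluation point within a region where $f$ is already single-valued and $(\Z+\tau\Z)^{n}$-periodic. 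A bounded entire elliptic function is constant, so $h$ is independent of $s$; differentiating $h(s)$ at $s = 0$ yields exactly $\sum_{i=0}^n \partial_{t_i} f = 0$. I would present the argument in this second form, as it is self-contained and does not require unwinding the inductive basis of $\cF_n$.
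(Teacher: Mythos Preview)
Your argument is correct. The paper does not supply a proof of this lemma: it is stated immediately after the observation that functions in $\cF_2$ depend only on the single difference $t_0 - t_1$, with the phrase ``More generally we have'' indicating that the lemma is the direct $(n+1)$-variable analogue and is left to the reader. Your write-up fills in exactly the details the paper omits, and either of your two arguments (explicit change of variables to the differences $u_i = t_i - t_0$, or the Liouville-type argument showing $h(s) = f(t_0+s,\dots,t_n+s,\tau)$ is an entire elliptic function of $s$, hence constant) is a valid and standard justification. The second form is indeed cleaner since it avoids appealing to the recursive generators of $\cF_{n+1}$.
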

\begin{rem} Notice that in the definition of $\zeta(t_1,t_2,t_3,\tau)$ we may
replace  $\wp_1(t,\tau)$ by $\owp_1(t,\tau)$ without effect. It follows that the Fourier series expansion of $\zeta(t_1,t_2,t_3,\tau)$ in the domain $|qz_1|<|z_2|<|z_3| < |z_1|$ is given by
\begin{align*} - P_{1} \left( \frac{z_2}{z_1},q \right) + P_1 \left( \frac{z_3}{z_1},q \right) - P_1\left( \frac{z_3}{z_2},q \right) - \pi i. \end{align*}
\end{rem}
The following is standard
\begin{lem} Let $f(t_1,\dots,t_n,\tau) \in \bJ^n_p$, then for each $1 \leq i < j \leq n$ the Laurent series expansion 
\begin{align*} f (t_1,\dots,t_n, \tau) = \sum_{k \geq N} f_k(t_1,\dots,\hat{t}_i,\dots,t_n, \tau) (t_i - t_j)^k, \end{align*}
where the variable $\hat{t}_i$ is missing in
the coefficient $f_k \in \bJ^{n-1}_{p+k}$. These coefficients are given as an integral
\begin{align*} f_k (t_1,\dots, \hat{t_i}, \dots,t_n,\tau) = \oint_{C_{j}} dt_i \frac{f(t_1,\dots,t_n,\tau)}{(t_i -t_j)^{k+1}}, \end{align*}
where the contour $C_j$ is a small circle in the $t_i$-plane around $t_j$ and not containing any other point $t_i = t_l$ for $l \neq j$. 
\label{lem:Laurent-exp}
\end{lem}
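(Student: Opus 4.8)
Lemma \ref{lem:Laurent-exp}: for $f(t_1,\dots,t_n,\tau) \in \bJ^n_p$ and $1 \le i < j \le n$, the Laurent expansion $f = \sum_{k \ge N} f_k(t_1,\dots,\hat t_i,\dots,t_n,\tau)\,(t_i-t_j)^k$ has coefficients $f_k \in \bJ^{n-1}_{p+k}$ given by the contour integral formula.

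**Plan.**

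The plan is to treat the three assertions in turn: (i) the Laurent expansion exists with the claimed pole order and dependence of coefficients on the variables, (ii) the contour-integral formula holds, and (iii) each coefficient $f_k$ lies in $\bJ^{n-1}_{p+k}$, i.e. it is periodic in each remaining variable and transforms with the correct modular weight.

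First I would fix $t_1,\dots,\hat t_i,\dots,t_n$ and $\tau$ in a region where the only pole of $f$ near $t_i=t_j$ is at $t_i=t_j$ itself (this is possible because the poles of $f$ are supported on the diagonals $t_a-t_b\in\Z+\tau\Z$, so a small $t_i$-disc around $t_j$ avoids all the other diagonal loci). On a punctured disc $0<|t_i-t_j|<\epsilon$ the function $t_i\mapsto f(t_1,\dots,t_n,\tau)$ is holomorphic, hence has a Laurent expansion $\sum_{k\in\Z} f_k\,(t_i-t_j)^k$ with the coefficients given by the usual Cauchy formula $f_k=\oint_{C_j} dt_i\, f/(t_i-t_j)^{k+1}$; the pole order being bounded below (the $N$ in the statement) is part of the definition of $\bJ^n_*$, whose elements are meromorphic with poles only along diagonals, i.e. of finite order. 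That the $f_k$ are themselves meromorphic in $(t_1,\dots,\hat t_i,\dots,t_n,\tau)$ follows from the integral representation and differentiation under the integral sign, and their poles are confined to the remaining diagonals $t_a-t_b\in\Z+\tau\Z$ with $a,b\ne i$ (when $t_i$ is integrated over a fixed small circle these are the only singular loci the integrand meets).

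The main content is step (iii): showing $f_k\in\bJ^{n-1}_{p+k}$. For periodicity, replacing $t_\ell\mapsto t_\ell+m+l\tau$ for some $\ell\ne i$ in the integral formula: if $\ell\ne j$ this leaves the contour $C_j$ and the factor $(t_i-t_j)^{-k-1}$ untouched and uses only periodicity of $f$ in $t_\ell$; if $\ell=j$ one also shifts $t_i$ by the same lattice vector (a change of contour, still a small circle around $t_j$, allowed by periodicity of $f$ in the $(i,j)$-difference up to a lattice shift — here one uses that $f\in\bJ^n_*$ is genuinely periodic in $t_i$, so $\wp_1$-type anomalies do not enter), and the factor $(t_i-t_j)^{-k-1}$ is unchanged. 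For the modular weight, one substitutes $t_a\mapsto t_a/(c\tau+d)$, $\tau\mapsto(a\tau+b)/(c\tau+d)$ in the integral; the change of variable $t_i=s_i/(c\tau+d)$ in the contour integral produces a Jacobian factor $(c\tau+d)^{-1}$ and the factor $(t_i-t_j)^{-k-1}$ contributes $(c\tau+d)^{k+1}$, while $f$ contributes $(c\tau+d)^p$ by hypothesis; multiplying, $f_k$ picks up $(c\tau+d)^{p+k}$, which is exactly weight $p+k$. One should also check the differential equation \eqref{eq:differential-equation}-compatibility is not needed here — only the membership in the graded Jacobi ring.

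**Expected obstacle.** The only subtle point is the interchange of ``the contour circles $t_j$'' with the modular substitution in step (iii): after $t_i\mapsto t_i/(c\tau+d)$ and $\tau\mapsto(a\tau+b)/(c\tau+d)$ the poles of $f$ move, so one must argue that the deformed contour can be pulled back to a small circle around the new $t_j$ without crossing any pole. This is where one uses that the diagonal pole loci $t_a-t_b\in\Z+\tau\Z$ are permuted compatibly under the simultaneous scaling of all $t$'s and the action on $\tau$ — i.e. the $\SL_2(\Z)$-equivariance of the whole configuration — so no pole is crossed and the contour integral transforms as claimed. Everything else (existence of the Laurent expansion, Cauchy's formula for the coefficients, meromorphy of the $f_k$) is standard one-variable complex analysis applied with the other variables as holomorphic parameters.
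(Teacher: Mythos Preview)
Your proposal is correct and complete. The paper itself does not prove this lemma: it is introduced with the phrase ``The following is standard'' and no argument is given. Your sketch --- Cauchy's integral formula for the Laurent coefficients, followed by direct verification of periodicity (using a simultaneous shift of $t_i$ and $t_j$ when $\ell=j$) and of the modular weight via the Jacobian computation $(c\tau+d)^{-1}\cdot(c\tau+d)^{k+1}\cdot(c\tau+d)^{p}=(c\tau+d)^{p+k}$ --- is exactly the standard argument the paper is alluding to.
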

\begin{nolabel} Being periodic, any element $f(t_1,\cdots,t_n,\tau) \in \cF^n$ admits a Fourier series. That is a formal power series
\begin{equation}
 F(z_1,\cdots,z_n,q) =  \sum_{m_1,\ldots,m_n\in \mathbb{Z}} F_{m_1,\ldots,m_n}(q) z_1^{m_1}\cdots z_n^{m_n}, 
\label{eq:fourier-n-variable}
\end{equation}
where each coefficient $F_{m_1,\ldots,m_n}(q) \in \mathbb{C}[ [q]]$. This series converges uniformly and absolutely in any closed subset of the domain $|qz_1| < |z_n| < \cdots < |z_1|$ and it can be analytically continued to a meromorphic function with poles at $z_i = z_j q^m$, $i \neq j$, $m \in \mathbb{Z}$. In its domain of convergence we have
\begin{align*} F\left( e^{2 \pi i t_1},\cdots,e^{2 \pi i t_n},e^{2 \pi i \tau} \right) = f(t_1,\cdots,t_n,\tau).\end{align*}
Notice that for each $1 \leq i \leq n$ the series $F(z_1,\cdots,  z_{i-1} ,q z_{i}, \cdots, q z_n,q)$ converges absolutely and uniformly in any closed subset of the domain 
\[ |q z_i| < |z_{i-1}| < \dots < |z_1| < |z_n| < \dots < |z_i|, \]
and can be analytically extended to the same meromorphic function as $F(z_1,\cdots,z_n,q)$. That is, we have ``cyclically'' shifted the order of the modulus of the variables in the domain of expansion. In the same vein, we will use the fact that 
\begin{equation}
 F(z_1,\dots,z_n, q) = F(q z_1,\dots, q z_n, q),
\label{eq:domain-change}
\end{equation}
which follows since both are Fourier expansions of the same elliptic function on the same domain. 
The Fourier expansions of the functions in $\bJ^2_*$ have been described in \ref{no:ellitptic-functions}. 
\label{no:fourier-n-variable}
\end{nolabel}

\begin{nolabel}
For $f(t,\tau) \in \bJ^2_k$ the function
\begin{equation} \label{eq:q-derivatives}
\left( (2 \pi i) \frac{d}{d\tau} - \zeta(t,\tau) \frac{d}{dt} \right) f(t,\tau),
\end{equation} 
is periodic in $t$ with periods $1$, $\tau$. It is not, however, a Jacobi form. Indeed if we write
\begin{align*}
D = (2 \pi i) \tfrac{d}{d\tau} - \zeta(t,\tau) \frac{d}{dt},
\end{align*}
then it follows from \cite[2.15]{libgober2009elliptic} that
\begin{equation}\label{eq:D-non-mod}
\bigl(D f\bigr) \left( \frac{t}{c\tau + d},\frac{a\tau+b}{c\tau+d} \right) = (c\tau + d)^{k+2} \bigl(D f\bigr)(t,\tau) + 2 \pi i k c(c\tau + d)^{k+1} f(t,\tau).
\end{equation}
Rather $Df \in \bJ^2_* \otimes_{\mring}\qmring$. 
If $F(z,q)$ is the Fourier series of $f(t,\tau)$ in the domain $|q|<|z| < 1$  
\begin{equation}
 \left[ (2 \pi i)^2 q \frac{d}{dq} + \left(P_1\left(z,q \right) + \pi i
\right)(2 \pi i) z \frac{d}{dz}  \right] F(z,q),
\label{eq:q-der-fourier}
\end{equation}
converges absolutely in the domain $|q| < |z| < 1 $ and is the Fourier series of $Df$.
\label{no:p1-mult-der}
\end{nolabel}
\begin{lem}
Let $f  = f(t_0,\dots,t_n, \tau) \in \cF_{n+1}$. Define $D f$ by 
\begin{equation} \label{eq:di-def}
Df(t_0,\ldots,t_n,\tau) = 2 \pi i \frac{\partial f}{\partial \tau }(t_0,\dots,t_n,\tau) + \sum_{i = 0}^{n-1}  \zeta(t_n - t_i) \frac{\partial f}{ \partial t_i}(t_0,\dots,t_n,\tau).
\end{equation}
Then $Df \in \cF_{n+1}$. 
\label{lem:deriv-sum}
\end{lem}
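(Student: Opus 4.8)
The plan is to check that $Df$ satisfies the two defining properties of $\cF_{n+1}$: that it is meromorphic on $\mathbb{C}^{n+1}\times\mathbb{H}$ with poles only along the loci $t_a-t_b\in\mathbb{Z}+\tau\mathbb{Z}$, and that it is invariant under $t_j\mapsto t_j+1$ and under $t_j\mapsto t_j+\tau$ for every $j$. Meromorphy and the pole statement are immediate: $\partial f/\partial\tau$ and $\partial f/\partial t_i$ are meromorphic with pole divisor contained in that of $f$, hence supported on the allowed diagonals, while each $\zeta(t_n-t_i,\tau)$ contributes at most a simple pole along $t_n-t_i\in\mathbb{Z}+\tau\mathbb{Z}$, again an allowed locus. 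Invariance under $t_j\mapsto t_j+1$ is clear since $f$, its partial derivatives, and $\zeta(\cdot,\tau)$ are all $1$-periodic. So the entire content lies in the invariance of $Df$ under $t_j\mapsto t_j+\tau$, which I would establish by direct substitution, treating the cases $0\le j\le n-1$ and $j=n$ separately.

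The one computation requiring care is the behaviour of $\partial f/\partial\tau$ under the shift $t_j\mapsto t_j+\tau$. Differentiating the periodicity relation $f(t_0,\dots,t_j+\tau,\dots,t_n,\tau)=f(t_0,\dots,t_n,\tau)$ with respect to $\tau$ — the chain rule producing an extra $t_j$-derivative on the left because $t_j+\tau$ depends on $\tau$ — yields
\[
\left(\tfrac{\partial f}{\partial\tau}\right)(t_0,\dots,t_j+\tau,\dots,t_n,\tau) = \left(\tfrac{\partial f}{\partial\tau}\right)(t_0,\dots,t_n,\tau) - \left(\tfrac{\partial f}{\partial t_j}\right)(t_0,\dots,t_n,\tau),
\]
while differentiating the same relation in $t_i$ gives $(\partial f/\partial t_i)(\dots,t_j+\tau,\dots)=(\partial f/\partial t_i)(\dots,t_n,\dots)$ for every $i$. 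Together with the quasi-periodicity $\zeta(t+m+l\tau,\tau)=\zeta(t,\tau)-2\pi i l$ of \eqref{eq:zeta-not-per}, these are all the ingredients.

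Now substitute. For $0\le j\le n-1$, shifting $t_j\mapsto t_j+\tau$ changes the $\tau$-derivative term by $-2\pi i\,\partial f/\partial t_j$, and changes $\zeta(t_n-t_j)$ in the $j$-th summand to $\zeta(t_n-t_j)+2\pi i$, contributing a compensating $+2\pi i\,\partial f/\partial t_j$; all other summands are unaffected, so $Df$ is invariant. For $j=n$, every $\zeta(t_n-t_i)$ becomes $\zeta(t_n+\tau-t_i,\tau)=\zeta(t_n-t_i,\tau)-2\pi i$ and the $\tau$-derivative term picks up $-2\pi i\,\partial f/\partial t_n$, so $Df$ changes by $-2\pi i\sum_{i=0}^{n}\partial f/\partial t_i$, which vanishes by Lemma \ref{lem:4.sum-deriv}. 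Hence $Df$ is doubly periodic in each $t_i$, and being meromorphic with poles only along the admissible diagonals, $Df\in\cF_{n+1}$. The only genuine obstacle is getting the chain-rule identity for $\partial f/\partial\tau$ under the shift right and then keeping track of the signs in the $j=n$ case; once those are pinned down the proof is a few lines of algebra.
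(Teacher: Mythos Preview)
Your proof is correct and follows essentially the same approach as the paper: both compute the defect of $\partial f/\partial\tau$ under the shift $t_j\mapsto t_j+\tau$ via the chain rule, check that it is cancelled by the shift in the $\zeta$-terms (treating $0\le j\le n-1$ and $j=n$ separately, the latter invoking Lemma~\ref{lem:4.sum-deriv}), and note that $1$-periodicity is immediate. Your presentation is slightly more streamlined in that you evaluate the derivative $\partial f/\partial t_j$ at the unshifted point (using its own periodicity), whereas the paper keeps the shifted argument; this is a cosmetic difference only.
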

\begin{proof}
Fix $0 \leq j \leq n$, we take the total derivative with respect to $\tau$ of
\begin{align*}
f(t_0,\dots, t_j+\tau,\ldots,t_n,\tau) - f(t_0,\dots, t_j, \dots,t_n,\tau) = 0,
\end{align*}
obtaining
\begin{align*}
\frac{\partial f}{\partial \tau}(t_0,\dots, t_j+\tau,\ldots,t_n,\tau) -  \frac{\partial f}{\partial \tau}(t_0,\dots, t_i,\ldots,t_n,\tau) = -\frac{\partial f}{\partial t_j}(t_0,\dots, t_j+\tau,\ldots,t_n,\tau).
\end{align*}
Recall that $\zeta(t+\tau, \tau) = \zeta(t, \tau)-2\pi i$, by \eqref{eq:zeta-not-per}. So if we put
\begin{align*}
\widehat{f}(t_0,\ldots,t_n,\tau) = \sum_{i=0}^{n-1}
\zeta(t_n-t_i,\tau)\frac{\partial f}{\partial t_i}(t_0,\ldots,t_n,\tau),
\end{align*}
then for $0 \leq j \leq n-1$ we have
\begin{align*}
\widehat{f}(t_0,\dots, t_j+\tau,\ldots,t_n,\tau) &- \widehat{f}(t_0,\dots, t_j,\ldots,t_n,\tau) \\
&= 2\pi i\frac{\partial f}{\partial t_j}(t_0,\dots, t_j+\tau,\ldots,t_n,\tau).
\end{align*}
In the case $j = n$ we have 
\[
\begin{multlined}
 \widehat{f}(t_0,\dots,t_n + \tau, \tau) = \sum_{i = 0}^{n-1} \left( \zeta(t_n - t_i) - 2 \pi i \right) \frac{\partial f}{\partial t_i} (t_0,\dots,t_n,\tau) \\ =  \widehat{f}(t_0,\dots,t_n, \tau) -  2 \pi i \sum_{i=0}^{n-1} \frac{\partial f}{\partial t_i} (t_0,\dots,t_n, \tau)  = \widehat{f}(t_0,\dots,t_n, \tau) +  2 \pi i  \frac{\partial f}{\partial t_n} (t_0,\dots,t_n, \tau),
\end{multlined}
\]
where we used Lemma \ref{lem:4.sum-deriv} in the last equation. It follows that
\begin{align*}
\left[  2\pi i \frac{\partial}{\partial \tau} + \sum_{i=0}^{n-1}
\zeta(t_n-t_i,\tau)\frac{\partial}{\partial t_i} \right] f(t_0,\dots,
t_i,\ldots,t_n,\tau),
\end{align*}
is $\tau$-periodic. 
It is obviously also $1$-periodic. Hence $Df \in \cF_{n+1}$ as desired.
\end{proof}

\begin{cor} Let $f \in \cF_n$ and let $F(z_1,\dots,z_n,q)$ be its Fourier series in the domain $|q z_1| < |z_n| < \dots <|z_2| < |z_1|$. Then the series
\begin{align} 
\left( (2 \pi i)^2 q \frac{d}{dq} - \sum_{i=1}^{n-1} \left( P_1\left(
\frac{z_n}{z_i},q \right) + \pi i \right) 2 \pi i z_i \frac{d}{d z_i} \right)
F(z_1,\dots,z_n,q) ,
\label{eq:4.14b}
\end{align}
converges absolutely in the same domain, and is the Fourier series of $Df \in \cF_n$. 
\label{cor:deriv-sum}
\end{cor}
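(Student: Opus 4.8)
The plan is to derive \eqref{eq:4.14b} by Fourier-transforming, term by term, the formula for $Df$ supplied by Lemma \ref{lem:deriv-sum}. Applying that lemma with $n$ replaced by $n-1$ and then relabelling the variables $t_0,\dots,t_{n-1}$ as $t_1,\dots,t_n$, we have for $f \in \cF_n$
\[ Df = 2\pi i\, \frac{\partial f}{\partial \tau} + \sum_{i=1}^{n-1} \zeta(t_n - t_i,\tau)\, \frac{\partial f}{\partial t_i} \in \cF_n. \]
The first ingredient is the elementary dictionary between differential operators: writing $z_i = e^{2\pi i t_i}$ and $q = e^{2\pi i \tau}$, the chain rule turns $\partial/\partial t_i$ into $2\pi i\, z_i\, d/dz_i$ and $\partial/\partial \tau$ into $2\pi i\, q\, d/dq$ on Fourier series. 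Since $F$ is a Laurent series converging absolutely on the open domain $|qz_1| < |z_n| < \dots < |z_1|$, hence holomorphic there, these operators act term by term and the differentiated series again converges absolutely on the same domain; in particular the summand $2\pi i\, \partial f/\partial \tau$ contributes $(2\pi i)^2 q\, dF/dq$.

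Next I would compute the Fourier expansion of the coefficient $\zeta(t_n - t_i,\tau)$. By \eqref{eq:Pk.wpk.relation} with $k = 1$ together with the identity $\owp_1 = \zeta + \pi i$, one has $\zeta(t,\tau) = -\bigl(P_1(e^{2\pi i t},q) + \pi i\bigr)$ on the strip $0 < \Im t < \Im\tau$, i.e.\ the Fourier expansion valid for $|q| < |e^{2\pi i t}| < 1$, where $P_1(z,q)$ is the series \eqref{eq:pk-def}. Substituting $t = t_n - t_i$, the domain of $F$ is contained in $|qz_i| < |z_n| < |z_i|$ (indeed $|qz_i| \le |qz_1| < |z_n| < |z_i|$ for $1 \le i \le n-1$), so $-\bigl(P_1(z_n/z_i,q) + \pi i\bigr)$ converges absolutely on the domain of $F$. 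Although the individual products $\zeta(t_n - t_i,\tau)\,\partial f/\partial t_i$ are not elliptic --- Lemma \ref{lem:deriv-sum} is precisely the statement that their sum with the $\tau$-term is --- each is a $1$-periodic meromorphic function with a well-defined Fourier expansion on that annulus, namely the product of the two absolutely convergent Fourier expansions $-\bigl(P_1(z_n/z_i,q) + \pi i\bigr)$ and $2\pi i\, z_i\, dF/dz_i$. Summing over $i$ and adding $(2\pi i)^2 q\, dF/dq$ assembles exactly the operator of \eqref{eq:4.14b} applied to $F$.

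Finally, the convergence assertion is immediate: by Lemma \ref{lem:deriv-sum}, $Df \in \cF_n$, so its Fourier series converges absolutely on $|qz_1| < |z_n| < \dots < |z_1|$ by \ref{no:fourier-n-variable}; and the expression \eqref{eq:4.14b} is that Fourier series. I do not expect a real obstacle here --- the case $n = 2$ is already \ref{no:p1-mult-der} --- the only genuine point of care being the domain bookkeeping above, i.e.\ verifying that each of the three ingredients ($q\, dF/dq$, the Fourier expansion of $\zeta(t_n - t_i,\tau)$, and $z_i\, dF/dz_i$) converges absolutely on $|qz_1| < |z_n| < \dots < |z_1|$, so that the term-by-term products and differentiations are legitimate.
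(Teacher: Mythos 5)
Your proof is correct and is essentially the argument the paper intends: the corollary is deduced from Lemma \ref{lem:deriv-sum} by translating $\partial/\partial t_i \mapsto 2\pi i\, z_i\, d/dz_i$ and $2\pi i\,\partial/\partial\tau \mapsto (2\pi i)^2 q\, d/dq$ and replacing $\zeta(t_n-t_i,\tau)$ by its Fourier expansion $-\bigl(P_1(z_n/z_i,q)+\pi i\bigr)$, exactly as in the two-variable case \eqref{eq:q-der-fourier} of \ref{no:p1-mult-der}. Your domain bookkeeping (checking $|q| < |z_n/z_i| < 1$ for $1 \le i \le n-1$) and the sign check via \eqref{eq:Pk.wpk.relation} are the only points requiring care, and you handle both correctly.
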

\begin{nolabel}Lemma \ref{lem:deriv-sum} can be interpreted in the following
way. 
Consider the semidirect product $\SL(2, \mathbb{Z}) \ltimes
\mathbb{Z}^{2n}$, where $\mathbb{Z}^{2n}$ consists of
$n$-copies of the standard $\SL(2, \mathbb{Z})$ representation. 
 The coarse moduli space of $n$-marked elliptic curves is the quotient of
$\mathbb{H} \times \mathbb{C}^n $ by the action of this group
given as follows. 
\[ 
\begin{aligned}
(k_1,l_1,\dots,k_n,l_n) \cdot (\tau, t_1,\dots,t_n) &= \left(\tau, t_1 + k_1 + l_1
\tau, \dots, t_n + k_n + l_n \tau \right), \\ 
\begin{pmatrix}
a & b\\  
c & d
\end{pmatrix} \cdot  \left( \tau, t_1,\dots,t_n \right) &= \left(
\frac{a \tau + b}{c \tau + d}, \frac{t_1}{c \tau + d}, \dots,
\frac{t_n}{c \tau + d}
\right). 
\end{aligned}
 \]
The vector fields $\tfrac{\partial}{\partial t_i}$, $i = 1,\dots,n-1$ are well
defined in this quotient, but the vector field $\tfrac{\partial}{\partial \tau}$
is not. Instead, the vector field 
\begin{equation} 
 D_\tau = 2 \pi i \frac{\partial}{\partial \tau} + \sum_{i = 1}^{n-1} \zeta(t_n -
t_i, \tau) \frac{\partial}{\partial t_i},
\label{eq:vector-field}
\end{equation}
is well defined in the open locus where the points are distinct $t_i - t_j
\not\in \mathbb{Z} + \tau \mathbb{Z}$. Notice that the non-trivial
commutators among these vector fields are 
\begin{equation}
\left[ D_\tau, \frac{\partial}{\partial t_i} \right] = - \owp_2(t_n - t_i)
 \frac{\partial}{\partial t_i}, \qquad i = 1,\dots,n-1. 
\label{eq:commutators-D}
\end{equation}
\label{no:vector-field}
\end{nolabel}
\section{First chiral homology groups}\label{sec:first.chiral}
In this section we define complexes computing chiral homology and their
duals in genus $1$. In \ref{no:complex-definition}--\ref{no:complex-def} we define the complex computing chiral homology of the universal elliptic curve. In  \ref{no:supports}--\ref{defn:chiral-homology-with-supports} we define the
complex that computes chiral homology on the universal elliptic curve with $n$
marked points. In \ref{no:moving-points} we study the connection obtained by moving the marked points. In \ref{no:connectionsa}--\ref{rem:extend-scalars} we describe the connection obtained as the base elliptic curve moves in the moduli space. We study the dual spaces of chiral homology in \ref{defn:definition-cohomologya} and their flat sections with respect to variation of the marked points in \ref{defn:cohomology-def-2b}. Finally we define flat sections of the dual of chiral homology in \ref{defn:chiral-cohomology-flat-sections}. 
\begin{nolabel}
Let $V$ be a vertex algebra. We define a complex
\begin{align*}
V^{\otimes 3} \otimes \bJ_*^3 \xrightarrow{d_2} V^{\otimes 2} \otimes \bJ_*^2
\xrightarrow{d_1} V \otimes \bJ_*^1,
\end{align*}
as follows. For $f(t_1, t_2, \tau) = f(t_1 - t_2,\tau) \in \bJ_*^2$ we put
\begin{equation}
d_1 \left( a \otimes b \otimes f(t_1, t_2) \right) = a_{(f)}b,
\label{eq:d1-def}
\end{equation}
where the $(f)$-product is as defined in \ref{no:f-product}. 
In this expression, and in the sequel, we omit explicit reference to $\tau$ and we abide by the conventions of Section \ref{no:expansions} and Example \ref{ex:expansion12}. 

Now we let $f(t_1,t_2,t_3) \in \bJ^{3}_*$ and we consider the Laurent series expansions
\begin{equation}
f(t_1,t_2,t_3)
= \sum_{k} f_{12k}(t_2, t_3) (t_1 - t_2)^k 
= \sum_{k} f_{13k}(t_2, t_3) (t_1 - t_3)^k
= \sum_{k} f_{23k}(t_1, t_3) (t_2 - t_3)^k,
\end{equation} 
where $f_{ijk} \in \bJ^2_*$ and the sum runs over $k \geq N_{ij}$ for some fixed $N_{ij} \in \mathbb{Z}$. For instance when $i=1$, $j=2$ we have
\begin{align*}
f(t_1, t_2, t_3) = f(t_2 + (t_1-t_2), t_2, t_3) = \sum_{k \geq {N_{12}}} f_{12k}(t_2, t_3) (t_1-t_2)^k.
\end{align*}
We now put
\begin{align}\label{eq:boundary}
\begin{split}
d_2 \left( a \otimes b \otimes c \otimes f(t_1,t_2,t_3)  \right)
= {} & \sum_{k} a \otimes b_{(k)}c \otimes f_{23k}(t_1,t_2) \\
&- \sum_k b \otimes a_{(k)}c \otimes f_{13k}(t_1,t_2) - \sum_k a_{(k)}b \otimes
c \otimes f_{12k}(t_1,t_2). 
\end{split}
\end{align}
\label{no:complex-definition}
The following lemma is a straightforward consequence of the Borcherds identity.
\end{nolabel}
\begin{lem} $d_1 \circ d_2 = 0$. 
\label{lem:square-zero}
\end{lem}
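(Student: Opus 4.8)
The plan is to unravel the two maps $d_1$ and $d_2$ and show that the composite $d_1 \circ d_2$ is precisely a sum of three terms which, after re-expanding the elliptic function $f$ appropriately, fall into the shape of two Borcherds identities (in the sense of \eqref{eq:borcherds-def}), hence cancel. First I would fix $a \otimes b \otimes c \otimes f(t_1,t_2,t_3) \in V^{\otimes 3} \otimes \bJ^3_*$ and compute $d_1(d_2(\cdots))$ using \eqref{eq:boundary} and \eqref{eq:d1-def}. The three groups of terms in $d_2$ produce, after applying $d_1$, the three expressions
\[
\sum_k a_{(g_{23k})}\bigl(b_{(k)}c\bigr), \qquad -\sum_k b_{(g_{13k})}\bigl(a_{(k)}c\bigr), \qquad -\sum_k \bigl(a_{(k)}b\bigr)_{(g_{12k})} c,
\]
where $g_{ijk}$ denotes the one-variable elliptic function obtained from $f_{ijk}(t_1,t_2) = f_{ijk}(t_1 - t_2)$, i.e. the variable fed to the $(\cdot)$-product after the first two arguments are collapsed. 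I would then recognize these, via the definition $a_{(h)}b = \res_t h(t) Y(a,t)b$, as iterated residues of products of vertex operators against a single three-variable function. Concretely, the first two combine into $\res_{t_1}\res_{t_2}\bigl[Y(a,t_1)Y(b,t_2)c - Y(b,t_2)Y(a,t_1)c\bigr]$ integrated against the appropriate expansion of $f$, while the third is $\res_{t_2}\res_{t_1-t_2}\, Y\bigl(Y(a,t_1-t_2)b, t_2\bigr)c$ integrated against the expansion of $f$ in powers of $t_1 - t_2$ then $t_2$.

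The key mechanism is that the three Laurent expansions of the \emph{same} elliptic function $f(t_1,t_2,t_3)$ — around $t_1 = t_2$, $t_1 = t_3$, and $t_2 = t_3$ — are exactly the three expansions $i_{z,w}$, $i_{w,z}$, $i_{z=w}$ of a rational-type function that appear in the Borcherds identity \eqref{eq:borcherds-def}; this is the content of the conventions set up in \ref{no:expansions} and Example \ref{ex:expansion12}, together with the fact (Lemma \ref{lem:Laurent-exp}) that the coefficients $f_{ijk}$ are themselves honest two-variable Jacobi forms. So after substituting, the sum $d_1 d_2(a\otimes b\otimes c\otimes f)$ is literally the left-hand side minus the right-hand side of the module/vertex Borcherds identity applied to $a,b,c$ with test function $f$, and therefore vanishes. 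Since everything is linear in $f$ and in $a,b,c$, and the expansions are compatible with the $\bJ^2_*$-linear structure, this suffices.

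The main obstacle I anticipate is purely bookkeeping: matching up which of the three expansions of $f$ goes with which vertex-operator ordering, and checking that collapsing the third variable (recall that functions in $\bJ^2_*$ depend only on the difference $t_1 - t_2$) is compatible with the residue operations in the right way, so that what one gets is genuinely an instance of \eqref{eq:borcherds-def} rather than something off by a shift or a sign. In particular one must be careful that the $t_3$-dependence carried along in the coefficients $f_{ijk}(t_1,t_2)$ (which becomes the surviving variable after $d_1$) is the \emph{spectator} variable throughout, and that the Borcherds identity is being applied with the two ``active'' variables being $t_1,t_2$ (or $t_1 - t_2, t_2$). Once the dictionary between the expansion conventions of Section \ref{no:expansions} and the $i_{z,w}, i_{w,z}, i_{z=w}$ of \eqref{eq:borcherds-def} is pinned down, the vanishing is immediate; I would present the argument by writing out $d_1 d_2$ explicitly and then citing \eqref{eq:borcherds-def} term by term.
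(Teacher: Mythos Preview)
Your proposal is correct and takes essentially the same approach as the paper: unwind $d_1 d_2$ into the three iterated residues and recognize them as the three terms of the Borcherds identity \eqref{eq:borcherds-def}. The paper's proof simply makes the variable dictionary explicit by setting $z = t_1 - t_3$ and $w = t_2 - t_3$ (so that $z - w = t_1 - t_2$), which resolves exactly the bookkeeping concern you flagged about the spectator variable $t_3$.
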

\begin{proof}
Let $f(t_1,t_2,t_3) \in \bJ^3_*$. In the notation of Example \ref{ex:expansion12} we have
\begin{align*}
\sum_k \bigl( a_{(k)}b \bigr)_{(f_{12k})} c = \res_{t_2-t_3} \res_{t_1-t_2}  Y\Bigl(  Y(a,t_1-t_2)b,t_2-t_3 \Bigr)c i_{t_2,t_2-t_3, t_1-t_2} f(t_1,t_2,t_3),
\end{align*}
and similar expressions for the other terms of \eqref{eq:boundary}. Thus
\begin{align}\label{eq:borcherds-res}
\begin{split}
& d_1 \left( d_2 \left(a \otimes b \otimes c \otimes f(t_1,t_2,t_3) \right) \right) \\
= {} & \res_{t_1-t_3} \res_{t_2-t_3} Y(a,t_1-t_3) Y(b,t_2-t_3) c \,\, i_{t_1,t_1-t_3,t_2-t_3} f( t_1,t_2,t_3) \\
&- \res_{t_2-t_3} \res_{t_1-t_3} Y(b,t_2-t_3) Y(a,t_1-t_3) c \,\, i_{t_2,t_2-t_3,t_1-t_3} f(t_1,t_2,t_3) \\
&- \res_{t_2-t_3} \res_{t_1-t_2} Y\left( Y(a,t_1-t_2)b,t_2-t_3 \right) c \,\, i_{t_2,t_2-t_3, t_1-t_2} f(t_1,t_2,t_3). 
\end{split}
\end{align}
The lemma follows from Borcherds identity \eqref{eq:borcherds-def} with $z=t_1-t_3$ and $w =t_2 -t_3$.
\end{proof}
\begin{lem} For any $a,b \in V$ and $f(t_1-t_2) \in \bJ^2_*$ we have
\begin{align}
\begin{split}
d_1 \left( L_{-1} a \otimes b \otimes f + a \otimes b \otimes f' \right) &= 0, \\
d_1 \left( a \otimes L_{-1} b \otimes f - a \otimes b \otimes
f'  \right) &=  L_{-1} d_1 (a \otimes b \otimes f) .
\end{split} 
\label{eq:derivatives-d1}
\end{align}
\label{lem:derivtives-d1}
\end{lem}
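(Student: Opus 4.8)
The plan is to deduce both identities from two inputs: the ``integration by parts'' relation \eqref{eq:integration-parts}, $(L_{-1}a)_{(f)}b = -a_{(f')}b$, and the standard fact that $L_{-1}$ acts as a derivation of every $(f)$-product, i.e. $L_{-1}\bigl(a_{(f)}b\bigr) = (L_{-1}a)_{(f)}b + a_{(f)}(L_{-1}b)$. The latter follows by expanding $f = \sum_n f_n t^n$ with coefficients $f_n \in \bM_*$, using that $L_{-1}$ commutes with scalar multiplication by modular forms, and applying the translation-covariance identity $Y(L_{-1}a,z) = \partial_z Y(a,z)$ (a consequence of the axioms of \ref{no:definition-vertex-algebra}) coefficientwise. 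I would first record a small preliminary: $f' = \tfrac{d}{dt}f$ again lies in $\bJ^2_*$ — differentiating the modular transformation law of a homogeneous $f$ in the elliptic variable raises its weight by one while preserving periodicity and the pole locus — so that the left-hand sides of both identities make sense as elements of the domain $V^{\otimes 2}\otimes\bJ^2_*$ of $d_1$; alternatively this is read off from \eqref{eq:differential-equation} and the presentation $\bJ^2_* \cong \bM_*[\wp_2,\wp_3,\wp_4]$ of \ref{no:jacobi-forms}.

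With these in hand the first identity is immediate: since $d_1(a\otimes b\otimes f) = a_{(f)}b$ by \eqref{eq:d1-def},
\[
d_1\bigl(L_{-1}a\otimes b\otimes f + a\otimes b\otimes f'\bigr) = (L_{-1}a)_{(f)}b + a_{(f')}b = 0
\]
by \eqref{eq:integration-parts}. For the second identity I would combine the derivation property with \eqref{eq:integration-parts} to rewrite $(L_{-1}a)_{(f)}b = -a_{(f')}b$, obtaining
\[
L_{-1}\bigl(a_{(f)}b\bigr) = (L_{-1}a)_{(f)}b + a_{(f)}(L_{-1}b) = a_{(f)}(L_{-1}b) - a_{(f')}b = d_1\bigl(a\otimes L_{-1}b\otimes f - a\otimes b\otimes f'\bigr),
\]
and since $L_{-1}d_1(a\otimes b\otimes f) = L_{-1}(a_{(f)}b)$ this is exactly the assertion.

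I do not anticipate a genuine obstacle here; the computation is short once the two inputs are available. The only points needing (routine) care are checking that $f'$ stays in $\bJ^2_*$ and, if one prefers not to cite them, giving direct proofs of the two vertex-algebra facts used — both of which drop out of the residue identities \eqref{eq:residue-trading-exp} together with $Y(L_{-1}a,z)=\partial_z Y(a,z)$, in the same way \eqref{eq:integration-parts} itself is obtained.
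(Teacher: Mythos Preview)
Your proof is correct and follows the same approach as the paper, which simply says ``This is simply an application of \eqref{eq:integration-parts}.'' You have filled in the details the paper leaves implicit, in particular the derivation property of $L_{-1}$ needed for the second identity and the observation that $f' \in \bJ^2_*$.
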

\begin{proof}
This is simply an application of \eqref{eq:integration-parts}.
\end{proof}

\begin{nolabel}
We now define $C_0 =V \otimes \bJ^1_*$ and
\begin{equation} \label{eq:c1-def}
C_1 = V^{\otimes 2}  \otimes \bJ^2_* \Bigl/ \Bigl< L_{-1} a \otimes b \otimes f
+ a \otimes b \otimes f'\Bigr>,
\end{equation}
and we define $C_2$ to be the quotient of $V^{\otimes 3} \otimes \bJ^3_*$ by the subspace spanned by the terms
\begin{align*} 
\begin{aligned}
L_{-1} a \otimes b \otimes c \otimes f +& a \otimes b \otimes c \otimes  \left( \partial f / \partial t_1 \right), \\
a \otimes L_{-1} b \otimes c \otimes f +& a \otimes b \otimes c \otimes  \left( \partial f / \partial t_2 \right), \\
a \otimes b \otimes c \otimes f(t_1,t_2,t_3) +& b \otimes a \otimes c \otimes f(t_2,t_1,t_3).
\end{aligned}
\end{align*}
The differentials $d_1$ and $d_2$ descend to define a complex $C_\bullet$ as we shall see below. Let us denote by $\pi$ the quotient map $V^{\otimes 2} \otimes \bJ^2_* \rightarrow C_1$. 
\end{nolabel}

\begin{lem} For any $a,b,c \in V$ and $f \in \bJ^3_*$ we have 
\begin{equation}
\begin{gathered}
\pi \circ d_2 \Bigl( L_{-1} a \otimes b \otimes c \otimes f(t_1,t_2,t_3) + 
a \otimes b \otimes c \otimes \frac{d}{dt_1} f(t_1,t_2,t_3)
\Bigr) = 0, \\
\pi \circ
d_2 \Bigl(  a \otimes L_{-1} b \otimes c \otimes f(t_1,t_2,t_3) + 
a \otimes b \otimes c \otimes \frac{d}{dt_2} f(t_1,t_2,t_3)
\Bigr) = 0, \\
{\multlinegap=120pt
\begin{multlined}
d_2 \Bigl(  a \otimes b \otimes L_{-1} c \otimes f(t_1,t_2,t_3) + 
a \otimes b \otimes c \otimes \frac{d}{dt_3} f(t_1,t_2,t_3)
\Bigr) = \\ 
  \left( L_{-1}^{(2)} + \frac{d}{d t_2} \right) d_2 \Bigl( a \otimes b \otimes c \otimes f(t_1,t_2,t_3) \Bigr),
\end{multlined}
}
\end{gathered}
\label{eq:d2-diff}
\end{equation}
where $L_{-1}^{(2)}$ is the operator $L_{-1}$ applied to the second factor of $V \otimes V \otimes \mathbb{J}^2_*$. 
\label{lem:d2-diff}
\end{lem}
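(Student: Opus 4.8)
The plan is to verify each of the three identities directly from the definition \eqref{eq:boundary} of $d_2$, using the relation \eqref{eq:integration-parts} together with the fact that passing an $L_{-1}$ from a slot of $V$ to the elliptic-function slot exchanges it for a derivative, and the elementary observation that Laurent expansion commutes with differentiation in a compatible way. The key computational input is the bookkeeping: when one writes $f(t_1,t_2,t_3)=\sum_k f_{ijk}(t_a,t_b)(t_i-t_j)^k$, then $\partial f/\partial t_1$, $\partial f/\partial t_2$, $\partial f/\partial t_3$ have Laurent coefficients obtained from the $f_{ijk}$ by the appropriate rule — e.g. the $(t_1-t_2)^k$-coefficient of $\partial f/\partial t_1$ is $(k+1)f_{12,k+1}$, the $(t_1-t_2)^k$-coefficient of $\partial f/\partial t_2$ is $\partial_{t_2}f_{12k} + (k+1)f_{12,k+1}$ (treating $f_{12k}$ as a function of $t_2,t_3$ — but these are really functions of $t_2-t_3$, so the $\partial_{t_2}$ and the shift interact), and analogously for the other diagonals. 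This is exactly the content of Lemma \ref{lem:Laurent-exp} combined with the chain rule.

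First I would treat the first identity. Expanding $d_2(L_{-1}a\otimes b\otimes c\otimes f)$ via \eqref{eq:boundary}, the term involving $(L_{-1}a)_{(k)}b$ and the term involving $(L_{-1}a)_{(k)}c$ both get rewritten using \eqref{eq:integration-parts}: $(L_{-1}a)_{(g)}w = -a_{(g')}w$ for any Laurent series $g$. Here $g$ is the relevant Laurent coefficient of $f$ viewed as a series in $t_1-t_j$, so $g'$ is its derivative with respect to $t_1-t_j$, i.e. with respect to $t_1$. On the other hand, the term $a\otimes b\otimes c\otimes(\partial f/\partial t_1)$ in $d_2$ produces, via \eqref{eq:boundary}, the Laurent coefficients of $\partial f/\partial t_1$ in $t_2-t_3$, $t_1-t_3$, $t_1-t_2$ respectively. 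One checks: the coefficient of $a_{(k)}b\otimes c$ in $d_2(a\otimes b\otimes c\otimes \partial f/\partial t_1)$ is $-(\partial f/\partial t_1)_{12k}$ (with a sign from \eqref{eq:boundary}), and $(\partial f/\partial t_1)_{12k}$ equals $\partial_{t_1}$ of the $(t_1-t_2)^k$-coefficient of $f$; the analogous statement for the $t_1-t_3$ diagonal holds because $\partial/\partial t_1 = \partial/\partial(t_1-t_3)$ in the relevant expansion. The term not touched by $L_{-1}a$ on the first slot — namely $a\otimes b_{(k)}c\otimes f_{23k}$ — pairs with the $(t_2-t_3)^k$-coefficient of $\partial f/\partial t_1$, but the $(t_2-t_3)^k$-coefficient of $\partial f/\partial t_1$ is the $t_1$-derivative of $f_{23k}(t_1,t_3)$, and since $f_{23k}(t_1,t_3)\in\bJ^2_*$ depends only on $t_1-t_3$ this reappears correctly. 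Matching these term by term shows the sum vanishes. The second identity is the mirror image with $t_2$ in place of $t_1$, and follows by an identical argument (using that $\bigl<L_{-1}a\otimes b\otimes f + a\otimes b\otimes f'\bigr>$ is modded out in $C_1$, which is why we apply $\pi$).

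The third identity is the genuinely interesting one: here $L_{-1}$ on the third slot $c$ cannot be absorbed into the elliptic function slot by the quotient relations defining $C_2$ (only the first two slots are "on the left" of the coproduct), so it must be moved differently. The idea is to use the translation-covariance of the vertex operators themselves: $[L_{-1}, a_{(n)}] = (L_{-1}a)_{(n)} = -n a_{(n-1)}$ together with $L_{-1}(a_{(n)}c) = (L_{-1}a)_{(n)}c + a_{(n)}(L_{-1}c)$. Applying $d_2$ to $a\otimes b\otimes L_{-1}c\otimes f$ and to $a\otimes b\otimes c\otimes \partial f/\partial t_3$, and comparing with $(L_{-1}^{(2)} + d/dt_2)d_2(a\otimes b\otimes c\otimes f)$, one reorganizes: the $L_{-1}$ acting on $c$ in the first two terms of \eqref{eq:boundary} becomes, after integration by parts in the $t_2-t_3$ variable (note $\partial/\partial t_3 = -\partial/\partial(t_2-t_3)$ there), a $d/dt_2$ on the elliptic function plus an $L_{-1}$ on the $b_{(k)}c$ factor — but $L_{-1}(b_{(k)}c) = (L_{-1}b)_{(k)}c + b_{(k)}(L_{-1}c)$ and the term with $L_{-1}$ on $b$ is precisely what $L_{-1}^{(2)}$ on the right-hand side accounts for. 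Meanwhile in the third term of \eqref{eq:boundary}, which reads $a_{(k)}b\otimes c\otimes f_{12k}$, the $c$ is in the last slot and $L_{-1}c$ there matches the $d/dt_3$ contribution to $\partial f/\partial t_3$ along the $t_1-t_2$ diagonal, again via integration by parts.

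The main obstacle, and where I would concentrate the care, is the third identity: keeping straight which variable each $d/dt_i$ acts on after the change of expansion variable, and correctly separating the $L_{-1}^{(2)}$ contribution from the $d/dt_2$ contribution on the right-hand side. The sign conventions from \eqref{eq:boundary} (the middle term carries a minus) and from \eqref{eq:integration-parts} must be tracked precisely. Once the dictionary "$L_{-1}$ on slot $i$ $\longleftrightarrow$ $-\partial/\partial t_i$ on the elliptic function" is set up carefully — taking into account that in slot 3 one has $\partial/\partial t_3 = -\partial/\partial(t_2-t_3) = -\partial/\partial(t_1-t_3)$ in the respective expansions, and that the "asymmetry" of $C_2$ forces the residual $L_{-1}^{(2)} + d/dt_2$ on the right — everything collapses to the identity via \eqref{eq:integration-parts} and the chain rule, with no appeal to the Borcherds identity needed (unlike Lemma \ref{lem:square-zero}).
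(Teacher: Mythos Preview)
Your proposal is correct and follows essentially the same route as the paper: both proofs compute the Laurent coefficients of $\partial f/\partial t_i$ in terms of those of $f$ (e.g. $g_{12,k-1}=kf_{12k}$ for $g=\partial f/\partial t_1$), apply $(L_{-1}a)_{(k)}=-ka_{(k-1)}$ together with the quotient relation defining $C_1$ for the first two identities, and for the third identity use $b_{(k)}L_{-1}c = L_{-1}(b_{(k)}c)-(L_{-1}b)_{(k)}c$ to separate the $L_{-1}^{(2)}$ contribution. Your phrasing is slightly loose in places (what you call ``integration by parts in the $t_2-t_3$ variable'' is really just the index shift $(L_{-1}b)_{(k)}=-kb_{(k-1)}$ combined with the Laurent--coefficient identities for $h=\partial f/\partial t_3$), but the content is the same.
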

\begin{proof}
If we write $g = \partial f / \partial t_1$ then the coefficients of the Laurent series expansions of $f$ and $g$ are related by
\begin{align*}  
g_{23k} = f'_{23k}, \quad g_{13,k-1} = k f_{13k} \quad \text{and} \quad g_{12,k-1} = k f_{12k}.
\end{align*}
Therefore
\begin{multline} d_2 \Bigl (a \otimes b \otimes c \otimes \frac{d}{dt_1}f(t_1,t_2,t_3)  \Bigr) = \sum_{k} a \otimes b_{(k)}c \otimes f'_{23k} \\ - \sum_k (k+1) b \otimes a_{(k)}c \otimes f_{13,k+1} - \sum_k (k+1) a_{(k)}b \otimes c \otimes f_{12,k+1}. 
\end{multline}
On the other hand, replacing $a$ by $L_{-1} a$ in \eqref{eq:boundary} and applying $\pi$, we have
\begin{multline}
\pi \circ d_2 \Bigl (L_{-1} a \otimes b \otimes c \otimes f(t_1,t_2,t_3)  \bigr) = \sum_{k} L_{-1} a \otimes b_{(k)}c \otimes f_{23k} \\ - \sum_k b \otimes (L_{-1}a)_{(k)}c \otimes f_{13k} - \sum_k (L_{-1}a)_{(k)}b \otimes c \otimes f_{12k} = 
\\ 
-\sum_{k} a \otimes b_{(k)}c \otimes f'_{23k}  + \sum_k k b \otimes a_{(k-1)}c \otimes f_{13k} + \sum_k k a_{(k-1)}b \otimes c \otimes f_{12k}. 
\label{eq:pid2}
\end{multline}
Thus the first equation of \eqref{eq:d2-diff} is proved. The second equation is proved in the same way. In order to prove the third equation we consider $h = \partial f/ \partial t_3$, then the coefficients of the Laurent series expansions of $f$ and $h$ are related by 
\[ h_{12k} =- f'_{12k}, \qquad h_{13k} = - \left( f'_{13k} + (k+1) f_{13,k+1}\right), \qquad h_{23k} = - \left(f'_{23k} + (k+1) f_{23,k+1}\right). \]
Therefore
\begin{equation}
\begin{multlined}
d_2 \left( a \otimes b \otimes c \otimes \frac{d}{d t_3} f(t_1,t_2,t_3) \right) = - \sum_{k} a \otimes b_{(k)}c \otimes \left( f'_{23k} + (k+1)f_{23,k+1} \right) \\ 
+ \sum_k b \otimes a_{(k)} c \otimes \left( f'_{13k} + (k+1) f_{13,k+1} \right) + \sum_{k} a_{(k)}b \otimes c \otimes f'_{12k} \\ 
= - \sum_k a \otimes b_{(k)}c \otimes f'_{23k} + \sum_{k} b \otimes a_{(k)}c \otimes f'_{13k} 
+ \sum_{k} a_{(k)}b \otimes c \otimes f'_{12k} \\
+  \sum_{k} a \otimes \left( L_{-1} b \right)_{(k)} c \otimes f_{23k} - \sum_{k} b \otimes \left( L_{-1}a \right)_{(k)} c \otimes f_{13k}.
\end{multlined}
\label{eq:d2-2}
\end{equation}
The sum of the first three terms equals $\tfrac{d}{d t_2} d_2\left( a \otimes b \otimes c \otimes f(t_1,t_2,t_3) \right)$. 
On the other hand replacing $c$ by $L_{-1}c$ in \eqref{eq:boundary} we have
\begin{equation}
\begin{multlined}
d_2 \Bigl( a \otimes b \otimes L_{-1} c \otimes f(t_1,t_2,t_3) \Bigr)  = \sum_{k} a \otimes b_{(k)} L_{-1} c \otimes f_{23k} \\ 
- \sum_k b \otimes a_{(k)}L_{-1}c \otimes f_{13k} - \sum_{k} a_{(k)}b \otimes L_{-1} c \otimes f_{12k} \\
= \sum_{k} a \otimes L_{-1} \left( b_{(k)}c \right) \otimes f_{23k} - \sum_k a \otimes \left(L_{-1}b\right)_{(k)} c \otimes f_{23k} \\ 
- \sum_{k} b \otimes L_{-1} \left( a_{(k)}c \right) \otimes f_{13k} + \sum_k b
\otimes \left( L_{-1} a \right)_{(k)} c \otimes f_{13k} - \sum_k a_{(k)}b
\otimes L_{-1}c \otimes f_{12k} .
\end{multlined}
\label{eq:d2-3}
\end{equation}
The sum of the first, third and last term of the right hand side equal $L_{-1}^{(2)} \left( d_2 (a \otimes b \otimes c \otimes  f)\right)$, the remaining terms cancel the last two terms of \eqref{eq:d2-2}.
\end{proof}
\begin{lem} We have the identity
\begin{equation}
\pi \circ d_2 \Bigl( a \otimes b \otimes c \otimes f(t_1,t_2,t_3) \Bigr) =- \pi
\circ d_2 \Bigl( b \otimes a \otimes c \otimes f(t_2,t_1,t_3) \Bigr).
\label{eq:d2sym}
\end{equation}
\label{lem:d2sym}
\end{lem}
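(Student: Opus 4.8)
The plan is to compute both sides of \eqref{eq:d2sym} explicitly using the defining formula \eqref{eq:boundary} for $d_2$ and the relations imposed in the definition of $C_1$, namely $\pi(L_{-1}x \otimes y \otimes g) = -\pi(x \otimes y \otimes g')$ for $x,y \in V$, $g \in \bJ^2_*$. First I would write out $d_2(b \otimes a \otimes c \otimes f(t_2,t_1,t_3))$; if we set $\hat f(t_1,t_2,t_3) = f(t_2,t_1,t_3)$, then the Laurent coefficients of $\hat f$ are related to those of $f$ by $\hat f_{12k} = (-1)^k$ times an expansion of $f_{12k}$ with $t_1,t_2$ swapped, and similarly the roles of the $23$ and $13$ expansions of $\hat f$ are governed by the $13$ and $23$ expansions of $f$ respectively. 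Plugging into \eqref{eq:boundary}, the term $-\sum_k b_{(k)}a \otimes c \otimes \hat f_{12k}$ appears, and this is where skew-symmetry enters: by \eqref{eq:skew-symmetry}, $b_{(k)}a = -(-1)^k \sum_{j \geq 0} \frac{(-L_{-1})^j}{j!} a_{(k+j)}b$, so after resumming the $(t_1-t_2)^k$-series this term becomes $-\sum_k a_{(k)}b \otimes c \otimes f_{12k}$ once we pass to the quotient $C_1$ (the higher $L_{-1}^j$ terms are absorbed by the relation defining $C_1$, which trades $L_{-1}$ on the first slot for a $t$-derivative, matching the shift in Laurent index). The first two terms of $d_2(b \otimes a \otimes c \otimes \hat f)$, involving $a_{(k)}c$ and $b_{(k)}c$, map directly onto the corresponding first two terms of $d_2(a \otimes b \otimes c \otimes f)$ with a sign, once one matches the $23$- and $13$-expansions correctly.

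Concretely, the key steps in order are: (1) record how the three Laurent expansions $f_{12k}, f_{13k}, f_{23k}$ transform under the transposition $t_1 \leftrightarrow t_2$, being careful that the expansion variable $t_1 - t_2$ picks up a sign $(-1)^k$ while $t_1 - t_3$ and $t_2 - t_3$ get their roles interchanged; (2) substitute into \eqref{eq:boundary} to get a raw expression for $d_2(b \otimes a \otimes c \otimes f(t_2,t_1,t_3))$; (3) apply skew-symmetry \eqref{eq:skew-symmetry} to the $b_{(k)}a$ appearing in the third summand, producing the term $a_{(k)}b \otimes c \otimes (\cdot)$ together with a tail of $L_{-1}^{j}$-corrections; (4) invoke the relation $\pi(L_{-1}x \otimes y \otimes g) = -\pi(x \otimes y \otimes g')$ defining $C_1$ to absorb that tail, noting that differentiating the generating function $\sum_k f_{12k}(t_2,t_3)(t_1-t_2)^k$ in $t_1$ precisely reproduces the index shifts needed; (5) compare with $-d_2(a \otimes b \otimes c \otimes f(t_1,t_2,t_3))$ term by term and read off the identity.

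I expect the main obstacle to be the bookkeeping in step (3)--(4): one must verify that the infinite sum of $L_{-1}$-corrections coming from skew-symmetry matches exactly the derivative relation modded out in the definition of $C_1$. The cleanest way to handle this is to work at the level of generating functions in the variable $t_1-t_2$ rather than coefficient-by-coefficient: write everything as $\res$-style integrals (as in the proof of Lemma \ref{lem:square-zero}), use \eqref{eq:skew-symmetry} in the form $Y(a,z)b = e^{zL_{-1}}Y(b,-z)a$ under the residue, and then observe that the $e^{zL_{-1}}$ applied inside the first tensor slot is precisely what the quotient by $\langle L_{-1}a\otimes b\otimes f + a\otimes b\otimes f'\rangle$ is designed to trivialize, since $e^{zL_{-1}}$ acting on the first slot corresponds to the translation $t_1 \mapsto t_1 + z$ on the elliptic-function argument. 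This reduces the whole computation to the observation that $f(t_2,t_1,t_3)$ and $f(t_1,t_2,t_3)$, viewed after the translation, differ by a sign in a way compatible with $\pi$. The other routine check — that the $a_{(k)}c$ and $b_{(k)}c$ terms match up — is a direct comparison using how $f_{13k}$ and $f_{23k}$ swap, and presents no difficulty.
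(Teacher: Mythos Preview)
Your proposal is correct and follows essentially the same approach as the paper: both set $g(t_1,t_2,t_3)=f(t_2,t_1,t_3)$, record that the $13$- and $23$-Laurent expansions swap while the $12$-expansion picks up $(-1)^k$ together with a Taylor shift, apply skew-symmetry \eqref{eq:skew-symmetry} to $b_{(k)}a$, and then use the defining relation of $C_1$ to trade the resulting $L_{-1}^j$ tail for derivatives on the function coefficient. The generating-function repackaging you suggest in step (3)--(4) is just a cosmetic variant of the same computation the paper carries out coefficient-by-coefficient.
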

\begin{proof}
Let $g(t_1,t_2,t_3) = f(t_2,t_1,t_3)$. Then the Laurent expansions of $g$ and of $f$ are related by
\begin{align*}
f_{23k} = g_{13k}, \quad f_{13k} = g_{23k}, \quad f_{12k} = \sum_{j \geq 0} (-1)^{k+j}  g^{(j)}_{12,k-j}. 
\end{align*}
The last one is obtained from 
\begin{align*}
g(t_2,t_1,t_3) = \sum_j f_{12j} (t_2,t_3) (t_1-t_2)^j = \sum_j (-1)^j
g_{12j}(t_1,t_3) (t_1 -t_2)^j,
\end{align*}
after multiplying by $(t_1-t_2)^{-k-1}$ and taking residues. By \eqref{eq:boundary} we have
\begin{equation} d_2 \Bigl (b \otimes a \otimes c \otimes g(t_1,t_2,t_3,\tau)  \bigr) = \sum_{k} b \otimes a_{(k)}c \otimes f_{13k}  - \sum_k a \otimes b_{(k)}c \otimes f_{23k} - \sum_k  b_{(k)}a \otimes c \otimes g_{12k}. 
\label{eq:boundary-twist}
\end{equation}
We use the skew-symmetry identity \eqref{eq:skew-symmetry} to write
\begin{multline*}
\pi \sum_k  b_{(k)}a \otimes c \otimes g_{12k}
= - \pi \sum_{k} \sum_{j \geq 0} (-1)^{k} \frac{(-L_{-1})^{j}}{j!} a_{(k+j)} b \otimes c \otimes g_{12k} = \\
= - \pi \sum_{k}\sum_{j \geq 0}  (-1)^{k} a_{(k+j)}b \otimes c \otimes g^{(j)}_{12k} = - \pi \sum_{k} a_{(k)} b \otimes c \otimes f_{12k}.
\end{multline*}
This proves the Lemma. 
\end{proof}
\begin{nolabel}Due to Lemmas \ref{lem:d2-diff} and \ref{lem:d2sym} we may define the following complex 
\begin{equation} \label{eq:c-complex}
C_2 \xrightarrow{d_2} C_1 \xrightarrow{d_1} C_0 \rightarrow 0.
\end{equation}
Notice that this is a complex of $\mathbb{M}_*[\partial]$-modules, where $\partial$ acts by $L_{-1}^{(k+1)} + \tfrac{d}{d t_{k+1}}$ on $C_k$. 

For $i=0, 1$ the homology $H_i(C_\bullet)$ is called the $i^\text{th}$ \emph{chiral homology group} of the vertex algebra $V$ in genus $1$. We will denote these groups by $H^{\text{ch}}_i(V)$, leaving the $\tau$ dependence implicit. They correspond to the chiral homology groups of the universal elliptic curve with coefficients in the chiral algebra associated to $V$ as defined in \cite{beilinsondrinfeld} and explained in \cite{eh2018}. 
$H^{ch}_0(V)$ is also known as the space of \emph{coinvariants} of $V$ in genus $1$ and its dual is closely related to the space of \emph{conformal blocks} of $V$ in genus one as defined and studied in \cite{zhu}. We will comment more on this below when we study the connection on this complex. 
\label{no:complex-def}
\end{nolabel}
\begin{nolabel}[Homology with supports] More generally we may define, for fixed $n \geq 1$, a complex
\begin{align}\label{eq:homol.with.supp}
V^{\otimes (n+2)} \otimes \bJ^{n+2}_* \xrightarrow{d_2} V^{\otimes (n+1)}
\otimes \bJ^{n+1}_* \xrightarrow{d_1} V^{\otimes n} \otimes \bJ^n_* \rightarrow
0,
\end{align}
as follows. Let $f(t_0,\dots,t_n) \in \bJ^{n+1}_*$ and let $1 \leq i \leq n$. We consider the Laurent series expansion
\begin{align*}
f(t_0,\dots,t_n) = \sum_k f_{ik}(t_1,\dots,t_n) (t_0 - t_i)^k.
\end{align*}
By Lemma \ref{lem:Laurent-exp} we have $f_{ik} \in \bJ^{n}_*$ for each $i=1,\ldots, n$ and for all integers $k$, and we define
\begin{equation}
d_1 \left( a\otimes a_1 \otimes \dots \otimes a_{n} \otimes f(t_0,\dots,t_n) \right) = \sum_{i=1}^n \sum_k a_1 \otimes \dots \otimes \left(a_{(k)}a_i\right) \otimes \dots \otimes a_n \otimes f_{ik}.
\label{eq:d1-general-def}
\end{equation}
For $f(u,v,t_1, \dots,t_{n}) \in \bJ^{n+2}_*$ we will consider similarly the following expansions for $i = 1,\ldots, n$:
\begin{equation}
\label{eq:5.12b}
\begin{multlined}
f(u,v,t_1,\dots,t_n) = 
\sum_{k} f_{0ik}(v,t_1,\dots,t_{n}) (u - t_i)^k, \\ 
= \sum_k f_{1ik} (u,t_1,\dots,t_{n}) (v - t_i)^k = \sum_k f_{00k}
(v,t_1,\dots,t_n) (u-v)^k.
\end{multlined}
\end{equation} 
We define
\begin{equation}
\begin{split}
d_2 \left( a \otimes b \otimes a_1 \otimes \dots \otimes a_n \otimes f \right)
&= \sum_{i = 1}^n \sum_k a \otimes a_1 \otimes \dots \otimes \left( b_{(k)} a_i \right) \otimes \dots \otimes a_n \otimes f_{1ik}(t_0,\dots,t_n) \\
& \quad - \sum_{i=1}^n \sum_k b \otimes a_1 \otimes \dots \otimes \left( a_{(k)}a_i \right) \otimes \dots \otimes a_n \otimes f_{0ik}(t_0,\dots,t_n) \\ & \quad - \sum_{k} a_{(k)} b \otimes a_1 \otimes \dots \otimes a_n \otimes f_{00k}(t_0,\dots,t_n). 
\label{eq:d2-general-def}
\end{split}
\end{equation}
\label{no:supports}
\end{nolabel}
\begin{lem} $d_1 \circ d_2 = 0$
\label{lem:complex-supports}
\end{lem}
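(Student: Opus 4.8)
This is the ``$n$-point'' analogue of Lemma~\ref{lem:square-zero}, and I would prove it by the same mechanism — reducing term by term to the Borcherds identity \eqref{eq:borcherds-def} — carrying along the extra bookkeeping forced by the spectator insertion points $t_1,\dots,t_n$. First I would fix $a,b,a_1,\dots,a_n\in V$ and $f(u,v,t_1,\dots,t_n)\in\bJ^{n+2}_*$ and expand $d_1 d_2(a\otimes b\otimes a_1\otimes\dots\otimes a_n\otimes f)$ directly from the formulas \eqref{eq:d1-general-def} and \eqref{eq:d2-general-def}, using Lemma~\ref{lem:Laurent-exp} to guarantee that every intermediate Laurent coefficient again lies in the appropriate ring $\bJ^m_*$. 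Each resulting summand is a tensor in $V^{\otimes n}\otimes\bJ^n_*$ in which the operators built from $a$ and from $b$ have each been applied once, and I would organize the summands according to the pair of tensor slots $(i,i')$ on which $a$ and $b$ respectively end up acting.

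For the \emph{diagonal} part ($i=i'$): for each fixed $i$, exactly three summands of $d_1 d_2$ have $a$ and $b$ both acting on the $i$-th slot — one from each of the three groups of terms in \eqref{eq:d2-general-def}, each followed by the $j=i$ component of $d_1$ (these are the ``$b$ then $a$'', ``$a$ then $b$'', and ``fused $a_{(k)}b$'' contributions). After rewriting the nested residues and Laurent coefficients, these three summands are precisely the three terms of the Borcherds identity \eqref{eq:borcherds-def} written in the variables $z=u-t_i$, $w=v-t_i$, evaluated on $c=a_i$, with the $t_j$ ($j\neq i$) entering only as inert parameters — exactly as in \eqref{eq:borcherds-res} in the proof of Lemma~\ref{lem:square-zero}. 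Hence the diagonal part vanishes.

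For the \emph{off-diagonal} part ($i\neq i'$): a summand with $a$ acting on slot $i$ and $b$ on slot $i'$ arises in precisely two ways — via the first group of \eqref{eq:d2-general-def} with index $i'$, then $d_1$ with component $j=i$; and via the second group of \eqref{eq:d2-general-def} with index $i$, then $d_1$ with component $j=i'$ — and these come with opposite overall signs. I would check that the two summands carry the same element of $\bJ^n_*$: one is obtained by expanding $f$ first around $v=t_{i'}$ and then around $u=t_i$, the other by expanding first around $u=t_i$ and then around $v=t_{i'}$, and since $i\neq i'$ the only singularities of $f$ meeting the locus $\{u=t_i,\,v=t_{i'}\}$ are the poles along $u=t_i$ and along $v=t_{i'}$, which involve disjoint sets of variables; hence the two iterated expansions agree. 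Therefore the off-diagonal summands cancel in pairs, and $d_1\circ d_2=0$.

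The main obstacle is purely organizational: correctly tracking, for each diagonal summand, which of the three Laurent-expansion domains of \eqref{eq:borcherds-def} it occupies — in particular making sure the ``fused'' summand from the third group of \eqref{eq:d2-general-def} (the one involving $Y(a_{(k)}b,\,v-t_i)$) is expanded in the domain matching the right-hand side of \eqref{eq:borcherds-def} — and making sure the inert variables $t_j$ are transported unchanged through every step. Once this matching is pinned down, the diagonal vanishing is immediate from \eqref{eq:borcherds-def} and the off-diagonal cancellation is immediate from commutativity of Laurent expansion in decoupled variables.
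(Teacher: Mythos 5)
Your proposal is correct and follows essentially the same route as the paper's proof: the paper likewise expands $d_1 d_2$ from \eqref{eq:d1-general-def} and \eqref{eq:d2-general-def}, cancels the off-diagonal summands (those with $a$ and $b$ landing on distinct slots) in pairs using the agreement of the iterated expansions $i_{u,u-t_j} i_{v,v-t_i} f$, and identifies the surviving diagonal summands for each $i$ with the three terms of the Borcherds identity in the variables $u-t_i$, $v-t_i$. Your explicit justification of why the two iterated Laurent expansions coincide in the off-diagonal case is a point the paper leaves implicit, but the argument is the same.
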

\begin{proof}
The proof of this lemma is similar to that of Lemma \ref{lem:square-zero}. Let $f(u,v,t_1,\dots,t_n) \in \bJ^{n+2}_*$. We have
\begin{align*}
& d_1 \left( d_2 \left( a \otimes b \otimes a_1 \otimes \dots \otimes a_n \otimes f (u,v,t_1,\dots,t_n) \right) \right) \\
= {} & \sum_{i=1}^n \sum_{j =1}^{i-1} \res_{u-t_j} \res_{v-t_i} a_1 \otimes \dots \otimes Y(a,u-t_j)a_j \otimes \dots \otimes Y(b,v-t_i)a_i \otimes \dots \otimes a_n \otimes i_{u,u-t_j} i_{v,v-t_i} f 
\\
&+ \sum_{i=1}^n \res_{u-t_i} \res_{v-t_i} a_1 \otimes \dots \otimes Y(a,u-t_i)Y(b,v-t_i)a_i \otimes \dots \otimes a_n \otimes i_{t_i,u-t_i,v-t_i} f \\
&+ \sum_{i=1}^n \sum_{j =i+1}^{n} \res_{u-t_j} \res_{v-t_i} a_1 \otimes \dots \otimes Y(b,v-t_i)a_i \otimes \dots \otimes Y(a,u-t_j) a_j\otimes \dots \otimes a_n \otimes i_{u,u-t_j} i_{v,v-t_i} f  \\
&- \sum_{i=1}^n \sum_{j =1}^{i-1} \res_{v-t_j} \res_{u-t_i} a_1 \otimes \dots \otimes Y(b,v-t_j)a_j \otimes \dots \otimes Y(a,u-t_i)a_i \otimes \dots \otimes a_n \otimes i_{u,u-t_i} i_{v,v-t_j} f 
\\
&- \sum_{i=1}^n \res_{v-t_i} \res_{u-t_i} a_1 \otimes \dots \otimes
Y(b,v-t_i)Y(a,u-t_i)a_i \otimes \dots \otimes a_n \otimes i_{t_i,v-t_i,u-t_i} f
\stepcounter{equation}\tag{\theequation}   
\label{eq:square-zero-support} \\ 
&-\sum_{i=1}^n \sum_{j =i+1}^{n} \res_{v-t_j} \res_{v-t_i} a_1 \otimes \dots \otimes Y(a,u-t_i)a_i \otimes \dots \otimes Y(b,v-t_j)a_j \otimes \dots \otimes a_n \otimes i_{u,u-t_j} i_{v,v-t_i} f  \\
&- \sum_{i = 1}^n \res_{v-t_i} \res_{u-v}  a_1 \otimes \dots \otimes Y\bigl( Y(a,u-v)b,v-t_i \bigr)a_i \otimes \dots \otimes a_n \otimes i_{t_i,v-t_i,u-v} f
\\
= {} & \sum_{i=1}^n \res_{u-t_i} \res_{v-t_i} a_1 \otimes \dots \otimes Y(a,u-t_i)Y(b,v-t_i)a_i \otimes \dots \otimes a_n \otimes i_{t_i,u-t_i,v-t_i} f \\
&- \sum_{i=1}^n \res_{v-t_i} \res_{u-t_i} a_1 \otimes \dots \otimes Y(b,v-t_i)Y(a,u-t_i)a_i \otimes \dots \otimes a_n \otimes i_{t_i,v-t_i,u-t_i} f \\
&- \sum_{i = 1}^n \res_{v-t_i} \res_{u-v}  a_1 \otimes \dots \otimes Y\bigl(
Y(a,u-v)b,v-t_i \bigr)a_i \otimes \dots \otimes a_n \otimes i_{t_i,v-t_i,u-v} f,
\end{align*}
and this vanishes because of Borcherds identity \eqref{eq:borcherds-def}.
\end{proof}
From now on, it will be convenient to denote by $L_{-1}^{(i)}$ the operator $1 \otimes \cdots \otimes L_{-1} \otimes \cdots \otimes 1$ acting on $a_1 \otimes \ldots \otimes a_n \in V^{\otimes n}$ or $a_0 \otimes a_1 \otimes \ldots \otimes a_n \in V^{\otimes (n+1)}$ or $a \otimes b \otimes  a_1 \otimes \ldots \otimes a_{n} \in V^{\otimes (n+2)}$ with $L_{-1}$ acting in the component labelled `$i$'.
\begin{lem}
For $a_0, a_1, \ldots, a_n \in V$ and $f(t_0,t_1,\dots,t_n) \in \bJ^{n+1}_*$ we have
\begin{equation} \label{eq:5.9.1}
d_1 \left( \left[ L_{-1}^{(0)} + \frac{\partial}{\partial t_0} \right] a_0 \otimes \dots \otimes a_n \otimes f(t_0,\dots,t_n) \right) = 0,
\end{equation}
and for $j=1,2,\ldots,n$ we have
\begin{equation} \label{eq:5.9.2}
d_1 \left( \left[ L_{-1}^{(j)} + \frac{\partial}{\partial t_j} \right] a_0 \otimes \dots \otimes a_n \otimes f(t_0,\dots,t_n) \right) 
= \left( L_{-1}^{(j)} + \frac{\partial}{ \partial t_j} \right)  d_1 \left( a_0 \otimes \dots \otimes a_n \otimes f(t_0,\dots,t_n) \right).
\end{equation}
\label{lem:d1-diff-trading}
\end{lem}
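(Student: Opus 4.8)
The plan is to rewrite $d_1$ in residue form and then reduce both identities to the translation-covariance relation $Y(L_{-1}a,z)=\partial_z Y(a,z)$ together with the vanishing of total residues, $\res_s\partial_s(-)=0$. First I would record that, in the expansion conventions of \ref{no:expansions} and by Lemma \ref{lem:Laurent-exp}, the definition \eqref{eq:d1-general-def} can be written as
\[
d_1\bigl(a_0\otimes a_1\otimes\cdots\otimes a_n\otimes f\bigr)=\sum_{i=1}^n a_1\otimes\cdots\otimes\res_{t_0=t_i}\!\bigl(f\cdot Y(a_0,t_0-t_i)a_i\bigr)\otimes\cdots\otimes a_n,
\]
where in the $i$-th summand the residue, an element of $V\otimes\bJ^n_*$, occupies the $i$-th tensor slot; this matches \eqref{eq:d1-general-def} because $\res_{t_0=t_i}(t_0-t_i)^{k-\ell-1}=\delta_{k,\ell}$.

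For the first identity \eqref{eq:5.9.1}, replacing $a_0$ by $L_{-1}a_0$ turns $Y(a_0,t_0-t_i)$ into $\partial_{t_0}Y(a_0,t_0-t_i)$ in each summand, and since $\res_{t_0=t_i}\partial_{t_0}\bigl(f\cdot Y(a_0,t_0-t_i)a_i\bigr)=0$ one integrates by parts to get $\res_{t_0=t_i}\bigl(f\cdot\partial_{t_0}Y(a_0,t_0-t_i)a_i\bigr)=-\res_{t_0=t_i}\bigl(\partial_{t_0}f\cdot Y(a_0,t_0-t_i)a_i\bigr)$. Summing over $i$ identifies $d_1\bigl(L_{-1}^{(0)}(\cdots)\bigr)$ with $-\,d_1\bigl(a_0\otimes\cdots\otimes\partial_{t_0}f\bigr)$, which is \eqref{eq:5.9.1}. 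Conceptually, $L_{-1}^{(0)}+\partial_{t_0}$ is the infinitesimal translation at the insertion point $t_0$, and the identity records that this point is integrated away.

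For \eqref{eq:5.9.2} I would fix $j\in\{1,\dots,n\}$ and split the sum over $i$ into the terms with $i\neq j$ and the single term $i=j$. For $i\neq j$ the operator $L_{-1}^{(j)}$ acts on the untouched slot carrying $a_j$, while $\partial_{t_j}$ acts only on the coefficient functions (the variable $t_j$ does not occur in $t_0-t_i$), so these contributions to $L_{-1}^{(j)}d_1$ and $\partial_{t_j}d_1$ agree, slot by slot, with the $i\neq j$ contributions of the left-hand side. For $i=j$ I would substitute $s=t_0-t_j$, so that the relevant slot is $\res_s\bigl(f(s+t_j,t_1,\dots,t_n)\,Y(a_0,s)a_j\bigr)$. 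Then $\partial_{t_j}$ produces, by the chain rule, a term with $(\partial_{t_0}f)(s+t_j,\dots)$ besides $(\partial_{t_j}f)(s+t_j,\dots)$, while $L_{-1}^{(j)}$, after using $L_{-1}Y(a_0,s)a_j=Y(a_0,s)L_{-1}a_j+\partial_sY(a_0,s)a_j$ and integrating by parts in $s$, produces that same $(\partial_{t_0}f)$-term with the opposite sign, together with $\res_s\bigl(f\,Y(a_0,s)L_{-1}a_j\bigr)$. The two $(\partial_{t_0}f)$-terms cancel, leaving exactly $\res_s\bigl(f\,Y(a_0,s)L_{-1}a_j\bigr)+\res_s\bigl(\partial_{t_j}f\,Y(a_0,s)a_j\bigr)$, which are the $i=j$ contributions of $d_1\bigl(a_0\otimes\cdots\otimes L_{-1}a_j\otimes\cdots\otimes f\bigr)$ and of $d_1\bigl(a_0\otimes\cdots\otimes\partial_{t_j}f\bigr)$ respectively. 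Summing over all $i$ gives \eqref{eq:5.9.2}.

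The main obstacle is purely bookkeeping: in the $i=j$ term one must carefully separate the two occurrences of $t_j$ — the one inside $t_0-t_j$ (absorbed into the shift $s\mapsto s+t_j$) and the one in the coefficient functions $f_{jk}$ — and arrange the two integration-by-parts cancellations with the right signs. This is the same mechanism already used for $n=1$ in Lemma \ref{lem:derivtives-d1} and for $d_2$ in Lemma \ref{lem:d2-diff}, so once the residue form above is established the statement is a routine, if notationally heavier, generalization.
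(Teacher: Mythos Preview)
Your proof is correct and takes essentially the same approach as the paper: both arguments rest on the translation-covariance relation $(L_{-1}a)_{(k)}=-k\,a_{(k-1)}$ (equivalently $Y(L_{-1}a,s)=\partial_sY(a,s)$) together with the Leibniz rule for $L_{-1}$. The paper carries this out directly in mode language, using $(L_{-1}a_0)_{(k)}a_j=-k\,a_{0(k-1)}a_j$ for \eqref{eq:5.9.1} and $a_{0(k)}L_{-1}a_j=L_{-1}(a_{0(k)}a_j)+k\,a_{0(k-1)}a_j$ for the $i=j$ term in \eqref{eq:5.9.2}, whereas you package the same computation as integration by parts in the residue variable $s=t_0-t_i$; your version is slightly more explicit in separating the $i\neq j$ summands, but the underlying mechanism is identical.
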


\begin{proof}
Recalling the Laurent series expansion $f = \sum_k f_{0jk} (t_1,\dots,t_n) (t_0-t_j)^k$ and the definition of $d_1$, we have
\begin{align*}
d_1\left( L_{-1} a_0 \otimes \dots \otimes a_n \otimes f(t_0,\dots,t_n) \right) 
&= \sum_{j=1}^n \sum_k a_1 \otimes \dots \otimes \left( (L_{-1} a_0)_{(k)} a_j \right) \otimes \dots \otimes a_n \otimes f_{jk} \\
&= -\sum_{j=1}^n \sum_k a_1 \otimes \dots \otimes \left(  a_{0(k-1)} a_j \right) \otimes \dots \otimes a_n \otimes k f_{jk} \\
&= - d_1\left(a_0 \otimes \dots \otimes a_n \otimes \frac{\partial f}{\partial t_0}(t_0,\dots,t_n)\right),
\end{align*}
and the first claim follows. Similarly the second claim follows from the equality
\begin{multline*}
\sum_k a_1 \otimes \dots \otimes \bigl( a_{0(k)}L_{-1} a_j \bigr) \otimes \dots
\otimes a_n \otimes  f_{0jk} (t_0 -t_j)^k\\   =  \sum_k  a_1 \otimes \dots
\otimes  L_{-1} \bigl(  a_{0(k)} a_j \bigr) \otimes \dots \otimes a_n \otimes
f_{0jk} (t_0-t_j)^k \\ - \sum_k a_1 \otimes \dots \otimes \bigl( a_{0 (k-1)} a_j
\bigr) \otimes \dots \otimes a_n \otimes k f_{0jk}  (t_0-t_j)^k.
\end{multline*}

\end{proof}
We define $\tilde{C}_0$ to be $V^{\otimes n} \otimes \bJ^{n}_*$ 
and $\tilde{C}_1$ to be the quotient of $V^{\otimes (n+1)}\otimes \bJ^{n+1}_*$ by the linear span of the terms
\begin{align*} 
\left( L_{-1}^{(0)} + \frac{\partial}{\partial t_0} \right) a_0 \otimes \dots \otimes a_n \otimes f(t_0,\dots,t_n).
\end{align*}
We denote by $\pi$ the corresponding quotient map $V^{\otimes n+1} \otimes \bJ^{n+1}_* \rightarrow \tilde{C}_1^{n}$. 
We notice that $d_1 : \tilde{C}_1 \rightarrow \tilde{C}_0$ is well defined by \eqref{eq:5.9.1} and it is a map of $k[\partial_1,\dots,\partial_n]$--modules by \eqref{eq:5.9.2}. In addition, recalling that we view $J^{n+1}_*$ as a $J^n_*$--module as in \ref{no:jacobi-forms} we see that the map $d_1$ is $J^n_*$--linear. These two actions make the complex $\tilde{C}_1 \rightarrow \tilde{C}_0$ a complex of $D$-modules. 
The following lemma is proved in the same way as above.
\begin{lem}
Let $a,b, a_1 \ldots a_n \in V$ and $f(u,v,t_1,\dots,t_{n}) \in \bJ^{n+2}_*$. Then
\begin{gather}
\pi \circ d_2 \left( L_{-1} a \otimes b \otimes a_1 \otimes \dots \otimes a_n \otimes f \right) + \pi \circ d_2 \left( a \otimes b \otimes a_1 \otimes \dots \otimes a_n \otimes \frac{d}{d u} f(u,v,t_1,\dots,t_n) \right) = 0 \\ 
\pi \circ d_2 \left( a \otimes L_{-1} b \otimes a_1 \otimes \dots \otimes a_n \otimes f \right) + \pi \circ d_2 \left( a \otimes b \otimes a_1 \otimes \dots \otimes a_n \otimes \frac{d}{d v} f(u,v,t_1,\dots,t_n) \right) = 0 \\ 
\begin{multlined}
d_2 \left( \left( L_{-1}^{(j)} + \frac{d}{d t_j} \right) a \otimes b \otimes a_1 \otimes \dots \otimes a_n \otimes f(u,v,t_1,\dots,t_n) \right) \\
= 
 \left( L_{-1}^{(j)} + \frac{d}{d t_j} \right) d_2 \left( a \otimes b \otimes a_1 \otimes \dots \otimes a_n \otimes f(u,v,t_1,\dots,t_n) \right), \qquad 1 \leq j \leq n.
\end{multlined}
\end{gather}
\label{lem:d2-diff-trading}
\end{lem}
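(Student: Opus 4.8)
The plan is to adapt, essentially verbatim, the argument of Lemma \ref{lem:d2-diff} to the present ``with supports'' setting, with the three Laurent expansions \eqref{eq:5.12b} of $f(u,v,t_1,\dots,t_n)$ — around $u-t_i$, around $v-t_i$, and around $u-v$ — playing the roles of the three expansions used there. The only vertex-algebra inputs are the translation-covariance identity $(L_{-1}a)_{(k)}c = -k\,a_{(k-1)}c$ and its companion $a_{(k)}(L_{-1}c) = L_{-1}(a_{(k)}c) + k\,a_{(k-1)}c$ (the commutator $[L_{-1},a_{(k)}] = (L_{-1}a)_{(k)}$); everything else is bookkeeping with Laurent coefficients, exactly as in the proofs of Lemmas \ref{lem:d2-diff} and \ref{lem:d1-diff-trading}.

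First I would record how the three families of coefficients in \eqref{eq:5.12b} transform under each of $\partial/\partial u$, $\partial/\partial v$, and $\partial/\partial t_j$. For $g = \partial f/\partial u$ one finds $g_{0ik} = (k+1)f_{0i,k+1}$, $g_{00k} = (k+1)f_{00,k+1}$, and $g_{1ik} = \partial_u f_{1ik}$ — the index shift occurring precisely in those expansions in which $u$ enters through a power of a difference; $\partial/\partial v$ is symmetric. For $\partial/\partial t_j$ with $1\le j\le n$, the expansion around $u-v$ is differentiated coefficientwise, while the expansion around $u-t_i$ (resp. $v-t_i$) is differentiated coefficientwise and, when $i=j$, acquires an additional $-(k+1)f_{0j,k+1}$ (resp. $-(k+1)f_{1j,k+1}$) term from differentiating $(u-t_j)^k$ (resp. $(v-t_j)^k$). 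These are the exact analogues of the relations $h_{12k}=-f'_{12k}$, $h_{13k}=-(f'_{13k}+(k+1)f_{13,k+1})$, $h_{23k}=-(f'_{23k}+(k+1)f_{23,k+1})$ from the proof of Lemma \ref{lem:d2-diff}.

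Granting these, the first identity follows by substituting $L_{-1}a$ into \eqref{eq:d2-general-def}: in the $f_{0ik}$- and $f_{00k}$-terms one applies $(L_{-1}a)_{(k)} = -k\,a_{(k-1)}$ and reindexes, producing exactly the $(k+1)$-shifted contributions of $d_2(a\otimes b\otimes a_1\otimes\cdots\otimes a_n\otimes \partial f/\partial u)$ with opposite sign, so these cancel; the remaining $f_{1ik}$-terms — in which $a$ sits untouched in the $0$-slot — combine with the $\partial_{t_0}f_{1ik}$-terms into $(L_{-1}^{(0)}+\partial/\partial t_0)$ applied to an element of $V^{\otimes (n+1)}\otimes \bJ^{n+1}_*$, which $\pi$ kills. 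The second identity is the mirror image; the one new point is that the $f_{00k}$-term now carries $a_{(k)}(L_{-1}b)$, which one rewrites as $L_{-1}(a_{(k)}b)+k\,a_{(k-1)}b$ so that the first summand joins the $\partial_{t_0}f_{00k}$-piece into a $(L_{-1}^{(0)}+\partial/\partial t_0)(\cdots)$ annihilated by $\pi$, while $k\,a_{(k-1)}b$ cancels the index-shift from $\partial f/\partial v$. For the third identity one substitutes $L_{-1}a_j$ into the $j$-th slot of \eqref{eq:d2-general-def}: the $f_{00k}$-term and the $i\neq j$ parts of the $f_{0ik}$- and $f_{1ik}$-terms are uniformly differentiated in $t_j$ and carry $L_{-1}a_j$ only as a spectator, so they reproduce $(L_{-1}^{(j)}+\partial/\partial t_j)$ of the corresponding pieces of $d_2(a\otimes b\otimes a_1\otimes\cdots\otimes a_n\otimes f)$; in the $i=j$ parts one rewrites $b_{(k)}(L_{-1}a_j) = L_{-1}(b_{(k)}a_j)+k\,b_{(k-1)}a_j$ (resp. with $a$ for $b$), the first summand again feeding the covariance term and the second cancelling the extra $-(k+1)$-shifted coefficient of $\partial f/\partial t_j$. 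Summing over $i$ yields exactly the right-hand side.

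I expect no new idea to be needed beyond those in the proofs of Lemmas \ref{lem:d2-diff} and \ref{lem:d1-diff-trading}; the only real obstacle should be the bookkeeping — keeping track, for each of the three expansion families and each derivative, of whether differentiation hits a coefficient function, a power of a difference, or both, and hence whether a given term cancels outright (the ``internal'' variables $u,v$) or is absorbed into a $D$-module covariance term $(L_{-1}^{(j)}+\partial/\partial t_j)(\cdots)$ (the ``external'' variables $t_j$).
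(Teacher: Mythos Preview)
Your proposal is correct and is precisely the approach the paper takes: it proves the lemma ``in the same way as above,'' i.e., by the bookkeeping argument of Lemma~\ref{lem:d2-diff} (and Lemma~\ref{lem:d1-diff-trading}) adapted to the three Laurent expansions \eqref{eq:5.12b}. Your tracking of which terms cancel by the $(L_{-1}a)_{(k)} = -k\,a_{(k-1)}$ reindexing and which are absorbed into $(L_{-1}^{(0)}+\partial/\partial t_0)(\cdots)\in\ker\pi$ is exactly right, including the handling of the $f_{00k}$-term in the second identity via $a_{(k)}(L_{-1}b)=L_{-1}(a_{(k)}b)+k\,a_{(k-1)}b$.
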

We also have as in Lemma \ref{lem:d2sym}:
\begin{lem} Let $a, b, a_1, \ldots a_n \in V$ and $f(u,v,t_1,\dots,t_{n}) \in \bJ^{n+2}_*$. Then
\[ \pi d_2\bigl(a \otimes b \otimes a_1 \otimes \dots \otimes a_n \otimes f(u,v,t_1,\dots,t_{n}) \bigr) +   
\pi  d_2\bigl(b \otimes a \otimes a_1 \otimes \dots \otimes a_n \otimes f(v,u,t_1,\dots,t_{n}) \bigr) = 0. \]
\label{lem:d2-sym-general}
\end{lem}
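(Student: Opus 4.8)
The strategy is to mirror the proof of Lemma \ref{lem:d2sym}, the $n=0$ case, since the only genuinely new feature here is the presence of the spectator tensor factors $a_1 \otimes \cdots \otimes a_n$ and the additional variables $t_1,\dots,t_n$, which play no active role in the computation. First I would set $g(u,v,t_1,\dots,t_n) = f(v,u,t_1,\dots,t_n)$ and record the relations between the Laurent coefficients of $f$ and $g$. Expanding in $u - t_i$, $v - t_i$ and $u - v$ exactly as in \eqref{eq:5.12b}, one finds $f_{0ik} = g_{1ik}$ and $f_{1ik} = g_{0ik}$ for each $i = 1,\dots,n$ (swapping the roles of $u$ and $v$ interchanges the two families of "off-diagonal" coefficients), while the "diagonal" coefficients are related by $f_{00k} = \sum_{j \geq 0} (-1)^{k+j} g_{00,k-j}^{(j)}$, obtained from $g(v,u,t_1,\dots,t_n) = \sum_j f_{00j}(v,t_1,\dots,t_n)(u-v)^j = \sum_j (-1)^j g_{00j}(u,t_1,\dots,t_n)(u-v)^j$ by multiplying by $(u-v)^{-k-1}$ and taking residues.

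Next I would substitute these relations into the formula \eqref{eq:d2-general-def} for $d_2(b \otimes a \otimes a_1 \otimes \cdots \otimes a_n \otimes g)$. The first sum of that expression, involving $a_{(k)} a_i$ with coefficients $g_{1ik} = f_{0ik}$, cancels against the negative of the second sum of $d_2(a \otimes b \otimes \cdots \otimes f)$; symmetrically the second sum involving $b_{(k)} a_i$ with $g_{0ik} = f_{1ik}$ cancels against the negative of the first sum of $d_2(a \otimes b \otimes \cdots \otimes f)$. For the remaining diagonal term, I would apply the skew-symmetry identity \eqref{eq:skew-symmetry} to rewrite $b_{(k)} a$ in terms of $a_{(k+j)} b$ and the powers of $L_{-1}$, and then — crucially — pass to the quotient $\tilde{C}_1$ via $\pi$, which kills the terms $\left(L_{-1}^{(0)} + \partial/\partial t_0\right)(\,\cdots\,)$ so that the $L_{-1}^j$ tail can be traded against the $j$-th derivative of $g_{00k}$, collapsing $\sum_k\sum_{j\geq 0}(-1)^k a_{(k+j)}b \otimes \cdots \otimes g_{00k}^{(j)}$ to $\sum_k a_{(k)} b \otimes \cdots \otimes f_{00k}$. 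This is precisely the computation carried out at the end of the proof of Lemma \ref{lem:d2sym}, and it is the step where the argument has real content rather than bookkeeping.

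The main obstacle, such as it is, is purely organizational: one must be careful that the skew-symmetry manipulation genuinely lands in $\tilde{C}_1$ (i.e. that the $L_{-1}$ appearing is always $L_{-1}^{(0)}$, acting on the factor that was $b$ in $b \otimes a \otimes \cdots$ and becomes $a$ after the swap, matching the relation defining $\tilde{C}_1$), and that the derivative traded in is $\partial/\partial t_0$, i.e. the derivative in the variable $u$ attached to that first slot. Since the $t_1,\dots,t_n$ and $a_1,\dots,a_n$ are untouched throughout, no new convergence or well-definedness issues arise beyond those already handled for $n=0$. I would therefore conclude by simply remarking that the proof is identical to that of Lemma \ref{lem:d2sym} after adjoining the inert factors, and that combining this symmetry with Lemma \ref{lem:d2-diff-trading} permits the definition of the complex \eqref{eq:homol.with.supp} in the quotient, exactly as in \ref{no:complex-def}.
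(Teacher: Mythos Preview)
Your proposal is correct and matches the paper's own treatment exactly: the paper introduces this lemma with the phrase ``We also have as in Lemma \ref{lem:d2sym}'' and gives no separate proof, since the argument is literally that of Lemma \ref{lem:d2sym} with the spectator factors $a_1 \otimes \cdots \otimes a_n$ and variables $t_1,\dots,t_n$ carried along inertly. Your identification of the Laurent-coefficient relations $f_{0ik} = g_{1ik}$, $f_{1ik} = g_{0ik}$ and the diagonal relation, together with the use of skew-symmetry \eqref{eq:skew-symmetry} followed by passage to $\tilde{C}_1$ via $\pi$ to trade $L_{-1}^{(0)}$ for $\partial/\partial t_0$, is precisely the content of the proof.
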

\begin{nolabel} \label{no:symmetric-group}
The complex \eqref{eq:homol.with.supp} carries an action of the symmetric group $S_n$. Indeed for $\sigma \in S_{n}$ a permutation we have
\begin{align*}
 \sigma \bigl( a_1 \otimes \dots \otimes a_n \otimes f(t_1,\dots,t_n) \bigr) &= a_{\sigma(1)} \otimes \dots \otimes a_{\sigma(n)} \otimes f(v, t_{\sigma(1)}, \dots, t_{\sigma(n)}),\\
 \sigma \bigl( a \otimes a_1 \otimes \dots \otimes a_n \otimes f(v, t_1,\dots,t_n) \bigr) &= a \otimes a_{\sigma(1)} \otimes \dots \otimes a_{\sigma(n)} \otimes f(v, t_{\sigma(1)}, \dots, t_{\sigma(n)}), \\
 \sigma \bigl( a \otimes b \otimes a_1 \otimes \dots \otimes a_n \otimes f(u,v, t_1,\dots,t_n) \bigr) &= a \otimes b \otimes a_{\sigma(1)} \otimes \dots \otimes a_{\sigma(n)} \otimes f(u,v, t_{\sigma(1)}, \dots, t_{\sigma(n)}). 
\end{align*}
It is immediate that the differentials $d_1$ and $d_2$ commute with this action, making the complex \eqref{eq:homol.with.supp} a complex in the category of $S_n$--modules.
\end{nolabel}
\begin{defn}[of chiral homology] Let $V$ be a vertex algebra and $n \geq 1$. 
\begin{enumerate}
\item Let $C_0^n$ be $V^{\otimes n} \otimes \bJ^n_*$.
\item Let $C_1^n$ be the quotient of $V^{\otimes (n+1)} \otimes \bJ^{n+1}_*$ by the linear span of the terms
\begin{equation}
\label{eq:c1-quotb2}
L_{-1} a_0 \otimes \dots \otimes a_{n} \otimes f(t_0, \dots,t_{n}) + a_0 \otimes
\dots \otimes a_n \otimes \frac{d}{d t_0} f(t_0,\dots,t_n).
\end{equation}
\item Let $C_2^n$ be the quotient of $V^{\otimes (n+2)} \otimes \bJ^{n+2}_*$ by the linear span of the terms
\begin{equation} \label{eq:c2-quot}
\begin{gathered}
a \otimes b \otimes a_1 \otimes \dots \otimes a_n \otimes f(u,v,t_1,\dots,t_n) + b \otimes a \otimes a_1 \otimes \dots \otimes a_n \otimes  f(v,u,t_1,\dots,t_n), \\
L_{-1} a \otimes b \otimes a_1 \otimes \dots \otimes a_{n} \otimes f(u,v ,t_1 \dots,t_{n}) + a \otimes b \otimes a_1 \dots \otimes a_n \otimes \frac{d}{d u} f(u,v,t_1,\dots,t_n).
\end{gathered}
\end{equation}
\end{enumerate}
Let 
\begin{equation}  C^n_2 \xrightarrow{d_2} C^n_1 \xrightarrow{d_1} C^n_0 \rightarrow 0, 
\label{eq:homology-def}
\end{equation}
be the complex with differentials $d_1$ and $d_2$ as in \eqref{eq:d1-general-def} and \eqref{eq:d2-general-def} above. For $i=0, 1$ we define $i^{\text{th}}$ \emph{chiral homology} of $V$ in
genus $1$, with coefficients in $V^{\otimes n}$ to be the homology
$H_i(C_\bullet^n)$. We denote this by $H^{\text{ch}}_i(V^{\otimes n})$.
\label{defn:chiral-homology-with-supports}
\end{defn}
\begin{nolabel}[Moving the points]\label{no:moving-points} The complex $C^n_\bullet$ is a complex of
$\mathbb{J}^n_*$--modules and a complex of
$k[\partial_1,\dots,\partial_n]$--modules. The action of $\mathbb{J}^n_*$ is by
multiplication on the right and the action of $\partial_j$ is given by
\begin{equation} \label{eq:quotient-moving}
\begin{aligned}
 \left( L_{-1}^{(j)} + \frac{d }{d t_j} \right) a_1 \otimes \dots \otimes a_n
\otimes f(t_1,\dots,t_n), && \text{in degree $0$}, \\
 \left( L_{-1}^{(j)} + \frac{d }{d t_j} \right) a_0 \otimes a_1 \otimes \dots \otimes a_n
\otimes f(t_0,t_1,\dots,t_n), && \text{in degree $1$}, \\
 \left( L_{-1}^{(j)} + \frac{d }{d t_j} \right) a \otimes b \otimes a_1 \otimes \dots \otimes a_n
\otimes f(u,v,t_1,\dots,t_n), && \text{in degree $2$},
\end{aligned}
\end{equation} 
where, in each case, $j = 1, \ldots, n$.
These two actions are compatible making
$C^n_\bullet$ a complex of $D$-modules. We let $H^{\ch}_i(V^{\otimes
n})_{\text{tr}}$ be the coinvariants by this action, namely the quotient of
$H^{\ch}_i(V^{\otimes n})$ by elements of the form \eqref{eq:quotient-moving}. 
\end{nolabel}
\begin{nolabel} In the definition of $H^{ch}_i \left( V^{\otimes n}
\right)_{\text{tr}}$ given in \ref{no:moving-points} above, the same result is obtained if we quotient the terms
\eqref{eq:quotient-moving} for $j = 1,\dots,n-1$ rather than $j = 1,\dots,n$.
Indeed let us write $\tilde{H}^{ch}_i(V^{\otimes n})_{\text{tr}}$ for the quotient of
$H^{\text{ch}}_i(V^{\otimes n})$ by \eqref{eq:quotient-moving}
for $j = 1,\dots,n-1$. We have obvious surjections $\pi_i:
\tilde{H}^{\text{ch}}_i\left( V^{\otimes n} \right)_{\text{tr}} \twoheadrightarrow
H^{\text{ch}}_i \left( V^{\otimes n} \right)_{\text{tr}}$. 

Let $\alpha = a_1 \otimes \dots \otimes a_{n} \otimes f(t_1,\dots,t_n, \tau) \in
C^n_0$. Put $g(t_0,\dots,t_n, \tau) = f(t_1,\dots,t_n, \tau)$ so that $g$ is
independent of $t_0$. Put $\beta = \omega \otimes a_1 \otimes \dots \otimes a_n
\otimes g \in C^n_1$. We have 
\begin{equation*}
d_1 \beta = \sum_{i = 1}^n a_1 \otimes \dots \otimes L_{-1} a_i \otimes \dots
\otimes a_n \otimes f(t_1,\dots,t_n, \tau) \in C^n_0.
\end{equation*}
Therefore in $\tilde{H}^{\text{ch}}_0(V^{\otimes n})_{\text{tr}}$ we
have 
\begin{multline*}
 a_1 \otimes \dots a_{n-1} \otimes L_{-1} a_n \otimes f = - \sum_{i = 1}^{n-1} a_1
\otimes \dots \otimes L_{-1} a_i \otimes \dots \otimes a_n \otimes f =  \\ 
\sum_{i=1}^{n-1} a_1 \otimes \dots \otimes a_n \otimes \frac{\partial}{\partial
t_i} f = - a_1 \otimes \dots \otimes a_n \otimes \frac{\partial}{\partial t_n}
f,
\label{eq:htilde0}
\end{multline*}
where in the last equality we used Lemma \ref{lem:4.sum-deriv}. Equivalently
\[ \left( L_{-1}^{(n)} + \frac{\partial}{\partial t_n} \right) \alpha = 0. \]It follows that
$\pi_0$ is an isomorphism. 

Similarly let $\alpha = a_0 \otimes \dots \otimes a_{n+1} \otimes
f(t_0,\dots,t_n, \tau) \in C^n_1$,
be a cycle, that is $d_1 \alpha = 0$. Put $g(u,v,t_1,\dots,t_n, \tau) = f(v,
t_1,\dots,t_n, \tau)$, so that $g$ is independent of $u$ and let 
$ \beta = \omega \otimes a_0 \otimes a_1 \otimes \dots \otimes a_n \otimes g
\in C^n_2$.
We have 
\[ d_2 \beta = - \sum_{i = 0}^{n-1} a_0 \otimes \dots \otimes L_{-1} a_i \otimes
\dots \otimes a_n \otimes f(t_0,\dots,t_n, \tau) \in C^n_1, \]
where we have used that $d_1 \alpha = 0$ so that the first term in the right hand side of
\eqref{eq:d2-general-def} vanishes.  Therefore in $\tilde{H}^{\text{ch}}_1\left(
V^{\otimes n} \right)_{\text{tr}}$ it holds
\begin{multline*}
 a_0 \otimes \dots a_{n-1} \otimes L_{-1} a_n \otimes f = - \sum_{i = 0}^{n-1} a_0
\otimes \dots \otimes L_{-1} a_i \otimes \dots \otimes a_n \otimes f =  \\ 
\sum_{i=0}^{n-1} a_0 \otimes \dots \otimes a_n \otimes \frac{\partial}{\partial
t_i} f = - a_0 \otimes \dots \otimes a_n \otimes \frac{\partial}{\partial t_n}
f.
\label{eq:htilde0}
\end{multline*}
It follows that $\pi_1$ is an isomorphism.
In the case of $n=1$ we obtain in particular
\begin{equation}
\label{eq:one-coinv-does-nothing}
H^{\text{ch}}_i(V) \cong H^{\text{ch}}_i(V)_{\text{tr}}, \qquad i = 0,1.
\end{equation}
\label{no:one-coinv-does-nothing}
\end{nolabel}
\begin{rem}\label{nolabel:A.C.comparison}
The homology groups $H^{\text{ch}}_i(V) \cong H^{\text{ch}}_i(V)_{\text{tr}}$ recover
the chiral homology (of degree $i=0, 1$) of the torus with coefficients in the
translation invariant chiral algebra $\cA_V$ associated with the conformal
vertex algebra $V$ \cite{beilinsondrinfeld,eh2018}.
To compare the construction above with that given in \cite[Proposition 7.13]{eh2018} we temporarily denote by $\mathring{\mathbb{J}^n_*}$ the ring of mermorphic elliptic functions $f(t_1, \ldots, t_n, \tau)$ with Jacobi dependence on the modular parameter $\tau$ and with possible poles at $t_i = t_j$ for $i \neq j$ and $t_i = 0$ for $i=1,\ldots n$. Due to the group structure of the elliptic curve $E_\tau =
\mathbb{C} / \Z + \Z \tau$ there is no essential difference between $\mathring{\mathbb{J}^n}_*$ and $\mathbb{J}^{n+1}_*$, indeed the map $g \mapsto f$ defined by $f(t, \ldots, t_n) = g(t_1-t,\ldots, t_n-t)$ is a ring isomorphism $\mathring{\mathbb{J}^n}_* \rightarrow \mathbb{J}^{n+1}_*$.


We specialize equations \eqref{eq:d1-general-def} and \eqref{eq:d2-general-def} to the case of $n=1$ and we identify $f(t_0, t_1) = g(t_0-t_1) \in \mathbb{J}^{2}_*$ with $g(t_0) \in \mathring{\mathbb{J}^1}_*$ and $f(u, v, t_1) = g(u-t_1, v-t_1) \in \mathbb{J}^{3}_*$ with $g(u, v) \in \mathring{\mathbb{J}^2}_*$ as above. With these identifications the differentials may be rewritten as:
\begin{align}\label{eq:d1.in.A}
d_1\left( f(t_0) \cdot a \otimes a_1 \right) = a_{(f)}a_1,
\end{align}
and
\begin{align}\label{eq:d2.in.A}
\begin{split}
d_2\left( f(u, v) \cdot a \otimes b \otimes a_1 \right)
= {} & \res_{v} f(u, v) a \otimes b(v) a_1 - \res_{v - u} f(v, u) a(v-u)b \otimes a_1 \\
&- \res_{v} f(v, u) b \otimes a(v)a_1.
\end{split}
\end{align}
In \cite[Proposition 7.13]{eh2018} we have written down a complex $A^\bullet$ which computes chiral homology $H^{\text{ch}}_i(E_\tau, \cA_V)$ for $i=0, 1$ for the elliptic curve $E_\tau = \mathbb{C} / \Z + \Z \tau$ with coefficients in the the translation invariant chiral algebra $\cA_V$ over $E_\tau$ associated with the conformal vertex algebra $V$. Comparison of $A^\bullet$ with $C_\bullet^{n=1}$, particularly with equations \eqref{eq:d1.in.A} and \eqref{eq:d2.in.A}, reveals that
\[
H_i^{\text{ch}}(V) \cong H^{\text{ch}}_i(X, \cA_V),
\]
for $i = 0, 1$. In fact there are two small differences between $A^\bullet$ and $C_\bullet^{n=1}$: the first is that coinvariants by the action of $S_2$ are omitted from the definition of $A^\bullet$ in \cite{eh2018}, but this does not alter the homology groups in degrees $0$ and $1$. The second difference is conventional, with $A^\bullet$ being a cohomological complex concentrated in non positive degrees and $C^n_\bullet$ a homological complex concentrated in non negative degrees.
\end{rem}
\begin{nolabel}[Connections] \label{no:connectionsa}
We extend scalars in the complex $C^n_*$ from the ring $\bJ_*^n$ of Jacobi forms
to the ring $\cF_n$ of meromorphic elliptic functions. Recall from \ref{no:jacobi-forms} that we have chosen a $\mathbb{J}^n_*$-module structure on
$\mathbb{J}^{n+k}_*$ and therefore on $\cF_{n+k}$. We consider $C^n_k
\otimes_{\bJ^n_*} \cF_{n+k}$. In addition to the action of
$\tfrac{\partial}{\partial t_i}$ for $i = 1,\dots,n$, these complexes carry a
canonical action of the vector field $D_\tau$ of \eqref{eq:vector-field}. Define $\nabla_\tau : C^n_0 \otimes_{\bJ^n_*} \cF_n \rightarrow C^n_0 \otimes_{\bJ^n_*} \cF_n$ by
\begin{equation}
\begin{split}
\nabla_\tau \left( a_1 \otimes \dots \otimes a_n \otimes f \right)
= {} & a_1 \otimes \dots \otimes a_n \otimes D f \\
&+ a_1 \otimes \dots \otimes a_{n-1} \otimes \left( \omega_{(-1) }a_n \right)  \otimes f \\ 
&- \sum_{k \geq 1} a_1 \otimes \dots \otimes a_{n-1} \otimes \left( \omega_{(2k-1)} a_n \right) \otimes g_{2k}(\tau) f \\
&- \sum_{i=1}^{n-1} \sum_{m \geq 0} a_1 \otimes \dots \otimes \left(
\omega_{(m+1)} a_i \right) \otimes \dots \otimes a_n \otimes \owp_{m+2}(t_n-t_i) f.
\end{split}
\label{eq:connection_c0}
\end{equation}
where $Df = \left(2 \pi i \tfrac{\partial}{\partial \tau} + \sum_{i=1}^{n-1} \zeta(t_n - t_i) \tfrac{\partial}{\partial t_i} \right) f$ is as defined in Lemma \ref{lem:deriv-sum}. 
Every function appearing on the right hand side of \eqref{eq:connection_c0} is elliptic
therefore  $\nabla_\tau \in \End \left(C_0^n \otimes_{\mathbb{J}^n_*} \cF_n \right)$ is well defined.

In the algebro-geometric setting, the chiral homology acquires the structure of a twisted $D$-module, indeed a vector bundle with projectively flat connection, over the moduli space of curves with marked points. In our setting this
connection is actually flat and it corresponds to the following
\begin{lem*} The operators $\nabla_\tau$ and $L_{-1}^{(j)} + \tfrac{d}{dt_j}$
satisfy, for $1 \leq j \leq n-1$, the same commutation relations as in
\eqref{eq:commutators-D}: 
\begin{equation}
\left[\left( L_{-1}^{(j)} + \frac{d}{d t_j}  \right), \nabla_\tau \right] = \owp_2(t_n-t_j) \left( L_{-1}^{(j)} + \frac{d}{d t_j}  \right).
\label{eq:non-flat-connection}
\end{equation}
\end{lem*}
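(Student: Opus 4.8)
The plan is to decompose $\nabla_\tau$ into the four summands appearing on the right-hand side of \eqref{eq:connection_c0}, say $\nabla_\tau = \nabla^{(1)} + \nabla^{(2)} + \nabla^{(3)} + \nabla^{(4)}$, one per displayed line, and to compute the commutator of each with $\partial_j := L_{-1}^{(j)} + \tfrac{d}{dt_j}$ separately, using throughout that $1 \leq j \leq n-1$, so the $j$-th and the $n$-th tensor slots are distinct and $t_j$ does not occur in $t_n - t_i$ for $i \neq j$. The only inputs I will need are the identity $\zeta'(t) = -\owp_2(t)$ (immediate from $\zeta = \wp_1 - g_2 t$ and $\wp_1' = -\wp_2$), the differential equations $\owp_k' = -k\,\owp_{k+1}$ from \ref{no:ellitptic-functions} and \eqref{eq:differential-equation}, and the Virasoro relation $[L_{-1}, L_m] = -(m+1)L_{m-1}$ of \ref{no:definition-vertex-algebra} together with $\omega_{(m+1)} = L_m$.

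First I would clear away the easy terms. Since $\nabla^{(2)}$ and $\nabla^{(3)}$ only modify the $n$-th tensor slot and multiply $f$ by a function of $\tau$ alone, and $j < n$, they commute with both $L_{-1}^{(j)}$ and $\tfrac{d}{dt_j}$, so $[\partial_j, \nabla^{(2)}] = [\partial_j, \nabla^{(3)}] = 0$. The term $\nabla^{(1)} = \id_{V^{\otimes n}} \otimes D$ commutes with $L_{-1}^{(j)}$, while on $\cF_n$ one finds $[\tfrac{d}{dt_j}, D] = \bigl(\partial_{t_j}\zeta(t_n - t_j)\bigr)\,\partial_{t_j} = \owp_2(t_n - t_j)\,\partial_{t_j}$ using $\zeta' = -\owp_2$; hence $[\partial_j, \nabla^{(1)}] = \owp_2(t_n - t_j)\,\tfrac{d}{dt_j}$. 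At this point the running total is $\owp_2(t_n-t_j)\,\tfrac{d}{dt_j}$, and the whole game is to extract the missing $\owp_2(t_n - t_j)\,L_{-1}^{(j)}$ from $\nabla^{(4)}$.

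For $\nabla^{(4)}$ I would write it as $-\sum_{i=1}^{n-1}\sum_{m\geq 0}\Phi_i(m)\,\mu_i(m)$, where $\Phi_i(m)$ inserts $\omega_{(m+1)}$ in the $i$-th slot and $\mu_i(m)$ is multiplication by $\owp_{m+2}(t_n - t_i)$. Only the $i = j$ summand fails to commute with $\partial_j$, so I focus on $-\sum_{m\geq 0}\Phi_j(m)\mu_j(m)$. The Virasoro relation gives $[L_{-1}^{(j)}, -\sum_m\Phi_j(m)\mu_j(m)] = \sum_{m\geq 0}(m+1)\,\Phi_j(m-1)\,\mu_j(m)$; its $m=0$ term is $\omega_{(0)} = L_{-1}$ in the $j$-th slot times $\owp_2(t_n - t_j)$, that is $\owp_2(t_n - t_j)\,L_{-1}^{(j)}$, and its $m\geq 1$ part reindexes to $\sum_{k\geq 0}(k+2)\,\Phi_j(k)\,\mu_j(k+1)$. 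On the other hand, differentiating $\owp_{m+2}(t_n - t_j)$ by means of $\owp_k' = -k\,\owp_{k+1}$ gives $[\tfrac{d}{dt_j}, -\sum_m\Phi_j(m)\mu_j(m)] = -\sum_{m\geq 0}(m+2)\,\Phi_j(m)\,\mu_j(m+1)$. These two ``higher'' sums cancel termwise, so $[\partial_j, \nabla^{(4)}] = \owp_2(t_n - t_j)\,L_{-1}^{(j)}$. Adding the four contributions yields $[\partial_j, \nabla_\tau] = \owp_2(t_n - t_j)\,\partial_j$, which is \eqref{eq:non-flat-connection}; this agrees with the commutator \eqref{eq:commutators-D} of $D_\tau$ with $\partial/\partial t_j$, as it must if $\nabla_\tau$ lifts $D_\tau$.

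I expect the main obstacle to be precisely the cancellation inside $\nabla^{(4)}$: one must line up the index shift $m \mapsto m - 1$ produced by $[L_{-1}, L_m] = -(m+1)L_{m-1}$ against the weight shift $\owp_{m+2} \mapsto \owp_{m+3}$ produced by $\tfrac{d}{dt_j}$, and verify that, after reindexing, the coefficient sequences match exactly and the surviving lowest-order term is $\owp_2(t_n - t_j)\,L_{-1}^{(j)}$ (which uses $\omega_{(0)} = L_{-1}$). Everything else is bookkeeping organized around the disjointness of the slots $j$ and $n$.
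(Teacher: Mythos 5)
Your proposal is correct and is exactly the argument the paper has in mind: its one-line proof cites precisely the two ingredients you use, namely $[\tfrac{d}{dt_j},D]=\owp_2(t_n-t_j)\tfrac{d}{dt_j}$ and $[L_{-1},L_m]=-(m+1)L_{m-1}$, and your term-by-term cancellation in $\nabla^{(4)}$ (the index shift from the Virasoro bracket against the weight shift $\owp_{m+2}\mapsto\owp_{m+3}$ from $\tfrac{d}{dt_j}$, with the surviving $m=0$ term $\owp_2(t_n-t_j)L_{-1}^{(j)}$ via $\omega_{(0)}=L_{-1}$) is the correct working-out of the details the paper leaves implicit.
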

\begin{proof}
Follows easily from $\left[ \tfrac{d}{dt_j},D \right] =  \owp_2(t_n - t_j) \tfrac{d}{d t_j}$ and $[L_{-1},L_{m}] = -(m+1)L_{m-1}$ for $m \geq 0$. 
\end{proof}

We let $H^{(0)}$ denote the endomorphism of $C_0^n$ defined by
\begin{align}\label{eq:H0.op.def}
H^{(0)} \left( a_1 \otimes \dots \otimes a_n \otimes f(t_1,\dots,t_n, \tau) \right) = d_1 \Bigl( \omega \otimes a_1 \otimes \dots \otimes a_n \otimes f(t_1,\dots,t_n, \tau) \zeta(t_n-t_0) \Bigr),
\end{align}
here abusing notation since $f(t_1,\dots,t_n, \tau) \zeta (t_0 - t_n) \notin \cF_{n+1}$ in general. A direct calculation shows that
\[
\nabla_\tau = 2 \pi i \frac{\partial}{\partial\tau} + \sum_{i=1}^{n-1}
\zeta(t_n - t_i) \left[ L_{-1}^{(i)} + \frac{\partial}{\partial t_i} \right]-
H^{(0)} .
\]
The non-ellipticity of $\zeta$ and of $\partial f / \partial \tau$ cancel due to \eqref{eq:q-derivatives}. In the right hand side of \eqref{eq:connection_c0} the terms $\zeta$ and $\partial f / \partial \tau$ are combined in the operator $D$.
\end{nolabel}
\begin{nolabel}
Similarly we define $\nabla_\tau$ on $C^n_1 \otimes_{\bJ^n_*} \cF_{n+1}$ by 
\begin{align}
\begin{split}
\nabla_\tau \left( a_0 \otimes\dots \otimes a_n \otimes f \right)
= {} &  a_0 \otimes \dots \otimes a_n \otimes Df (t_0,\dots,t_n,\tau) \\
&+ a_0 \otimes \dots \otimes \omega_{(-1)} a_n \otimes f - \sum_{k \geq 1} a_0 \otimes \dots \otimes \omega_{(2k-1)}a_n \otimes g_{2k}(\tau) f \\
&+  a_0 \otimes \dots \otimes a_n \otimes  \owp_2(t_n-t_0)   f \\ 
&- \sum_{i=0}^{n-1} \sum_{m \geq 0}  a_0 \otimes a_1 \otimes \dots \otimes
\omega_{(m+1)} a_i \otimes \dots \otimes a_n \otimes \owp_{m+2}(t_n-t_i) f \\ 
&- \sum_{i=1}^n \sum_k \omega \otimes a_1 \otimes \dots \otimes a_{0(k)} a_i \otimes \dots \otimes a_n \otimes f_{0ik}(t_1,\dots,t_n,\tau) \zeta(t_0 - t_n,\tau).
\end{split}
\label{eq:connection_2}
\end{align}

When $a_0 \otimes \dots \otimes a_n \otimes f$ is a cycle (that is if it lies in
the kernel of $d_1$) then the last term of \eqref{eq:connection_2} vanishes. The
remaining terms are manifestly elliptic so that the image of $\nabla_\tau$ is
indeed in $C_1^n \otimes_{\mathbb{J}^n_*} \cF_{n+1}$. 
For elements of $\ker d_1$ we have, in the same way as in
\eqref{eq:non-flat-connection} the commutation relations
\eqref{eq:commutators-D}:
\begin{equation}
\left[\left( L_{-1}^{(j)} + \frac{d}{d t_j}  \right), \nabla_\tau \right] = \owp_2(t_n-t_j) \left( L_{-1}^{(j)} + \frac{d}{d t_j}  \right).
\label{eq:non-flat-connection-3}
\end{equation}
for $1 \leq j \leq n-1$. Meanwhile for $j=0$ we have
\begin{equation} \label{eq:j0commut} 
\left[\left( L_{-1}^{(0)} + \frac{d}{d t_0}  \right), \nabla_\tau \right] =
\left( L_{-1}^{(0)} + \frac{d}{d t_0} \right) \owp_2(t_n - t_0). 
\end{equation}
It follows that $\nabla_\tau$ descends to the quotient by elements of the form
\eqref{eq:c1-quotb2}. 

In general we can write
\begin{equation}
\nabla_\tau =  2 \pi i \frac{\partial}{\partial\tau} + \sum_{i=0}^{n-1}
\zeta(t_n - t_i) \left[ L_{-1}^{(i)} + \frac{\partial}{\partial t_i} \right] -
H^{(1)} + \owp_2(t_n-t_0),
\label{eq:h1-generala}
\end{equation}
where $H^{(1)}$ is defined by
\begin{align}\label{eq:H1.op.def}
H^{(1)}\left( a_0 \otimes \dots \otimes a_{n} \otimes f(t_0,\dots,t_{n}, \tau) \right) &= d_2 \Bigl( \omega \otimes  a_0 \otimes \dots \otimes a_{n} \otimes f(t_0, t_1, \dots,t_{n}, \tau) \zeta(t - t_{n}, \tau) \Bigr),
\end{align}
where on the right hand side the function $f \cdot \zeta$ is considered as a function on the $n+3$ variables $t, t_0, t_1,\dots,t_{n}, \tau$. 
Once again the non-ellipticity of $\partial f / \partial \tau$ is corrected by the terms containing $\zeta$, making the elliptic function $Df$ appear in the right hand side of \eqref{eq:connection_2}. 

Notice that by commuting $\zeta(t_n-t_0)$ with $L_{-1}^{0} + \tfrac{d}{dt_0}$ in
\eqref{eq:h1-generala}, it is seen that $\nabla_\tau$ has the following simple
form in the quotient $C^n_1$:
\begin{equation}
\nabla_\tau =  2 \pi i \frac{\partial}{\partial\tau} + \sum_{i=1}^{n-1}
\zeta(t_n - t_i) \left[ L_{-1}^{(i)} + \frac{\partial}{\partial t_i} \right] -
H^{(1)} 
\label{eq:h1-generalb}
\end{equation}
\label{no:connections}
\end{nolabel}
\begin{nolabel}
We record here some particular cases of the operators $\nabla_\tau$ when $n=1$. Equation \eqref{eq:connection_c0} reduces to 
\begin{equation}
\nabla_\tau \bigl( a \otimes f \bigr) = a \otimes \frac{d}{d\tau} f  + \omega_{(-1)}a \otimes f - \sum_{k \geq
1} \omega_{(2k-1)}a \otimes g_{2k}(\tau) f .
\label{eq:nabla-0-n=1}
\end{equation}
Similarly, equation \eqref{eq:connection_2}, applied to a cycle $a \otimes b \otimes f$ reduces to
\begin{equation}
\begin{split}
\nabla_\tau \bigl(a \otimes b \otimes f(t_1-t_0,\tau) \bigr) &=  a \otimes b \otimes Df (t_1 - t_0, \tau)
+a \otimes \omega_{(-1)}b \otimes f(t_1-t_0,\tau) 
\\ & \quad - \sum_{k \geq 1} a \otimes \omega_{(2k-1)}b \otimes g_{2k}(\tau) f(t_1-t_0,\tau) 
\\ & \quad+ a \otimes b \otimes \owp_2(t_1 - t_0) f(t_1-t_0,\tau) 
\\ & \quad - \sum_{m \geq 0} \omega_{(m+1)}a \otimes b \otimes \owp_{m+2}(t_1 -
t_0) f(t_1-t_0,\tau).
\label{eq:nabla-1-n=1}
\end{split}
\end{equation}
\end{nolabel}
\begin{prop}
The connection operator induces well defined endormorphisms of $H^{\text{ch}}_k(V^{\otimes n})$ for $k=0,1$. 
\label{prop:dnabla-kills}
\end{prop}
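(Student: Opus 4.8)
The plan is to show that $\nabla_\tau$ descends from the chain level to homology, i.e.\ that it commutes with the differentials $d_1$ and $d_2$ up to terms that are already accounted for, and that it preserves the relevant subspaces. Concretely, the key point is the identity relating $\nabla_\tau$ to the ``Hamiltonians'' $H^{(0)}$ and $H^{(1)}$, namely \eqref{eq:h1-generala} and the companion formula just before it for degree $0$,
\[
\nabla_\tau = 2\pi i \frac{\partial}{\partial\tau} + \sum_{i} \zeta(t_n-t_i)\left[L_{-1}^{(i)} + \frac{\partial}{\partial t_i}\right] - H^{(k)} \;(+\;\owp_2(t_n-t_0)\text{ in degree }1),
\]
where $H^{(k)}(\xi) = d_{k+1}(\omega \otimes \xi \cdot \zeta(t-t_n))$. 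Since $d_1$ and $d_2$ are $\bJ^n_*$-linear and commute with each $L_{-1}^{(i)} + \partial/\partial t_i$ (Lemmas \ref{lem:d1-diff-trading}, \ref{lem:d2-diff-trading}), the only potentially problematic piece is the interaction of the $d_{k+1}(\omega \otimes -)$ operator with the differentials. Using $d_1 \circ d_2 = 0$ (Lemmas \ref{lem:square-zero}, \ref{lem:complex-supports}) one gets $d_1 H^{(1)} = d_1 d_2(\omega \otimes -) = 0$, and $H^{(0)} d_1 = d_1 d_2 (\omega \otimes -) $ on appropriate inputs, so the $H^{(k)}$-terms in $\nabla_\tau d_{k+1}$ and $d_{k+1}\nabla_\tau$ match up. The upshot is $d_1 \nabla_\tau = \nabla_\tau d_1$ on $C_1^n$, modulo the subspaces defining $C_0^n$ and $C_1^n$.

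In detail I would proceed as follows. \textbf{Step 1:} Check $\nabla_\tau$ is well defined on the chain groups $C_0^n$ and $C_1^n$, i.e.\ that it preserves the defining relations \eqref{eq:c1-quotb2} and \eqref{eq:c2-quot}. For the $L_{-1}^{(0)} + d/dt_0$ relation this is exactly \eqref{eq:j0commut}: the commutator $[L_{-1}^{(0)} + d/dt_0, \nabla_\tau] = (L_{-1}^{(0)} + d/dt_0)\owp_2(t_n - t_0)$ lies in the span of the relation, hence $\nabla_\tau$ descends (this is already asserted in \ref{no:connections}, so I would just cite it). For the $S_n$-symmetry relation in degree $2$, and the $L_{-1} + d/dt_0$ relation in $C_1^n$, one similarly checks $\nabla_\tau$ respects them, appealing to Lemmas \ref{lem:d2-diff-trading} and \ref{lem:d2-sym-general}; this part uses that the $\zeta$ and $\owp_m$ insertions in \eqref{eq:connection_2} are placed in a way compatible with the relations. \textbf{Step 2:} Show $d_1 \nabla_\tau = \nabla_\tau d_1 : C_1^n \to C_0^n$. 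Expand both sides using \eqref{eq:h1-generalb} and \eqref{eq:connection_c0}. The $2\pi i \partial/\partial\tau$ and $\sum \zeta(t_n-t_i)[L_{-1}^{(i)} + \partial/\partial t_i]$ pieces commute with $d_1$ by $\bJ^n_*$-linearity and Lemma \ref{lem:d1-diff-trading} (the $q$-derivative of the structure constants produces exactly the Eisenstein-series corrections, which are built into $d_1$'s source being over $\bJ^{n+1}_*$). The remaining identity to verify is $d_1 H^{(1)} = H^{(0)} d_1$, which follows from $d_1 \circ d_2 = 0$ together with the explicit description $H^{(1)}(\xi) = d_2(\omega \otimes \xi \cdot \zeta)$, $H^{(0)}(\eta) = d_1(\omega \otimes \eta \cdot \zeta)$, and the fact that $d_1(\omega \otimes d_1(\xi) \cdot \zeta) = d_1 d_2(\omega \otimes \xi \cdot \zeta)$ when $\xi$ is a cycle — the hypothesis $d_1 \xi = 0$ is precisely what makes the cross-terms in $d_2(\omega \otimes -)$ collapse. \textbf{Step 3:} Show $d_2 \nabla_\tau = \nabla_\tau d_2 : C_2^n \to C_1^n$ at the level of homology; here one works modulo $\ker d_1$ since $\nabla_\tau$ on $C_1^n$ was only defined (as a genuine map into $C_1^n \otimes \cF_{n+1}$) on cycles, and the image of $d_2$ consists of cycles. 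The computation mirrors Step 2, now using $d_2 \circ (\text{a }d_3\text{-type operator}) = 0$, which holds because the complex extends one step further (or, more elementarily, because $d_2$ applied to the $\omega$-insertion term is a Borcherds-identity consequence analogous to Lemma \ref{lem:complex-supports}); alternatively one notes that on $H_1$ the class of $d_2\beta$ is zero so there is nothing to check for well-definedness on $H_1$, and the only content is that $\nabla_\tau$ sends cycles to cycles and boundaries to boundaries.

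\textbf{Expected main obstacle.} The routine part is the $\bJ^n_*$-linearity and the $L_{-1} + d/dt$ bookkeeping. The delicate point is \textbf{Step 2}, tracking how the non-elliptic pieces ($\zeta$, $\partial/\partial\tau$, and the quasi-modular correction to $g_2$) conspire to cancel: one must verify that $d_1$ applied to $\nabla_\tau$ of a chain lands back in $C_0^n \otimes \cF_n$ (genuinely elliptic) and equals $\nabla_\tau d_1$ of that chain, which requires carefully matching the Eisenstein-series terms $-\sum_{k\ge 1}\omega_{(2k-1)}a_n \otimes g_{2k}f$ and the $\owp_{m+2}(t_n-t_i)$ insertions against the Laurent coefficients $f_{0ik}$ appearing in $d_1 H^{(1)}$. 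This is essentially the assertion that $H^{(0)}$ and $H^{(1)}$ are the ``same'' operator intertwined by $d_1$, which in turn is a repackaging of $d_1 d_2 = 0$ (Lemma \ref{lem:square-zero}) specialized to the insertion of $\omega$ against the non-periodic function $\zeta(t-t_n)$; the fact that $\zeta$ itself is not elliptic is exactly compensated by the failure of $d/d\tau$ to preserve ellipticity, which is the content of Lemma \ref{lem:deriv-sum} and the definition of $D$. Once this single intertwining identity is established the Proposition follows formally.
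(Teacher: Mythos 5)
You have correctly located the mechanism: the proposition ultimately rests on the Borcherds identity (equivalently, $d_1\circ d_2=0$) applied to the chain $\omega\otimes\alpha$ weighted by the non-elliptic function $\zeta(t-t_n)$, whose Laurent coefficients at the various points produce exactly the $\omega_{(-1)}$, $g_{2k}(\tau)\,\omega_{(2k-1)}$ and $\owp_{m+2}(t_n-t_i)\,\omega_{(m+1)}$ insertions of \eqref{eq:connection_c0}--\eqref{eq:connection_2}, and whose non-periodicity cancels against that of $\partial/\partial\tau$. The paper's proof is precisely this computation carried out directly: for arbitrary $\alpha\in C^n_1$ it expands $\nabla_\tau d_1\alpha$, applies the Borcherds identity, and exhibits the result as $d_1$ of an explicit element of $C^n_1\otimes\cF_{n+1}$ (see \eqref{eq:n=1-boundaries-deg0}); this shows $\nabla_\tau(\operatorname{im}d_1)\subseteq\operatorname{im}d_1$ without ever needing to identify the preimage with $\nabla_\tau\alpha$, and the degree-$1$ statements are proved the same way.

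The gap is in your central "matching" step. The two displayed identities, $d_1H^{(1)}=d_1d_2(\omega\otimes-)=0$ and $H^{(0)}d_1=d_1d_2(\omega\otimes-)$, cannot both hold (they would force $H^{(0)}d_1=0$), and the second is wrong as stated: $H^{(0)}(d_1\alpha)=d_1\bigl(\omega\otimes(d_1\alpha)\cdot\zeta(t_n-t_0)\bigr)$ is $d_1$ of a degree-$1$ chain, not $d_1d_2$ of anything. What is actually true is that $d_2\bigl(\omega\otimes\alpha\cdot\zeta(t-t_n)\bigr)$, computed by \eqref{eq:d2-general-def}, splits into (i) the terms contracting $a_0$ into the $a_i$, which give exactly $\omega\otimes(d_1\alpha)\cdot\zeta(t-t_n)$ because $\zeta(t-t_n)$ is independent of $t_0$, and (ii) the terms contracting $\omega$ into $a_0$ and the $a_i$, in which $\zeta(t-t_n)$ must be Laurent-expanded about $t=t_0$ and $t=t_i$, yielding the $\omega_{(\zeta)}$ and $\owp_{m+2}$ insertions of $\nabla_\tau$. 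Applying $d_1$ and using $d_1d_2=0$ --- which must itself be re-justified for the non-elliptic weight $f\cdot\zeta$, as in \ref{no:ins-well-defined} --- gives $H^{(0)}(d_1\alpha)=d_1(\text{the }\omega\text{-contraction terms})$, and this computation \emph{is} the proof; you have named it as the "delicate point" but not performed it. Two further slips: your parenthetical "the hypothesis $d_1\xi=0$ makes the cross-terms collapse" is vacuous where invoked (for a cycle $\xi$ one has $H^{(0)}(d_1\xi)=0$ trivially, whereas degree-$0$ well-definedness requires arbitrary $\alpha$, not cycles); and in Step 3 the claim that "there is nothing to check for well-definedness on $H_1$ because the class of $d_2\beta$ is zero" is backwards --- showing $\nabla_\tau(d_2\beta)\in\operatorname{im}d_2$ is exactly what must be checked, and the paper has no $C_3$ or $d_3$ to appeal to, so the only available route is the same Borcherds computation as in Lemmas \ref{lem:square-zero} and \ref{lem:complex-supports}.
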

\begin{proof}
We want to show that $\nabla_\tau$ preserves the spaces of cycles and
boundaries. In degree $0$ it is therefore enough to prove that $\nabla_\tau$
preserves boundaries. It is instructive to do the case $n=1$ first, the proof
for general $n$ follows the same pattern. So let $\alpha = a \otimes b \otimes f
\in C^{n=1}_1$. We have 
\[ d_1 \alpha = a_{(f)}b = \sum_k a_{(k)}b \otimes f_k, \quad \text{where }\quad f =
\sum_{k} f_k (t_0-t_1)^k, \]
and 
\[ \nabla_\tau d_1 \alpha = \sum_k a_{(k)}b \otimes \frac{d}{d\tau} f_k +
\sum_{k} \omega_{(-1)} a_{(k)}b \otimes f_k - \sum_{j \geq 1} \sum_{k}
\omega_{(2j-1)} a_{(k)}b \otimes g_{2j}(\tau) f_k. \]
Using Borcherds identity \eqref{eq:borcherds-def} we express the right hand side as 
\[
\sum_k a_{(k)}b \otimes \frac{d}{d\tau} f_k + \sum_k a_{(k)}
\omega_{(\zeta)}b \otimes f_k +  \left(L_{-1} a \right)_{(\zeta f)} b + \left(
\omega_{(1)}a  \right)_{(\owp_2 f)}b +  \sum_{m \geq 2} \left( \omega_{(m)} a
\right)_{(\wp_{m+1} f)} b
\]
The third term equals $a_{( \owp_2 f)}b -a_{(\zeta f')}b$ by
\eqref{eq:integration-parts}. Collecting we obtain
\begin{equation}
\nabla_\tau d_1 \alpha = d_1 \Bigl( a \otimes b \otimes Df + a \otimes
\omega_{(\zeta)}b \otimes f + a \otimes b \otimes 
\owp_2 f + \sum_{m \geq 1} \omega_{(m)}a \otimes b \otimes \owp_{m+1} f \Bigr).
\label{eq:n=1-boundaries-deg0}
\end{equation}
Therefore $\nabla_\tau$ preserves homology in degree $0$ in the case $n=1$. The
general case is proved in the same fashion by an application of Borcherds
identity.  Let $\alpha = a \otimes a_1 \dots \otimes a_n \otimes f(t_0,\dots,t_n) \in
C^n_1$. We have 
\[ 
d_1 \alpha = \sum_{i = 1}^n  \sum_k a_1 \otimes \dots \otimes a_{(k)}a_i \otimes \dots \otimes
a_n \otimes f_{0ik}(t_1,\dots,t_n). \]
We compute therefore
\begin{multline*}
\nabla_\tau d_1 \alpha = \sum_{i=1}^n \sum_k a_1 \otimes \dots \otimes a_{(k)}a_i
\otimes \dots \otimes a_n \otimes Df_{0ik} + \sum_{i=1}^{n-1} \sum_k a_1 \otimes \dots
\otimes a_{(k)}a_i \otimes \dots \otimes \omega_{(\zeta)}a_n \otimes f_{0ik} \\ 
+ \sum_k a_1 \otimes \dots \otimes a_{n-1} \otimes \omega_{(\zeta)}a_{(k)}a_n
\otimes f_{0nk} \\ 
- \sum_{i=1}^{n} \sum_{k} \sum_{j = 1}^{i-1} \sum_{m \geq 1} a_1 \otimes \dots
\otimes \omega_{(m)} a_j \otimes \dots \otimes a_{(k)}a_i \otimes \dots \otimes
a_n \otimes \overline{\wp}_{m+1}(t_n - t_j) f_{0ik} \\ 
- \sum_{i=1}^{n-1} \sum_{k} \sum_{m \geq 1} a_1 \otimes \dots
\otimes \omega_{(m)} a_{(k)}a_i \otimes \dots \otimes 
a_n \otimes \overline{\wp}_{m+1}(t_n - t_i) f_{0ik} \\ 
- \sum_{i=1}^{n-2} \sum_{k} \sum_{j = i+1}^{n-1} \sum_{m \geq 1} a_1 \otimes \dots
\otimes a_{(k)}a_i \otimes \dots \otimes  \omega_{(m)} a_j \otimes \dots \otimes
a_n \otimes \overline{\wp}_{m+1}(t_n - t_j) f_{0ik}.
\end{multline*}
We add and subtract the terms with $m = 0$ as in the last three sums and use
Borcherds identity in the third and fifth summand on the right hand side to obtain 
\begin{multline*}
\nabla_\tau d_1 \alpha = d_1 \Bigl( a \otimes a_1 \otimes \dots \otimes a_n
\otimes D f + a \otimes a_1 \otimes \dots \otimes a_{n-1} \otimes
\omega_{(\zeta)} a_n \otimes f \\ 
- \sum_{j=1}^{n-1}  \sum_{m \geq 1} a \otimes a_1 \otimes \dots \otimes
\omega_{(m)} a_j \otimes \dots \otimes a_n \otimes \owp_{m+1}(t_n - t_j) f \\  + 
\sum_{m \geq 1} \omega_{(m)} a \otimes a_1 \otimes \dots \otimes a_n \otimes
\owp_{m+1}(t_n-t_0) f 
 \Bigr). 
\end{multline*}
This proves that $\nabla_\tau$ preserves homology in degree $0$. The degree $1$
case is proved in the same way. 
\end{proof}
\begin{rem} \label{rem:extend-scalars} In order to define $\nabla_\tau$, it is
enough to extend scalars by adding $g_2(\tau)$ instead of all
holomorphic functions, ie. the connection $\nabla_\tau$ is well defined on $C^n_\bullet \otimes_{\bM_*}
\bQM_* \subset C^n_\bullet \otimes_{\bJ^n_*} \cF_*$. We can therefore consider the complexes
\begin{equation} 
 C^n_2 \otimes_{\bM_*} \bQM_* \xrightarrow{d_2} C^n_1 \otimes_{\bM} \bQM_*
\xrightarrow{d_1} C^n_0  \otimes_{\bM} \bQM_* \rightarrow 0,
\label{eq:complejo-plano}
\end{equation}
and the $\nabla_\tau$-- and $k[\partial_1,\dots , \partial_n]$--coinvariants of the corresponding homology. 
 We will denote these coinvariants by $H^{ch}_{i}(V^{\otimes n})_\nabla$.

Now let
$a_0,\dots,a_n$ be of homogeneous conformal weight and let $f \in \bJ^n_k$. Let
$\alpha = a_0 \otimes \dots \otimes a_n \otimes f \in C^n_1$ and put $\deg \alpha = k +
\sum_{i = 0}^n \deg a_i$.  This defines a grading on $C^n_1$ that we may call
the \emph{conformal grading}. We let $(C^n_1)_p$ consist of elements of
conformal degree $p$. Suppose $d_1 \alpha= 0$. Then every element in \eqref{eq:connection_2} lies in
$(C^n_1)_{\deg \alpha+2}$ except   
\[ a_0 \otimes \dots \otimes a_n \otimes \left( Df - \sum_{i=0}^n \deg a_i
g_2(\tau) f \right). \]
Let $g(t_0,\dots,t_n, \tau) = D(f) - \sum_{i=0}^n \deg a_i g_2(\tau) f$. It
follows from \eqref{eq:D-non-mod} and \eqref{eq:g2-non-mod} that $g$ is not modular but rather
satisfies 
\begin{equation} g\left( \frac{t_0}{c \tau + d}, \dots,\frac{t_n}{c \tau + d},
\frac{a \tau + b}{c \tau + d} \right) = \left( c \tau + d \right)^{k+2}
g(t_0,\dots,t_n, \tau) +  2 \pi i  c \deg \alpha \left( c \tau + d \right)^{k+1}
f(t_0,\dots,t_n, \tau). 
\label{eq:nabla-non-mod}
\end{equation}
\end{rem}
\begin{nolabel}[Specializing the curve] We can restrict the complex
$C^n_\bullet$ (or rather its extension of scalars to $\cF_n$) by fixing the points $t_1,\dots,t_n \in \mathbb{C}$ with $t_i \neq
t_j$ for $i \neq j$ and $\tau \in \mathbb{H}$. The homology of the resulting
complex is denoted by $H^{\ch}_i(V^{\otimes n}, t_1,\dots,t_n, \tau)$ and it
corresponds to the chiral homology of the elliptic curve $E_\tau = \mathbb{C} / \mathbb{Z} +
\tau \mathbb{Z}$ with insertions of the vacuum module $V$ at the points
$t_1,\dots,t_n$, as defined in \cite[\S 4.2.19]{beilinsondrinfeld}. Explicitly,
$C^n_0$ specializes to $V^{\otimes n}$, $C^n_1$ specializes to the quotient of
$V^{\otimes n+1} \otimes \Gamma_1$, where $\Gamma_1$ consists of meromorphic
functions $f(t_0)$, biperiodic with periods $1$ and $\tau$, with possible poles at
$t_0 = t_i + \mathbb{Z} + \tau \mathbb{Z}$, $i = 1,\dots,n$,  by the relation
\eqref{eq:c1-quotb2}. An similarly $C^n_2$ specializes to the quotient of
$V^{\otimes n+2} \otimes \Gamma_2$, where $\Gamma_2$ consists of meromorphic
functions $f(u,v)$, biperiodic, with possible poles at $u,v = t_i + \mathbb{Z} +
\tau \mathbb{Z}$, $i = 1,\dots,n$, and $u = v + \mathbb{Z} + \tau \mathbb{Z}$, by elements of
the form \eqref{eq:c2-quot}. 

The chiral homology $H^{\ch}_i (V^{\otimes n})$ induces a vector bundle with a
flat connection over the
moduli space of $n$-marked elliptic curves, its fiber over the curve $E_\tau$
with markings $t_1,\dots,t_n$ is $H^{\ch}_i(V^{\otimes
n},t_1,\dots,t_n,\tau)$. We remark that in the general case the connection is
not flat but rather projectively flat, but in the genus $1$ case (where there is
only one modular parameter) this connection is actually flat as we saw in
\eqref{eq:non-flat-connection} and \eqref{eq:non-flat-connection-3}. 

In the case $n=1$, as we saw in \ref{no:one-coinv-does-nothing} we have an
isomorphism $H^{\ch}_i(V) \simeq H^{\ch}_i(V)_{\text{tr}}$ therefore
$H^{\ch}_i(V, t_1,\tau)$ does not depend on $t_1$, we denote this homology by
$H^{ch}_i (E_\tau, V)$.  
\label{no:supports-as-bd}
\end{nolabel}
\begin{nolabel} We will be interested in the dual vector spaces
$H^{\ch}_\bullet(V^{\otimes n})^*$ and their flat sections as we
vary the points and the modular parameter.  Recall that we have
fixed a $\bJ_*^n$-module structure on $\bJ_*^m$ for all $m \geq n$ in
\ref{no:jacobi-forms} and that the complexes $C^n_\bullet$ are complexes of
$\bJ_*^n$-modules. Let $Z^n_i \subset C^n_i$ consist of cycles, that is $Z^n_i =
\ker d_i$. We will consider linear functionals 
\begin{equation}
 S^n_i \in \Hom_{\bJ^n_*} \Bigl( Z^n_i, \cF_{n} \Bigr), \qquad S^n_i(d_{i+1}
\alpha) = 0, \quad  \forall \:  \alpha \in C^{n}_{i+1}. 
\label{eq:si-domain-def}
\end{equation}
Thus,
$S^n_i$ gives a vector in the dual to chiral homology $H^{\ch}_i(V^{\otimes
n})^*$. 
 
We remark that these are not cohomology classes of the dual complex of
$C^n_\bullet$ as these linear functionals are only defined on cycles and not on
all chains.
\label{defn:definition-cohomologya}
\end{nolabel}
\begin{nolabel}[Moving the points] 
Recall that the complex $C^n_\bullet$ is a complex of
$k[\partial_1,\dots,\partial_n]$-modules. $\cF_{n}$ is a
$k[\partial_1,\dots,\partial_n]$-module where $\partial_i$ acts as
$\tfrac{\partial}{\partial t_i}$. Hence $\Hom_{\bJ^n_*}(Z^n_i,\cF_n)$ is a
$k[\partial_1,\dots,\partial_n]$--module where the action of $\partial_j$ is given
by
\begin{equation} 
\Bigl( \partial_j S^n_i \Bigr) \left( \alpha \right) = \frac{\partial}{\partial
t_j} S^n_i \left( \alpha \right) -  S^n_i \left( \left[ L_{-1}^{(j)} +
\frac{\partial}{\partial t_j} \right]\alpha \right), \qquad S_i^n
\in \Hom_{\bJ^n_*}\left( Z^n_i, \cF_n \right), \quad \alpha \in Z_{i}^{n}.
\label{eq:connection-tn}
\end{equation}
We denote by $H^{ch}_i (V^{\otimes n})^{* \text{tr}}$ the invariants of this action, namely
the subspace consisting on classes $S^n_i$
such that the right hand side of \eqref{eq:connection-tn} vanishes for every $1 \leq j \leq
n$.  
\label{defn:cohomology-def-2b}
\end{nolabel}
\begin{defn} A \emph{flat section of the dual of chiral homology}, of degree $i
= 0, 1$ is the cohomology class of a linear functional $S^n_i \in
H^{\ch}_i(V^{\otimes n})^{*\text{tr}}$ which in addition is flat
with respect to the dual connection $\nabla_\tau$, that is 
\begin{equation}
D_\tau S^n_i(\alpha) = S^n_i( \nabla_\tau \alpha), \qquad
\text{for all $\alpha \in Z^i_n$},
\label{eq:flatness-0}
\end{equation}
where $D_\tau$ is the derivation defined in \eqref{eq:vector-field}. 
We will denote by $H^{\ch}_i(V^{\otimes n})^{*\nabla}$ the group of such flat
sections.
\label{defn:chiral-cohomology-flat-sections}
\end{defn}
\begin{rem} 
In degree $0$ the space of flat sections of the dual of chiral homology agrees
with the space of \emph{conformal blocks of $V$ in genus 1} as defined by Zhu
\cite{zhu}. It is clear that we also have $H^{ch}_0(V^{\otimes n})^{*\nabla} =
\left(H^{ch}_0(V^{\otimes n})_\nabla\right)^*$. 
\label{rem:zhu-connection}
\end{rem}
\section{Degree $1$ conformal blocks on the torus} \label{sec:degree-1-conf}
In this section we define the degree $1$ analog of the notion of \emph{conformal
block} as in \cite{zhu}. It consists of a compatible system of flat sections of
the dual of the first chiral homology with supports on $n$ points, for every $n$. We prove the modular invariance of the degree $1$ conformal block in \ref{no:modular-deg1}. 

\begin{defn} Let $(V,\vac, \omega, Y(\cdot,z))$ be a vertex algebra. For each $n
\geq 1$, consider
the complex $C^n_\bullet$ computing the chiral homology with coefficients in
$V^{\otimes n}$. We let $Z^n_1 \subset C^n_1$ be the set of cycles, that is the
kernel of $d_1$. A system of maps $S_1 = \left\{ S^n_1 \right\}_{n \geq 1}$ 
\[ S^n_1: Z^n_1 \rightarrow \cF_n, \qquad (a_0,\dots,a_n, f) \mapsto S^n_1(a_0,\dots,a_n,f;  t_1,\dots,t_n,\tau), \] 
is said to satisfy the \emph{genus-one property} if the following properties are satisfied.
\begin{enumerate}
\item $S^n_1$ defines a flat section of the dual of the first chiral homology
in the torus:  $S^n_1 \in H^\text{ch}_1(V^{\otimes n})^{*\nabla}$. 
\item For $a_1,\dots,a_n \in V$ and $f \in \bJ^{n+1}_*$: 
\begin{equation}
 S^n_1(\vac, a_1,\dots,a_n,f;t_1,\dots,t_n,\tau) = 0
\end{equation}
\item Let $\alpha = a_0 \otimes \dots \otimes a_n \otimes f \in Z^n_1$ and let
$\sigma \in S_n$. Recall the action of the symmetric group on $C^n_\bullet$
defined in \ref{no:symmetric-group}. We have
\begin{equation}
S^n_1\left( \sigma(\alpha); t_{\sigma(1)},\dots,t_{\sigma(n)} \right) = S^n_1(\alpha;
t_1,\dots,t_n).
\label{eq:s-symmetric}
\end{equation}
\item Let $b,a_0,\dots,a_n \in V$ and $f = f(u,v,t_1,\dots,t_n,\tau) \in
\bJ^{n+2}_*$. 
We can view 
\begin{equation} \label{eq:vintwocomplexes}
 v :=  a_0 \otimes b \otimes a_1 \otimes \dots \otimes a_n \otimes f  \in
C^{n+1}_1.
\end{equation}
Then if $v$ is a cycle, that is $d_1 v = 0$, using the Laurent expansions of $f$ as in \eqref{eq:5.12b}. We have 
\begin{multline}
S^{n+1}_1 \Bigl( a_0,b,a_1,\dots, a_n, f; v,t_1,\dots,t_n, \tau \Bigr) = \\
\sum_{i = 1}^{n} \sum_k  S^{n}_1 \Bigl( a_0, a_1,\dots, b_{(k)}a_i,\dots,a_n,
f_{1i k}; t_1,\dots,t_n,\tau)   \Bigr) \zeta(v - t_i) \\ 
+\sum_{i = 1}^{n} \sum_k  S^{n}_1 \Bigl( a_0, a_1,\dots, b_{(k)}a_i,\dots,a_n,
f_{1i k-1}; t_1,\dots,t_n,\tau   \Bigr) \left(\wp_{2}(v - t_i) + g_2(\tau) \right) \\ 
+\sum_{i = 1}^{n} \sum_k  \sum_{m \geq 0} S^{n}_1 \Bigl( a_0, a_1,\dots,
b_{(k)}a_i,\dots,a_n, f_{1i k-m-2} ;t_1,\dots,t_n,\tau  \Bigr)  \wp_{m+3}(v - t_i) \\ 
- \sum_{i = 1}^{n-1} \sum_k  S^{n}_1 \Bigl( a_0, a_1,\dots, b_{(k)}a_i,\dots,a_n, f_{1i k};t_1,\dots,t_n,\tau   \Bigr) \zeta(t_n - t_i) \\ 
- \sum_{i = 1}^{n-1} \sum_k  S^{n}_1 \Bigl( a_0, a_1,\dots, b_{(k)}a_i,\dots,a_n, f_{1i k-1};t_1,\dots,t_n,\tau \Bigr) \left(\wp_{2}(t_n - t_i)+g_2(\tau) \right)  \\ 
- \sum_{i = 1}^{n-1} \sum_k  \sum_{m \geq 0} S^{n}_1 \Bigl( a_0, a_1,\dots, b_{(k)}a_i,\dots,a_n, f_{1i k-2};t_1,\dots,t_n,\tau  \Bigr) \wp_{m+3}(t_n - t_i)  \\ 
+  \sum_k  S^{n}_1 \Bigl( a_0, a_1,\dots, b_{(k)}a_n, f_{1n k+1};t_1,\dots,t_n,\tau \Bigr)  \\ 
- \sum_{k} \sum_{j \geq 1} S^n_1 \Bigl( a_0,a_1,\dots,b_{(k)}a_n, f_{1n,k+1-2j};t_1,\dots,t_n,\tau \Bigr) g_{2j}(\tau)
.
\label{eq:insertion-conf-2}
\end{multline}
\end{enumerate}
\label{defn:conf-1-blocks}
\end{defn}
\begin{nolabel} We prove now that the right hand side of \eqref{eq:insertion-conf-2} is
well defined. Consider 
\[ \alpha_s = a_0 \otimes b \otimes a_1 \otimes \dots \otimes a_n \otimes
g_s(u,v,t_1,\dots,t_n), \]
where $g_s(u,v,t_1,\dots,t_n) = f(u,v,t_1,\dots,t_n)\left( \zeta(v - t_n) +
\zeta(s-v) \right)$ is a family of meromorphic functions parame\-trized by $s$.
The functions $g_s$ are not in $\cF_{n+2}$ since they have poles at $v = s$ in
addition to the diagonal divisors, and are not elliptic with respect to the
variable $t_n$. As such $\alpha$ cannot be viewed as an
$s$-family of elements in $C^n_2$. However, we may formally define $\beta_s = d_2
\alpha_s$ using \eqref{eq:d2-general-def}. It follows in the same way as in
Lemma \ref{lem:complex-supports} that $d_1 \beta_s = 0$. Note however that
$\beta_s$ is not a cycle in $C^{n}_1$ since the functions involved have poles at
$t_i = s$ for $i=1,\dots,n$ in addition to the diagonal divisor (there is no
pole at $t_0 = s$ due to our assumption that $v$ defined by
\eqref{eq:vintwocomplexes} is a cycle).
Equation \eqref{eq:insertion-conf-2} is equivalent to 
\begin{equation} S^{n+1}_1 \Bigl(a_0,b,a_1,\dots,a_n,f;v,t_1,\dots,t_n, \tau \Bigr) = S^n_1
\Bigl( \beta_s; t_1,\dots,t_n,\tau \Bigr)|_{s = v}.
\label{eq:insertion-d2-exp}
\end{equation}
However, the $s$-dependent part of 
$\beta_s$ can be written as a linear combination of elements of the form $\gamma
\otimes h \cdot \wp_k(t_i - s)$ where $\gamma \in V^{\otimes (n+1)}$, $h \in
\cF_{n+1}$, $k \geq 1$ and $1 \leq i \leq n$. It follows by the
$\cF_{n}$--linearity of the differential that each coefficient of $\wp_k(t_i
-s)$ is a cycle in $C^n_1$ and therefore in the domain of $S^n_1$. Thus, 
the right hand side of \eqref{eq:insertion-d2-exp} is well defined.

We now prove that the right hand side of \eqref{eq:insertion-conf-2} is an elliptic
function in 
$\cF_{n+1}$. Indeed every term except the first and the fourth are manifestly
elliptic. Notice that the sum of these two terms is also elliptic for the
variables $t_i$, $1 \leq i \leq n-1$. Denote by $S = S(v,t_1,\dots,t_n)$ the right hand side
of \eqref{eq:insertion-conf-2}. It follows that 
\begin{equation} \label{eq:sniselliptic}
\begin{multlined}
S(v,t_1,\dots,t_n + \tau) - S(v,t_1,\dots,t_n) = - S(v + \tau, t_1,\dots,t_n) + S(v,t_1,\dots,t_n) =  \\ 
 2 \pi i \sum_{i = 1}^{n} \sum_k  S^{n}_1 \Bigl( a_0, a_1,\dots, b_{(k)}a_i,\dots,a_n,
f_{1i k}(t_0,\dots,t_n,\tau)   \Bigr) .
\end{multlined}
\end{equation}
Consider the vector $v$ defined by \eqref{eq:vintwocomplexes} but viewed as a
vector in $C^{n}_2$. We compute 
\[ d_2 v = \sum_{i = 1}^n  \sum_k a_0 \otimes \dots \otimes b_{(k)}a_i \otimes \dots
\otimes a_n \otimes f_{1ik}(t_0,\dots,t_n,\tau), \]
where we used that $d_1 v = 0$ when viewed as an element of $C^{n+1}_1$. Thus,
the right hand side of \eqref{eq:sniselliptic}
equals $2 \pi i S^n_1 \left( d_2 v \right) = 0$.
\label{no:ins-well-defined}
\end{nolabel}
\begin{nolabel}
Given such a system, for each $a_0,\dots,a_n \in V$ and $f \in \bJ^{n+1}_*$, and a fixed $\tau \in \mathbb{H}$, we may view \[S^n_1(a_0,\dots,a_n, f; t_1,\dots,t_n, \tau),\] as an $n$-variable function on the complex torus $\mathbb{C}/\mathbb{Z} + \mathbb{Z}\tau$. These are the higher analogs of the \emph{correlation functions} on the torus defined by Zhu in \cite{zhu} in degree $0$. 

By definition it is clear that if $\left\{ S^n_1 \right\}$ and $\left\{ S'^n_1 \right\}$ satisfy the genus-one property then so does $\left\{ \alpha S^{n}_1 + S'^n_1 \right\}$ for any $\alpha \in \mathbb{C}$. 
\begin{defn*}The linear space of families $S_1 = \{S_1^n\}$ satisfying the genus one property is
called the space of \emph{degree $1$ conformal blocks on the torus}. 
\end{defn*}
\label{defn:1-conformal-block}
\end{nolabel}
\begin{rem}Notice that all the terms in the right hand side of \eqref{eq:insertion-conf-2}
involving $g_2(\tau)$ cancel each-other and can be therefore omitted from the
identity. 
\label{rem:g2-doesnt-count}
\end{rem}
\begin{nolabel}
Let $f(t_0,t,t_1,\dots,t_n, \tau) \in \bJ^{n+2}_*$ and consider its Laurent expansion near $t-t_1$:
\[ f = \sum_{k} f_k(t_0,t_1,\dots,t_n, \tau) (t-t_1)^k. \]
It follows from \eqref{eq:insertion-conf-2} 
that the following holds
\begin{equation}
\int_C S^{n+1}_1 \Bigl( a_0, a,a_1,\dots,a_n, f; t,t_1,\dots,t_n, \tau \Bigr)dt = \sum_{k} S^n_1 \Bigl( a_0, a_{(k)}a_1,a_2,\dots,a_n, f_{k}; t_1,\dots,t_n,\tau \Bigr),
\label{eq:integral}
\end{equation}
where $C$ is a small countour around $t_1$, not including any of the points
$t_i$, $i \neq 1$. This is the degree $1$ analog of \cite[(4.1.4)]{zhu}. To
prove \eqref{eq:integral} notice that from the right hand side of
\eqref{eq:insertion-conf-2} only the $i=1$ summand of the first term has a non-trivial residue at $s
= t_1$. 
\label{no:integral}
\end{nolabel}
\begin{nolabel}
Consider \eqref{eq:insertion-conf-2} with $b = \omega$ and $f =
f(t_0,\dots,t_n,\tau) \in \bJ^{n+1}_*$, that is, independent of $s$. It follows from \eqref{eq:flatness-0} that 
\begin{equation}
\begin{multlined}
S^{n+1}_1 \Bigl( a_0, \omega, a_1,\dots,a_n, f; t_1,\dots,t_n \Bigr) = \left( 2 \pi i \frac{d}{d \tau} - \sum_{i=1}^{n-1} \zeta(s-t_i) \frac{d}{d t_i} \right)  S^{n}_1 \Bigl( a_0,\dots,a_n, f; t_1,\dots,t_n,\tau \Bigr) \\ 
- S^{n}_1 \left( a_0,\dots,a_n, \left(  2 \pi i \frac{d}{d \tau} - \sum_{i=0}^{n-1} \zeta(s-t_i) \frac{d}{dt_i} \right)  f ; t_1,\dots,t_n,\tau \right) \\ 
- S^n_1 \Bigl( a_0,\dots,a_n, \left( \wp_2(s-t_0) + g_2(\tau) f \right);
t_1,\dots,t_n,\tau \Bigr).
\end{multlined}
\label{eq:new-4.0.1}
\end{equation}
This is a degree $1$ analog of \cite[(4.1.5)]{zhu}. 
\label{no:ins-diff-eq}
\end{nolabel}
\begin{thm}\label{no:modular-deg1}  Let $\{S^n_1\}$ satisfy the genus--one
property and let 
\[ 
\sigma = \begin{pmatrix}
a & b \\ c & d 
\end{pmatrix} \in SL(2, \mathbb{Z}). 
\]
Define $\{\sigma(S)^n_1\}$ as follows. For $a_i \in V$, $i=0,\dots,n$ of conformal
weight $\Delta_i$ and $f  \in \bJ^{n+1}_k$. Put
\begin{equation}
\sigma(S)^n_1\left( a_0,\dots,a_n,f;t_1,\dots,t_n,\tau \right) = \left( c \tau +
d\right)^{-k -\sum_{i=0}^n \Delta_i} 
 S^n_1\left(
a_0,\dots,a_n,f; \frac{t_1}{c \tau +d},\dots,\frac{t_n}{c \tau +d},
\frac{a \tau + b}{c \tau +d} \right). 
\label{eq:sl2action}
\end{equation}
Then $\{ \sigma(S^n_1)\}$ satisfies the genus--one property. 
\end{thm}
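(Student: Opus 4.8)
The plan is to verify the four conditions of Definition \ref{defn:conf-1-blocks} directly for the transformed system $\{\sigma(S)^n_1\}$, by tracing how each piece of data transforms under the action of $\sigma \in SL(2,\mathbb Z)$. The transformation rules we need are: the $SL(2,\mathbb Z)$ actions on $\mathbb J^n_*$ (a Jacobi form of weight $k$ picks up a factor $(c\tau+d)^k$), equation \eqref{eq:g2-non-mod} for $g_2$, equation \eqref{eq:zeta-not-per} combined with the modular behaviour of $\zeta$ (i.e. $\zeta\!\left(\tfrac{t}{c\tau+d},\tfrac{a\tau+b}{c\tau+d}\right) = (c\tau+d)\zeta(t,\tau) + 2\pi i c\, t$, which follows from the definition $\zeta = \wp_1 - g_2 t$ and the weight-$1$-up-to-correction behaviour of $\wp_1$), the modular weights of $\wp_k$ ($k\ge 2$), and crucially equation \eqref{eq:D-non-mod} for the operator $D$. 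The key bookkeeping observation is that $a_0\otimes\cdots\otimes a_n\otimes f$ lies in $(C^n_1)_p$ with $p = k + \sum_{i=0}^n \Delta_i$, and this ``conformal degree'' is exactly the total power of $(c\tau+d)$ appearing in \eqref{eq:sl2action}; the differentials $d_1, d_2$ preserve conformal degree, so the normalization is compatible with taking cycles and with the insertion formula.

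Conditions (2) and (3) are immediate, since \eqref{eq:sl2action} only rescales the value of $S^n_1$ by a scalar depending on weights (which are $S_n$-invariant and vanish appropriately when $a_0 = \vac$, as $\Delta(\vac) = 0$ and $S^n_1(\vac,\dots) = 0$). For condition (4), the insertion formula \eqref{eq:insertion-conf-2}, I would substitute the definition \eqref{eq:sl2action} on both sides with the substituted arguments $t_i \mapsto t_i/(c\tau+d)$, $\tau \mapsto (a\tau+b)/(c\tau+d)$, and then match factors of $(c\tau+d)$ term by term: each Laurent coefficient $f_{1ik}$ of $f\in\mathbb J^{n+2}_*$ has weight $k + (\text{weight of }f)$ shifted appropriately, each $\wp_{m}(v-t_i)$ or $\wp_m(t_n-t_i)$ contributes $(c\tau+d)^m$ while the shift $v - t_i \mapsto (v-t_i)/(c\tau+d)$ is absorbed, and the non-modular pieces of $\zeta$ (the $2\pi i c\, t$ correction terms) cancel among themselves — precisely as in Remark \ref{rem:g2-doesnt-count}, where the $g_2(\tau)$ terms were observed to cancel. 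The arithmetic of weights works out because both sides have the same conformal degree (the insertion formula is degree-homogeneous), so the overall power of $(c\tau+d)$ factors out consistently.

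The main obstacle is condition (1): showing $\sigma(S)^n_1$ is again a flat section, i.e.\ that it satisfies the flatness equation \eqref{eq:flatness-0} with respect to $\nabla_\tau$, and descends to chiral homology. The subtlety is that $\tfrac{\partial}{\partial\tau}$ does not transform tensorially — $\sigma$ does not commute with $\tfrac{\partial}{\partial\tau}$ as an operator, only with $D_\tau$ after the correction by $\zeta$-vector fields, cf. \ref{no:vector-field}. Here I would use the non-modularity identity \eqref{eq:D-non-mod}: applying $D_\tau$ to $\sigma(S)^n_1(\alpha)$ and expanding, the anomalous term $2\pi i\, k\, c(c\tau+d)^{k+1} f$ produced by $D$ acting on the substituted $f$ is exactly compensated by (a) the anomalous term $2\pi i c\, \Delta_i$-type contributions coming from commuting $\zeta(t_n - t_i)$ past the substitution, and (b) the derivative of the scalar prefactor $(c\tau+d)^{-k - \sum\Delta_i}$ with respect to $\tau$, which produces a matching $-(k + \sum\Delta_i)\cdot \tfrac{c}{c\tau+d}$ factor. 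Matching these requires using the precise form \eqref{eq:connection_2}–\eqref{eq:h1-generala} of $\nabla_\tau$ on $C^n_1$, together with \eqref{eq:nabla-non-mod} from Remark \ref{rem:extend-scalars}, which already records exactly this anomaly: $g = Df - \sum\Delta_i\, g_2(\tau) f$ satisfies \eqref{eq:nabla-non-mod}. So in fact the computation in \eqref{eq:nabla-non-mod} is the heart of the matter, and the proof of (1) amounts to feeding \eqref{eq:nabla-non-mod} and \eqref{eq:g2-non-mod} into \eqref{eq:flatness-0} and checking the anomalies cancel; the descent to homology follows because $\sigma$ acts compatibly with $d_1, d_2$ (which are $SL(2,\mathbb Z)$-equivariant once the Jacobi-form structure is used).
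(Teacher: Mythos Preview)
Your proposal is correct and follows essentially the same approach as the paper: both arguments verify the genus--one property pointwise by tracking the conformal degree $p = k + \sum_i \Delta_i$ through each construction, use the degree-homogeneity of $d_1$ and $d_2$ to get vanishing on coboundaries, invoke \eqref{eq:nabla-non-mod} to show that the anomaly from $D$ acting on the substituted $f$ is exactly cancelled by the $\tau$-derivative of the prefactor $(c\tau+d)^{-p}$, and observe that in the insertion formula the non-modular contributions from the $\zeta$ terms cancel pairwise. One small correction: your appeal to Remark \ref{rem:g2-doesnt-count} for the $\zeta$-cancellation is not quite apt (that remark concerns the $g_2$ terms); the relevant observation, as in the paper, is that the first and fourth groups of terms in \eqref{eq:insertion-conf-2} combine so that only modular combinations of $\zeta$'s survive.
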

\begin{proof}
It is clear that $\sigma(S)^n_1$ satisfies \eqref{eq:s-symmetric}. 
Let now $\alpha = a_0 \otimes \dots \otimes a_n \otimes f \in Z^n_1$ with $f \in
\bJ^{n+1}_k$ and $a_0,\dots,a_n$ of homogeneous conformal weight. We put $\deg \alpha = k + \sum_{i=0}^n \deg a_i$. 
\begin{multline*} \sigma(S)^n_1 \left( L_{-1} a_0,\dots, a_n, f; t_1,\dots,t_n,
\tau  \right) \\
\begin{split}
&= \left( c \tau + d \right)^{-\deg \alpha - 1}
S^n_1 \left(L_{-1} a_0, \dots a_n, f; \frac{t_1}{c \tau +
d},\dots,\frac{t_n}{c\tau +d} , \frac{a  \tau + b}{c\tau +d}\right) \\
&= 
- \left( c\tau + d \right)^{-\deg \alpha - 1} S^n_1 \left(
a_0,\dots,a_n, \frac{\partial}{\partial t_0}f; \frac{t_1}{c\tau
+d},\dots,\frac{t_n}{c\tau +d}, \frac{a \tau +b}{c \tau +d} 
\right) \\ 
&= - \sigma(S)^n_1\left( a_0,\dots,a_n, \frac{\partial}{\partial t_0} f;
t_1,\dots,t_n, \tau \right),
\end{split}
\end{multline*}
so that $\sigma(S)^n_1$ vanishes on elements as in \eqref{eq:c1-quotb2} and it
is therefore well defined on $Z^n_1$. 

Next, we check that $\sigma(S)^n_1$ vanishes on coboundaries. 
 Let $a,b, a_1, \dots,a_n \in V$ be homogeneous elements, and $f = f(u,v,t_1,\dots,t_n,\tau)
\in \bJ^{n+2}_p$. 
Let $\alpha = a \otimes b \otimes a_1 \dots \otimes a_n \otimes f \in C^{n}_2$.
We we have
\begin{equation}
\begin{aligned}
\sigma(S)^n_1 \left( d_2\alpha;t_1,\dots,t_n,\tau \right) &= \sum_{i=1}^n \sum_k \sigma(S)^n_1 \left( a,
a_1,\dots,b_{(k)}a_i, \dots,a_n, f_{1ik};  t_1,\dots,t_n, \tau \right) \\ &\quad - 
\sum_{i=1}^n  \sum_k \sigma(S)^n_1 \left( b, a_1,\dots,a_{(k)}a_i, \dots,a_n, f_{0ik};
t_1\dots,t_n, \tau \right)\\  &\quad - \sum_k \sigma(S)^n_1 \left( a_{(k)}b,
a_1,\dots,a_n, f_{00k}; t_1,\dots,t_n, \tau \right). 
\label{eq:modularity1}
\end{aligned}
\end{equation}
Noting that $f_{1ik} \in \bJ^{n+1}_{p+k}$ and $\deg b_{(k)}a_i = \deg b + \deg
a_i - k - 1$ we have
that the first term on the right hand side of \eqref{eq:modularity1} equals 
\[ 
\left( c \tau + d \right)^{1 -p - \deg b - \deg a - \sum_{i=1}^n \deg a_i}
S^n_1 \left( a, a_1,\dots,b_{(k)}a_i, \dots,a_n, f_{1ik}; \frac{t_1}{c \tau +d
},\dots, \frac{t_n}{c \tau + d}, \frac{a \tau +b}{c \tau +d} \right). 
\]
The other two terms are treated similarly. It follows that
\[ \sigma(S)^n_1 \left( d\alpha; t_1,\dots,t_n, \tau \right)   = 
\left( c \tau + d \right)^{1-p - \deg b - \deg a - \sum_{i=1}^n \deg a_i}
S^n_1 \left( d_2 \alpha; \frac{t_1}{c \tau +d
},\dots, \frac{t_n}{c \tau + d}, \frac{a \tau +b}{c \tau +d} \right) = 0, \]
therefore $\sigma(S)^n_1$ defines a vector in $H^{\text{ch}}_1(V^{\otimes n},
t_1,\dots,t_n)^*$. 

We now check that $\sigma(S)^n_1$ vanishes on elements as in
\eqref{eq:quotient-moving}. We have
for $\alpha = a_0 \otimes \dots \otimes a_n \otimes f \in Z^n_1$ with $f \in
\bJ^{n+1}_k$
\begin{multline*} \sigma(S)^n_1 \left( a_0,\dots,L_{-1}a_i, \dots, a_n, f; t_1,\dots,t_n,
\tau  \right) \\
\begin{split}
&= \left( c \tau + d \right)^{-\deg \alpha - 1}
S^n_1 \left(a_0, \dots, L_{-1}a_i,\dots, a_n, f; \frac{t_1}{c \tau +
d},\dots,\frac{t_n}{c\tau +d} , \frac{a  \tau + b}{c\tau +d}\right) \\
&= 
\left( c\tau + d \right)^{-\deg \alpha -1}\frac{\partial}{\partial t_i}  S^n_1 \left(
a_0,\dots,a_n, f; \frac{t_1}{c\tau
+d},\dots,\frac{t_n}{c\tau +d}, \frac{a \tau +b}{c \tau +d} 
\right) \\ 
& \quad - \left( c\tau + d \right)^{-\deg \alpha -1} S^n_1 \left(
a_0,\dots,a_n, \frac{\partial}{\partial t_i}f; \frac{t_1}{c\tau
+d},\dots,\frac{t_n}{c\tau +d}, \frac{a \tau +b}{c \tau +d} 
\right) \\ 
&= \frac{\partial}{\partial t_i} \sigma(S)^n_1\left( a_0,\dots,a_n, f;
t_1,\dots,t_n, \tau \right) \\ 
& \quad - \sigma(S)^n_1\left( a_0,\dots,a_n, \frac{\partial}{\partial t_i} f;
t_1,\dots,t_n, \tau \right).
\end{split}
\end{multline*}
So that $\sigma(S)^n_1$ gives a well defined vector on
$H^{\text{ch}}_1(V^{\otimes n})^{*\text{tr}}$. 

We now check that $\sigma(S)_1^n$ satisfies \eqref{eq:flatness-0}. Let
$a_0,\dots,a_n \in V$ be homogeneous elements and $f = f(t_0,\dots,t_n, \tau)
\in \bJ_k^{n+1}$. Put $\alpha = a_0 \otimes \dots \otimes a_n \otimes f \in
Z^n_1$.  
We have 
\begin{equation}\label{eq:modularity-nabla0}
\begin{split}
\frac{d}{d \tau} \sigma(S)_1^n (\alpha; t_1,\dots,t_n, \tau) &=
\frac{d}{d \tau} \left( c \tau + d \right)^{- \deg
\alpha} S_1^n\left(\alpha; \frac{t_1}{c \tau + d}, \dots, \frac{t_n}{c \tau + d},
\frac{a \tau + b}{c \tau +d} \right) \\
&= - c \deg \alpha  \left( c \tau + d \right)^{-1 - \deg \alpha} S_1^n\left( 
\alpha; \frac{t_1}{c \tau + d}, \dots, \frac{t_n}{c \tau + d},
\frac{a \tau + b}{c \tau +d} 
\right) \\ 
&\quad - 
c \left( c \tau + d \right)^{-2 - \deg
\alpha} \sum_{i=1}^n  \left( \frac{\partial}{\partial t_i} S_1^n \right) \left(\alpha; \frac{t_1}{c \tau + d}, \dots, \frac{t_n}{c \tau + d},
\frac{a \tau + b}{c \tau +d} \right) \\ 
& \quad +  \left( c \tau + d \right)^{-2 - \deg \alpha}
\left( \frac{\partial }{\partial \tau} S_1^n \right) \left( \alpha; \frac{t_1}{c \tau + d}, \dots, \frac{t_n}{c \tau + d},
\frac{a \tau + b}{c \tau +d}  \right). 
\end{split}
\end{equation}
The second term on the right hand side vanishes because of Lemma \ref{lem:4.sum-deriv}. The
first term equals 
\begin{equation} \label{eq:modularity-nabla1}  -  \frac{c \deg \alpha}{c \tau + d}
\sigma(S)_1^n(\alpha; t_1, \dots,t_n, \tau). 
\end{equation}
Finally using that $S_1^n$ satisfies \eqref{eq:flatness-0}, the third term equals
\[
\frac{1}{2 \pi i}  \left( c \tau + d \right)^{-2 - \deg \alpha }
S_1^n \left( \nabla_\tau \alpha; \frac{t_1}{c\tau +d}, \dots,
\frac{t_n}{c \tau+d},\frac{a \tau+b}{c\tau+d} \right),
\]
It follows from Remark \ref{rem:extend-scalars} and \eqref{eq:nabla-non-mod}
that this term equals 
\[ \frac{1}{2 \pi i} \sigma(S)_1^n \left( \nabla_\tau \alpha; t_1,\dots,t_n,
\tau \right) + \frac{c \deg \alpha}{c \tau +d} \sigma(S)_1^n(\alpha;
t_1,\dots,t_n,\tau).  \]
The second term cancels \eqref{eq:modularity-nabla1} and the first term together
with the left hand side of \eqref{eq:modularity-nabla0} show that $\sigma(S)_1^n$ is a flat
section in $H^{\text{ch}}_1(V^{\otimes n})^{*\nabla}$

It is only left to prove that $\sigma(S)_1^n$ satisfies the insertion formula
\eqref{eq:insertion-conf-2}. We let $a_0,\dots,a_n,b$ be homogeneous elements in
$V$ and $f = f(u,v,t_1,\dots,t_n,\tau) \in \bJ^{n+2}_p$. We define $v$ as in
\eqref{eq:vintwocomplexes} and assume it is a cycle, that is $d_1 v = 0$. We
want to evaluate 
\begin{multline*} \sigma(S)_1^{n+1} \left( a_0,b,a_1,\dots,a_n,f; v,t_1,\dots,t_n,\tau \right)
= \\  \left( c \tau+d \right)^{-k - \deg b - \sum_{i=0}^n \deg a_i} S_1^{n+1}\left(
a_0,b,a_1,\dots,a_n,f;\frac{v}{c\tau+d},\frac{t_1}{c\tau+d},\dots,\frac{t_n}{c\tau+d},\frac{a\tau+b}{c\tau+d}
\right). 
\end{multline*}
We now notice that in the right hand side of \eqref{eq:insertion-conf-2} function appearing
transforms maniflestly as a Jacobi form except the first and the fourth terms involving a $\zeta$
function. The sum of these two terms is however manifestly modular and the
Theorem follows by simply applying \eqref{eq:insertion-conf-2} for $S_1^n$ and
the definition of $\sigma(S)_1^n$.
\end{proof}
\section{Modified Vertex Operators}\label{sec:modified.v.o}
In this section we prove a version of Borcherds identity adapted to elliptic curves via the exponential map. This identity \eqref{eq:bmf} involves the two different vertex algebra structures $Y(\cdot,z)$ and $Y[\cdot,z]$ on $V$, as well as the Laurent and Fourier expansions of an elliptic function in the coordinates $t$ and $z = e^{2 \pi i t}$. 
\begin{nolabel}\label{no:modified-operators}
We introduce the following modified vertex operators
\begin{align*}
X(a,z) = z^{-1} Y(z^{L_0} a,z) = \sum_{n \in \mathbb{Z}} a_n z^{-n-1}.
\end{align*}
\end{nolabel}
\begin{lem}
The modified vertex operators $(X\cdot, z)$ satisfy the following identities: 
\begin{equation}
X\left( (L_{-1} +  L_0)a,z \right) = z\frac{d}{dz} X(a,z) + X(a,z),
\label{eq:x-translation}
\end{equation}
and
\begin{equation}
q^{L_0} X(a,z) = q X(a, qz) q^{L_0}. 
\label{eq:q0-xoperators}
\end{equation}
\label{lem:modified-operators}
\end{lem}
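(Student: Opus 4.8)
The plan is to verify both identities by direct computation from the definition $X(a,z) = z^{-1} Y(z^{L_0}a, z) = \sum_{n \in \mathbb{Z}} a_n z^{-n-1}$, where $a_n$ denotes the mode appearing in the expansion (so $a_n = (z^{L_0}a)_{(n)}$ with the grading shift absorbed; concretely for homogeneous $a$ of degree $\Delta$ one has $a_n = a_{(n + \Delta - 1)}$). The first identity \eqref{eq:x-translation} is essentially a restatement of the translation-covariance axiom for $Y$ adapted to the modified operator. First I would recall that $[L_{-1}, Y(a,z)] = \partial_z Y(a,z)$ and $[L_0, Y(a,z)] = (z\partial_z + L_0^{\text{(acting on $a$)}}) Y(a,z)$, equivalently $Y(L_{-1}a, z) = \partial_z Y(a,z)$ and $Y(L_0 a, z) = (z \partial_z + \deg) Y(a,z)$ on homogeneous $a$. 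Then I would compute $z \frac{d}{dz} X(a,z) + X(a,z) = z\frac{d}{dz}\left( z^{-1} Y(z^{L_0}a,z)\right) + z^{-1}Y(z^{L_0}a,z)$; carrying out the product rule, the $-z^{-1}$ term from differentiating $z^{-1}$ cancels against the $+X(a,z)$ term, leaving $z^{-1}\left( z\partial_z + L_0 \right) Y(z^{L_0}a,z)$ in the appropriate sense, which is precisely $z^{-1} Y(z^{L_0}(L_{-1}+L_0)a, z) = X\left((L_{-1}+L_0)a, z\right)$. The cleanest route is to work mode-by-mode: writing $X(a,z) = \sum_n a_n z^{-n-1}$ with $a_n$ the homogeneous-degree-$n$ operator (lowering conformal weight by $n$), the operator $L_{-1} + L_0$ sends weight-$\Delta$ to a combination whose modes satisfy $(L_{-1}+L_0)a\, {}_n = (-n) a_n$ type relations — I would just match coefficients of $z^{-n-1}$ on both sides and check the scalar identity.

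For the second identity \eqref{eq:q0-xoperators}, I would again argue mode-by-mode. Since $q^{L_0}$ acts on a generalized eigenvector of $L_0$ of eigenvalue $\lambda$ as multiplication by $q^\lambda$ (up to the nilpotent part, which the formula $q^{L_0} = e^{2\pi i \tau L_0}$ handles uniformly as in \ref{no:ql0}), and since $a_n$ shifts $L_0$-weight by $-n$, we get $q^{L_0} a_n = q^{n} a_n q^{L_0}$ as operators: indeed if $v$ has weight $\lambda$ then $a_n v$ has weight $\lambda - n$, so $q^{L_0} a_n v = q^{\lambda - n} a_n v = q^{-n} a_n q^{\lambda} v = q^{-n} a_n q^{L_0} v$. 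Wait — I should be careful with the sign convention; with $X(a,z) = \sum a_n z^{-n-1}$ and $a_n$ the mode that appears, the relation is $q^{L_0} a_n = q^{-n} a_n q^{L_0}$ if $a_n$ raises weight by $n$, or the opposite if it lowers. I would pin down the convention from \ref{no:modified-operators} and the preceding material, then compute $q^{L_0} X(a,z) = \sum_n (q^{L_0} a_n) z^{-n-1} = \sum_n q^{\mp n} a_n q^{L_0} z^{-n-1} = \sum_n a_n (qz)^{\mp n} \cdot (\text{correction}) \, q^{L_0}$, and observe that $q\, X(a, qz) = q \sum_n a_n (qz)^{-n-1} = \sum_n a_n q^{-n} z^{-n-1}$, so that $q\, X(a,qz) q^{L_0} = \sum_n q^{-n} a_n z^{-n-1} q^{L_0}$, matching $q^{L_0} X(a,z)$ exactly. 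The single factor of $q$ out front of $X(a,qz)$ is precisely what accounts for the shift $z^{-n-1} \mapsto (qz)^{-n-1} = q^{-n-1} z^{-n-1}$ versus the desired $q^{-n} z^{-n-1}$.

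I do not anticipate a genuine obstacle here: both statements are formal consequences of the grading and translation axioms, and the proof amounts to bookkeeping with the mode index and the factor of $z^{L_0}$ inside the definition of $X$. The one point requiring care is that $q^{L_0}$ is only well-defined because the generalized $L_0$-eigenspaces are finite-dimensional (hypothesis in \ref{no:modules}), so $e^{2\pi i \tau L_0}$ makes sense termwise; I would note this so that the interchange of $q^{L_0}$ with the infinite sum $\sum_n a_n z^{-n-1}$ is justified as an identity in $\operatorname{Hom}(V, V((z)))$ (or the module analog), where on each fixed source-weight space only finitely many modes contribute to each target-weight component. Thus the proof I would write consists of: (i) recording the convention $X(a,z) = \sum_n a_n z^{-n-1}$ and the weight-shift property of $a_n$; (ii) a two-line product-rule computation for \eqref{eq:x-translation}; (iii) a two-line mode computation for \eqref{eq:q0-xoperators} using $q^{L_0} a_n = q^{\pm n} a_n q^{L_0}$. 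Since the excerpt ends exactly at the statement of Lemma \ref{lem:modified-operators}, this is the whole content to be proved.
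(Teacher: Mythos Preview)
Your approach is correct and is precisely what the paper has in mind: the lemma is stated without proof in the paper, as both identities are immediate consequences of the definition $X(a,z)=z^{-1}Y(z^{L_0}a,z)$ together with the standard relations $Y(L_{-1}a,z)=\partial_z Y(a,z)$ and $q^{L_0}Y(b,z)q^{-L_0}=Y(q^{L_0}b,qz)$ (equivalently $[L_0,a_n]=-n\,a_n$). Your product-rule computation for \eqref{eq:x-translation} and mode-by-mode verification of \eqref{eq:q0-xoperators} are exactly the expected arguments; the hedging about the sign convention is unnecessary once you fix $a_n=a_{(n+\deg a-1)}$, which indeed lowers $L_0$-weight by $n$ so that $q^{L_0}a_n=q^{-n}a_n q^{L_0}$, and the extra factor $q$ in front of $X(a,qz)$ accounts for $(qz)^{-n-1}=q^{-n-1}z^{-n-1}$ versus $q^{-n}z^{-n-1}$, as you noted.
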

\begin{thm}\label{prop:borcherds-modified-fourier}
Let $f(t)$ be a meromorphic function of a complex variable $t$ which is periodic, i.e., $f(t+1) = f(t)$. Let $\tau \in \mathbb{H}$ and suppose $f$ is analytic in the domain $-\im \tau < \im t < \im \tau$ except possibly at $t=0$ where it may have a pole. Put $q = e^{2 \pi i \tau}$, $z = e^{2 \pi i t}$ and let $F_+(z)$ (resp. $F_-(z)$ ) be the Fourier expansion of $f$ as in \ref{no:fourier-n-variable}, in the domain $|q|<|z|<1$ (resp. $1 < |z| < |q|^{-1}$). If either
\begin{itemize}
\item $f(t)$ is regular at $t=0$, or

\item $f$ satisfies $f(t+\tau) = f(t) - \alpha$ for all $t \in \mathbb{C}$ for some constant $\alpha \in \mathbb{C}$,
\end{itemize}
then the following identity holds
\begin{equation}\label{eq:bmf}
2\pi i X(a_{[f]} b, w)  = \res_z F_-\left( \frac{z}{w} \right) X(a,z) X(b,w) -
\res_z F_+\left( \frac{z}{w} \right) X(b,w) X(a,z),
\end{equation}
as an equality in $\Hom(V,V ((w)))$.
\end{thm}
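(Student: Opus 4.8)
The plan is to derive \eqref{eq:bmf} from the ordinary Borcherds identity \eqref{eq:borcherds-def} by a careful change of coordinates, keeping track of which expansion of the elliptic function appears in each term. First I would recall that for $a,b\in V$ the bracketed mode $a_{[f]}b = \res_t f(t)Y[a,t]b$ expands, by the definition of $Y[\cdot,z]$ in \ref{no:zhu-alter-vertex} and the substitution $z = e^{2\pi i t}-1$ (so $1+z = e^{2\pi i t}$, $dz = 2\pi i\, e^{2\pi i t}\,dt$), as a residue in the variable $z$ of $Y(a,z)$ applied against $f$ composed with $\frac{1}{2\pi i}\log(1+z)$, together with the Jacobian and a factor $(1+z)^{L_0}$. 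Packaging this with the modified operators $X(a,z) = z^{-1}Y(z^{L_0}a,z)$, the left-hand side $2\pi i\, X(a_{[f]}b,w)$ should turn into an iterated residue $\res_{z/w}$ of $X(a,z/w\cdot w)X$-type expressions evaluated on the formal function $f$ expanded in powers of $(z-w)/w$, i.e. the ``diagonal'' expansion. The key bookkeeping identity is the relation between the three expansions of the elliptic/periodic function $f$: its Laurent expansion near $t=0$ (equivalently near $z=w$ in multiplicative coordinates) versus its two Fourier expansions $F_\pm$ valid in $|q|<|z/w|<1$ and $1<|z/w|<|q|^{-1}$. The hypothesis that $f$ is either regular at $t=0$ or quasi-periodic, $f(t+\tau)=f(t)-\alpha$, is exactly what guarantees that $F_+$ and $F_-$ are Fourier expansions on adjacent annuli of a \emph{single} meromorphic function whose only pole in the strip is at $t=0$, so that the difference $F_- - F_+$ of the two expansions captures precisely the principal part at $z=w$ — this is the multiplicative-coordinate analogue of the delta-function identities \eqref{eq:delta-log}, \eqref{eq:higher-delta-log}.

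Concretely I would proceed as follows. Step 1: rewrite $2\pi i\,X(a_{[f]}b,w)$ using \eqref{eq:zhu-bracket-mode} and the definition of $X$, obtaining an expression of the shape $\res_{z}\big(\text{(diagonal expansion of }f(z/w)\text{)}\cdot Y$-operator applied to $Y(a,z-w)b$ at $w\big)$, where the "diagonal expansion'' means $i_{w,z-w}$ applied to $f$ viewed as a function of $z/w$ — this is the $|w|>|z-w|$ expansion in the language of \ref{no:expansions}. Step 2: apply the module/vertex Borcherds identity \eqref{eq:borcherds-def} to the rational-function input; the right-hand side of Borcherds produces the two terms $\res_z\, i_{z,w}(\cdot)\,X(a,z)X(b,w)$ and $\res_z\, i_{w,z}(\cdot)\,X(b,w)X(a,z)$, now with $f$ expanded in the domains $|z|>|w|$ and $|w|>|z|$ respectively. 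Step 3 is the analytic heart: identify the formal expansions $i_{z,w}f(z/w)$ and $i_{w,z}f(z/w)$ with the Fourier expansions $F_-(z/w)$ and $F_+(z/w)$. This is where the strip condition enters — on $1<|z/w|<|q|^{-1}$ (which contains $|z|>|w|$ once we work modulo the lattice action, using the ``cyclic shift'' of domains from \ref{no:fourier-n-variable} and \eqref{eq:q0-xoperators}) the expansion $i_{z,w}$ converges to $F_-$, and likewise $i_{w,z}$ to $F_+$; the quasi-periodicity of $f$ controls the mismatch at the boundary $|z/w|=1$, exactly as \eqref{eq:higher-delta-log} and the transformation $P_1(qz,q) = P_1(z,q)+2\pi i$ record. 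Step 4: bookkeep the factors of $2\pi i$, the powers of $z$ and $w$ coming from $z^{L_0}$, and the Jacobian $e^{2\pi i t}\,dt \leftrightarrow \tfrac{1}{2\pi i}\tfrac{dz}{z}$, using Lemma \ref{lem:modified-operators} (in particular \eqref{eq:q0-xoperators}) to move $q^{L_0}$ past the operators and to justify the domain shift; everything should collapse to \eqref{eq:bmf}.

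The main obstacle I anticipate is Step 3 — making rigorous the passage from the purely \emph{formal} expansions $i_{z,w}$, $i_{w,z}$ produced by the algebraic Borcherds identity to the \emph{analytic} Fourier series $F_\pm$, and in particular handling the $k=1$ (non-periodic $\wp_1$/$\zeta$) case correctly, where the two Fourier expansions genuinely differ by the constant $2\pi i$ (cf. \eqref{eq:delta-log} and \eqref{eq:wp-not-per}). This is why the theorem has the dichotomy in its hypotheses: for $f$ regular at $t=0$ there is no pole to expand around and the identity is comparatively soft, whereas in the quasi-periodic case one must check that the extra constant $\alpha$ produced by $f(t+\tau)=f(t)-\alpha$ is absorbed consistently on both sides — it should cancel because $a_{[f]}b$ only depends on $f$ through its Laurent coefficients at $t=0$, which are insensitive to the additive quasi-period, while on the right-hand side the discrepancy between $F_-$ and the lattice-shifted $F_+$ is precisely this constant times a formal delta, whose residue against $X(a,z)X(b,w)$ vanishes by the regularity assumption $\res_t f(t)Y[a,t]b$ being the only surviving term. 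A secondary technical point is verifying convergence/well-definedness of all the residues as elements of $\Hom(V,V((w)))$, which follows from the positive-energy bound on $M$ and the fact that $Y(a,z)b\in V((z))$, but needs to be stated cleanly. I would isolate the formal delta-function manipulations into a preliminary lemma (essentially a repackaging of \eqref{eq:delta-log}–\eqref{eq:higher-delta-log} in the variable $z/w$) so that the main argument reads as a clean substitution into \eqref{eq:borcherds-def}.
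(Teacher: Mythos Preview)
Your plan has a real gap at Step~3, and the paper in fact takes a different route that avoids this problem.

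The Borcherds identity \eqref{eq:borcherds-def} is an identity for \emph{rational} test functions $f(z,w)\in\mathbb C[[z,w]][z^{-1},w^{-1},(z-w)^{-1}]$. In your setup, the function $f$ expressed in the multiplicative coordinate $\zeta=z/w$ is not rational: its Fourier expansions $F_\pm(\zeta)$ are Laurent series on two adjacent annuli which in the quasi-periodic case do not even agree as formal series (they differ by a formal delta, cf.\ \eqref{eq:delta-log}). So ``applying Borcherds and then identifying $i_{z,w}f$ with $F_-$ and $i_{w,z}f$ with $F_+$'' is not a well-defined step: the operators $i_{z,w}$, $i_{w,z}$ act on rational functions, and there is no direct sense in which $F_\pm$ arise from them. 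Your Step~3 is thus the entire content of the theorem, not a bookkeeping matter. Also, your invocation of \eqref{eq:q0-xoperators} in Step~4 is out of place: no $q^{L_0}$ appears in the statement, which is an identity in $\Hom(V,V((w)))$, not inside a trace.

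The paper's argument bypasses the algebraic Borcherds identity altogether. It computes both sides of \eqref{eq:bmf} mode by mode for homogeneous $a,b$. On the left: write $a_{[f]}b=\res_t f(t)e^{2\pi i t\deg a}Y(a,e^{2\pi i t}-1)b\,dt$, so that for each $a_{(n)}b$ the coefficient is a residue which is computed by integrating over a \emph{rectangular} contour in the $t$-plane; the horizontal sides give Fourier coefficients $F^\pm_m$ (this is where analyticity in the strip is used), and the vertical sides cancel by $1$-periodicity. For $n\ge 0$ one first expands $(e^{2\pi i t}-1)^{-n-1}$ separately in the upper and lower half-planes; for $n\le -1$ the quasi-periodicity enters as the exact identity $F^-_m-F^+_m=\alpha$ for all $m$, yielding $\alpha\,\delta_{n,-1}$. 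On the right: expand $F_\pm(z/w)$ in modes and use only the commutator formula $[a_{(n)},Y(b,w)]=\sum_j\binom{n}{j}w^{n-j}Y(a_{(j)}b,w)$. Matching the two explicit expressions proves \eqref{eq:bmf}. The contour integral in the $t$-plane is the ``analytic heart'' you correctly anticipated, but it replaces rather than justifies your Step~3.
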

Notice that two different vertex algebra structures of $V$ are in play here: the usual vertex algebra structure in the form of $X(\cdot, z)$, and also the $Y[\cdot, z]$ vertex algebra structure of \ref{no:zhu-alter-vertex} is used in the left hand side of \eqref{eq:bmf}.
\begin{proof}
It is enough to prove the statement for $a,b \in V$ of homogeneous conformal weight. We assume this throughout. We also fix $\tau \in \mathbb{H}$ and write $q = e^{2\pi i \tau}$.
Our hypotheses on $f(t)$ imply that it admits a Laurent expansion near $t =0$, namely 
\begin{align*}
f(t) = \sum_{n \geq N} f_n t^n,
\end{align*}
so the left hand side of (\ref{eq:bmf}) becomes
\begin{align}\label{eq:fourier-exponentiated-LHS}
X\left( \res_t f(t)Y[a,t]b dt,w\right) = \sum_{n \geq N} f_n X\left(a_{[n]}b,w \right) \in \Hom\bigl(V,V((w))\bigr).
\end{align}
Since $f(t+1)=f(t)$, the function $f$ has Fourier series expansions 
\begin{align*}
F_\pm (z) = \sum_{n\in \mathbb{Z}} F^\pm_n z^n, \quad \text{where} \quad F^\pm_n = \int_{t_0}^{t_0+1} f(t) e^{-2 \pi i n t} dt.
\end{align*}
Here $t_0$ is chosen so that $0 < \pm \Im t_0 < \Im\tau$ and the integral is taken along the path, parallel to the real axis in the $t$-plane, connecting $t_0$ to $t_0+1$.

The series $F_+(z)$ converges uniformly on closed subsets of the domain $|q| < |z| < 1$ and the series $F_-(z)$ converges uniformly on closed subsets of the domain $1 < |z| < |q|^{-1}$. In their corresponding domains of convergence we have
\begin{align*}
F_\pm (e^{2 \pi i t}) = f(t).
\end{align*}
If $f(t)$ is regular at $t=0$ then $F^-_n - F^+_n = 0$ for all $n \in \mathbb{Z}$ and if $f(t+\tau) = f(t) - \alpha$ then
\begin{equation}
F^-_n - F^+_n = \alpha \quad \text{for all $n \in \mathbb{Z}$.}
\label{eq:fourier-identity}
\end{equation}
To simplify the presentation we agree to put $\alpha = 0$ in case $f(t)$ is regular at $t=0$, so that (\ref{eq:fourier-identity}) holds in all cases.
\begin{multline}
\res_t f(t) Y[a,t]b dt = \res_t f(t) e^{2 \pi i \deg(a) t} Y(a,e^{2 \pi i t} -1)b dt \\
= \sum_{n \in \mathbb{Z}} a_{(n)}b \res_t f(t) e^{2 \pi i \deg(a) t} \left( e^{2 \pi i t} -1  \right)^{-n-1} dt.
\label{eq:residue-fourier-1}
\end{multline}
Since $f(t)$ is analytic with possible pole at $t=0$, we have
\begin{equation}
\res_t f(t) e^{2 \pi i t  \deg(a)} \left( e^{2 \pi i t} -1 \right)^{-n-1} dt = \frac{1}{2\pi i} \oint f(t)e^{2 \pi i t \deg(a)} \left( e^{2 \pi i t} -1 \right)^{-n-1} dt,
\label{eq:contour-2}
\end{equation}
where the integral is taken along an anticlockwise contour containing $t=0$ and no other pole of $f(t)$. One can always find a suitable contour of the form: $t_0$ to $t_0+1$ to $t_0+1+\tau$ to $t_0+\tau$ back to $t_0$. We denote the lower and upper segments of this contour by $C_1$ and $-C_2$ respectively. The integrals on the lateral segments cancel because $f(t+1)=f(t)$. For $n \leq -1$ we have
\begin{equation}
\begin{multlined}
\oint dt f(t) e^{2 \pi i \deg(a)} \left( e^{2 \pi i t} -1 \right)^{-n-1} =
 \sum_{j \geq 0} (-1)^j \binom{-n-1}{j} \oint dt f(t) e^{2 \pi i t (\deg(a) - n -1 -j)} \\
= 
 \sum_{j \geq 0} (-1)^j \binom{-n-1}{j} \left (\int_{C_1} dt f(t) e^{2 \pi i t (\deg(a) - n -1 -j)} 
- 
 \int_{C_2} dt f(t) e^{2 \pi i t (\deg(a) - n -1 -j)} \right)\\
=
\sum_{j \geq 0} (-1)^j \binom{-n-1}{j} \left( F^-_{n+1+j-\deg(a)} - F^+_{n+1+j-\deg(a)} \right). 
\label{eq:fourier-nnegative}
\end{multlined}
\end{equation}
By (\ref{eq:fourier-identity}) the right hand side of \eqref{eq:fourier-nnegative} equals $\alpha \delta_{n,-1}$.
For $n \geq 0$ we have the following convergent series expansions
\begin{equation}
\left( e^{2 \pi i t} -1 \right)^{-1-n} = 
\begin{cases}
(-1)^{n+1} \sum_{j \geq 0} \binom{n+j}{j} e^{2 \pi i jt},&  t \in  \mathbb{H}, \\
\sum_{j \geq n+1}  \binom{j -1}{n} e^{- 2 \pi i j t}, &  -{t}  \in \mathbb{H},
\end{cases}
\label{eq:different-expansions-exp}
\end{equation}
and hence
\begin{equation}
\begin{aligned}
\oint f(t) e^{2 \pi i \deg(a)} \left( e^{2 \pi i t} -1 \right)^{-n-1} dt &= 
\sum_{j \geq n+1} \binom{j-1}{n}  \int_{C_1} e^{2 \pi it (\deg(a)-j)} f(t) \\
& \quad + \sum_{j \geq 0} (-1)^n \binom{n+j}{j} \int_{C_2} f(t) e^{2 \pi i t (\deg(a) + j)} dt
\\ 
&= \sum_{j \geq n+1} \binom{j-1}{n} F^-_{j - \deg(a)}  + \sum_{j \geq 0} (-1)^n \binom{n+j}{j} F^+_{-j - \deg(a)}.
\end{aligned}
\label{eq:aux-2}
\end{equation}
Collecting (\ref{eq:fourier-nnegative}) and (\ref{eq:aux-2}) yields
\begin{equation}
\begin{multlined}
2\pi i X\left( a_{[f]}b, w \right) =  \alpha X(a_{(-1)}b,w) \\
+ \sum_{n \geq 0} \left( \sum_{j \geq n+1} \binom{j-1}{n} F^-_{j - \deg(a)} + \sum_{j \geq 0} (-1)^n \binom{n+j}{j} F^+_{-j - \deg(a)} \right)  w^{\deg(a) + \deg(b) -n-2}Y\left(a_{(n)}b,w\right).
\label{eq:borcherds-lhs}
\end{multlined}
\end{equation}
We now study the right hand side of \eqref{eq:bmf}:
\begin{equation}
\begin{multlined}
\res_z F_-\left( \frac{z}{w} \right) X(a,z)X(b,w) = \sum_{n \in \mathbb{Z}} F^-_n w^{-n +\deg(b) -1} a_n Y(b,w) \\ 
= \sum_{n \in \mathbb{Z}} F^-_{n+1 - \deg(a)} w^{\deg_a + \deg_b -n -2} a_{(n)} Y(b,w)
\label{eq:rhs-1-borcherds}
\end{multlined}
\end{equation}
Similarly we obtain
\begin{equation}
\res_z F_+\left( \frac{z}{w} \right) X(b,w)X(a,z) = \sum_{n \in \mathbb{Z}} F^+_{n+1 -\deg(a)} w^{\deg(a) + \deg(b) -n -2} Y(b,w) a_{(n)}\label{eq:rhs-2-borcherds}
\end{equation}
Subtracting (\ref{eq:rhs-2-borcherds}) from (\ref{eq:rhs-1-borcherds}) and using
\begin{align*}
[a_{(n)},Y(b,w)] = \sum_{j \geq 0} \binom{n}{j} w^{n-j} Y(a_{(j)}b,w),
\end{align*}
we obtain
\begin{equation}
\begin{multlined}
\res_z F_-\left( \frac{z}{w} \right) X(a,z)X(b,w) - \res_z F_+\left( \frac{z}{w} \right) X(b,w)X(a,z) \\
= \sum_{n < 0} F^-_{n+1 - \deg(a)} w^{\deg(a) + \deg(b) -n -2} a_{(n)} Y(b,w) \\
+\sum_{n \geq 0} F^{-}_{n+1-\deg(a)} \left[ \sum_{j \geq 0} \binom{n}{j} w^{\deg(a) + \deg(b) - j -2} Y\left( a_{(j)}b,w \right) + w^{\deg_a + \deg_b -n -2} Y(b,w) a_{(n)} \right] \\
- \sum_{n \geq 0} F^+_{n+1 -\deg(a)} w^{\deg(a) + \deg(b) -n -2} Y(b,w) a_{(n)} \\
+ \sum_{n < 0} F^+_{n + 1 - \deg(a)} \left[ \sum_{j \geq 0} \binom{n}{j} w^{\deg(a) + \deg(b) - j - 2} Y\left( a_{(j)}b,w \right) - w^{\deg(a) + \deg(b) -n -2} a_{(n)} Y(b,w) \right] \\ 
= \sum_{n,j \geq 0} F^{-}_{n+1-\deg(a)} \binom{n}{j} w^{\deg(a) + \deg(b) - j -2} Y\left( a_{(j)}b,w \right) \\
+ \sum_{n < 0, j \geq 0} F^+_{n + 1 - \deg(a)} \binom{n}{j} w^{\deg(a) + \deg(b) - j - 2} Y\left( a_{(j)}b,w \right) \\ 
+ \sum_{n < 0} \left( F^-_{n+1-\deg(a)} - F^{+}_{n+1 - \deg(a)}  \right) a_{(n)} Y(b,w) w^{\deg(a) + \deg(b) - n -2} 
\\ 
+ \sum_{n \geq  0} \left( F^-_{n+1-\deg(a)} - F^+_{n+1 - \deg(a)} \right) w^{\deg(a) + \deg(b) -n-2} Y(b,w) a_{(n)}.
\label{eq:multline-b}
\end{multlined}
\end{equation}
The sum of the final two terms of \eqref{eq:multline-b} is
\begin{multline}
\alpha \sum_{n < 0} a_{(n)} Y(b,w)w^{\deg(a) + \deg(b) - n -2} + \alpha \sum_{n \geq 0} w^{\deg(a) + \deg(b) -n -2} Y(b,w) a_{(n)} \\ = \alpha w^{\deg(a) + \deg(b) -1}  Y(a_{(-1)}b,w) = \alpha X(a_{(-1)}b,w).
\label{eq:-1-product}
\end{multline}
We use the identity $\binom{-n-1}{j} = (-1)^j \binom{n+j}{j}$ to rewrite the first two terms of the right hand side of (\ref{eq:multline-b}) as
\begin{align}\label{eq:final-lhs-1}
\sum_{j \geq 0} w^{\deg(a) + \deg(b)-j-2} Y\left( a_{(j)}b,w \right) \left[ \sum_{n \geq j+1} F^{-}_{n-\deg(a)} \binom{n-1}{j} + \sum_{n \geq 0} F^+_{-n - \deg(a)} (-1)^j \binom{n+j}{j} \right].
\end{align}
Comparing \eqref{eq:-1-product} and \eqref{eq:final-lhs-1} with \eqref{eq:borcherds-lhs} yields the desired result.
\end{proof}
We only use a limited number of cases of Theorem \ref{prop:borcherds-modified-fourier}, summarized in:
\begin{cor} For $a,b \in V$ the modified vertex operators satisfy
\begin{equation}
X\left( a_{[0]}b,w \right) =\frac{1}{2 \pi i} \left( \res_z X(a,z) X(b,w) - \res_z X(b,w) X(a,z) \right).
\label{eq:0-th-borcherds}
\end{equation}
\begin{equation}
\begin{gathered}
\begin{aligned}
X\left( a_{[\wp_1]} b,w \right) &- G_2(q) X(a_{[1]}b,w) + \pi i  X\left( a_{[0]},b,w \right) = \\ &\quad -\frac{1}{2 \pi i} \res_z \left( P_1 \left( \frac{zq}{w},q \right) - 2 \pi i  \right) X(a,z) X(b,w)  + \frac{1}{2 \pi i} 
\res_z P_1\left( \frac{z}{w},q \right) X(b,w) X(a,z).  
\end{aligned} \\ 
\begin{aligned}
X\left( a_{[\wp_2]} b,w \right) + G_2(q)  X\left( a_{[0]},b,w \right) &= \frac{1}{2 \pi i} \res_z P_2 \left( \frac{zq}{w},q \right)  X(a,z) X(b,w)\\ & \quad - \frac{1}{2 \pi i} 
\res_z P_2\left( \frac{z}{w},q \right) X(b,w) X(a,z). \\  
X\left( a_{[\wp_k]} b,w \right) &= \frac{(-1)^{k}}{2 \pi i} \res_z  P_k \left( \frac{zq}{w},q \right)  X(a,z) X(b,w)\\ & \quad - \frac{(-1)^{k}}{2 \pi i} 
\res_z P_k\left( \frac{z}{w},q \right) X(b,w) X(a,z) \qquad k \geq 3. 
\end{aligned}
\label{eq:borcherds-modified-1}
\end{gathered}
\end{equation}
\label{prop:borcherds-modified}
\end{cor}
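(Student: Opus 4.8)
The plan is to derive each identity in Corollary \ref{prop:borcherds-modified} as a specialization of Theorem \ref{prop:borcherds-modified-fourier}, applied in turn to $f(t) = 1$ and to $f(t) = \owp_k(t,\tau)$, $k \geq 1$. For \eqref{eq:0-th-borcherds} I would take $f(t) = 1$: it is $1$-periodic and regular at $t = 0$, so the first alternative in the hypotheses of Theorem \ref{prop:borcherds-modified-fourier} holds (with $\alpha = 0$), and one has $a_{[f]}b = \res_t Y[a,t]b = a_{[0]}b$ and $F_\pm(z) = 1$. Substituting into \eqref{eq:bmf} and dividing by $2\pi i$ gives \eqref{eq:0-th-borcherds} immediately.

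For the three identities collected in \eqref{eq:borcherds-modified-1} I would take $f = \owp_k$. First I would verify the hypotheses of Theorem \ref{prop:borcherds-modified-fourier}: each $\owp_k$ is $1$-periodic in $t$ --- for $k \geq 2$ by double periodicity, and for $k=1$ because $\zeta(t,\tau) = \wp_1(t,\tau) - g_2(\tau)t$ is $1$-periodic by \eqref{eq:wp-not-per} --- and it satisfies $\owp_k(t+\tau,\tau) = \owp_k(t,\tau) - \alpha$ with $\alpha = 2\pi i$ for $k=1$ (by \eqref{eq:zeta-not-per}) and $\alpha = 0$ for $k \geq 2$, so the second alternative applies. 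Next, unwinding the definition $a_{[\owp_k]}b = \res_t \owp_k(t)\,Y[a,t]b$ and using $\res_t t\,Y[a,t]b = a_{[1]}b$, $\res_t Y[a,t]b = a_{[0]}b$ together with the Laurent expansion of $\owp_k$ from \ref{no:ellitptic-functions}, one obtains $a_{[\owp_1]}b = a_{[\wp_1]}b - G_2(q)\,a_{[1]}b + \pi i\,a_{[0]}b$, $a_{[\owp_2]}b = a_{[\wp_2]}b + G_2(q)\,a_{[0]}b$, and $a_{[\owp_k]}b = a_{[\wp_k]}b$ for $k \geq 3$, so that the left-hand sides of the three identities in \eqref{eq:borcherds-modified-1} are exactly $X(a_{[\owp_k]}b, w)$. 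On the Fourier side, \eqref{eq:Pk.wpk.relation} identifies the expansion of $\owp_k$ on $|q| < |z| < 1$ (i.e.\ in the strip $0 < \im t < \im\tau$) as $F_+(z) = (-1)^k P_k(z,q)$, while the statements about $P_k(qz,q)$ recorded in \ref{no:ellitptic-functions} give, on $1 < |z| < |q|^{-1}$, the expansion $F_-(z) = (-1)^k P_k(qz,q)$ for $k \geq 2$ and $F_-(z) = -P_1(qz,q) + 2\pi i$ for $k = 1$. Plugging these $F_\pm$ into \eqref{eq:bmf}, dividing by $2\pi i$, and rearranging yields the three displayed formulas.

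The only place that calls for care is the case $k = 1$: the constant $\alpha = 2\pi i$ carried by $\owp_1$ is precisely what produces the $-2\pi i$ appearing inside $\res_z\bigl(P_1(zq/w,q) - 2\pi i\bigr)X(a,z)X(b,w)$, and one must match $\owp_1 = \zeta + \pi i = \wp_1 - g_2(\tau)t + \pi i$ against the combination on the left of that identity so that the terms $G_2(q)\,X(a_{[1]}b,w)$ and $\pi i\,X(a_{[0]}b,w)$ come out with the correct signs. All of the remaining manipulations are routine substitutions into \eqref{eq:bmf}, so I do not expect a serious obstacle beyond this bookkeeping.
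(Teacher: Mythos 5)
Your proposal is correct and follows essentially the same route as the paper: both derive \eqref{eq:0-th-borcherds} by setting $f(t)=1$ in Theorem \ref{prop:borcherds-modified-fourier}, and both obtain \eqref{eq:borcherds-modified-1} by applying that theorem to (a sign multiple of) $\owp_k$, identifying $F_+ = (-1)^k P_k(z,q)$ and $F_- = (-1)^k P_k(qz,q)$ (with the $-2\pi i$ shift for $k=1$ coming from the quasi-periodicity constant $\alpha$) via \eqref{eq:Pk.wpk.relation} and \eqref{eq:zeta-not-per}. Your bookkeeping of the left-hand sides through $a_{[\owp_k]}b$ matches the paper's normalization conventions, so there is nothing to add.
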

\begin{proof}
Putting $f(t) = 1$ in Theorem \ref{prop:borcherds-modified-fourier} immediately yields \eqref{eq:0-th-borcherds}. By \eqref{eq:Pk.wpk.relation} and subsequent remarks, it follows that the Fourier expansions of $-\wp_1(t,\tau) + g_2(\tau)t - \pi i$ in the domains $|q| < |z| < 1$, resp. $1 < |z| < |q|^{-1}$, are $P_1(z, q)$, resp. $P_1(qz, q)-2\pi i$. The first equation of \eqref{eq:borcherds-modified-1} now follows from Theorem \ref{prop:borcherds-modified-fourier}. The proofs of the remaining equations are similar.
\end{proof}
Finally we will need the following slight generalization of the identities proved above.
\begin{prop} For each $a,b,c \in V$ the following identity of elements of $V[[u, u^{-1}, w, w^{-1}, q]]$ holds:
\begin{equation}
\begin{multlined}
\sum_{m \geq 0} (-1)^m \binom{k+m-1}{m} P_{k+m}\left( \frac{u}{w},q \right) X\left( a_{[m]}b,w \right) c 
= \frac{1}{2 \pi i} \res_z  P_k\left( \frac{u}{z},q \right) X( a, z) X(b,w) c \\ 
- \frac{1}{2 \pi i}\res_z  P_k\left( \frac{u}{z},q \right)  X(b,w) X(a,z) c.
\label{eq:borcherds-modified-2}
\end{multlined}
\end{equation}
\label{prop:borcherds-modified-2}
\end{prop}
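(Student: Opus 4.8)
The plan is to prove \eqref{eq:borcherds-modified-2} by running the argument of Theorem \ref{prop:borcherds-modified-fourier} with an additional free variable $u$. Conceptually, \eqref{eq:borcherds-modified-2} is the identity \eqref{eq:bmf} for the translated function $f(t) = \owp_k(t - r_0, \tau)$, where $r_0$ is a formal parameter with $e^{2\pi i r_0} = u/w$ (and $\zeta(t-r_0,\tau)$ replaces $\owp_1(t-r_0,\tau)$ when $k=1$). I would first record the left-hand side: iterating the hierarchy relation \eqref{eq:differential-equation} yields the Taylor expansion
\[
\owp_k(t - r_0, \tau) = \sum_{m\geq 0}(-1)^m\binom{k+m-1}{m}\owp_{k+m}(-r_0,\tau)\,t^m,
\]
so $a_{[f]}b = \res_t f(t)Y[a,t]b = \sum_{m\geq0}(-1)^m\binom{k+m-1}{m}\owp_{k+m}(-r_0,\tau)\,a_{[m]}b$, a finite sum because $a_{[m]}b=0$ for $m\gg0$. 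In the regime $|q|<|u/w|<1$, \eqref{eq:Pk.wpk.relation} together with Lemma \ref{lem:p-sym} turn each $\owp_{k+m}(-r_0,\tau)$ into $P_{k+m}(u/w,q)$, so that $2\pi i\,X(a_{[f]}b,w)c$ equals $2\pi i$ times the left-hand side of \eqref{eq:borcherds-modified-2}.

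Since the pole of this $f$ is at $t=r_0$ rather than at $t=0$, invoking Theorem \ref{prop:borcherds-modified-fourier} literally would require delicate bookkeeping of Fourier domains, so I would instead reprove \eqref{eq:borcherds-modified-2} directly, following the proof of that theorem. Expanding $P_k(u/z,q)$ via \eqref{eq:pk-def} and noting that, for each monomial in $u$ and each power of $q$, it is a single power of $z$, one uses $\res_z z^{\mp m}X(a,z)=a_{\mp m}$ to rewrite the difference of the two residues on the right of \eqref{eq:borcherds-modified-2} as a weighted sum of commutators $[a_{\mp m},X(b,w)]c$, the weights being the Fourier coefficients of $P_k(u/z,q)$. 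Applying to these commutators the commutator formula and the reorganization of $n$-products into bracket products used in the proof of Theorem \ref{prop:borcherds-modified-fourier} (together with \eqref{eq:x-translation}), one collects the total coefficient of each $X(a_{[m]}b,w)c$; comparing with the left-hand side reduces \eqref{eq:borcherds-modified-2} to a single resummation identity.

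The main obstacle is precisely this resummation: one must show that the contribution to $X(a_{[m]}b,w)c$ so obtained is $2\pi i\,(-1)^m\binom{k+m-1}{m}$ times $\frac{(2\pi i)^{k+m}}{(k+m-1)!}\sum_{n\geq1}\bigl(\frac{n^{k+m-1}(u/w)^n}{1-q^n}+(-1)^{k+m}\frac{n^{k+m-1}(u/w)^{-n}q^n}{1-q^n}\bigr)=P_{k+m}(u/w,q)$. This rests on the binomial identity $\binom{-n-1}{j}=(-1)^j\binom{n+j}{j}$ and the same geometric-series rearrangements as in the proof of Theorem \ref{prop:borcherds-modified-fourier}, supplemented by the formal identities \eqref{eq:higher-delta-log} (and \eqref{eq:delta-log} for $k=1$) to handle the terms supported on $|z|=1$. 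For $k=1$ one must also track the quasi-periodicity of $\zeta$ and its anomalous parity $\owp_1(-x)=-\owp_1(x)+2\pi i$; these corrections cancel and the conclusion is unchanged.
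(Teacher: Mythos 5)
Your proposal rests on the same mechanism as the paper's proof: \eqref{eq:borcherds-modified-2} is the Fourier--Borcherds identity \eqref{eq:bmf} applied to the translated Weierstrass function, and your treatment of the left-hand side (iterating \eqref{eq:differential-equation} to get the Taylor coefficients $(-1)^m\binom{k+m-1}{m}\owp_{k+m}(-r_0)$ and converting them to $P_{k+m}(u/w,q)$ via \eqref{eq:Pk.wpk.relation} and Lemma \ref{lem:p-sym}) is exactly the paper's \eqref{eq:taylor-wp-shifted}. You diverge on the right-hand side. The paper does precisely the ``delicate bookkeeping'' you decline to do: it notes that the shifted function, though outside the literal hypotheses of Theorem \ref{prop:borcherds-modified-fourier}, is analytic on a strip of width $\im\tau$ containing $t=0$, reruns the contour argument with the contour moved into that strip (so the two Fourier expansions coincide with the single series $P_k(u/(zw),q)$ and the $\alpha$-correction disappears), and handles the two signs of $\im(s-v)$ separately, reconciling \eqref{eq:borcherds-modified-3} with \eqref{eq:borcherds-modified-2} through \eqref{eq:P-symmetric}. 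Your alternative --- a direct verification in $V[[u^{\pm1},w^{\pm1},q]]$ starting from $\tfrac{1}{2\pi i}\res_z P_k(u/z,q)[X(a,z),X(b,w)]c$ --- is legitimate, and arguably cleaner since the statement is purely formal; but the ``single resummation identity'' you defer to the end is where all the content sits, and the ingredients you name are not quite the right ones. Because the \emph{same} series $P_k(u/z,q)$ multiplies both orderings, no difference-of-expansions term supported on $|z|=1$ ever appears, so \eqref{eq:delta-log} and \eqref{eq:higher-delta-log} play no role. What closes the argument is rather: (i) the commutator formula in the form $[a_n,X(b,w)]=w^n\sum_{j\ge0}\binom{n+\deg a-1}{j}X(a_{(j)}b,w)$; (ii) the residue identity $\binom{n+\deg a-1}{j}=2\pi i\res_t e^{2\pi i(n+\deg a)t}(e^{2\pi it}-1)^{-j-1}$, which resums the Fourier coefficients of $P_k$ into $\res_t P_k(\tfrac{u}{w}e^{-2\pi it},q)\,e^{2\pi it\deg a}(e^{2\pi it}-1)^{-j-1}$ (the interchange of sum and residue being valid because each monomial in $u$ and $q$ receives finitely many terms, as you observe); and (iii) the expansion $P_k(\tfrac{u}{w}e^{-2\pi it},q)=\sum_m(-1)^m\binom{k+m-1}{m}P_{k+m}(\tfrac{u}{w},q)t^m$ from \eqref{eq:pkdiff}, compared against $Y[a,t]b=\sum_j a_{(j)}b\,e^{2\pi it\deg a}(e^{2\pi it}-1)^{-j-1}$. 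Run this way the computation is uniform in $k\ge1$ and the $k=1$ parity and quasi-periodicity corrections you worry about never arise (your claim that they cancel in the analytic picture is nonetheless correct). So: right idea, same target, a genuinely workable alternative execution, but the decisive step is asserted rather than carried out, and as written it points at the wrong supporting identities.
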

\begin{proof}
Put $z = e^{2 \pi i t}$, $w = e^{ 2 \pi i v}$, $u = e^{2 \pi i s}$ and for each $k \geq 1$ and fixed $v,s, \tau$, consider the function 
\[ f(t, \tau) = (-1)^k \owp_k (t + v - s, \tau).
\]
We apply Theorem \ref{prop:borcherds-modified-fourier} with the above function. This function is periodic $f(t+1) = f(t)$ and analytic except at $t = s - v + m + n \tau$, $m,n \in \mathbb{Z}$, where it has a pole. This function does not satisfy the conditions for Theorem \ref{prop:borcherds-modified-fourier} unless $s - v =0$. This function is analytic on an appropriate strip containing $t=0$.  Without loss of generality we may assume $0 < \im (s-v) < \tau$. In the following picture we can see shaded the domain where the function $f(t)$ is analytic:
\begin{center}
\begin{tikzpicture}
\begin{axis}[axis lines=middle,minor tick num=4, xmin=-4, xmax=4, ymin=-1, ymax=1,set layers,
	declare function={ycrit(\x)=0.8; y2crit(\x)=-0.2;}]
	\begin{pgfonlayer}{axis background}
		\path[fill=red!20]
	(-4,0.8)
	edge[dashed,red]
	(4,0.8)
       	(4,0.8)
	--
	(4,-0.2)
	edge[dashed,red]
	(-4,-0.2)
	--
	(-4,-0.2)
        |- cycle;
        ;
	\node at (2,0.2) {$\bullet t$};
	\draw [blue,fill] (3,0.8) circle (2pt) node [above] {$s-v$};
	\draw [blue,fill] (1.2,-0.2) circle (2pt) node [below] {$s-v-\tau$};
	\node at (1.8,1) {$\bullet \tau$};
	\draw[dashed] 
	(-0.5,-0.1)
	--
	(0.5,-0.1)
	--
	(0.5,0.2)
	--
	(-0.5,0.2)
	--
	(-0.5,-0.1);
    \end{pgfonlayer}
  \end{axis}
\end{tikzpicture}
\end{center}
That is, $f(t)$ is analytic in the strip $\im(s-v - \tau) < \im t < \im (s-v)$.  The proof of Theorem \ref{prop:borcherds-modified-fourier} is adapted verbatim by replacing the contour of integration there by a contour contained  in this strip (depicted in dashed in the above picture around the point $0$).
Recall from \eqref{eq:Pk.wpk.relation} that the series $P_k(\tfrac{u}{zw}, q)$ converges to $f(t)$ in the domain $|q| < |\tfrac{u}{zw}| < 1$, that is precisely the shaded strip. It follows that the right hand side of \eqref{eq:bmf} is given by the right hand side of \eqref{eq:borcherds-modified-2} times $2 \pi i$. 

We can compute the left hand side of \eqref{eq:bmf} by Laurent expansion.
\begin{equation}
\begin{multlined}
X\bigl(a_{[f]}b,w\bigr) = \res_t f(t) X\bigl(Y[a,t]b,w\bigr) = \sum_{m \geq 0}
\frac{1}{m!} \frac{d^m f}{dt^m}\Bigr|_{t=0} t^m X \bigl( Y[a,t]b,w \bigr) \\ =
\sum_{m \geq 0} \frac{1}{m!} \frac{d^m f}{dt^m}\Bigr|_{t=0} X\bigl( a_{[m]}b,w
\bigr).
\end{multlined}
\label{eq:taylor-wp-shifted}
\end{equation}
Using \eqref{eq:differential-equation} and \eqref{eq:Pk.wpk.relation} we see that the series 
 $(-1)^{m}\tbinom{k+m-1}{m} P_{k+m} \left( \tfrac{u}{w},q \right)$ converges
uniformly in the domain $|q| < |u/w| < 1$, to $\tfrac{1}{m!} \tfrac{d^m f }{dt^m}|_{t=0}$. Therefore the left hand side of \eqref{eq:bmf} reads 
\begin{align*}
\sum_{m \geq 0} (-1)^m \binom{k+m-1}{m} P_{k+m}\left( \frac{u}{w},q \right) X\left( a_{[m]}b,w \right).
\end{align*}

Notice that if we consider the situation when $ - \tau < \im (s-v) < 0$, then the function $f(t)$ would be analytic in the strip $\im (s-v) < \im t < \im (s-v + \tau)$. Repeating the same argument we would obtain the equation:
\begin{equation}
\begin{multlined}
\sum_{m \geq 0}  \binom{k+m-1}{m} P_{k+m}\left( \frac{w}{u},q \right) X\left( a_{[m]}b,w \right) c 
= \frac{1}{2 \pi i} \res_z  P_k\left( \frac{z}{u},q \right) X( a, z) X(b,w) c \\ 
- \frac{1}{2 \pi i}\res_z  P_k\left( \frac{z}{u},q \right)  X(b,w) X(a,z) c,
\label{eq:borcherds-modified-3}
\end{multlined}
\end{equation}
which is equivalent to \eqref{eq:borcherds-modified-2} by \eqref{eq:P-symmetric} after replacing $u$ by $u q^{-1}$.  

\end{proof}
\section{Derivations and Self-Extensions}\label{sec:derivations}
In this section we study self extensions of a module $M$ of a vertex algebra $V$ and the corresponding $\End(M)$-valued derivation of $V$. We state an analogous to the Borcherds formula of Theorem \ref{prop:borcherds-modified-fourier} in this setting. 
\begin{nolabel}
Let $V$ be a vertex algebra and $M$ be a $V$-module. An $\End(M)$-valued derivation of $V$ is a bilinear map $\Psi: V \otimes M \rightarrow M( (z))$ such that for every $f \in \mathbb{C} [ [z,w]][z^{-1},w^{-1},(z-w)^{-1}]$ and every $a,b \in V$, $m \in M$ we have
\begin{align*}
\res_{z-w} \Psi\left( Y(a,z-w)b,w \right)m\, f(z,w)
= {} & \res_z  \bigl( Y(a,z) \Psi(b,w) + \Psi(a,z) Y(b,w) \bigr) m\, i_{z,w} f(z,w) \\ 
& - \res_z \bigl( \Psi(b,w) Y(a,z) + Y(b,w) \Psi(a,z) \bigr)m\, i_{w,z} f(z,w). 
\end{align*}
\end{nolabel}
\begin{ex}
Let $M$ be a $V$-module and $E$ a self extension of $V$-modules, i.e.,
\begin{align*}
0 \rightarrow M \rightarrow E \rightarrow M \rightarrow 0.
\end{align*}
Writing $Y^E(\cdot,z) : V \otimes E \rightarrow E( (z))$ for the $V$-module structure of $E$ and choosing a splitting of the sequence as vector spaces, allows us to write in matrix form 
\begin{equation}
Y^E(a,z) = 
\begin{pmatrix}
Y^M (a,z) & \Psi(a,z) \\ 
0 & Y^M(a,z)
\end{pmatrix}.
\label{eq:Psi-def}
\end{equation}
Then \eqref{eq:module-def} for the $V$-module $E$ implies that $\Psi$ is an $\End(M)$-valued derivation. Conversely, given an $\End(M)$-valued derivation $\Psi$, we define $Y^E(a,z)$ on $M \oplus M$ by \eqref{eq:Psi-def} and obtain a self extension of $M$. 
\end{ex}
\begin{nolabel}
As in Section \ref{no:modified-operators} we introduce the modified derivation $\sigma(a,z) = z^{-1} \Psi(z^{L_0}a,z)$ and we write
\begin{align*}
\Psi(a,z) = \sum_{n \in \mathbb{Z}} \Psi(a)_{(n)} z^{-1-n} \quad \text{and} \quad \sigma(a,z) = \sum_{n \in \mathbb{Z}} \sigma(a)_n z^{-n-1}.
\end{align*}
\label{no:modified-derivation}
In the same way as in Theorem \ref{prop:borcherds-modified-fourier}, Corollary \ref{prop:borcherds-modified} and Proposition \ref{prop:borcherds-modified-2}  we prove
\end{nolabel}
\begin{prop} Let $M$ be a $V$-module, $X(\cdot,z) = X^M(\cdot,z)$ the modified vertex operators. Let $\Psi(\cdot,z)$ be a derivation and $\sigma(\cdot,z)$ the corresponding modified operator as in \ref{no:modified-derivation}. For $a,b \in V$ we have:
\begin{equation}
\sigma\left( a_{[0]}b,w \right) = \frac{1}{ 2 \pi i} \res_z \Bigl( \sigma(a,z) X(b,w) + X(a,z) \sigma(b,w) \Bigr) - \frac{1}{ 2 \pi i} \res_z \Bigl(  \sigma(b,w) X(a,z)  + X(b,w) \sigma(a,z) \Bigr)
\label{eq:0-th-borcherds-b}
\end{equation}
\begin{equation}
\begin{split}
\sigma\left( a_{[\wp_1]} b,w \right) - G_2(q) \sigma\left( a_{[1]}b,w \right) &+ \pi i \sigma\left( a_{[0]}b,w \right)  
\\ 
&= 
 -\frac{1}{2 \pi i} \res_z \left( P_1 \left( \frac{zq}{w},q \right) -  2\pi i\right)  \Bigl( \sigma(a,z) X(b,w) + X(a,z) \sigma(b,w) \Bigr) \\
&\quad +\frac{ 1 }{2 \pi i}
res_z P_1\left( \frac{z}{w},q \right) \Bigl( \sigma(b,w) X(a,z) + X(b,w) \sigma(a,z) \Bigr),
\label{eq:borcherds-modified-1-ba}
\end{split}
\end{equation}
\begin{equation}
\begin{split}
\sigma\left( a_{[\wp_2]} b,w \right) + G_2(q) X\left( a_{[0]}b,w \right)  &=  \frac{1}{2 \pi i} \res_z P_2 \left( \frac{zq}{w},q \right)  \Bigl( \sigma(a,z) X(b,w) + X(a,z) \sigma(b,w) \Bigr) \\
&\quad -\frac{ 1 }{2 \pi i}
res_z P_2\left( \frac{z}{w},q \right) \Bigl( \sigma(b,w) X(a,z) + X(b,w) \sigma(a,z) \Bigr),
\label{eq:borcherds-modified-1-bb}
\end{split}
\end{equation}
\begin{equation}
\begin{split}
\sigma\left( a_{[\wp_k]} b,w \right) &=  \frac{(-1)^{k}}{2 \pi i} \res_z P_k \left( \frac{zq}{w},q \right)  \Bigl( \sigma(a,z) X(b,w) + X(a,z) \sigma(b,w) \Bigr) \\
&\quad -\frac{ (-1)^{k} }{2 \pi i}
res_z P_k\left( \frac{z}{w},q \right) \Bigl( \sigma(b,w) X(a,z) + X(b,w) \sigma(a,z) \Bigr), \qquad k \geq 3
\label{eq:borcherds-modified-1-bc}
\end{split}
\end{equation}
{\multlinegap=130pt
\begin{equation}
\begin{multlined}
\sum_{m \geq 0} (-1)^m \binom{k+m-1}{m} P_{k+m}\left( \frac{u}{w},q \right) \sigma\left( a_{[m]}b,w \right) c 
= \\ \frac{1}{2 \pi i} \res_z  P_k\left( \frac{u}{z},q \right) \Bigl( \sigma( a, z) X(b,w) +  
X(a,z) \sigma(b,w) \Bigr) c 
\\ - \frac{1}{2 \pi i}\res_z  P_k\left( \frac{u}{z},q \right)  \Bigl( X(b,w) \sigma(a,z) + \sigma(b,w) X(a,z)\Bigr) c.
\label{eq:borcherds-modified-2-b}
\end{multlined}
\end{equation}
}
\label{prop:borcherds-modified-b}
\end{prop}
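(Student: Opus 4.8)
The plan is to deduce Proposition \ref{prop:borcherds-modified-b} from the module-theoretic ``derivation Borcherds identity'' stated at the opening of Section \ref{sec:derivations}, by running the Fourier-analytic argument of Theorem \ref{prop:borcherds-modified-fourier} essentially verbatim, with the single modification that the product $X(a,z)X(b,w)$ is everywhere replaced by the ``derivation-twisted'' combination $\sigma(a,z)X(b,w)+X(a,z)\sigma(b,w)$ and, in the opposite ordering, by $\sigma(b,w)X(a,z)+X(b,w)\sigma(a,z)$. First I would record the analog of Lemma \ref{lem:modified-operators} for $\sigma$: since $\sigma(a,z)=z^{-1}\Psi(z^{L_0}a,z)$ is defined by exactly the same recipe as $X$, one gets $\sigma\bigl((L_{-1}+L_0)a,z\bigr)=z\tfrac{d}{dz}\sigma(a,z)+\sigma(a,z)$ and $q^{L_0}\sigma(a,z)=q\,\sigma(a,qz)q^{L_0}$ (this last requires that the derivation $\Psi$ commute appropriately with $L_0^M$, which holds for a grading-preserving self-extension; I would note this compatibility explicitly or else restrict to that case). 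I would also record the twisted commutator formula $[\sigma(a)_{(n)},Y(b,w)]+[\,a_{(n)}^M,\Psi(b,w)\,]=\sum_{j\ge 0}\binom{n}{j}w^{n-j}\Psi(a_{(j)}b,w)$, which is the image of the module Borcherds identity for $\Psi$ under $f(z,w)=(z-w)^n i_{z,w}$ and plays the role of the commutator identity $[a_{(n)},Y(b,w)]=\sum_j\binom{n}{j}w^{n-j}Y(a_{(j)}b,w)$ used in the proof of Theorem \ref{prop:borcherds-modified-fourier}.

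With these ingredients in hand, the argument proceeds as in that proof. One fixes $a,b\in V$ of homogeneous conformal weight and a periodic meromorphic $f(t)$ of the allowed type (either regular at $t=0$, or satisfying $f(t+\tau)=f(t)-\alpha$). The left-hand side $\sigma(a_{[f]}b,w)=\sum_{n}f_n\,\sigma(a_{[n]}b,w)$ is computed by expanding $Y[a,t]b=e^{2\pi i t\deg a}Y^M(a,e^{2\pi i t}-1)b$ and trading the residue in $t$ for a contour integral, exactly as in \eqref{eq:residue-fourier-1}--\eqref{eq:aux-2}; nothing here involves $\sigma$ except as a spectator that replaces $Y$ by $\Psi$ in the appropriate slot, so the coefficient bookkeeping in terms of the Fourier coefficients $F_n^{\pm}$ is literally unchanged. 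The right-hand side is expanded as in \eqref{eq:rhs-1-borcherds}--\eqref{eq:multline-b}, now using the twisted commutator formula in place of the ordinary one; the ``final two terms'' collapse to $\alpha\bigl(\sigma(a,z)X(b,w)+X(a,z)\sigma(b,w)\bigr)$-type expressions whose combination is $\alpha\,\sigma(a_{(-1)}b,w)$ by the same reordering as in \eqref{eq:-1-product}, and the remaining terms match \eqref{eq:borcherds-lhs} via $\binom{-n-1}{j}=(-1)^j\binom{n+j}{j}$. Comparing yields \eqref{eq:0-th-borcherds-b} when $f=1$, and \eqref{eq:borcherds-modified-1-ba}--\eqref{eq:borcherds-modified-1-bc} by taking $f$ to be the functions $-\wp_1+g_2 t-\pi i$, $\wp_2$, and $\wp_k$ ($k\ge 3$) whose Fourier expansions were identified in \ref{no:ellitptic-functions} and Corollary \ref{prop:borcherds-modified}. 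Finally \eqref{eq:borcherds-modified-2-b} follows by repeating the argument of Proposition \ref{prop:borcherds-modified-2} with $f(t,\tau)=(-1)^k\owp_k(t+v-s,\tau)$: the strip of analyticity is the same, the Taylor expansion \eqref{eq:taylor-wp-shifted} of $\sigma(a_{[f]}b,w)$ is the same with $X$ replaced by $\sigma$, and the differential-equation relation \eqref{eq:differential-equation} identifies the coefficients as $(-1)^m\binom{k+m-1}{m}P_{k+m}(u/w,q)$.

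The one genuinely new point — and the main thing to get right — is the twisted commutator identity and, more precisely, checking that the \emph{module} Borcherds identity for $\Psi$ produces exactly the symmetric combination $\sigma X+X\sigma$ (and $\sigma(b,w)X(a,z)+X(b,w)\sigma(a,z)$ in the other order) rather than some asymmetric variant. This is where the structure of $\Psi$ as the off-diagonal block of $Y^E$ in \eqref{eq:Psi-def} enters: writing $Y^E(a,z)Y^E(b,w)$ in $2\times 2$ block form, the $(1,2)$-entry is precisely $Y^M(a,z)\Psi(b,w)+\Psi(a,z)Y^M(b,w)$, and applying the ordinary Borcherds identity \eqref{eq:module-def} to the module $E$ and reading off the $(1,2)$-entry gives the derivation identity in exactly the form needed. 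So I would open the proof by deriving the twisted commutator formula from this block computation, then note that every step of the proof of Theorem \ref{prop:borcherds-modified-fourier} and of Proposition \ref{prop:borcherds-modified-2} carries over mutatis mutandis, and omit the repeated bookkeeping. I do not expect any analytic subtlety beyond what is already handled in Section \ref{sec:modified.v.o}; the domains of convergence and contour choices are identical because $\sigma$ lies in $\Hom(M,M((z)))$ with the same pole structure as $X$.
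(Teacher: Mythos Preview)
Your proposal is correct and matches the paper's approach exactly: the paper itself offers no independent proof but simply states that the identities are proved ``in the same way as in Theorem \ref{prop:borcherds-modified-fourier}, Corollary \ref{prop:borcherds-modified} and Proposition \ref{prop:borcherds-modified-2}'', and your write-up spells out precisely how that transfer works via the $(1,2)$-block reading of $Y^E$ and the resulting twisted commutator formula. The only superfluous item is the $q^{L_0}$-commutation for $\sigma$, which is not used in the present argument (it enters later in Section \ref{sec:higher.traces}); otherwise your outline is exactly what the paper intends.
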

\section{Higher Trace Functions}\label{sec:higher.traces}
In this section we define and study $n$--point linear functionals associated with a self extension of a $V$-module $M$. These functionals are formal power series, defined as graded traces of products of an $\End(M)$-valued derivation and $n$ vertex operators acting on $M$. We start with the $n=1$ case in  \ref{no:f1-def}--\ref{thm:cor-der-dq} and then treat the general case in \ref{prop:actual-trace-prop}--\ref{prop:8.15}. In Lemma \ref{lem:residue-trading} we prove that these linear functionals satisfy the formal differential equations \eqref{eq:connection-tn} which correspond to moving the marked points. We show that the functionals vanish on boundaries in Proposition \ref{prop:new-deriv-generic} in the $n=1$ case and in Proposition \ref{prop:8-trace-d2} in the general case. We show that they are formal solutions of the modular differential equation \eqref{eq:flatness-0} in Theorems \ref{thm:cor-der-dq} and \ref{thm:last-diff-eq}. We prove a recursive formula expressing an $(n+1)$--point linear functional in terms of $n$--point linear functionals in \ref{prop:8.15}, which we refer to as the \emph{insertion formula}. The motif in all proofs is similar: one uses Theorem \ref{prop:borcherds-modified-fourier} to obtain two terms involving different Fourier expansions of the same elliptic function, inside of the trace functional. Then one commutes one of these two terms using the cyclic property of the trace and the commutation relation \eqref{eq:q0-xoperators}, of the modified vertex operators with $L_0$. We often exploit the fact that most of the Fourier series appearing are expansions of elliptic functions, with the exception of $P_1$ which is an expansion of $\wp_1$. One needs to keep track of the different domains of convergence of the involved Fourier transforms. 

In this section we fix a vertex algebra $V$, its module $M$ and an $\End(M)$-valued derivation $\Psi$ as in \ref{no:modified-derivation}. We let $X(\cdot, z)$ and $\sigma(\cdot, z)$ be the associated modified operators.
\begin{nolabel}
Recall the ring $\cF_{n+1}$ of meromorphic elliptic functions as defined in \ref{no:meromorphic-functions}, and its subring $\bJ_*^{n+1}$ of weak Jacobi forms of index $0$. For elements $a_0,\dots,a_n \in V$ and $f(t_0,\dots,t_n,\tau) \in \cF_{n+1}$, we let $F(z_0,\dots,z_n,q)$ be the Fourier expansion of $f$ in the domain $|q z_0| < |z_n| < \dots < |z_1| < |z_0|$,
as in \ref{no:fourier-n-variable}, and we define 
\begin{equation}
F_1^n\left( a_0,\dots,a_n,F(z_0,\dots,z_n,q) \right) = \res_{z_0} z_1\dots z_n F(z_0,\dots,z_n,q) \tr_M \sigma(a_0,z_0) X(a_1,z_1)\dots X(a_n,z_n) q^{L_0}.
\label{eq:8.1}
\end{equation}
We will prove below (see Theorem \ref{thm:convergence1}) that, under certain finiteness conditions on $V$, this series converges in the domain $|qz_1| < |z_n| < \dots < |z_1|$, to an element of $\cF_{n}$, which we denote $F_1^n\left( a_0,\dots,a_n, f \right)$. 
\label{no:f1-def}
\end{nolabel}
\begin{rem} In this section we will make extensive use of Fourier expansions of elliptic functions in different domains. To avoid notational clutter, we will adhere to the following conventions. Whenever we write a Fourier series expansion $F(z_0,\dots,z_n, q)$ of an elliptic function $f(t_0,\dots,t_n, \tau)$ in $n+1$ variables, followed by $n+1$ field insertions in a given order, it is tacitly understood that $F$ is the expansion whose domain of convergence is given by the order the variables in the product of fields as it is in \eqref{eq:8.1}. Occasionally other domains of expansion play a role. We write $F_> (z_0,\dots,z_n,q)$ to mean the Fourier expansion in the domain given by decreasing order of variable modulus, in the same order as specified by the field insertion. For example, in the expression
\[ F_> \bigl(z_0, z_1,z_2, q\bigr) X(b,z_1) \sigma(a,z_0) X(c,z_2), \]
The series $F_>(z_0,z_1,z_2, q)$ converges in the domain $|q z_1| < |z_2| < |z_0|  < |z_1|$.

When in need of a more verbose notation, we will write a subindex with the explicit order of the expansion. Hence, in the example above we would have
\[ F_{z_1,z_0,z_2}(z_0,z_1,z_2,q) = F_>(z_0,z_1,z_2,q). \]
\label{rem:ordering}
\end{rem}
\begin{rem} The right hand side of \eqref{eq:8.1} is well defined for any formal series $F(z_0,\dots,z_n,q)$ not necessarily the Fourier expansion of an elliptic function. We will denote the corresponding functional also with the left hand side of \eqref{eq:8.1}, noting the points where the argument is not elliptic. 
\label{rem:9.3}
\end{rem}
\begin{lem} The series $F_1^n$ satisfies
\begin{gather} \label{eq:lem821}
\begin{split} 
z_i \frac{d}{dz_i} F_1^n\Bigl( a_0,\dots,a_n, F(z_0,\dots,z_n,q) \Bigr)
&= F_1^n\Bigl(  a_0,\ldots, (L_{-1} + L_0) a_i,\ldots,a_n, F(z_0,\cdots,z_n,q) \Bigr) \\
&\quad + F_1^n \Bigl( a_0,\cdots,a_n, z_i \frac{d}{dz_i} F(z_0,\ldots,z_n,q) \Bigr), \qquad 1 \leq i \leq n, 
\end{split} \\
F^n_1\left(   (L_{-1} + L_0) a_0,\ldots,a_n, F(z_0,\cdots,z_n,q) \right) + F^n_1
\left( a_0,\cdots,a_n, z_0 \frac{d}{dz_0} F(z_0,\ldots,z_n,q)  \right) = 0.
\label{eq:8.trading-2}
\end{gather}

\label{lem:residue-trading}
\end{lem}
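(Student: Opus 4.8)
The plan is to reduce both identities to the translation property \eqref{eq:x-translation} of the modified vertex operators, namely $X\left((L_{-1}+L_0)a,z\right) = z\frac{d}{dz}X(a,z) + X(a,z)$, together with the analogous identity for the modified derivation $\sigma$ stated in \ref{no:modified-derivation} (or rather, proved in the same way as \eqref{eq:x-translation}): $\sigma\left((L_{-1}+L_0)a,z\right) = z\frac{d}{dz}\sigma(a,z) + \sigma(a,z)$. The whole calculation takes place inside the residue $\res_{z_0}$ of a trace, so the only subtlety is bookkeeping of which variable the derivative $z_i\frac{d}{dz_i}$ is acting on, and the interaction of $z_i\frac{d}{dz_i}$ with the prefactor $z_1\cdots z_n$ in \eqref{eq:8.1}.

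\textbf{First identity \eqref{eq:lem821} (the case $1\le i\le n$).} I would start from the right-hand side. By \eqref{eq:x-translation}, $F_1^n\bigl(a_0,\dots,(L_{-1}+L_0)a_i,\dots,a_n, F\bigr)$ equals
\[
\res_{z_0} z_1\cdots z_n\, F(z_0,\dots,z_n,q)\, \tr_M \sigma(a_0,z_0) X(a_1,z_1)\cdots \left(z_i\tfrac{d}{dz_i}X(a_i,z_i) + X(a_i,z_i)\right)\cdots X(a_n,z_n) q^{L_0}.
\]
Now I would integrate by parts in $z_i$ inside the residue: since $\res_{z_0}$ is unaffected and the expression is a formal Laurent series, moving $z_i\frac{d}{dz_i}$ off of $X(a_i,z_i)$ and onto everything else changes sign and produces a factor from differentiating the explicit monomial $z_i$ in $z_1\cdots z_n$. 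Concretely, $\res\, z_i\frac{d}{dz_i}(\text{anything}) \cdot (\cdots) = -\res\,(\text{anything})\cdot z_i\frac{d}{dz_i}(\cdots) - \res(\text{anything})(\cdots)$ when the ``anything'' carries exactly one power of $z_i$; the $-\res(\text{anything})(\cdots)$ term cancels the stray $+X(a_i,z_i)$ term above. What survives is $-F_1^n\bigl(a_0,\dots,a_n, z_i\frac{d}{dz_i}F\bigr)$ coming from the derivative hitting $F$, plus $z_i\frac{d}{dz_i}$ of the whole residue expression — but that residue expression is exactly $F_1^n(a_0,\dots,a_n,F)$ with no remaining $z_i$-dependence outside, so $z_i\frac{d}{dz_i}$ applied to it is just $z_i\frac{d}{dz_i}F_1^n(\dots)$. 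Rearranging gives \eqref{eq:lem821}. The cleanest way to present this is simply: apply $z_i\frac{d}{dz_i}$ to the defining formula \eqref{eq:8.1}, use the product rule to hit the monomial $z_i$ in the prefactor, the operator $X(a_i,z_i)$, and the Fourier series $F$ in turn, and then recognize the three resulting terms via \eqref{eq:x-translation}.

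\textbf{Second identity \eqref{eq:8.trading-2} (the $a_0$ / $\sigma$ case).} Here the argument is even simpler because there is no monomial $z_0$ in the prefactor $z_1\cdots z_n$. Using the translation identity for $\sigma$, $F_1^n\bigl((L_{-1}+L_0)a_0,\dots,a_n,F\bigr) = \res_{z_0} z_1\cdots z_n F\,\tr_M\left(z_0\frac{d}{dz_0}\sigma(a_0,z_0) + \sigma(a_0,z_0)\right) X(a_1,z_1)\cdots q^{L_0}$. In the first term I integrate by parts in $z_0$; the operator $z_0\frac{d}{dz_0}$ moves onto $z_1\cdots z_n F\,\tr_M(\cdots)q^{L_0}$ with a sign. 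Since $z_1\cdots z_n$ is $z_0$-independent and the remaining fields $X(a_1,z_1),\dots,X(a_n,z_n),q^{L_0}$ are $z_0$-independent, $z_0\frac{d}{dz_0}$ hits only $F$, giving $-F_1^n\bigl(a_0,\dots,a_n, z_0\frac{d}{dz_0}F\bigr)$, minus the leftover $+\sigma(a_0,z_0)$ term which becomes $-F_1^n(a_0,\dots,a_n,F)$; but wait — there is no such $F$-only term on the right-hand side of \eqref{eq:8.trading-2}, so I must check that $\res_{z_0}$ of a total $z_0$-derivative vanishes (this is the first identity of \eqref{eq:residue-trading-exp}) and also that $\res_{z_0} z_0\frac{d}{dz_0}(\cdots) = -\res_{z_0}(\cdots)$ is the correct bookkeeping, so that in fact the $+\sigma(a_0,z_0)$ contribution is precisely cancelled by the ``$+1$'' that appears when trading $z_0\frac{d}{dz_0}$ across the residue via $\res_t\, dt\,(t\frac{d}{dt}+1)f = 0$. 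This last identity is exactly the second equation of \eqref{eq:residue-trading-exp}, and it is the right tool: it says $\res_{z_0}\left(z_0\frac{d}{dz_0}+1\right)(\text{anything in }z_0) = 0$, which immediately kills the combination $\sigma(a_0,z_0) + z_0\frac{d}{dz_0}\sigma(a_0,z_0) = \sigma\bigl((L_{-1}+L_0)a_0,z_0\bigr)$ against the $F$-differentiation term. Collecting terms yields \eqref{eq:8.trading-2}.

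\textbf{Main obstacle.} The content is genuinely routine — it is all integration by parts in formal Laurent series — so the only thing that requires care is sign and prefactor bookkeeping, in particular making sure the $+1$ in \eqref{eq:x-translation} is correctly absorbed (for \eqref{eq:lem821}, against the monomial $z_i$ in the prefactor via ordinary integration by parts; for \eqref{eq:8.trading-2}, against the residue-trading identity $\res(t\frac{d}{dt}+1)f=0$ from \eqref{eq:residue-trading-exp}). I would also remark, for rigor, that all manipulations are legitimate term-by-term operations on the formal series defining $F_1^n$ in \eqref{eq:8.1}, so no convergence input is needed at this stage — the identities are purely formal, exactly as the section's overview indicates the $n=1$ differential equations are established before convergence is addressed.
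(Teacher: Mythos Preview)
Your approach is exactly the same as the paper's, which simply writes ``Follows directly from \eqref{eq:residue-trading-exp} and \eqref{eq:x-translation}.'' One small expository point: for \eqref{eq:lem821} with $1\le i\le n$ there is no residue in $z_i$, so no integration by parts is needed---just apply $z_i\tfrac{d}{dz_i}$ to \eqref{eq:8.1} and use the product rule together with \eqref{eq:x-translation}, as you correctly note in your final summary; the residue identity \eqref{eq:residue-trading-exp} is only needed for \eqref{eq:8.trading-2}.
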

\begin{proof}
Follows directly from \eqref{eq:residue-trading-exp} and \eqref{eq:x-translation}.
\end{proof}
\begin{nolabel}
For simplicity we first describe the situation in the case $n=1$. Let $a,b \in V$ and $f(t_0,t_1,\tau) \in \cF_2$. We have
\begin{align*}
a_{[f]}b = \res_{t_1 - t_0} dt_1 f(t_1 - t_0) Y[a,t_1 -t_0]b \in V \otimes \cF_1.
\end{align*}
If $f(t_0,t_1,\tau) \in \bJ^2_*$ then $a_{[f]}b$ is a linear combination of the $\wp_k$-products defined in \eqref{no:wpk-propduct}, and we have in fact $a_{[f]}b \in V \otimes \bQM_*$.

Let $f(t_0,t_1,\tau) \in \cF_2$ with Fourier series $F(z_0, z_1, q)$ convergent in the domain $|qz_0| < |z_1| < |z_0|$. Abusing notation, we have $f(t_0,t_1,\tau) = f(t_1 - t_0, \tau)$, so that $F(z_0,z_1,q) = F\left( \frac{z_1}{z_0},q \right)$ and
\begin{align*}
F_1^1 \left( a,b,F\left(z_0, z_1,q \right) \right) = \res_{z_0} z_1 F\left( \frac{z_1}{z_0},q \right) \tr_M \sigma(a,z_0) X(b,z_1) q^{L_0}.
\end{align*}
We will prove in Corollary \ref{cor:sum-of-omega} below that it is independent of $z_1$. We now prove a proposition which plays an important technical role in what follows. This is a degree $1$ analog of \cite[4.3.3]{zhu}. Recall that we have introduced $\zeta(t, \tau) = \wp_1(t, \tau) - g_2(\tau)t$ and that $P_1(z, q)$ is the expansion, in the domain $|q| < |z| < 1$, of $-\zeta(t, \tau) - \pi i$.
\end{nolabel}
\begin{prop} Let $a,b,c \in V$ and let $f(t_0,t_1,\tau) = f(t_1-t_0,\tau) \in \cF_2$ with Fourier expansion $F\left( \frac{z_1}{z_0},q \right)$ convergent in the domain $|qz_0| < |z_1| < |z_0|$. Then
\begin{align}
\begin{split}
-\res_{z_0} z_1 & F\left( \frac{z_1}{z_0},q \right) \tr_M a_0 \,\, \sigma(b,{z_0}) X(c,z_1) q^{L_0} \\
= {} &
\res_{z_0} z_1 P_1\left( \frac{z_1}{z_0},q \right) \tr_M \sigma(a,z_0) X\left( b_{[f]}c,z_1 \right) q^{L_0} - \res_{z_0} z_1 F\left( \frac{z_1}{z_0},q \right) \tr_M \sigma(b,z_0) X(a_{[\zeta]}c, z_1) q^{L_0} \\
&+ \pi i \res_{z_0} z_1 F\left( \frac{z_1}{z_0},q \right) \tr_M \sigma(b, z_0) X(a_{[0]}c, z_1) q^{L_0} \\
&- \sum_{m \geq 0} (-1)^m \res_{z_0} z_1 P_{m+1}\left( \frac{z_1}{z_0},q \right) F\left( \frac{z_1}{z_0},q \right) \tr_M \sigma\left( a_{[m]}b, z_0 \right) X(c,z_1) q^{L_0}.
\end{split}
\label{eq:recursion-1}
\end{align}
\label{prop:recursion}
\end{prop}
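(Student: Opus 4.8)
The plan is to start from the right-hand side of \eqref{eq:recursion-1} and collapse it to the left-hand side using the Borcherds-type identities of Corollary \ref{prop:borcherds-modified} and Proposition \ref{prop:borcherds-modified-2}, together with the cyclicity of the trace and the commutation relation \eqref{eq:q0-xoperators}. The key observation is that the combination of the second, third, and fourth groups of terms on the right-hand side is, term by term, an instance of the modified Borcherds identities applied to $a$ and $b$ \emph{inside} the trace, but with one of the two operator orderings replaced by its cyclic permutation. Concretely, the first term $\res_{z_0} z_1 P_1(z_1/z_0,q)\tr_M \sigma(a,z_0) X(b_{[f]}c,z_1)q^{L_0}$ should be thought of as carrying the Fourier expansion of $f$ converted via the operator $X(b_{[f]}c, z_1)$; one expands $b_{[f]}c$ into $\wp_k$-products (or uses Proposition \ref{prop:borcherds-modified-2} directly with the generating function $\sum_m (-1)^m\binom{k+m-1}{m}P_{k+m}$), and each such $\wp_k$-product contributes a pair of residue terms $\res_z P_k(\cdot) \sigma(a,z)X(b,w)$ and $\res_z P_k(\cdot) X(b,w)\sigma(a,z)$.

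\textbf{Main steps.} First I would rewrite the first term on the right-hand side of \eqref{eq:recursion-1} by applying the $\sigma$-version of Proposition \ref{prop:borcherds-modified-2} (equation \eqref{eq:borcherds-modified-2-b}) with $u$ playing the role bookkeeping the $z_0$-dependence of $f$: this splits it into two families of double-residue expressions, one with $\sigma(a)X(b)$-ordering and one with the reversed ordering $X(b)\sigma(a)$ (plus the mixed $X(a)\sigma(b)$ and $\sigma(b)X(a)$ terms coming from the derivation structure). Next I would apply the analogous replacement to the second and third terms using \eqref{eq:borcherds-modified-1-ba}, \eqref{eq:borcherds-modified-1-bb}, \eqref{eq:borcherds-modified-1-bc}, \eqref{eq:0-th-borcherds-b}, noting that the $g_2(\tau)$ contributions cancel among these terms (as in Remark \ref{rem:g2-doesnt-count}) and that the $\pi i$-terms combine to cancel the third group. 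The crucial manipulation is then: in one of the two resulting double residues, cyclically permute the trace to move $X$ or $\sigma$ past $q^{L_0}$, invoking \eqref{eq:q0-xoperators} to produce a shift $z \mapsto qz$ in the argument of $P_k$; since $P_k(qz/w,q)$ and $P_k(z/w,q)$ (for $k\geq 2$) are expansions of the same elliptic function and $P_1$ differs only by the additive constant $2\pi i$ accounted for by the $\pi i$-terms, the two halves combine into the single Fourier series $F$ multiplying $\sigma(b,z_0)X(c,z_1)q^{L_0}$, which is precisely $-$ the left-hand side.

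\textbf{Expected obstacle.} The main difficulty will be bookkeeping the domains of convergence: each Fourier expansion $P_k(z_1/z_0,q)$, $F(z_1/z_0,q)$, and their products must be expanded in the correct annular region $|qz_0|<|z_1|<|z_0|$, and after the cyclic shift $z_0 \mapsto q^{-1}z_0$ (or $z_1\mapsto qz_1$) one lands in a cyclically rotated domain, so one has to verify carefully — using the analytic-continuation statements of \ref{no:ellitptic-functions} and \ref{no:fourier-n-variable}, in particular \eqref{eq:domain-change} and Lemma \ref{lem:p-sym} — that all the rearranged series are expansions of the \emph{same} elliptic function and hence may be legitimately added. A secondary technical point is keeping track of the extra $\binom{k+m-1}{m}$ combinatorial factors when converting between the ``$b_{[f]}c$ inside one $X$'' form and the ``sum over $\wp_k$-products'' form; here one uses \eqref{eq:differential-equation} and \eqref{eq:taylor-wp-shifted} exactly as in the proof of Proposition \ref{prop:borcherds-modified-2}. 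Once these convergence and combinatorial ledgers are settled, the identity falls out by matching coefficients.
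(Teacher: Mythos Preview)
Your overall strategy---expand each term on the right-hand side into double residues via the modified Borcherds identities, then use trace cyclicity together with \eqref{eq:q0-xoperators} to collapse---is exactly the paper's approach. However, you have the specific identity assignments swapped. The first term contains $\sigma(a,z_0)\,X(b_{[f]}c,z_1)$, so it is $X(b_{[f]}c,z_1)$ that must be expanded, and for this one uses the \emph{ordinary} $X$-version (Theorem~\ref{prop:borcherds-modified-fourier}), yielding terms of the form $\sigma(a,\cdot)X(b,\cdot)X(c,\cdot)$ and $\sigma(a,\cdot)X(c,\cdot)X(b,\cdot)$; no mixed $\sigma(a)X(b)$ versus $X(b)\sigma(a)$ orderings arise here. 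It is the \emph{fourth} term, $\sum_m \sigma(a_{[m]}b,z_0)X(c,z_1)$, that requires the derivation identity \eqref{eq:borcherds-modified-2-b} and produces the four mixed terms $\sigma(a)X(b)$, $X(a)\sigma(b)$, $\sigma(b)X(a)$, $X(b)\sigma(a)$ you describe. Likewise, the second and third terms are handled by expanding $X(a_{[\zeta-\pi i]}c,z_1)$ directly via the two Fourier expansions $-P_1(z,q)-\pi i$ and $-P_1(qz,q)+\pi i$ of $\zeta$; no $g_2(\tau)$ cancellation enters (Remark~\ref{rem:g2-doesnt-count} concerns a different formula). The extra $2\pi i$ from the $P_1(qz)$ versus $P_1(z)$ comparison is precisely what, after one cycling of $X(a,u)$ through $q^{L_0}$, produces the zero-mode $a_0$ on the left-hand side. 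With these reassignments your outline matches the paper's proof step for step.
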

\begin{proof}
Recall from \ref{no:fourier-n-variable} that $F(\tfrac{z_1 q}{u},q)$ is the Fourier expansion of $f$ in the domain $|z_1 q| < |u| < |z_1|$. 
Using Theorem \ref{prop:borcherds-modified} we obtain
\begin{align}
\begin{split}
\res_{z_0} z_1 & P_1\left( \frac{z_1}{z_0},q \right) \tr_M \sigma(a,z_0) X\left( b_{[f]}c,z_1 \right) q^{L_0} \\
= {} & \frac{1}{ 2 \pi i} \res_{z_0} z_1 P_1\left( \frac{z_1}{z_0},q \right) \tr_M \sigma(a,z_0) \res_u X(b,u) X(c,z_1) F\left( \frac{z_1}{u},q \right) q^{L_0} 
\\
&- \frac{1}{ 2 \pi i} \res_{z_0} z_1 P_1\left( \frac{z_1}{z_0},q \right) \tr_M \sigma(a,z_0) \res_u X(c,z_1) X(b,u) F\left(\frac{z_1 q}{u},q \right) q^{L_0} \\
= {} & \frac{1}{ 2 \pi i} \res_{z_0} \res_u z_1 P_1\left( \frac{z_1}{u},q \right) F\left( \frac{z_1}{z_0},q \right)  \tr_M \sigma(a,u) X(b,z_0) X(c,z_1) q^{L_0} \\
&- \frac{1}{2 \pi i} \res_{z_0} \res_u z_1 P_1\left( \frac{z_1}{u},q \right) F\left(\frac{z_1q}{z_0},q \right) \tr_M \sigma(a,u) X(c,z_1) X(b,z_0) q^{L_0}.
\end{split}
\label{eq:rec-1-1}
\end{align}
Similarly, using that $P_1(zq,q)$ converges in the domain $1 < |z| < |q^{-1}|$ to $- \zeta(t,\tau) + \pi i$, while $P_1(z,q)$ converges in the domain $|q| < |z| <1$ to $- \zeta(t,\tau) - \pi i$ we obtain:
\begin{align}
\begin{split}
-\res_{z_0} z_1 & F\left( \frac{z_1}{z_0},q \right) \tr_M \sigma(b, z_0) X(a_{[\zeta]}c, z_1) q^{L_0} + \pi i \res_{z_0} z_1 F\left( \frac{z_1}{z_0},q \right)\tr_M \sigma(b, z_0) X(a_{[0]}c, z_1) q^{L_0} \\
= {} &
\frac{1}{ 2 \pi i}\res_{z_0} \res_u z_1 F\left( \frac{z_1}{z_0},q \right) P_1\left(\frac{uq}{z_1},q  \right) \tr_M \sigma(b,z_0) X(a,u) X(c,z_1) q^{L_0} \\
&-\frac{1}{2 \pi i}\res_{z_0} \res_u z_1 F\left( \frac{z_1}{z_0},q \right) \left( P_{1} \left(\frac{u}{z_1},q  \right) + 2 \pi i \right) \tr_M \sigma(b,z_0) X(c,z_1) X(a,u) q^{L_0}. 
\end{split}
\label{eq:rec-1-2}
\end{align}
And by \eqref{eq:borcherds-modified-2-b} we obtain
\begin{align}
\begin{split}
&-\sum_{m \geq 0} (-1)^m \res_{z_0} z_1 P_{m+1}\left( \frac{z_1}{z_0},q \right) F\left( \frac{z_1}{z_0},q \right) \tr_M \sigma(a_{[m]}b, z_0) X(c, z_1) q^{L_0} \\
= {} &
-\frac{1}{2 \pi i} \res_{z_0} z_1 F\left( \frac{z_1}{z_0},q \right) \tr_M \res_u P_1\left( \frac{z_1}{u},q \right) \sigma(a,u) X(b,z_0) X(c,z_1) q^{L_0} \\
&- \frac{1}{2 \pi i} \res_{z_0} z_1 F\left( \frac{z_1}{z_0},q \right)  \tr_M \res_u P_1\left( \frac{z_1}{u},q \right) X(a,u) \sigma(b,z_0) X(c,z_1) q^{L_0} \\
&+ \frac{1}{2 \pi i} \res_{z_0} z_1 F\left( \frac{z_1}{z_0},q \right) \tr_M \res_u P_1\left( \frac{z_1}{u},q \right) \sigma(b,z_0) X(a,u)  X(c,z_1) q^{L_0} \\
&+ \frac{1}{2 \pi i} \res_{z_0} z_1 F\left( \frac{z_1}{z_0},q \right) \tr_M
\res_u P_1\left( \frac{z_1}{u},q \right)  X(b,z_0) \sigma(a,u) X(c,z_1) q^{L_0}
.
\end{split}
\label{eq:rec-1-3}
\end{align}
The first term of the right hand side of \eqref{eq:rec-1-3} cancels the first term of the right hand side of \eqref{eq:rec-1-1}. Using \eqref{eq:P-symmetric}, the third term of the right hand side of \eqref{eq:rec-1-3} cancels the first term of the right hand side of \eqref{eq:rec-1-2}. In the fourth term of the right hand side of \eqref{eq:rec-1-3} we commute $X(b,z_0)$ with $q^{L_0}$ using \eqref{eq:q0-xoperators} and that the trace is a symmetric functional, so this term cancels the second term of the right hand side of \eqref{eq:rec-1-1}. In the second term of the right hand side of \eqref{eq:rec-1-3} we commute $X(a,u)$ with $q^{L_0}$ (using \eqref{eq:q0-xoperators} again) to obtain 
\begin{align*}
&- \frac{1}{2 \pi i} \res_{z_0} z_1 F\left( \frac{z_1}{z_0},q \right) \tr_M \res_u P_1\left( \frac{z_1}{u},q \right)\sigma(b,z_0) X(c,z_1) q^{L_0}  X(a,u) \\ 
= &- \frac{1}{2 \pi i} \res_{z_0} z_1 F\left( \frac{z_1}{z_0},q \right)  \tr_M \res_u P_1\left( \frac{z_1}{u},q \right)\sigma(b,z_0) X(c,z_1)  X(a,qu)q q^{L_0} \\ 
= &- \frac{1}{2 \pi i} \res_{z_0} z_1 F\left( \frac{z_1}{z_0},q \right)  \tr_M \res_u P_1\left( \frac{z_1q}{u},q \right)\sigma(b,z_0) X(c,z_1)  X(a,u) q^{L_0}. 
\end{align*}
Now, using $P_1( \tfrac{z_1q}{u},q) = - P_1(\tfrac{u}{z_1},q)$, this term plus the second term on the right hand side of \eqref{eq:rec-1-2} yields
\begin{align*}
&- \res_{z_0} \res_u z_1 F\left( \frac{z_1}{z_0},q \right) \tr_M \sigma(b,z_0) X(c,z_1) X(a,u) q^{L_0} \\
= &-\res_{z_0} z_1 F\left( \frac{z_1}{z_0},q \right) \tr_M a_0 \sigma(b,z_0) X(c,z_1) q^{L_0}. 
\end{align*}
This proves the proposition. 
\end{proof}
Recall the complex $C_\bullet$ defined in \ref{no:complex-def}, computing the chiral homology in genus $1$ of the vertex algebra $(Y,\vac,\tilde{\omega}, Y[\cdot,z])$. To lighten the burden of notation we write $a \otimes b \otimes f$ for a typical element of $\ker d_1$, implicitly we shall always mean finite sums of such terms. The following proposition shows that $F^1_1$ vanishes on coboundaries.
\begin{prop} Let $a,b,c \in V$ and let $f = f(t_0,t_1,t_2,\tau) \in \cF_3$. Let
$F(z_0,z_1,z_2,q)$ be the Fourier expansion of $f$ in the domain $|qz_0| < |z_2|
< |z_1| < |z_0|$. Let $f_{ijk}$ be the coefficients of the Laurent expansion of
$f$ near $t_i-t_j = 0$, that is $f = \sum_k f_{ijk} (t_i-t_j)^k$ with $f_{ijk}
\in \cF_2$. Let $F_{ijk} (z,q)$ be the Fourier expansion of $f_{ijk}$ in the
domain $|q|<|z|<1$. Recall the definition of $d_2 (a \otimes b \otimes c \otimes
f)$ from \eqref{eq:boundary}. Then the following identity holds
\begin{multline} \label{eq:8.19}
\sum_k F^1_1\left( a, b_{[k]} c, F_{12k}\left( \frac{z_1}{z_0},q \right) \right) - \sum_k F^1_1\left( b,a_{[k]}c,F_{02k}\left( \frac{z_1}{z_0},q \right) \right) \\ - \sum_k F^1_1\left( a_{[k]}b,c,F_{01k}\left( \frac{z_1}{z_0},q \right) \right) = 0.
\end{multline}
\label{prop:new-deriv-generic}
\end{prop}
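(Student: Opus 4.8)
The statement asserts that the $n=1$ trace functional $F_1^1$ annihilates the image of $d_2$, i.e. vanishes on boundaries in $C_\bullet$ for the $Y[\cdot,z]$ vertex algebra structure. The strategy is precisely the same as in the proof of Proposition \ref{prop:recursion}: rewrite each of the three sums of \eqref{eq:8.19} in terms of residues of products of modified operators, invoke the Borcherds identities of Corollary \ref{prop:borcherds-modified} and Proposition \ref{prop:borcherds-modified-2} (in the forms \eqref{eq:0-th-borcherds}, \eqref{eq:borcherds-modified-1} and \eqref{eq:borcherds-modified-2}), which turn a single product of two fields into a difference of two orderings of that product against two different Fourier expansions of the \emph{same} elliptic function, and then show the resulting terms cancel in pairs after using the cyclic property of the trace together with the commutation relation \eqref{eq:q0-xoperators} to move a field past $q^{L_0}$.

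First I would set up notation: for $f\in\cF_3$ with Fourier expansion $F(z_0,z_1,z_2,q)$ in the domain $|qz_0|<|z_2|<|z_1|<|z_0|$, the three Laurent coefficients $f_{12k}, f_{02k}, f_{01k}\in\cF_2$ have Fourier expansions $F_{12k},F_{02k},F_{01k}$, and (mirroring the manipulation in Lemma \ref{lem:square-zero} and the proof of Lemma \ref{lem:complex-supports}) each sum $\sum_k F_1^1(a,b_{[k]}c,F_{12k}(\tfrac{z_1}{z_0},q))$ can be written as an iterated residue of $\tr_M \sigma(a,z_0)X(Y[b,\cdot]c,z_1)q^{L_0}$ against the appropriate expansion of $F$ — that is, the $\sigma$-analog of \eqref{eq:borcherds-res}. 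Concretely, using the modified-operator Borcherds formula \eqref{eq:borcherds-modified-2-b} to expand $X(b_{[k]}c,z_1)$, and the translation/residue identities of Lemma \ref{lem:residue-trading}, each of the three sums becomes a double residue in $z_0$ and an auxiliary variable $u$ of a trace of a product of $\sigma$ and two $X$'s against a product of two factors: one factor being a Fourier expansion of $f$, the other a $P_1$ (coming from expanding the $Y[\cdot,z]$-vertex operator, exactly as $\wp_1$ governs the change of coordinates).

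The key step is then the cancellation bookkeeping. After expanding all three sums, I expect to get six or so terms, each a double residue of $\tr_M$ of one of the two orderings $\sigma(\cdot)X(\cdot)X(\cdot)$ or $X(\cdot)\sigma(\cdot)X(\cdot)$ (and permutations thereof) times a product of a Fourier expansion of $f$ and a $P_1$. The terms pair up according to which two of $a,b,c$ sit inside the inner operator product; to match a pair one uses (i) the symmetry $P_1(\tfrac{zq}{w},q)=-P_1(\tfrac{w}{z},q)$ of Lemma \ref{lem:p-sym}/\eqref{eq:P-symmetric}, and (ii) the identity \eqref{eq:q0-xoperators}, $q^{L_0}X(a,z)=qX(a,qz)q^{L_0}$, together with cyclicity of the trace to move the offending field to the other side of $q^{L_0}$, which shifts its Fourier expansion from one domain to the adjacent one (e.g. $F(\tfrac{z_1}{z_0})\leftrightarrow F(\tfrac{z_1 q}{z_0})$). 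This is exactly the mechanism that makes the Borcherds commutator close up; the only subtlety is that $P_1$ is the expansion of $\wp_1$ (not of an elliptic function), so each time $P_1$ crosses $q^{L_0}$ one picks up the constant $2\pi i$ from \eqref{eq:delta-log}/\eqref{eq:zeta-not-per}, and one must check these constant contributions also cancel — they do, because the $a_{[0]}$ and $a_{[\zeta]}$ terms are precisely arranged (as in Proposition \ref{prop:recursion}) to absorb them.

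\textbf{Main obstacle.} The conceptual content is light — it is ``$d_1\circ d_2=0$ transported through the trace'' — but the main difficulty is purely organizational: keeping the six-plus residue terms, their ordering of field insertions, and their associated Fourier domains straight, and verifying that the $P_1$-constant ($2\pi i$) discrepancies cancel. I would handle this by closely following the template of the proof of Proposition \ref{prop:recursion} (and the formal $d_1\circ d_2=0$ computation \eqref{eq:borcherds-res}), applying the residue-trading Lemma \ref{lem:residue-trading} to reduce to the case where the relevant Laurent coefficients are read off by contour integrals, and then invoking Borcherds identity \eqref{eq:borcherds-def} with $z=t_1-t_2$, $w=t_0-t_2$ inside the trace, exactly as in Lemma \ref{lem:square-zero}. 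The $\sigma$-versions \eqref{eq:0-th-borcherds-b}--\eqref{eq:borcherds-modified-2-b} of the modified Borcherds identities are precisely what is needed to carry the argument from $X$ to $\sigma$.
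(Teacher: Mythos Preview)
Your broad strategy --- expand each of the three sums as double residues of traces of products of $\sigma$ and $X$ operators, then cancel in pairs using cyclicity of the trace and the commutation \eqref{eq:q0-xoperators} --- is exactly right, and matches the paper. But you are reaching for the wrong Borcherds identity, and this leads you to expect complications that do not arise.

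The key point you are missing is that $f\in\cF_3$ is \emph{fully elliptic} in each variable. To expand, say, $\sum_k F_{12k}\bigl(\tfrac{z_1}{z_0}\bigr)X(b_{[k]}c,z_1)$, you should apply Theorem~\ref{prop:borcherds-modified-fourier} directly, with the role of the one-variable function $f(t)$ there played by $t\mapsto f(t_0,t+t_2,t_2)$ (the other variables fixed as parameters). Since this function is doubly periodic, the constant $\alpha$ in that theorem is zero and the output involves only the Fourier expansions $F_{\pm}$ of $f$ itself in two adjacent domains --- precisely the $F_>$ of Remark~\ref{rem:ordering}. There is no $P_1$ factor and no $2\pi i$ discrepancy to track. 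The identity \eqref{eq:borcherds-modified-2-b} you plan to use is specific to the shifted $\owp_k$ kernels and is simply not applicable to a general $f$; nor do you need Lemma~\ref{lem:residue-trading} here.

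With the correct tool, the paper's proof is short: the three sums yield $2+2+4=8$ terms, each a double residue of $\tr_M$ of three fields (one $\sigma$, two $X$'s) against a single Fourier expansion $F_>$ of $f$. Four of these cancel on the nose (same ordering, same domain). The remaining four cancel in two pairs after commuting one $X$ past $q^{L_0}$ via \eqref{eq:q0-xoperators} and observing that $F_{z_0,u,z_1}(u,q^{-1}z_0,z_1,q)=F_{u,z_1,z_0}(u,z_0,z_1,q)$ because both are Fourier expansions of the same elliptic function on the same domain. The $P_1$/$\zeta$ machinery you describe belongs to Proposition~\ref{prop:recursion}, where a genuinely quasi-periodic function is in play; here it is unnecessary and would only obscure the argument.
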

\begin{proof} Using Theorem \ref{prop:borcherds-modified-fourier} and the convention of Remark \ref{rem:ordering}, the first term of \eqref{eq:8.19} can be written as
\begin{multline}
\frac{1}{2 \pi i} \res_{u} \res_{z_0} z_1 F_>(u,z_0,z_1) \tr_M \sigma(a,u)
X(b,z_0) X(c,z_1) q^{L_0} \\ - \frac{1}{2 \pi i}  \res_{u} \res_{z_0} z_1
F_>(u,z_0,z_1) \tr_M \sigma(a,u) X(c,z_1) X(b,z_0) q^{L_0} .
\label{eq:8.7.1}
\end{multline}
Similarly the second and third term are respectively written as 
\begin{multline}
-\frac{1}{2 \pi i } \res_{z_0} \res_u z_1 F_>(u, z_0,z_1) \tr_M \sigma(b,z_0) X(a,u) X(c,z_1) q^{L_0} \\ 
+ \frac{1}{2 \pi i} \res_{z_0} \res_u z_1 F_>(u,z_0,z_1 ) \tr_M \sigma(b,z_0) X(c,z_1) X(a,u) q^{L_0}.
\label{eq:8.7.2}
\end{multline}
\begin{multline}
-\frac{1}{2 \pi i} \res_{z_{0}} \res_u z_1 F_>(u,z_0,z_1) \tr_M \left( \sigma(a,u) X(b,z_0) + X(a,u) \sigma(b,z_0) \right)  X(c,z_1) q^{L_0} \\ 
+ \frac{1}{2\pi i} \res_{z_0} \res_u z_1 F_>(u,z_0,z_1 ) \tr_M \left( X(b,z_0) \sigma(a,u) + \sigma(b,z_0) X(a,u) \right) X(c,z_1) q^{L_0}.
\label{eq:8.7.3}
\end{multline}
The first term of \eqref{eq:8.7.3} cancels the first term of \eqref{eq:8.7.1} and the fourth term of \eqref{eq:8.7.3} cancels the first term of \eqref{eq:8.7.2}. Commuting $X(b,z_0)$ with $q^{L_0}$ using \eqref{eq:q0-xoperators} the third term of \eqref{eq:8.7.3} can be written as 
\[ \frac{1}{ 2 \pi i} \res_{z_0} \res_u F_{z_0,u,z_1}\left(u,q^{-1} z_0,z_1,q \right) \sigma(a,u)X(c,z_1)X(b,z_0)q^{L_0}.\]
Noting that 
\[ F_{z_0,u,z_1}(u,q^{-1}z_0,z_1,q) = F_{u,z_1,z_0}(u,z_0,z_1,q), \]
since they are two Fourier expansions of the same function in the same domain, we see that this term
 cancels the second term of \eqref{eq:8.7.1}. Finally by the same reasoning the second term of \eqref{eq:8.7.3} cancels the second term of \eqref{eq:8.7.2} after commuting $X(a,u)$ with $q^{L_0}$ using \eqref{eq:q0-xoperators}. 
\end{proof}
\begin{cor} Let $a \otimes b \otimes f \in \ker d_1$. The linear functional $F^1_1$ satisfies:
\begin{equation}
 z_1 \frac{d}{d z_1} F^1_1 \left( a,b,F\left( \frac{z_1}{z_0},q \right) \right) = 0.
\label{eq:sum-of-omega}
\end{equation}
\label{cor:sum-of-omega}
\end{cor}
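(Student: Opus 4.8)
The plan is to deduce Corollary \ref{cor:sum-of-omega} from Proposition \ref{prop:new-deriv-generic} by choosing a suitable test function $f \in \cF_3$. The statement that $z_1 \tfrac{d}{dz_1} F^1_1(a,b,F(\tfrac{z_1}{z_0},q)) = 0$ says precisely that $F^1_1(a,b,-)$ does not depend on the insertion point $z_1$; since the Fourier expansion of an elliptic $f(t_1-t_0,\tau)$ only depends on the ratio $z_1/z_0$, this is equivalent to the statement that $F^1_1(a \otimes b \otimes f)$ depends only on the image of $a \otimes b \otimes f$ in the degree-$1$ chain group $C_1$, i.e.\ that $F^1_1$ descends to the quotient killing terms $(L_{-1}a)\otimes b \otimes f + a \otimes b \otimes f'$. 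Combined with Proposition \ref{prop:new-deriv-generic} (vanishing on $d_2$) this makes $F^1_1$ a genuine functional on $\ker d_1 / \im d_2$. The concrete mechanism is: apply Lemma \ref{lem:residue-trading}, equation \eqref{eq:lem821} with $i = 1$, which trades $z_1 \tfrac{d}{dz_1}$ acting on $F$ against the insertion of $(L_{-1}+L_0)$ in the $b$-slot; then \eqref{eq:integration-parts-alt} relates $((L_0+L_{-1})b)_{[f]}$-type products back to the $f'$ operation, so that $z_1 \tfrac{d}{dz_1} F^1_1(a,b,F)$ gets rewritten as the combination $F^1_1(a, (L_{-1}+L_0)b, F) + F^1_1(a,b, z_1\tfrac{d}{dz_1}F)$.

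The key step is then to exhibit this combination as the image under $d_2$ of an explicit element of $C_2$ and invoke Proposition \ref{prop:new-deriv-generic}. First I would take $a \otimes b \otimes f \in \ker d_1$ with $f = f(t_1 - t_0,\tau) \in \cF_2$, and build $g(t_0,t_1,t_2,\tau) \in \cF_3$ out of $f$ — the natural candidate being $g = f(t_1-t_0,\tau)\,\zeta(t_2 - t_1,\tau)$ or a variant thereof whose various Laurent coefficients $g_{ijk}$ near the three diagonals reproduce, via \eqref{eq:boundary}, exactly the three-term combination appearing in \eqref{eq:8.19}. Unwinding \eqref{eq:8.19} for this $g$: the $b_{[k]}c$ term collapses (taking $c = \vac$, or rather isolating the appropriate residue) to $F^1_1(a, (L_{-1}+L_0)b, F)$-type contributions plus the $z_1\tfrac{d}{dz_1}F$ term coming from the $\zeta$-factor's expansion, while the $a_{[k]}b$ term reassembles into $z_1\tfrac{d}{dz_1}F^1_1(a,b,F)$ itself. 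Matching coefficients, \eqref{eq:8.19} becomes the assertion $z_1 \tfrac{d}{dz_1} F^1_1(a,b,F) = 0$.

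A cleaner route, which I would try first, is purely formal and avoids building an auxiliary $g$: use \eqref{eq:lem821} directly. We have
\[
z_1 \frac{d}{dz_1} F^1_1\left(a,b,F\left(\tfrac{z_1}{z_0},q\right)\right)
= F^1_1\left(a,(L_{-1}+L_0)b,F\left(\tfrac{z_1}{z_0},q\right)\right)
+ F^1_1\left(a,b, z_1\tfrac{d}{dz_1}F\left(\tfrac{z_1}{z_0},q\right)\right).
\]
Since $F(\tfrac{z_1}{z_0},q)$ is the Fourier expansion of $f(t_1-t_0,\tau)$, we have $z_1\tfrac{d}{dz_1}F(\tfrac{z_1}{z_0},q) = \tfrac{1}{2\pi i}$ times the Fourier expansion of $\tfrac{d}{dt}f$, i.e.\ it corresponds to the function $f'$. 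By \eqref{eq:integration-parts-alt}, $(2\pi i)((L_0+L_{-1})a)_{[f]}b = -a_{[f']}b$ for the $Y[\cdot,z]$ structure; the analogous identity for the derivation $\sigma$ (which is part of Proposition \ref{prop:borcherds-modified-b}, or follows from the same integration-by-parts argument applied inside the trace) gives that $F^1_1(a,(L_{-1}+L_0)b,F)$ equals $-\tfrac{1}{2\pi i}$ times $F^1_1(a,b,$ Fourier expansion of $f'$), which is exactly $-F^1_1(a,b,z_1\tfrac{d}{dz_1}F)$. Hence the two terms on the right cancel and the left-hand side vanishes.

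The main obstacle is the second, ``formal'' route's hidden assumption: the relation \eqref{eq:integration-parts-alt} is about the algebraic $[f]$-product, whereas $F^1_1$ involves $\sigma(a_0,z_0)$ sitting inside a trace with $q^{L_0}$, so one must check that integration by parts in the $z_0$-residue is legitimate — i.e.\ that no boundary term survives. This is where the cyclicity of the trace and the commutation relation \eqref{eq:q0-xoperators} enter, and it is essentially the content already packaged in \eqref{eq:8.trading-2} of Lemma \ref{lem:residue-trading} combined with the derivation analog of \eqref{eq:x-translation} for $\sigma$. So in practice the cleanest writeup is: invoke \eqref{eq:lem821} with $i=1$ to produce the two terms, then invoke \eqref{eq:8.trading-2} (with the roles of slots exchanged, i.e.\ trading $z_1\tfrac{d}{dz_1}$ on $F$ in the $b$-variable) to see the sum is zero. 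If \eqref{eq:8.trading-2} as stated only covers the $z_0$-slot, I would first establish its $z_i$-slot analog for $i\geq1$ by the same argument, using that $a \otimes b \otimes f \in \ker d_1$ is not even needed here — only the identities of Lemma \ref{lem:residue-trading} are. The hypothesis $a\otimes b\otimes f \in \ker d_1$ is genuinely used only to make $F^1_1$ well-defined on the class (via Proposition \ref{prop:new-deriv-generic}), and for the present corollary it can likely be dropped, but I would keep it to match the statement.
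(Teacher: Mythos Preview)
Your ``cleaner route'' has a genuine gap. Equation \eqref{eq:8.trading-2} works because $F^1_1$ involves a \emph{residue} in $z_0$: it is the identity $\res_{z_0}(z_0\partial_{z_0}+1)(\cdots)=0$ that makes it hold. There is no residue in $z_1$, so there is no analog of \eqref{eq:8.trading-2} with $z_0$ replaced by $z_1$; the $z_1$-version is precisely \eqref{eq:lem821}, which carries the extra term $z_1\tfrac{d}{dz_1}F^1_1$ on the left. Your proposed cancellation $F^1_1(a,(L_{-1}+L_0)b,F) = -F^1_1(a,b,z_1\partial_{z_1}F)$ is therefore, via \eqref{eq:lem821}, equivalent to the statement you are trying to prove. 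The appeal to \eqref{eq:integration-parts-alt} does not help either: that identity concerns the $[f]$-product $a_{[f]}b$, whereas $b$ enters $F^1_1$ through the field $X(b,z_1)$, for which the relevant identity is \eqref{eq:x-translation}, and that just reproduces \eqref{eq:lem821}.

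What one \emph{can} do is note that $F=F(z_1/z_0,q)$ depends only on the ratio, so $z_1\partial_{z_1}F=-z_0\partial_{z_0}F$; combining \eqref{eq:lem821} and \eqref{eq:8.trading-2} then gives
\[
z_1\tfrac{d}{dz_1}F^1_1(a,b,F)=F^1_1\bigl((L_0+L_{-1})a,b,F\bigr)+F^1_1\bigl(a,(L_0+L_{-1})b,F\bigr).
\]
This right-hand side does \emph{not} vanish by formal manipulation. The paper kills it by inserting the conformal vector: apply Proposition \ref{prop:new-deriv-generic} to $\tilde{\omega}\otimes a\otimes b\otimes f/(2\pi i)$, with $f$ regarded as an element of $\cF_3$ constant in the extra variable $t_0$. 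Computing $d_2$ gives three terms; one is $\tilde{\omega}\otimes a_{[f]}b\otimes 1/(2\pi i)$, which vanishes precisely by the hypothesis $d_1(a\otimes b\otimes f)=0$, and the remaining two, using $\tilde{\omega}_{[0]}=2\pi i(L_0+L_{-1})$, are the two summands above. Your claim that the hypothesis $a\otimes b\otimes f\in\ker d_1$ ``can likely be dropped'' is therefore wrong: without it the term $F^1_1(\tilde{\omega},a_{[f]}b,1)$ survives and $F^1_1$ genuinely depends on $z_1$. Your first route had the right instinct to invoke Proposition \ref{prop:new-deriv-generic}, but the correct test element carries $\tilde{\omega}$ with $f$ extended trivially in the new variable, not a $\zeta$-factor or a $\vac$-insertion.
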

\begin{proof}
Apply proposition \ref{prop:new-deriv-generic} to $\tilde{\omega}\otimes a \otimes b \otimes \tfrac{F(z_1,z_2,q)}{2 \pi i}$. We have
\[ d_2 \left( \tilde{\omega} \otimes a \otimes b \otimes \frac{f(t_1,t_2,\tau)}{2 \pi i} \right) = \tilde{\omega} \otimes a_{[f]}b \otimes \frac{1}{2 \pi i} - \tilde{\omega}_{[0]} a \otimes b \otimes \frac{f(t_1,t_2,\tau)}{2 \pi i} - a \otimes \tilde{\omega}_{[0]}b \otimes \frac{f(t_1,t_2,\tau)}{2\pi i}. \]
The first term in the right hand side vanishes by our assumption. 
Using that $\tilde{\omega}_{[0]} = 2 \pi i (L_0 + L_{-1})$ obtain
\begin{equation} F^1_1 \left( (L_0+L_{-1})a, b, F\left( \frac{z_1}{z_0},q \right) \right) + F^1_1 \left( a, (L_0+L_{-1}) b, F\left( \frac{z_1}{z_0},q \right) \right) = 0. 
\label{eq:cor-8.1}
\end{equation}
On the other hand, it follows from Lemma \ref{lem:residue-trading} that the left hand side of \eqref{eq:cor-8.1} equals 
\[ z_1 \frac{d}{d z_1} F^1_1 \left( a,b,F\left( \frac{z_1}{z_0},q \right) \right). \]
\end{proof}
\begin{thm}
Let $b,c \in V$ and let $F(z_0,z_1,q) = F\left(\frac{z_1}{z_0},q\right)$ be the Fourier series of $f(t_0,t_1,\tau) \in \cF_2$ in the domain $|qz_0| < |z_1| < |z_0|$. Recall the differential operator $D = (2 \pi i) \tfrac{d}{d \tau} + \zeta(t_1-t_0,\tau) \tfrac{d}{d t_0}$ of \eqref{eq:di-def}. 
Let $G\left( \frac{z_1}{z_0},q \right)$ be the Fourier series of $g(t_0,t_1,\tau) = Df(t_0,t_1,\tau) \in \cF_2$. Then the following differential equation holds
\begin{equation}
\begin{multlined}
(2 \pi i)^2 \left[ q \frac{d}{dq} - \frac{c}{24} \right] F^1_1\left( b, c, F\left( \frac{z_1}{z_0},q \right) \right) 
=   F^1_1 \left( b, \tilde{\omega}_{[\zeta]} c, F\left( \frac{z_1}{z_0},q \right) \right) \\
+ F^1_1 \left( b,c, P_2 \left( \frac{z_1}{z_0},q \right) F\left( \frac{z_1}{z_0},q \right) \right)
+ F^1_1\left( b,c, G\left( \frac{z_1}{z_0},q \right) \right) \\
- \sum_{k \geq 0} (-1)^k F^1_1 \left( \tilde{\omega}_{[k+1]}b, c, P_{k+2}\left( \frac{z_1}{z_0},q \right) F\left( \frac{z_1}{z_0},q \right)\right) 
- F^1_1 \left( \tilde{\omega},  b_{[f]}c, P_1 \left( \frac{z_1}{z_0},q \right) \right).
\end{multlined}
\label{eq:dq-der-1}
\end{equation}
Notice that the argument of $F^1_1$ in the last term on the right hand side is not the Fourier expansion of an elliptic function. This term however vanishes if $b \otimes c \otimes f \in \ker d_1$. 
\label{thm:cor-der-dq}
\end{thm}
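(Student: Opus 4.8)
The plan is to compute $(2\pi i)^2\,q\,\tfrac{d}{dq}$ of the series $F^1_1(b,c,F(\tfrac{z_1}{z_0},q))$ directly from the definition \eqref{eq:8.1}, by differentiating the two factors that depend on $q$: the insertion $q^{L_0}$ inside the trace, and the Fourier series $F(\tfrac{z_1}{z_0},q)$ itself. Differentiating $q^{L_0}$ produces a factor of $(2\pi i)\,L_0$ inside the trace, which I will rewrite as an insertion of $\tilde\omega=(2\pi i)^2(\omega-\tfrac{c}{24}\vac)$ using $L_0 = \omega_{(1)} = \operatorname{res}_z z\,X(\omega,z)$ (more precisely, using $\tilde\omega_{[1]} = (2\pi i)^2(L_0 - c/24)$, which follows from \eqref{eq:zhu-bracket-mode}); this is exactly how the $-c/24$ shift and the overall $(2\pi i)^2$ appear on the left-hand side. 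The resulting trace then has $\sigma(b,z_0)$, $X(c,z_1)$ and one extra field $X(\tilde\omega, u)$ in it, integrated against an appropriate kernel, and I will use the Borcherds-type identities of Proposition \ref{prop:borcherds-modified-b} — specifically \eqref{eq:borcherds-modified-1-bc} for $\wp_k$, $k\geq 3$, \eqref{eq:borcherds-modified-1-bb} for $\wp_2$, and \eqref{eq:borcherds-modified-2-b} — to move the $X(\tilde\omega,u)$ field past $\sigma(b,z_0)$ and collapse it, at the cost of creating $\tilde\omega_{[k+1]}b$ and $\tilde\omega_{[\zeta]}c$ terms. Differentiating $F$ with respect to $q$, by Corollary \ref{cor:deriv-sum} (the $n=1$, one-insertion version of Lemma \ref{lem:deriv-sum}), produces precisely the Fourier series of $Df$ together with the $P_1$-correction term $(P_1(\tfrac{z_1}{z_0},q)+\pi i)\,2\pi i\, z_0\tfrac{d}{dz_0}$; the $P_1$ piece is what gets absorbed into the $\zeta$-insertion on $\tilde\omega$ and into the last term involving $\operatorname{res}$ of $P_1$.

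Concretely, the key steps in order are: (1) write $(2\pi i)^2 (q\tfrac{d}{dq}-\tfrac{c}{24})F^1_1$ as $\operatorname{res}_{z_0} z_1 F(\tfrac{z_1}{z_0},q)\operatorname{tr}_M\sigma(b,z_0)X(c,z_1)\tilde\omega_{[1]}^{\text{ins}}q^{L_0}$ plus the term coming from $q\tfrac{d}{dq}F$; (2) for the first term, insert a field $X(\tilde\omega,u)$ and use $z\tfrac{d}{dz}X(\omega,z)\sim X(L_{-1}\omega,z)$ together with $\tilde\omega_{[m]}$ expansions so that only $\tilde\omega_{[1]}$ survives after an integration by parts, analogous to the $n=0$ computation in \cite[\S4.3]{zhu}; (3) apply the appropriate identity from Proposition \ref{prop:borcherds-modified-b} to the inserted $X(\tilde\omega,u)$, using the cyclicity of the trace and the commutation relation \eqref{eq:q0-xoperators} to move one of the two resulting Fourier-expansion terms around the trace, exactly as in the proof of Proposition \ref{prop:recursion}; (4) for the $q\tfrac{d}{dq}F$ term, invoke Corollary \ref{cor:deriv-sum} to replace it by $G(\tfrac{z_1}{z_0},q)$ plus a $P_1$-correction, then recognize the $P_1$-correction as combining with the surviving pieces from step (3) to give the $\tilde\omega_{[\zeta]}c$ term, the $P_2\cdot F$ term, the $\sum_k (-1)^k \tilde\omega_{[k+1]}b$ term, and the final $\operatorname{res}_z P_1$ term; (5) identify the last term as $F^1_1(\tilde\omega,\,b_{[f]}c,\,P_1(\tfrac{z_1}{z_0},q))$ with $P_1$ not being the expansion of an elliptic function, and note that $b_{[f]}c = d_1(b\otimes c\otimes f)$ up to normalization, so it vanishes on $\ker d_1$.

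The main obstacle I expect is bookkeeping of the Fourier-expansion domains: the variable $u$ of the inserted $\tilde\omega$-field must be expanded in a definite domain relative to $z_0$ and $z_1$, and after using \eqref{eq:q0-xoperators} to commute $X(\tilde\omega,u)$ (or $X(\tilde\omega,qu)$) past $q^{L_0}$ the domain shifts by a factor of $q$, so that the two "halves" of the Borcherds identity land in different annuli; one must check, using \eqref{eq:P-symmetric}, \eqref{eq:domain-change}, and the relation $P_1(\tfrac{z_1 q}{u},q) = -P_1(\tfrac{u}{z_1},q)$, that the surviving terms are genuinely Fourier expansions of the claimed elliptic functions on the target domain $|qz_1|<|z_0|$ and not merely formal identities. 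A second, milder, subtlety is tracking the non-elliptic behaviour: both $G$ (which is honestly in $\cF_1$ by Lemma \ref{lem:deriv-sum}) and the genuinely non-elliptic $P_1$-insertion appear, and one must make sure the non-elliptic contributions cancel except in the single explicitly flagged term. Once the domains are pinned down, the algebra is the same pattern as Proposition \ref{prop:recursion} and Proposition \ref{prop:new-deriv-generic}, so I expect no further conceptual difficulty.
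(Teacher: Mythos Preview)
Your proposal is correct and follows essentially the same approach as the paper. The paper's proof is slightly more streamlined in that it directly specializes Proposition~\ref{prop:recursion} to $a=\tilde\omega$ (giving the trace of $\tilde\omega_0\,\sigma(b,z_0)X(c,z_1)q^{L_0}$ in terms of $F^1_1$-values) and then unpacks the $m=0$ term via $\tilde\omega_{[0]}=2\pi i(L_0+L_{-1})$, Lemma~\ref{lem:residue-trading}, and Corollary~\ref{cor:sum-of-omega}; you instead plan to redo that computation inline by inserting $X(\tilde\omega,u)$ and invoking the Borcherds identities of Proposition~\ref{prop:borcherds-modified-b} directly, but the manipulations and the tracking of Fourier domains are identical. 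One small notational slip: the zero mode you want is $\tilde\omega_0$ in the modified-operator sense (i.e.\ the coefficient of $z^{-1}$ in $X(\tilde\omega,z)$), which equals $(2\pi i)^2(L_0-\tfrac{c}{24})$ because $\tilde\omega$ is not $L_0$-homogeneous; this is not the bracket mode $\tilde\omega_{[1]}$.
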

\begin{proof}
We have from \eqref{eq:4.14b}:
\begin{align*}
G\left(\frac{z_1}{z_0}, q\right) = (2\pi i)^2 q \frac{d}{d q} F\left(\frac{z_1}{z_0}, q\right) - 2\pi i \left[P_1\left(\frac{z_1}{z_0}, q\right) + \pi i\right] z_0 \frac{d}{d z_0} F\left(\frac{z_1}{z_0}, q\right).
\end{align*}

Putting $a = \tilde{\omega} = (2 \pi i)^2 (\omega - \tfrac{c}{24} \vac)$ in Proposition \ref{prop:recursion} yields
\begin{align}
\begin{split}
-\res_{z_0} z_1 & F\left( \frac{z_1}{z_0},q \right) \tr_M \tilde{\omega}_{0} \,\, \sigma(b,{z_0}) X(c,z_1) q^{L_0} \\
= {} &
\res_{z_0} z_1 P_1\left( \frac{z_1}{z_0},q \right) \tr_M \sigma(\tilde{\omega},z_0) X\left( b_{[f]}c,z_1 \right) q^{L_0} - \res_{z_0} z_1 F\left( \frac{z_1}{z_0},q \right) \tr_M \sigma(b,z_0) X(\tilde{\omega}_{[\zeta]}c, z_1) q^{L_0} \\
&+ \pi i \res_{z_0} z_1 F\left( \frac{z_1}{z_0},q \right) \tr_M \sigma(b, z_0) X(\tilde{\omega}_{[0]}c, z_1) q^{L_0} \\
&- \sum_{k \geq 0} (-1)^k \res_{z_0} z_1 P_{k+1}\left( \frac{z_1}{z_0},q \right) F\left( \frac{z_1}{z_0},q \right) \tr_M \sigma\left( \tilde{\omega}_{[k]}b, z_0 \right) X(c,z_1) q^{L_0}.
\end{split}\label{eq:8.6-1}
\end{align}
The fourth term equals
\begin{equation}
\begin{multlined}
\sum_{k \geq 0} (-1)^k F^1_1 \left( \tilde{\omega}_{[k+1]}b,c, P_{k+2} \left( \frac{z_1}{z_0},q \right) F\left( \frac{z_1}{z_0},q \right) \right)\\
-\res_{z_0} z_1 P_{1}\left( \frac{z_1}{z_0},q \right) F\left( \frac{z_1}{z_0},q \right) \tr_M \sigma\left( \tilde{\omega}_{[0]}b, z_0 \right) X(c,z_1) q^{L_0}.
\label{eq:assuming-primary}
\end{multlined}
\end{equation}
Recalling that $\tilde{\omega}_{[0]} = (2 \pi i) (L_0 + L_{-1})$, and using Lemma \ref{lem:residue-trading}, we expand the second term of \eqref{eq:assuming-primary} as
\begin{equation*}
\begin{split}
&- (2 \pi i) \res_{z_0} z_1 P_1\left( \frac{z_1}{z_0},q \right) F\left( \frac{z_1}{z_0},q\right) \tr_M  \sigma \left( (L_0 + L_{-1})b,z_0 \right)X(c,z_1) q^{L_0} \\ 
=&(2 \pi i)  \res_{z_0} z_1 z_0\frac{d}{dz_0} \left[ P_1\left( \frac{z_1}{z_0},q \right) F\left( \frac{z_1}{z_0},q\right) \right] \sigma \left(b,z_0 \right)X(c,z_1) q^{L_0}.
\end{split}
\end{equation*}
Using \eqref{eq:pkdiff} this last expression equals:
\begin{equation}
\begin{split}
=&-F^1_1 \left( b, c, P_2 \left( \frac{z_1}{z_0},q \right) F\left( \frac{z_1}{z_0},q\right) \right) \\  
&+(2 \pi i) \res_{z_0}z_1 P_1\left( \frac{z_1}{z_0},q \right) z_0 \frac{d}{dz_0} \left( F\left( \frac{z_1}{z_0},q\right) \right) \sigma\left(b,z_0 \right) X(c,z_1) q^{L_0}.
\end{split}
\label{eq:zdz-2term}
\end{equation}
We rewrite the second term of the right hand side of \eqref{eq:zdz-2term} in terms of the function $G$, obtaining
\begin{align}
\begin{split}
-& F^1_1 \left(b, c, G\left( \frac{z_1}{z_0},q \right) \right)
- (2\pi i)(\pi i) F^1_1 \left(b, c, z_0 \frac{d}{d z_0} F\left( \frac{z_1}{z_0},q \right) \right) \\
+& (2 \pi i)^2 \res_{z_0} z_1 \left[ q \frac{d}{dq} F \left( \frac{z_1}{z_0},q \right) \right] \tr_M \sigma(b, z_0) X(c, z_1) q^{L_0}.
\end{split}
\label{eq:some-g-second}
\end{align}
Next we use $\tilde{\omega}_{(\deg \tilde{\omega} -1)} = (2 \pi i)^{2} (L_0 - \tfrac{c}{24})$ to see that:
\begin{multline}
(2 \pi i)^2 \left[ q  \frac{d}{dq} - \frac{c}{24} \right]  F^1_1 \left( b,c,F\left( \frac{z_1}{z_0},q \right) \right) \\ 
= \res_{z_0} z_1 F\left( \frac{z_1}{z_0},q \right) \tr_M \tilde{\omega}_{(\deg \tilde{\omega} -1)} \sigma(b,z_0) X(c,z_1) q^{L_0} \\
+ (2 \pi i)^2 \res_{z_0} z_1 \left[ q \frac{d}{dq} F\left( \frac{z_1}{z_0},q \right) \right] \tr_M \sigma(b,z_0) X(c,z_1) q^{L_0}.
\label{eq:theorem-trace-deriv-def}
\end{multline}
Finally we use \eqref{eq:lem821} and Corollary \ref{cor:sum-of-omega} to obtain
\[
F^1_1 \left( b,\tilde{\omega}_{[0]}c, F\left( \frac{z_1}{z_0},q \right) \right)
=
2\pi i F^1_1\left( b,c, z_0 \frac{d}{dz_0} F\left( \frac{z_1}{z_0},q \right) \right).
\]
Putting all this together gives the claimed differential equation.
\end{proof}
We now proceed to analyze the general $n \geq 1$ situation. The proofs are essentially the same as above with a little bit of notational clutter. Care must be taken when considering the domain of expansions of the respective Fourier series. 
The following proposition expresses the trace of the zero mode as a linear combination of $n$-point functions, this is the analog of \cite[Prop 4.3.3]{zhu} and a generalization of Proposition \ref{prop:recursion}. 
\begin{prop} Let $a,a_0,\dots,a_n \in V$, $n \geq 1$ and $f(t_0,\dots,t_n,\tau) \in \cF_{n+1}$. Let $F(z_0,\dots,z_n,q)$ be the Fourier expansion of $f$ in the domain $|qz_0|<|z_n|<\dots<|z_0|$. For each $1 \leq i \leq n$ consider the Laurent expansion of $f$ near $t_0=t_i$, that is 
\begin{align*} f(t_0,\dots,t_n,\tau) = \sum_k f_{0ik}(t_1,\dots,t_n,\tau) (t_0-t_i)^k. \end{align*}
Let $F_{0ik}(z_1,\dots,z_n,q)$ be the Fourier expansion of $f_{0ik}$ in the domain $|qz_1| < |z_n| < \dots < |z_1|$. The following holds
\begin{equation}
\begin{multlined}
+ \sum_{i=0}^{n-1} \sum_{m \geq 0} (-1)^m  F^n_1 \left( a_0,\dots,  a_{[m]}a_i,\dots,a_n, F(z_0,\dots,z_n,q)P_{m+1}\left( \frac{z_n}{z_i},q \right) \right) \\ 
+ F^n_1 \left( a_0,\dots,a_{n-1},a_{[\zeta(t) - \pi i]}a_n, F(z_0,\dots,z_n,q)   \right) \\ 
-\sum_{i=1}^n \sum_k F^n_1 \left( a,a_1,\dots,a_{0[k]}a_i,\dots,a_n,F_{0ik}(z_1,\dots,z_n,q) P_1\left( \frac{z_n}{z_0},q \right) \right) = \\ 
\res_{z_0} z_1\dots z_n F(z_0,\dots,z_n,q) \tr_M a_{(\deg a -1)}
\sigma(a_0,z_0)X(a_1,z_1)\dots X(a_n,z_n) q^{L_0}.
\label{eq:actual-trace-prop}
\end{multlined}
\end{equation}
\label{prop:actual-trace-prop}
\end{prop}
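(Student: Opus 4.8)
The plan is to follow the template set by Proposition \ref{prop:recursion} (the $n=1$ case), which this statement generalizes, and to reduce everything to the modified Borcherds identities of Corollary \ref{prop:borcherds-modified} and Proposition \ref{prop:borcherds-modified-2-b}. The starting point is the right-hand side of \eqref{eq:actual-trace-prop}, the trace with the zero mode $a_{(\deg a-1)}$ inserted at the front. I would first write $a_{(\deg a -1)} = a_0^{X}$, the coefficient of $z^{-\deg a}$ in $z^{\deg a}X(a,z) = Y(z^{L_0}a,z)$, so that the front insertion equals $\res_z X(a,z)$ acting by \emph{left multiplication}; then use cyclicity of the trace together with $q^{L_0}X(a,z) = qX(a,qz)q^{L_0}$ from \eqref{eq:q0-xoperators} to move this factor around the product, producing the ``shifted-domain'' Fourier expansion $F(qz_0,\dots,qz_n,q) = F(z_0,\dots,z_n,q)$ (using \eqref{eq:domain-change}) and a relocated insertion $X(a,z)$ between two of the $X(a_i,z_i)$. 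This is exactly the maneuver used repeatedly in the proof of Proposition \ref{prop:recursion}.

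Next I would expand each of the three groups of terms on the left-hand side of \eqref{eq:actual-trace-prop} using the relevant identity from Proposition \ref{prop:borcherds-modified-b}: the $\sum_m (-1)^m P_{m+1}$ sum over $a_{[m]}a_i$ is rewritten via \eqref{eq:borcherds-modified-2-b} (with the elliptic function $F$ carried along as a spectator, legitimate by Remark \ref{rem:9.3} and the $\cF$-linearity arguments), the $a_{[\zeta(t)-\pi i]}a_n$ term via the $P_1$ identity \eqref{eq:borcherds-modified-1-ba} together with the fact recalled in \ref{no:ellitptic-functions} that $P_1(z,q)$ and $P_1(qz,q)-2\pi i$ are the two Fourier expansions of $-\zeta-\pi i$ on the adjacent annuli, and the $\sum_i\sum_k F^n_1(\dots,a_{0[k]}a_i,\dots)\,P_1(z_n/z_0,q)$ term via the Borcherds identity for $\sigma$ in the form \eqref{eq:0-th-borcherds-b}/\eqref{eq:borcherds-modified-1-ba} applied to the insertion at $t_0$. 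Each application produces two residue integrals in an auxiliary variable $u$ (or $z_0$), with $\sigma$ and $X$ factors in two different orders and with $F$ evaluated on two different annular domains. Then, exactly as in the $n=1$ proof, one commutes one of the two terms around the trace using \eqref{eq:q0-xoperators} and matches the resulting shifted Fourier expansion against the other via ``two expansions of the same function on the same domain'', so that the terms cancel in pairs; the symmetry relation \eqref{eq:P-symmetric}, $P_k(z^{-1},q)=(-1)^kP_k(zq,q)$, is what makes the $P_k$-weights match up after the shift. What survives is precisely the right-hand side of \eqref{eq:actual-trace-prop}.

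The bookkeeping of which Fourier expansion lives on which domain is the real content, so I would lean heavily on the conventions of Remark \ref{rem:ordering}: write $F_>$ and, when necessary, subscript the variables in increasing-modulus order, and record at each step the chain of inequalities $|qz_{?}|<\dots<|z_{?}|$ that the relevant series converges on. The only genuinely non-elliptic series in play are the $P_1$'s (expansions of $\wp_1$, which fails periodicity by \eqref{eq:wp-not-per}), and the point is that they enter only through the two specific combinations — $P_1(z/z_0,q)$ near $t_0$ and $\zeta$-type insertions — whose $2\pi i$ ambiguities are exactly cancelled by the $\pi i$'s appearing in $\tilde\omega_{[\zeta(t)-\pi i]}$ and in the $P_1$ identity of Corollary \ref{prop:borcherds-modified}; this is the same cancellation mechanism as in \eqref{eq:rec-1-2} of the proof of Proposition \ref{prop:recursion}.

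The main obstacle I anticipate is organizing the cancellations cleanly for general $n$: in the $n=1$ case there are only three or four terms to pair off, whereas here each Borcherds application generates a double sum over the $n$ insertion points and over Laurent/Fourier indices, and one must verify that, after the $q^{L_0}$-conjugation shift, the $i$-th term of one expansion matches the $i$-th term of another across the full range $0\le i\le n-1$, including the boundary case $i=n$ where the $\zeta(t_n-t_i)$ combination degenerates. I expect this to work term-by-term with no new ideas beyond the $n=1$ argument, but writing it so that the reader can follow the pairing will require careful use of the domain-subscript notation and a remark that the spectator elliptic function $F$ (and its Laurent coefficients $F_{0ik}$) may be pulled through all residues by $\cF_n$-linearity, exactly as in \ref{no:ins-well-defined}. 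I would therefore present the proof as: (i) rewrite the RHS using cyclicity and \eqref{eq:q0-xoperators}; (ii) expand the three LHS groups via Proposition \ref{prop:borcherds-modified-b}; (iii) match and cancel in pairs using \eqref{eq:domain-change}, \eqref{eq:P-symmetric}, and \eqref{eq:q0-xoperators}, pointing back to the $n=1$ computation for the details of each cancellation.
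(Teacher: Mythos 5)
Your proposal is correct and follows essentially the same route as the paper's proof: the paper likewise expands the three groups of terms on the left-hand side via Proposition \ref{prop:borcherds-modified-b} and Proposition \ref{prop:borcherds-modified-2}, organizes the resulting double sums as telescoping sums, and cancels the surviving pairs by commuting $X(\cdot,z)$ factors past $q^{L_0}$ with \eqref{eq:q0-xoperators} and matching Fourier expansions of the same elliptic function on shifted domains, with the final residue $\res_u X(a,u)=a_{(\deg a-1)}$ producing the zero-mode trace. The only cosmetic difference is direction (the paper works LHS~$\to$~RHS), and the ``telescoping'' organization you flag as the anticipated obstacle is exactly how the paper resolves the pairing over $0\le i\le n-1$.
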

\begin{proof}
Using \eqref{eq:borcherds-modified-2-b} we express the term $i=0$ in the first summand of \eqref{eq:actual-trace-prop} as 
\begin{equation}
\begin{multlined}
\frac{1}{ 2 \pi i } \res_{z_0} \res_u z_1 \dots z_n F(z_0,\dots,z_n,q) P_1 \left( \frac{z_n}{u},q \right) \tr_M \sigma(a,u) X(a_0,z_0) \dots X(a_n,z_n)q^{L_0} \\ 
+ \frac{1}{ 2 \pi i } \res_{z_0} \res_u z_1 \dots z_n F(z_0,\dots,z_n,q) P_1 \left( \frac{z_n}{u},q \right)  \tr_M X(a,u) \sigma(a_0,z_0) X(a_1,z_1) \dots X(a_n,z_n)q^{L_0} \\ 
-\frac{1}{ 2 \pi i } \res_{z_0} \res_u z_1 \dots z_n F(z_0,\dots,z_n,q) P_1 \left( \frac{z_n}{u},q \right) \tr_M X(a_0,z_0) \sigma(a,u) X(a_1,z_1) \dots X(a_n,z_n) q^{L_0}  \\ 
-\frac{1}{ 2 \pi i } \res_{z_0} \res_u z_1 \dots z_n F(z_0,\dots,z_n,q) P_1 \left( \frac{z_n}{u},q \right) \tr_M \sigma(a_0,z_0) X(a,u) X(a_l,z_1) \dots X(a_n,z_n) q^{L_0}.
\label{eq:trace-1-1}
\end{multlined}
\end{equation}
The remaining terms corresponding to  $ 1 \leq i \leq n-1$ can be written as follows using Proposition \ref{prop:borcherds-modified-2}
\begin{equation}
\begin{multlined}
\frac{1}{ 2 \pi i } \sum_{i=1}^{n-1} \res_{z_0} \res_u z_1 \dots z_n F(z_0,\dots,z_n,q) P_1 \left( \frac{z_n}{u},q \right) \\ 
\tr_M \sigma(a_0,z_0) X(a_1,z_1) \dots X(a_{i-1},z_{i-1}) X(a,u) X(a_i,z_i) \dots X(a_n,z_n)q^{L_0} \\ 
-
\frac{1}{ 2 \pi i } \sum_{i=2}^{n-1} \res_{z_0} \res_u z_1 \dots z_n F(z_0,\dots,z_n,q) P_1 \left( \frac{z_n}{u},q \right) \\ 
\tr_M \sigma(a_0,z_0) X(a_1,z_1) \dots X(a_{i},z_{i}) X(a,u) X(a_{i+1},z_{i+1}) \dots X(a_n,z_n)q^{L_0}.
\label{eq:trace-1-2}
\end{multlined}
\end{equation}
This is a telescoping sum and the only two terms that survive are
\begin{equation}
\begin{multlined}
\frac{1}{ 2 \pi i } \res_{z_0} \res_u z_1 \dots z_n F(z_0,\dots,z_n,q) P_1 \left( \frac{z_n}{u},q \right) \tr_M \sigma(a_0,z_0) X(a,u) X(a_1,z_1)\dots X(a_n,z_n)q^{L_0} \\ 
-\frac{1}{ 2 \pi i } \res_{z_0} \res_u z_1 \dots z_n F(z_0,\dots,z_n,q) P_1 \left( \frac{z_n}{u},q \right) \\ 
\tr_M \sigma(a_0,z_0) X(a_1,z_1) \dots X(a_{n-1},z_{n-1}) X(a,u) X(a_n,z_n)q^{L_0}.
\label{eq:trace-1-2-bis}
\end{multlined}
\end{equation}
The second term using Proposition \ref{prop:borcherds-modified} is written as 
\begin{equation}
{\multlinegap=200pt
\begin{multlined}
F_1^n\Bigl(a_0,\dots,a_{n-1}, a_{[\zeta(t)-\pi i]}a_n, F(z_0,\dots,z_n,q)\Bigr) = \\ 
 \frac{1}{2 \pi i} \res_{z_0} \res_u z_1 \dots z_n F(z_0,\dots,z_n,q) P_1 \left( \frac{z_n}{u},q \right) \\ \tr_M \sigma(a_0,z_0) X(a_1,z_1) \dots X(a_{n-1},z_{n-1}) X(a,u) X(a_n,z_n) q^{L_0} \\ 
- \frac{1}{2 \pi i} \res_{z_0} \res_u z_1 \dots z_n F(z_0,\dots,z_n,q) \left( P_1\left( \frac{z_nq}{u},q \right)  - 2\pi i  \right) \\ 
\tr_M \sigma(a_0,z_0)X(a_1,z_1)\dots X(a_n,z_n)X(a,u)  q^{L_0} .
\label{eq:trace1-2-a}
\end{multlined}
}
\end{equation}
We see that the first two terms of \eqref{eq:actual-trace-prop} add up to 
\begin{equation}
\begin{multlined}
\frac{1}{ 2 \pi i } \res_{z_0} \res_u z_1 \dots z_n F(z_0,\dots,z_n,q) P_1 \left( \frac{z_n}{u},q \right) \tr_M \sigma(a,u) X(a_0,z_0) \dots X(a_n,z_n)q^{L_0} \\ 
+ \frac{1}{ 2 \pi i } \res_{z_0} \res_u z_1 \dots z_n F(z_0,\dots,z_n,q) P_1 \left( \frac{z_n}{u},q \right)  \tr_M X(a,u) \sigma(a_0,z_0) X(a_1,z_1) \dots X(a_n,z_n)q^{L_0} \\ 
-\frac{1}{ 2 \pi i } \res_{z_0} \res_u z_1 \dots z_n F(z_0,\dots,z_n,q) P_1 \left( \frac{z_n}{u},q \right) \tr_M X(a_0,z_0) \sigma(a,u) X(a_1,z_1) \dots X(a_n,z_n) q^{L_0}  \\ 
- \frac{1}{2 \pi i} \sum_{i=1}^n \res_{z_0} \res_u z_1 \dots z_n F(z_0,\dots,z_n,q) \left( P_1\left( \frac{z_nq}{u},q \right)  - 2\pi i  \right) \\ 
\tr_M \sigma(a_0,z_0)\dots X(a_n,z_n)X(a,u)  q^{L_0} .
\label{eq:first+second-trace}
\end{multlined}
\end{equation}
In order to compute the third term of \eqref{eq:actual-trace-prop}, we recall from \ref{no:fourier-n-variable} that $F(q^{-1}z_0, z_1,\dots,z_n,q)$ is the Fourier series of $f(t_0,\dots,t_n,\tau)$ convergent in the domain $|z_0| < |z_n| < \dots <|z_1| < |q^{-1} z_0|$. Using Proposition \ref{prop:borcherds-modified-fourier} we see that this term is a telescoping sum and the two surviving terms are 
\begin{equation}
\begin{multlined}
-\frac{1}{2\pi i} \res_{z_0} \res_u  z_1\dots z_n F(z_0,\dots,z_n,q) P_1\left( \frac{z_n}{u} \right) \tr_M \sigma(a,u)X(a_0,z_0) X(a_2,z_2) \dots X(a_n,z_n) q^{L_0} \\ 
+ \frac{1}{2\pi i} \res_{z_0} \res_u  z_1\dots z_n F(q^{-1} z_0,\dots,z_n,q) P_1\left( \frac{z_n}{u} \right) \tr_M \sigma(a,u) X(a_1,z_1) \dots X(a_{n},z_{n}) X(a_0,z_0) q^{L_0} .
\label{eq:third-term-trace}
\end{multlined}
\end{equation}
The first term of \eqref{eq:third-term-trace} cancels the first term in \eqref{eq:first+second-trace}. Commuting $X(a_0,z_0)$ with $q^{L_0}$ using \eqref{eq:q0-xoperators} in the third term of \eqref{eq:first+second-trace} that term cancels the second term of \eqref{eq:third-term-trace}. Finally commuting $X(a,u)$ with $q^{L_0}$ in the second term of \eqref{eq:first+second-trace} we see that this term plus the fourth term add up to 
\[
\res_{z_0} \res_u z_1 \dots z_n F(z_0,\dots,z_n,q) \tr_M \sigma(a_0,z_0) X(a_1,z_1) \dots X(a_n,z_n) X(a,u) q^{L_0}, 
\]
proving the proposition.
\end{proof}
Recall the complex $C_\bullet^n$ of \ref{defn:chiral-homology-with-supports} computing the chiral homology of the vertex algebra $(V, Y[\cdot,z],\tilde{\omega}, \vac)$ with coefficients in $V^{\otimes n}$ in genus $1$. The following proposition shows that $F^n_1$ vanishes on boundaries. 
\begin{prop} Let $a, b, a_1,\dots,a_n \in V$ and $f = f(u,v,t_1,\dots,t_{n},\tau) \in \cF_{n+2}$. Recall the definition of $d_2$ in \eqref{eq:d2-general-def}. We let $f_{0ik}$, $f_{1ik}$ and $f_{00k}$ be the coefficients of the Laurent expansions of $f$ about $u-t_i$, $v - t_i$ and $u-v$ respectively as in \eqref{eq:5.12b}. Let $F_{0ik}(z_0,\dots,z_n,q)$ and $F_{1ik}(z_0,\dots,z_n,q)$ and $F_{00k}(z_0,\dots,z_n,q)$  be their respective Fourier expansions, convergent in the domain
\[ |z_0 q| < |z_n| < \dots < |z_1| < |z_0|. \]
The linear functional $F^n_1$ satisfies:
\begin{equation}
\begin{multlined}
\sum_{i=1}^n \sum_{k} F^n_1 \Bigl( a,a_1,\dots, b_{[k]}a_i,\dots,a_n, F_{1ik} \Bigr) - \sum_{i=1}^n \sum_k F^n_1 \Bigl( b, a_1,\dots, a_{[k]} a_i, \dots,a_n, F_{0ik} \Bigr) \\ 
- \sum_{k} F^n_1 \Bigl( a_{[k]}b, a_1,\dots,a_n, F_{00k} \Bigr) = 0.
\end{multlined}
\label{eq:8-trace-d2}
\end{equation}
\label{prop:8-trace-d2}
\end{prop}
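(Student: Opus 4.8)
The plan is to mimic the proof of Proposition \ref{prop:new-deriv-generic} in the general $n$-point setting, the only complication being notational bookkeeping of which Fourier expansion (in which domain of moduli ordering) appears in each term. Let $F(u,v,z_1,\dots,z_n,q)$ be the Fourier expansion of $f$ in the domain $|qu|<|z_n|<\dots<|z_1|<|v|<|u|$ dictated by the ordering of the field insertions $\sigma(a,u)\,X(b,v)\,X(a_1,z_1)\cdots X(a_n,z_n)$. Using Theorem \ref{prop:borcherds-modified-fourier} with the variable $u-t_i$ (resp.\ $v-t_i$, resp.\ $u-v$), I would rewrite each of the three sums in \eqref{eq:8-trace-d2} as a residue in $u$ of $F^n_1$-type traces with an extra contour integral in the variable $u$ (or a variable named after the relevant separation), obtaining terms of the form $\res_u \res_v \cdots \tr_M(\text{product of }\sigma,X)q^{L_0}$ with $F_>$ (the expansion in decreasing modulus order matching the printed order of the fields) inserted. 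Concretely, the first sum produces a two-term combination $\sigma(b,v)$ placed before and after the block $X(a_1,z_1)\cdots X(a_n,z_n)$ with $a$ inserted via $\sigma(a,u)$; the second sum produces the analogous thing with the roles of $a,b$ swapped and both $\sigma(a,u)X(b,v)$ and $X(b,v)\sigma(a,u)$ appearing (because the derivation Borcherds identity of Proposition \ref{prop:borcherds-modified-b} has the symmetrized combination $\sigma(a,u)X(b,v)+X(a,u)\sigma(b,v)$); and the third sum, handled by \eqref{eq:borcherds-modified-2-b} of Proposition \ref{prop:borcherds-modified-b}, produces the $u$--$v$ collision terms with $\sigma(a_{[k]}b,\cdot)$ unfolded into four traces.

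Next I would match terms pairwise exactly as in the $n=1$ proof of Proposition \ref{prop:new-deriv-generic}. The key moves are: (1) the first term coming from the $\sum_i F^n_1(a,\dots,b_{[k]}a_i,\dots)$ sum cancels one of the four terms from the $d_2$-collision sum; (2) the term with $\sigma(b,v)$ to the left of the block cancels another; (3) the remaining two terms are reconciled by commuting $X(b,v)$ (resp.\ $X(a,u)$) past $q^{L_0}$ using \eqref{eq:q0-xoperators}, which turns $F_>(u,v,z_1,\dots,z_n,q)$ into $F_>$ evaluated at the cyclically shifted arguments, and then invoking the fact (stated in \ref{no:fourier-n-variable}) that two Fourier expansions of the same elliptic function in the same domain coincide, i.e.\ $F_{v,u,z_1,\dots}(u,q^{-1}v,z_1,\dots,q)=F_{u,z_1,\dots,v}(u,v,z_1,\dots,q)$ etc. Each of these cancellations is forced by the fact that $f\in\cF_{n+2}$ is genuinely elliptic (so all the relevant $F$'s agree after the cyclic shift), together with the cyclicity of $\tr_M$. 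After all cancellations the net sum is $0$, which is the assertion.

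The main obstacle I expect is \textbf{organizing the telescoping and the domain bookkeeping} rather than any conceptual difficulty: there are three sums over $i=1,\dots,n$, each expanding (via the appropriate Borcherds identity) into two or four residue-trace terms, so a dozen-odd terms must be matched. The subtle point is that the derivation $\sigma$ is \emph{not} a field of a vertex algebra but an $\End(M)$-valued derivation, so one must use the symmetrized identities of Proposition \ref{prop:borcherds-modified-b} (where $\sigma(a,z)X(b,w)+X(a,z)\sigma(b,w)$ appears as a unit) and keep careful track of which of $a$ or $b$ carries the $\sigma$; the apparent asymmetry between the $a$--block and the $b$--block in \eqref{eq:8-trace-d2} is exactly what makes the terms with $X(\cdot,\cdot)\sigma(\cdot,\cdot)$ (rather than $\sigma X$) cancel. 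One should also be careful that the collision term $\sum_k F^n_1(a_{[k]}b,\dots,F_{00k})$ genuinely uses \eqref{eq:borcherds-modified-2-b} (four traces, because $\sigma(a_{[k]}b,\cdot)$ is unfolded by the \emph{derivation} version of the $P_k$-Borcherds identity) and not the shorter scalar version. Modulo this careful case-by-case matching, the proof is a direct transcription of the $n=1$ argument and of Lemma \ref{lem:complex-supports}, and I would present it in that telescoping style, displaying the three expanded sums and then pointing out the pairwise cancellations.
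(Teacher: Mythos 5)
Your proposal follows essentially the same route as the paper's proof: expand each of the three sums via Theorem \ref{prop:borcherds-modified-fourier} (and its derivation analogue, Proposition \ref{prop:borcherds-modified-b}, for the $u$--$v$ collision term), telescope over $i$, and cancel the surviving terms pairwise using cyclicity of the trace, the commutation \eqref{eq:q0-xoperators}, and the identification of Fourier expansions of the same elliptic function on the appropriately shifted domains. One small slip in your description: the symmetrized combination $\sigma(a,u)X(b,v)+X(a,u)\sigma(b,v)$ enters only in the third (collision) sum, not in the second, where the derivation sits on $b$ in slot $0$ and $a_{[k]}a_i$ is unfolded by the plain, non-derivation identity --- but this does not affect the validity of your cancellation scheme.
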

\begin{proof}
Using Theorem \ref{prop:borcherds-modified-fourier} and the convention of Remark \ref{rem:ordering}, the first term of \eqref{eq:8-trace-d2} can be written as
\[
\begin{multlined}
\frac{1}{2 \pi i} \sum_{i = 1}^n  \res_{z_0} \res_{u} F_>(z_0,u, z_1,\dots, z_n,q) z_1 \dots z_n  \tr_M \sigma(a,z_0) X(a_1,z_1) \dots X(b,u) X(a_i,z_i) \dots X(a_n,z_n) q^{L_0}  \\ 
- \frac{1}{2 \pi i} \sum_{i = 1}^n \res_{z_0} \res_u F_>(z_0, u, z_1,\dots, z_n,q)  z_1 \dots z_n   
\tr_M \sigma(a,z_0) X(a_1,z_1) \dots X(a_i,z_i) X(b,u) \dots X(a_n,z_n) q^{L_0}. 
\end{multlined}
\]
This is a telescoping sum and the only surviving terms after relabeling $u \leftrightarrow z_0$ are:
\begin{equation}
\begin{multlined}
\frac{1}{2 \pi i}  \res_{z_0} \res_{u} z_1\dots z_n F_>(u,z_0, z_1,\dots, z_n,q)  \tr_M \sigma(a,u) X(b,z_0) X(a_1,z_1) \dots X(a_n,z_n) q^{L_0}  \\ 
- \frac{1}{2 \pi i}  \res_{z_0} \res_u z_1 \dots z_n F_>(u, z_0, z_1,\dots, z_n, q)  
\tr_M \sigma(a,u) X(a_1,z_1)  \dots X(a_n,z_n) X(b,z_0) q^{L_0}. 
\label{eq:8-d2-1}
\end{multlined}
\end{equation}
Similarly, the second term can be written as:
\begin{equation}
\begin{multlined}
-\frac{1}{2 \pi i} \res_{z_0}\res_u z_1 \dots z_n  F_>(u,z_0,z_1, \dots, z_n,q) \tr_M \sigma(b,z_0) X(a,u) X(a_1,z_1) \dots X(a_n,z_n) q^{L_0}  \\ 
+ \frac{1}{2 \pi i} \res_{z_0} \res_u z_1\dots z_nF_>(u, z_0 , \dots, z_n, q)  \tr_M \sigma(b,z_0) X(a_1,z_1) \dots X(a_n,z_n) X(a,u) q^{L_0}. 
\end{multlined}
\label{eq:8-d2-2}
\end{equation}
Finally the third term is written using Proposition \ref{prop:borcherds-modified-b} as 
\begin{equation}
\begin{multlined}
-\frac{1}{2 \pi i} \res_{z_0} \res_u z_1 \dots z_n F_>(u,z_0,\dots,z_n,q) \tr_M \Bigl(\sigma(a,u) X(b,z_0) + X(a,u) \sigma(b,z_0) \Bigr) X(a_1,z_1) \dots X(a_n,z_n) q^{L_0} \\ 
+\frac{1}{2 \pi i} \res_{z_0} \res_u z_1 \dots z_n F_>(u,z_0,\dots,z_n,q) \tr_M
\Bigl(\sigma(b,z_0) X(a,u) + X(b,z_0) \sigma(a,u) \Bigr) X(a_1,z_1) \dots
X(a_n,z_n) q^{L_0}.
\end{multlined}
\label{eq:8-d2-3}
\end{equation}
The first summands of each term in \eqref{eq:8-d2-3} cancel the first summands
of \eqref{eq:8-d2-1} and \eqref{eq:8-d2-2}. The second term of \eqref{eq:8-d2-3} after commuting with $q^{L_0}$ using \eqref{eq:q0-xoperators} equals
\[ -\frac{1}{2 \pi i} \res_{z_0} \res_u F_{u,z_0,\dots,z_n} (q^{-1} u,z_0,\dots,z_n,q) \sigma(b,z_0)X(a_1,z_1) \dots X(a_n,z_n) X(a,u) q^{L_0}. \]
Noting that 
\[ F_{u,z_0,\dots,z_n} (q^{-1} u,z_0,\dots,z_n,q) = F_{z_0,\dots,z_n,u}(u,z_0,\dots,z_n,q), \]
this term cancels the second term of \eqref{eq:8-d2-2}. Similarly the fourth term of \eqref{eq:8-d2-3} cancels the second term of \eqref{eq:8-d2-1}. 
\end{proof}
\begin{lem} Let $f(t_0,\dots,t_n, \tau) \in \cF_{n+1}$ and consider its Laurent expansion
\[ f = \sum_k f_{0ik}(t_1,\dots,t_n,\tau) (t_0-t_i)^k. \]
The series $F^n_1$ satisfies
\[ 
\begin{multlined}
2 \pi i \sum_{i=1}^n z_i\frac{d}{d z_i} F^n_1 \Bigl(a_0,\dots,a_n, F(z_0,\dots,z_n,q) \Bigr) = \\ 
\sum_{i=1}^n \sum_k F^n_1 \Bigl( \tilde{\omega},a_1,\dots,a_{0[k]}a_i,\dots,a_n, F_{0ik}(z_1,\dots,z_n, q) \Bigr)
\end{multlined}
\]
\label{lem:9-sum-of-deriv}
\end{lem}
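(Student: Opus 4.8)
The statement is the degree-1 analogue of Lemma \ref{lem:residue-trading}, equation \eqref{eq:lem821}, but where instead of differentiating the Fourier expansion $F$ directly one feeds in the $L_{-1}+L_0$ descendant of $a_0$ and uses Proposition \ref{prop:actual-trace-prop} to convert the insertion of $\tilde\omega$ at $u$ back into a descendant. Concretely, the plan is to apply Lemma \ref{lem:residue-trading}, specifically \eqref{eq:lem821}, to the variables $z_1,\dots,z_n$ and sum over $i=1,\dots,n$. This gives
\[
\sum_{i=1}^n z_i \frac{d}{dz_i} F^n_1\bigl(a_0,\dots,a_n,F\bigr)
= \sum_{i=1}^n F^n_1\bigl(a_0,\dots,(L_{-1}+L_0)a_i,\dots,a_n,F\bigr)
+ \sum_{i=1}^n F^n_1\bigl(a_0,\dots,a_n, z_i\tfrac{d}{dz_i}F\bigr).
\]
The first sum on the right is what I want to relate to traces of $\sigma(\tilde\omega_{[0]}a_0)\cdots$; the second sum I want to show vanishes. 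For the vanishing, note $\sum_{i=1}^n z_i\frac{d}{dz_i}F$ is not quite $\sum_{i=0}^n z_i\frac{d}{dz_i}F$, which is $0$ by Lemma \ref{lem:4.sum-deriv} (translated to Fourier variables as in Lemma \ref{lem:4.sum-deriv}/\ref{no:fourier-n-variable}); so $\sum_{i=1}^n z_i\frac{d}{dz_i}F = -z_0\frac{d}{dz_0}F$, and by \eqref{eq:8.trading-2} the corresponding term equals $F^n_1((L_{-1}+L_0)a_0,\dots,a_n,F)$.

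Thus after collecting terms the left-hand side $2\pi i\sum_{i=1}^n z_i\frac{d}{dz_i}F^n_1(a_0,\dots,a_n,F)$ becomes $2\pi i$ times $\sum_{i=0}^n F^n_1(a_0,\dots,(L_{-1}+L_0)a_i,\dots,a_n,F)$ — i.e. the sum now runs over \emph{all} indices $0$ through $n$, which is precisely the total conformal-weight shift. The next step is to recognize this total shift as the effect of inserting $\tilde\omega_{[0]} = (2\pi i)(L_0+L_{-1})$ once and using the cyclicity of the trace together with \eqref{eq:q0-xoperators}. This is exactly the mechanism behind \eqref{eq:cor-8.1} in the proof of Corollary \ref{cor:sum-of-omega}, now applied with $n$ spectator insertions: the key identity is that $\tr_M$ of $(L_0+L_{-1})$ acting as a derivation across the product $\sigma(a_0,z_0)X(a_1,z_1)\cdots X(a_n,z_n)q^{L_0}$ produces the total weight. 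Finally, Proposition \ref{prop:actual-trace-prop} applied with $a = \tilde\omega$ expresses the insertion of $\tilde\omega_{(\deg\tilde\omega - 1)}$ (and hence, by the same token, the relevant zero-mode combination) in terms of the functionals $\sum_{i}\sum_k F^n_1(\tilde\omega, a_1,\dots,a_{0[k]}a_i,\dots,a_n, F_{0ik})$ on the right-hand side of the claimed identity, together with terms involving the $P_{m+1}$ and the $\zeta$-insertion; one checks these extra terms assemble into the $z_i\frac{d}{dz_i}$ expressions already on the left.

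The main obstacle I anticipate is bookkeeping rather than anything conceptual: one must be scrupulous about which Fourier expansion (which cyclic ordering of the moduli $|z_i|$) appears at each stage, in the style of Remark \ref{rem:ordering}, since the identity $F(q^{-1}z_0,z_1,\dots)=F(z_0,\dots,z_n,q)$ and the shifts $z_j\mapsto qz_j$ induced by commuting $X(\cdot,z_j)$ past $q^{L_0}$ must be tracked exactly. In particular the $P_1$ pieces coming from Proposition \ref{prop:actual-trace-prop} are expansions of $\wp_1$, which is \emph{not} elliptic, so the cancellations only close up after the $\pm 2\pi i$ and $\pm\pi i$ constants are matched; verifying that these constants cancel (using \eqref{eq:delta-log} and the difference $P_1(z,q) - P_1(qz,q) = -2\pi i$ of \ref{no:ellitptic-functions}) is the delicate point. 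Once the ordering conventions are fixed, the computation is a direct parallel of the $n=1$ argument establishing \eqref{eq:cor-8.1}, so no new tool beyond Lemma \ref{lem:residue-trading}, Lemma \ref{lem:4.sum-deriv}, Proposition \ref{prop:actual-trace-prop}, and \eqref{eq:q0-xoperators} is required.
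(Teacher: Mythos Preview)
Your first two steps are exactly right and coincide with the paper: apply \eqref{eq:lem821} for $i=1,\dots,n$ and sum, then use Lemma~\ref{lem:4.sum-deriv} together with \eqref{eq:8.trading-2} to convert the leftover $-z_0\frac{d}{dz_0}F$ term into $F^n_1((L_{-1}+L_0)a_0,\dots)$, arriving at $\sum_{i=0}^n F^n_1(a_0,\dots,\tilde\omega_{[0]}a_i,\dots,a_n,F)$.

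Where you diverge from the paper is in the final step. You propose to invoke Proposition~\ref{prop:actual-trace-prop} with $a=\tilde\omega$, but that proposition concerns the \emph{usual} zero mode $\tilde\omega_{(\deg\tilde\omega-1)}=(2\pi i)^2(L_0-\tfrac{c}{24})$, not the square-bracket mode $\tilde\omega_{[0]}=(2\pi i)(L_{-1}+L_0)$ that you actually have in hand; relating the two would drag in the $P_{m+1}$ and $\zeta$ terms you mention and essentially rederive a result that is already available. In fact the $n=1$ argument you correctly cite as a template (Corollary~\ref{cor:sum-of-omega}) does not use Proposition~\ref{prop:recursion}/\ref{prop:actual-trace-prop} at all: it uses the boundary-vanishing Proposition~\ref{prop:new-deriv-generic}. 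The general-$n$ analogue is Proposition~\ref{prop:8-trace-d2}. Apply it with $a=\tilde\omega$, $b=a_0$, and $f(u,v,t_1,\dots,t_n)=f(v,t_1,\dots,t_n)$ independent of $u$: then $f_{0ik}=\delta_{k,0}f$ and $f_{00k}=\delta_{k,0}f$, so the second and third sums in \eqref{eq:8-trace-d2} collapse to $-\sum_{i=1}^n F^n_1(a_0,\dots,\tilde\omega_{[0]}a_i,\dots,f)$ and $-F^n_1(\tilde\omega_{[0]}a_0,\dots,f)$, while the first sum is exactly the right-hand side of the lemma. No $P_k$ bookkeeping, no $\pm\pi i$ matching, no domain-of-expansion subtleties are needed.
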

\begin{proof}
From \eqref{eq:lem821} the left hand side equals:
\[ 
\begin{multlined}
2  \pi i \sum_{i=1}^{n} F^n_1 \Bigl(a_0, a_1,\dots,(L_{-1} + L_0)a_i, \dots,a_n, F(z_0,\dots,z_n,q) \Bigr) \\  
+  2 \pi i F^n_1 \left( a_0,\dots,a_n, \sum_{i=1}^n z_i \frac{d}{dz_i} F(z_0,\dots,z_n,q)  \right). 
\end{multlined}
\]
Using \eqref{eq:8.trading-2}, Lemma \ref{lem:4.sum-deriv} and that $\tilde{\omega}_{[0]} = (2 \pi i) (L_{-1} + L_0)$, this can be written as 
\[ \sum_{i=0}^{n} F^n_1 \Bigl(a_0, a_1,\dots, \tilde{\omega}_{[0]}a_i, \dots,a_n, F(z_0,\dots,z_n,q) \Bigr)  \]
The lemma follows from Proposition \ref{prop:8-trace-d2} with $a = \tilde{\omega}$ and $b = a_0$. 
\end{proof}
\begin{nolabel} Let $a_0,\dots,a_n  \in V$ and $F(z_0,\dots,z_n,q)$ be the Fourier series of $f(t_0,\dots,t_n,\tau) \in \cF_{n+1}$ in the domain $|qz_0| < |z_n| < \dots < |z_1| < |z_0|$.
Recall the definition of $Df \in \cF_{n+1}$ in \ref{no:p1-mult-der}. Let $\overline D$ be the differential operator $D$ in the exponential variables $q,z_i$ as in \eqref{eq:4.14b}, that is 
\[
\overline{D}F = \left( (2 \pi i)^2 q \frac{d}{dq} - \sum_{i=0}^{n-1} \left(
P_1\left( \frac{z_n}{z_i},q \right) + \pi i \right) 2 \pi i z_i \frac{d}{d z_i}
\right) F(z_1,\dots,z_n,q) ,
\]
is the Fourier series of $Df$ convergent in the domain $|qz_0| < |z_n| < \dots < |z_1| < |z_0|$ as in \eqref{eq:4.14b}. For any formal series $S(z_1,\dots,z_n,q)$ on $n+1$ variables $z_1,\dots,z_n,q$, we will denote by $\overline{D}S$ the differential operator above with the sum starting from $i=1$. 
\label{no:new-diff-variables}
\end{nolabel}
\begin{thm} Let $a_0,\dots,a_n \in V$ and $f = f(t_0,\dots,t_n,\tau) \in \cF_{n+1}$. 
  The following differential equation holds
\begin{equation}
\begin{multlined}
 \left( \overline{D}  - (2 \pi i)^2 \frac{c}{24} \right)   F^n_1\left( a_0,\dots,a_n,F(z_0,\dots,z_n,q) \right) = \\
F^n_1 \Bigl(a_0,\dots,a_{n-1}, \tilde{\omega}_{[\zeta]}a_n, F(z_0,\dots,z_n,q) \Bigr)   
+ F^n_1 \Bigl( a_0,\dots,a_n, \overline{D}F(z_0,\dots,z_n,q)  \Bigr)  \\
+ F_1^n \left( a_0,\dots,a_n, F(z_0,\dots,z_n,q) P_2\left( \frac{z_n}{z_0},q \right) \right) \\ 
- \sum_{i=0}^{n-1} \sum_{m \geq 0} (-1)^m  F^n_1 \left( a_0,\dots,  \tilde{\omega}_{[m+1]}a_i,\dots,a_n, F(z_0,\dots,z_n,q)P_{m+2}\left( \frac{z_n}{z_i},q \right) \right)  \\ 
- \sum_{i = 1}^n \sum_{k} F^n_1 \left( \tilde{\omega}, a_1,\dots a_{0[k]}a_i,\dots,a_n, F_{0ik}(z_1,\dots,z_n,q) \left( P_1\left( \frac{z_n}{z_0},q \right) + \pi i \right) \right).
\end{multlined}
\end{equation}
The last term vanishes if $a_0 \otimes \dots \otimes a_n \otimes f \in \ker d_1$.
\label{thm:last-diff-eq}
\end{thm}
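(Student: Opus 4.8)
The plan is to mimic the proof of Theorem \ref{thm:cor-der-dq} for general $n$, the extra bookkeeping being the presence of $n$ field insertions and the careful choice of Fourier expansion domains. The starting point is Proposition \ref{prop:actual-trace-prop} applied with $a = \tilde{\omega} = (2\pi i)^2(\omega - \tfrac{c}{24}\vac)$. This rewrites the trace of the zero mode $\tilde{\omega}_{(\deg\tilde{\omega}-1)} = (2\pi i)^2(L_0 - \tfrac c{24})$ acting at the front of the product
\[ \res_{z_0} z_1\cdots z_n F(z_0,\dots,z_n,q) \tr_M \tilde\omega_{(\deg\tilde\omega -1)}\sigma(a_0,z_0)X(a_1,z_1)\cdots X(a_n,z_n) q^{L_0} \]
as a combination of: the $\sum_{m\geq 0}$ term involving $\tilde\omega_{[m]}a_i$ for $i=0,\dots,n-1$, the term with $\tilde\omega_{[\zeta(t)-\pi i]}a_n$, and the sum over $i=1,\dots,n$ with insertions $a_{0[k]}a_i$. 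This last sum vanishes precisely when $a_0\otimes\cdots\otimes a_n\otimes f\in\ker d_1$, matching the statement.

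Next I would isolate the $\tilde\omega_{[0]}$ summand (the $m=0$ term with $i$ ranging over $0,\dots,n-1$, together with the $i=n$ contribution hidden inside the $\tilde\omega_{[\zeta-\pi i]}a_n$ term via $P_1$). Using $\tilde\omega_{[0]} = (2\pi i)(L_{-1}+L_0)$ and Lemma \ref{lem:residue-trading} (equation \eqref{eq:lem821}) one converts the action of $L_{-1}+L_0$ on the $i$-th slot into $z_i\tfrac{d}{dz_i}$ acting on the Fourier series, modulo the $z_i\tfrac{d}{dz_i}F^n_1$ term; by Lemma \ref{lem:9-sum-of-deriv} the sum $\sum_{i=1}^n z_i\tfrac d{dz_i}F^n_1$ is controlled (it is itself expressed through $F^n_1$ at $\tilde\omega$). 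Combining these with the relation \eqref{eq:pkdiff} between $P_{k}$ and $P_{k+1}$ turns the $m=0$ contribution into $-F^n_1(a_0,\dots,a_n, F\cdot P_2(z_n/z_0,q))$ plus a term containing $z_0\tfrac{d}{dz_0}F$ hit by $P_1(z_n/z_0,q) + \pi i$. This last term is exactly what is needed to build $\overline D F$ once one uses Corollary \ref{cor:deriv-sum} / the definition in \ref{no:new-diff-variables}: recall $\overline D F = (2\pi i)^2 q\tfrac d{dq}F - \sum_{i=0}^{n-1}(P_1(z_n/z_i,q)+\pi i)2\pi i z_i\tfrac{d}{dz_i}F$, so the $q\tfrac d{dq}$ piece pairs with the left-hand side and the $P_1$-pieces absorb the derivative terms.

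Finally, assembling all pieces: the left side $(2\pi i)^2(q\tfrac d{dq} - \tfrac c{24})F^n_1(a_0,\dots,a_n,F)$ equals, by definition of $q^{L_0}$ and $\tilde\omega_{(\deg\tilde\omega-1)}$, the trace of $\tilde\omega_{(\deg\tilde\omega-1)}$ at the front plus $(2\pi i)^2\res z_1\cdots z_n (q\tfrac d{dq}F)\tr_M\sigma(a_0,z_0)X(a_1,z_1)\cdots q^{L_0}$; substituting the expansion from Proposition \ref{prop:actual-trace-prop} and collecting the derivative terms into $\overline D F$ yields the four surviving terms on the right-hand side: the $\tilde\omega_{[\zeta]}a_n$ term, the $\overline D F$ term, the $P_2(z_n/z_0,q)$ term, the $\sum_{i,m}$ term with $\tilde\omega_{[m+1]}a_i$, and the $\ker d_1$-vanishing remainder. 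The main obstacle is purely organizational: one must keep scrupulous track of which Fourier expansion domain each series lives in (notably distinguishing $P_1(z/w,q)$, $P_1(zq/w,q)$, and the use of \eqref{eq:P-symmetric} and \eqref{eq:domain-change}) and of the commutations of $X(\cdot,u)$ and $X(a_0,z_0)$ past $q^{L_0}$ via \eqref{eq:q0-xoperators} under the trace — each such move shifts an argument by $q$ and must land on the correct expansion for the telescoping cancellations (as in the proof of Proposition \ref{prop:actual-trace-prop}) to go through. Once the bookkeeping is fixed, every individual identity is an instance of a result already proved above (Theorem \ref{prop:borcherds-modified-fourier}, Propositions \ref{prop:borcherds-modified-b} and \ref{prop:actual-trace-prop}, Lemmas \ref{lem:residue-trading} and \ref{lem:9-sum-of-deriv}, and \eqref{eq:pkdiff}).
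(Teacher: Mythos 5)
Your proposal follows essentially the same route as the paper's proof: apply Proposition \ref{prop:actual-trace-prop} with $a = \tilde{\omega}$, use $\tilde{\omega}_{(\deg\tilde{\omega}-1)} = (2\pi i)^2(L_0 - \tfrac{c}{24})$ and $\tilde{\omega}_{[0]} = (2\pi i)(L_0+L_{-1})$ together with Lemma \ref{lem:residue-trading}, \eqref{eq:pkdiff} and Lemma \ref{lem:9-sum-of-deriv} to extract the $P_2$ term and assemble the derivative pieces into $\overline{D}F$. The outline and the cited ingredients match the paper's argument, so this is correct in approach.
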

\begin{proof}
Consider Proposition \ref{prop:actual-trace-prop} with $a = \tilde{\omega} = (2 \pi i)^2 \left( \omega - \tfrac{c}{24} \vac \right)$, \eqref{eq:actual-trace-prop} reads
\begin{equation}
\begin{multlined}
 \sum_{i=0}^{n-1} \sum_{m \geq 0} (-1)^m  F^n_1 \left( a_0,\dots,  \tilde{\omega}_{[m]}a_i,\dots,a_n, F(z_0,\dots,z_n,q)P_{m+1}\left( \frac{z_n}{z_i},q \right) \right) \\ 
+ F^n_1 \left( a_0,\dots,a_{n-1},\tilde{\omega}_{[\zeta(t) - \pi i]}a_n, F(z_0,\dots,z_n,q)   \right) \\
- \sum_{i = 1}^n \sum_{k} F^n_1 \left( \tilde{\omega}, a_1,\dots a_{0[k]}a_i,\dots,a_n, F_{0ik}(z_1,\dots,z_n,q)P_1\left( \frac{z_n}{z_0},q \right) \right) = \\ 
\res_{z_0} z_1\dots z_n F(z_0,\dots,z_n,q) \tr_M \tilde{\omega}_{(\deg \tilde{\omega} -1)}  \sigma(a_0,z_0)X(a_1,z_1)\dots X(a_n,z_n) q^{L_0}
\end{multlined}
\label{eq:actual-trace-prop-2}
\end{equation} 
Using that $\tilde{\omega}_0 = (2 \pi i)^2 (L_0 - \tfrac{c}{24})$, the right hand side equals
\begin{equation}
\begin{multlined}
 (2 \pi i)^2 \left( q \frac{d}{dq} - \frac{c}{24} \right) F^n_1 \Bigl(a_0,\dots,a_n, F(z_0,\dots,z_n,q) \Bigr) \\ 
- (2 \pi i)^2 \res_{z_0} z_1\dots z_n \left( q\frac{d}{dq} F(z_0,\dots,z_n,q) \right) \tr_M \sigma(a_0,z_0) X(a_1,z_1)\dots X(a_n,z_n) q^{L_0}. 
\end{multlined}
\label{eq:8.28}
\end{equation}
Using that $\tilde{\omega}_{[0]} = (2\pi i)(L_0 + L_{-1})$ and \eqref{eq:8.trading-2}, the summand $i=m=0$ of the first term in \eqref{eq:actual-trace-prop-2} equals
\begin{equation}
\begin{multlined}
 - 2 \pi i F^n_1 \left( a_0,\dots,a_n, P_1\left( \frac{z_n}{z_0},q \right) z_0 \frac{d}{d z_0} F(z_0,\dots,z_n,q) \right) \\ +  F^n_1 \left( a_0,\dots,a_n, F(z_0,\dots,z_n,q) P_2 \left( \frac{z_n}{z_0},q \right) \right). 
\end{multlined}
\label{eq:8.29}
\end{equation}
Similarly using \eqref{eq:lem821} the summands $m = 0$ and $1 \leq i \leq n-1$ in the first term of \eqref{eq:actual-trace-prop-2} equal:
\begin{equation}
\begin{multlined}
2 \pi i \sum_{i = 1}^{n-1} z_i \frac{d}{d z_i} F^n_1 \left(a_0,\dots,a_n, F(z_0,\dots,z_n,q) P_1\left( \frac{z_n}{z_i
},q \right)  \right) \\ 
- 2 \pi i \sum_{i=1}^{n-1} F^n_1 \left( a_0,\dots,a_n, P_1\left( \frac{z_n}{z_i},q \right) z_i \frac{d}{d z_i} F(z_0,\dots,z_n,q) \right) \\ 
+ \sum_{i = 1}^{n-1} F^n_1 \left( a_0,\dots,a_n, P_2 \left( \frac{z_n}{z_i},q \right)  F(z_0,\dots,z_n,q) \right) = \\ 
\sum_{i=1}^{n-1} P_1\left( \frac{z_n}{z_i},q \right) 2 \pi i z_i\frac{d}{d z_i} F^n_1\left( a_0,\dots,a_n,F(z_0,\dots,z_n,q) \right) \\ 
- 2  \pi i \sum_{i=1}^{n-1} F^n_1 \left( a_0,\dots,a_n, P_1\left( \frac{z_n}{z_i},q \right) z_i \frac{d}{d z_i} F(z_0,\dots,z_n,q) \right).
\end{multlined}
\label{eq:8.30}
\end{equation}
We see that the second term of the right hand side of  \eqref{eq:8.30} plus the first term of \eqref{eq:8.29} minus the second term of \eqref{eq:8.28} equal
\begin{equation}
\begin{multlined}
 F^n_1 \Bigl( a_0,\dots,a_n, \overline{D}F(z_0,\dots,z_n,q)  \Bigr) + \frac{(2 \pi i)^2}{2} \sum_{i = 0}^{n-1}  F^n_1 \left( a_0,\dots,a_n, z_i \frac{d}{dz_i} F(z_0,\dots,z_n,q) \right) \\ = 
 F^n_1 \Bigl( a_0,\dots,a_n, \overline{D}F(z_0,\dots,z_n,q)  \Bigr) - \frac{(2 \pi i)^2}{2}  F^n_1 \left( a_0,\dots,a_n, z_n \frac{d}{dz_n} F(z_0,\dots,z_n,q) \right),
\end{multlined}
\label{eq:8.38}
\end{equation}
where we used Lemma \ref{lem:4.sum-deriv} in the last equation. We use Lemma \ref{lem:residue-trading} to express
\begin{equation}
\begin{multlined}
 F^n_1\Bigl(a_0,\dots,a_{n-1}, \tilde{\omega}_{[-\pi i]} a_n, F(z_0,\dots,z_n,q) \Bigr) = \frac{(2 \pi i)^2}{2} F^n_1 \Bigl( a_0,\dots,a_n, z_n \frac{d}{d z_n} F(z_0,\dots,z_n,q) \Bigr) \\ 
- \frac{(2 \pi i)^2}{2} z_n \frac{d}{d z_n} F^n_1 \Bigl( a_0,\dots,a_n,
F(z_0,\dots,z_n, q)\Bigr).
\end{multlined}
\label{eq:8.39}
\end{equation}
The first term cancels the second term of \eqref{eq:8.38} and the second term can be written as 
\[
\begin{multlined}
 \frac{(2 \pi i)^2}{2} \sum_{i=1}^{n-1} z_i \frac{d}{d z_i} F^n_1\Bigl(a_0,\dots,a_n, F(z_0,\dots,z_n,q) \Bigr) \\ 
- \pi i 
\sum_{i=1}^n \sum_k F^n_1 \Bigl( \tilde{\omega},a_1,\dots,a_{0[k]}a_i,\dots,a_n,
F_{0ik}(z_1,\dots,z_n, q) \Bigr),
\end{multlined}
\]
using Lemma \ref{lem:9-sum-of-deriv}.
Replacing in \eqref{eq:actual-trace-prop-2} the Theorem follows. 
\end{proof}
\begin{nolabel} Let $1 \leq j \leq n+1$ and $f = f(t_0,\dots,t_{j-1},s,t_j,\dots,t_n, \tau) \in \cF_{n+2}$. 
We let 
\[ h(v,s,t_0,\dots,t_n, \tau) = f(t_0,\dots,t_{j-1},v,t_j,\dots,t_n, \tau) \left( \zeta\left( s-v,\tau \right) - \pi i  \right). \]
We will need the Laurent expansions of $h$ and $f$ in powers of  $v - t_i$:
\[
\begin{aligned}
f(t_0,\dots,v,t_j,\dots,t_n, \tau) &=  \sum_{k} f_{vik} (t_0,\dots,t_n,\tau) (v-t_i)^k,\\  
h(v,s,t_0,\dots,t_n,\tau) &= \sum_k h_{vik} (s,t_0,\dots,t_n,\tau) (v-t_i)^k, \\
h_{vik}(s,t_0,\dots,t_n, \tau) &= f_{vik}(t_0,\dots,t_n,\tau) \left( \zeta(s-t_i,\tau) - \pi i \right) \\ 
& \quad + f_{vi,k-1}(t_0,\dots,t_n,\tau) \left(\wp_2(s-t_i,\tau) + g_2(\tau) \right) \\ 
& \quad + \sum_{m \geq 0} f_{vi,k-m-2}(t_0,\dots,t_n,\tau) \wp_{m+3}(s-t_i,\tau)
.
\end{aligned}
\]
Notice that $F(z_0,\dots,u,z_j,\dots,z_n,q) P_1\left( \tfrac{u}{w},q \right)$ is the Fourier expansion of $h$ in the domain $|q z_0| < |z_n| < \dots < |z_j| < |u| < |w| < |z_{j-1}|< \dots < |z_0|$. We let $F_{vik}(z_0,\dots,z_n,q)$ be the Fourier expansion of $f_{vik}(t_0,\dots,t_m, \tau)$, in the domain $|qz_0| < |z_n| < \dots <|z_0|$. Similarly, we consider the Laurent expansion of $f$ in powers of $t_0 - t_i$ for $1 \leq i \leq n$: 
\[ f(t_0,\dots,t_{j-1},v,t_j,\dots,t_n,\tau) = \sum_k f_{0ik} (t_1,\dots,t_{j-1},v,t_j,\dots,t_n,\tau) (t_0-t_i)^k, \]
And let $F_{0ik}(z_1,\dots,z_{j-1},u,z_j,\dots,z_n,q)$ be its Fourier expansion convergent in the domain $|qz_1| < |z_n| < \dots < |z_j| < |u|< |z_{j-1}| < \dots < |z_1|$. 

The following Proposition expresses an $n+1$-point functional in terms of
$n$-point functionals and the trace of the zero mode. This is the analog of
\cite[Prop. 4.3.4]{zhu} in the degree zero case. 
\label{no:prop-9.15-ordering}
\end{nolabel}
\begin{prop}Let $a,a_0,\dots,a_n \in V$, $1 \leq j \leq n+1$, and $f =
f(t_0,\dots,t_{j-1},s,t_j,\dots,t_n,\tau) \in \cF_{n+2}$.  Let
$F(z_0,\dots,z_{j-1},w,z_j,\dots,z_n,q)$ be the Fourier
expansion of $f$ in the domain 
\begin{equation*}
|qz_0| < |z_n| < \dots <|z_{j}| < |w| < |z_{j-1}| < \dots
< |z_1| < |z_0|,
\end{equation*} 
Then 
\begin{equation}
\begin{multlined}
F^{n+1}_1\Bigl(a_0,\dots,a_{j-1},a,a_j,\dots,a_n,F(z_0,\dots,z_{j-1},w,z_j,\dots,z_n,q)
\Bigr) \\ = \res_{z_0} \res_u z_1\dots,z_n F_>(z_0,\dots,z_{j-1},u,z_j,\dots,z_n,q) \tr_M
X(a,u) \sigma(a_0,z_0)X(a_1,z_1) \dots X(a_n,z_n) q^{L_0} \\
+ \sum_{i=0}^{j-1} \sum_k \sum_{m \geq 0} F^n_1 \left( a_0,\dots,a_{[k]}a_i,\dots,a_n,
F_{vi,k-m}(z_0,\dots,z_n,q) \left( P_{m+1}\left( \frac{z_i q}{w},q \right) - 2
\pi i \delta_{m,0} \right) \right) \\
+ \sum_{i=j}^{n} \sum_k \sum_{m \geq 0} F^n_1 \left( a_0,\dots,a_{[k]}a_i,\dots,a_n,
F_{vi,k-m}(z_0,\dots,z_n,q) P_{m+1}\left( \frac{z_i}{w},q \right) \right) \\
-\sum_{i=1}^n \sum_k F^n_1 \left( a, a_1,\dots, a_{0[k]}a_i,\dots,a_n, F_{0ik,z_0,\dots,z_n}(z_1,\dots,z_{j-1},z_0,z_j,\dots,z_n,q)\left( P_1\left( \frac{z_0 q}{w},q \right) - 2 \pi i \right) \right).
\end{multlined}
\label{eq:8.14-prop1}
\end{equation}
Where we have used the notation of \ref{no:prop-9.15-ordering} on the right hand side.
\label{prop:8.14}
\end{prop}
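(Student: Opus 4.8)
The plan is to mimic closely the proof of Proposition \ref{prop:actual-trace-prop}, of which this is a mild generalization where we allow an arbitrary insertion point rather than placing the new field $X(a,u)$ at the end of the product of field operators. The starting point is the same: I would treat the left hand side $F^{n+1}_1(a_0,\dots,a_{j-1},a,a_j,\dots,a_n,F)$ as a residue-trace expression according to \eqref{eq:8.1}, with the field $X(a,u)$ sitting between $X(a_{j-1},z_{j-1})$ and $X(a_j,z_j)$. The goal is to move $X(a,u)$ to the far left, past all the operators between it and $\sigma(a_0,z_0)$, and eventually out of the trace entirely. Each time $X(a,u)$ passes a field $X(a_i,z_i)$ we invoke Proposition \ref{prop:borcherds-modified-2} (the identity \eqref{eq:borcherds-modified-2}) to produce normally ordered terms involving $a_{[m]}a_i$ against the Fourier expansion $P_{m+1}(z_i/w,q)$ — giving rise to the sums over $i \geq j$ in \eqref{eq:8.14-prop1}. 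When it passes $\sigma(a_0,z_0)$ we use the derivation version Proposition \ref{prop:borcherds-modified-b}, equation \eqref{eq:borcherds-modified-2-b}, which spreads the insertion over both $\sigma(a_{0[k]}\cdots)$ and $X(a_{0[k]}\cdots)$-type terms — this, combined with the subsequent commutations past $\sigma$ and the $X(a_i,z_i)$ for $i<j$, produces the sums over $i \leq j-1$ and the final sum over $i=1,\dots,n$ involving $a_{0[k]}a_i$, where the insertion point now corresponds to the exponentially shifted domains $P_1(z_0 q/w,q)-2\pi i$ and $P_{m+1}(z_i q/w,q)-2\pi i\delta_{m,0}$.

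Concretely the key steps, in order: (1) Write the LHS as a residue-trace with $X(a,u)$ at position $j$; (2) Apply \eqref{eq:borcherds-modified-2} iteratively to commute $X(a,u)$ leftward past $X(a_{j-1},z_{j-1}),\dots,X(a_1,z_1)$, producing a telescoping collection of normally-ordered terms and remainder terms attached to $P_{m+1}(z_i/w,q)$ for $j > i \geq 1$; (3) Apply \eqref{eq:borcherds-modified-2-b} to commute $X(a,u)$ past $\sigma(a_0,z_0)$, yielding terms with $\sigma(a_{0[k]}a_i\cdots)$ and terms of the form $X(a,u)\sigma(a_0,z_0)\cdots$ and $\sigma(a_0,z_0)X(a,u)\cdots$; (4) For the terms where $X(a,u)$ now sits to the left of $\sigma(a_0,z_0)$, it must still pass nothing — it is already leftmost — so $X(a,u)\sigma(a_0,z_0)X(a_1,z_1)\cdots X(a_n,z_n)$ gives the first term on the RHS of \eqref{eq:8.14-prop1} after relabeling $u \leftrightarrow$ appropriate variable and recognizing $F_>$; (5) For terms where $X(a,u)$ remains to the right of $\sigma(a_0,z_0)$, use the cyclicity of the trace and $q^{L_0} X(a,u) = q X(a,qu) q^{L_0}$ from \eqref{eq:q0-xoperators} to bring it around — this is the mechanism that converts $P_1(z_0/w,q)$ into $P_1(z_0 q/w,q)$ and supplies the $-2\pi i$ corrections via \eqref{eq:delta-log}/\eqref{eq:higher-delta-log}, i.e. the failure of $\wp_1$ to be elliptic; (6) Collect all terms and match against \eqref{eq:8.14-prop1}, carefully tracking which Fourier expansion (which cyclic ordering of $|z_i|$ moduli) each summand belongs to.

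The main obstacle I expect is the bookkeeping of the domains of Fourier expansion, precisely the issue flagged in Remark \ref{rem:ordering}. Every time $X(a,u)$ hops past a field or is cycled through $q^{L_0}$, the relevant elliptic function $f$ must be re-expanded in a different ordering of $|u|, |z_i|, |w|$, and one must verify at each step that two apparently different formal series — $F_{\sigma}(u,z_0,\dots)$ and $F_{\sigma'}(q^{\pm 1}u,z_0,\dots)$ for cyclic $\sigma,\sigma'$ — are in fact Fourier expansions of the same elliptic function on the same domain and hence equal, exactly as in the identities ``$F_{u,z_1,z_0}=F_>$'' used in the proof of Proposition \ref{prop:8-trace-d2} and ``$F_{z_0,u,z_1}(u,q^{-1}z_0,z_1,q)=F_{u,z_1,z_0}(u,z_0,z_1,q)$'' in Proposition \ref{prop:new-deriv-generic}. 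The distinction between $P_k(z/w,q)$ (expansion of $\owp_k$ in $|q|<|z/w|<1$) and $P_k(zq/w,q)$ (the expansion in $1<|z/w|<|q|^{-1}$), together with the extra $2\pi i$ in the $k=1$ case, is the source of all the shifted arguments and $\delta_{m,0}$ corrections on the RHS; getting every one of these right is the delicate part, but it is entirely parallel to the $j=n+1$ case already established in Proposition \ref{prop:actual-trace-prop}, which serves as the template.
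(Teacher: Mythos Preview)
Your plan has a structural gap at the very first step. The identity \eqref{eq:borcherds-modified-2} you intend to invoke to ``commute $X(a,u)$ leftward'' is not a bare commutator formula: its right hand side is a commutator weighted by $P_k(u/z,q)$, with $u$ a \emph{separate free parameter} distinct from the field variable $z$. In the left hand side of \eqref{eq:8.14-prop1} the field $X(a,w)$ simply sits at position $w$ with no residue and no $P$-factor attached, so there is nothing to apply \eqref{eq:borcherds-modified-2} to. The paper supplies this missing ingredient with a step you omit entirely: it uses the formal delta identity \eqref{eq:8-delta},
\[
2\pi i\,\delta(u,w) = w^{-1}P_1\!\left(\tfrac{u}{w},q\right) - w^{-1}\Bigl(P_1\!\left(\tfrac{uq}{w},q\right) - 2\pi i\Bigr),
\]
to replace $wX(a,w)$ by $\res_u$ of a $P_1$-weighted expression, splitting the left hand side into two pieces \eqref{eq:8.14.1}. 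Only with this $P_1$ weighting in hand can the modified Borcherds identities be used. The first piece (carrying $P_1(u/w,q)$) is then moved \emph{rightward} through $X(a_j),\ldots,X(a_n)$, producing the sum over $i\geq j$ with unshifted $P_{m+1}(z_i/w)$; the second piece (carrying $P_1(uq/w,q)-2\pi i$) is moved \emph{leftward} through $X(a_{j-1}),\ldots,X(a_1)$ and then across $\sigma(a_0,z_0)$ via \eqref{eq:borcherds-modified-2-b}, producing the sum over $i\leq j-1$ with shifted arguments, the zero-mode trace term, and (after cycling $X(a_0,z_0)$ through $q^{L_0}$ and applying Theorem \ref{prop:borcherds-modified-fourier} once more) the final sum involving $a_{0[k]}a_i$.

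Your outline also conflates the two directions: you first write ``giving rise to the sums over $i\geq j$'' for the leftward pass, then in step (2) say ``for $j>i\geq 1$''. The latter is the correct range for leftward movement, but then the $i\geq j$ terms are unaccounted for. In the actual proof they arise from rightward movement of the other $P_1$-piece, not from a leftward-then-cycle maneuver. This is also why Proposition \ref{prop:actual-trace-prop} is not the right template: there the left hand side already carries $P_{m+1}$ factors, so no delta insertion is needed; here the delta insertion is precisely what gets the argument started.
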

\begin{proof}
It follows from \eqref{eq:delta-log} that 
\begin{equation} \label{eq:8-delta} w^{-1} P_1\left( \frac{u}{w},q \right) - w^{-1} \left( P_1\left( \frac{u q}{w},q \right)-2 \pi i \right) = 2 \pi i \delta(u,w) = 2 \pi i \sum_{n \in \mathbb{Z}} w^{-n-1}u^n. 
\end{equation}
Consider
\begin{equation}
\begin{multlined}
F^{n+1}_1\Bigl(a_0,\dots,a_{j-1},a,a_j,\dots,a_n,F(z_0,\dots,z_{j-1},w,z_j,\dots,z_n,q)
\Bigr) \\ = \res_{z_0} w z_1\dots,z_n F(z_0,,\dots,z_{j-1},w,z_j,\dots,z_n,q) \tr_M \sigma(a_0,z_0)\dots X(a,w)X(a_j,z_j)\dots X(a_n,z_n)q^{L_0} \\ 
=
\frac{1}{2 \pi i} \res_{z_0} \res_u z_1\dots,z_n
F(z_0,\dots,z_{j-1},u,z_j,\dots,z_n,q)P_1\left( \frac{u}{w},q  \right) \\ \tr_M
\sigma(a_0,z_0)\dots X(a_{j-1},z_{j-1})X(a,u)X(a_j,z_j)\dots X(a_n,z_n)q^{L_0} \\ 
-
\frac{1}{2 \pi i} \res_{z_0} \res_u z_1\dots,z_n
F(z_0,\dots,z_{j-1},u,z_j,\dots,z_n,q)\left(P_1\left( \frac{uq}{w},q  \right) -
2\pi i \right) \\ \tr_M \sigma(a_0,z_0)\dots
X(a_{j-1},z_{j-1})X(a,u)X(a_j,z_j)\dots X(a_n,z_n)q^{L_0} .
\label{eq:8.14.1}
\end{multlined}
\end{equation}
Using \eqref{eq:borcherds-modified-3} we can express the first term on the right hand side of \eqref{eq:8.14.1} as
\begin{equation}
{\multlinegap=120pt
\begin{multlined}
\frac{1}{2 \pi i} \res_{z_0} \res_u z_1\dots,z_n
F(z_0,\dots,z_{j-1},u,z_j,\dots,z_n,q)P_1\left( \frac{u}{w},q  \right) \\ \tr_M
\sigma(a_0,z_0)\dots X(a_{j-1},z_{j-1})X(a,u)X(a_j,z_j)\dots X(a_n,z_n)q^{L_0} \\ 
=
\frac{1}{2 \pi i} \res_{z_0} \res_u z_1\dots,z_n
F_>(z_0,\dots,z_{j-1},u,z_j,\dots,z_n,q)P_1\left( \frac{u}{w},q  \right) \\
\tr_M \sigma(a_0,z_0)\dots X(a_j,z_j) X(a,u)X(a_{j+1},z_{j+1})\dots X(a_n,z_n)q^{L_0} \\
+
\sum_{k} \sum_{m \geq 0} \res_{z_0} z_1\dots,z_n
F_{vj,k-m,>}(z_0,\dots,z_n,q)P_{m+1}\left( \frac{z_j}{w},q  \right) \\ \tr_M
\sigma(a_0,z_0)\dots X(a_{[k]}a_j,z_j)\dots X(a_n,z_n)q^{L_0} .
\end{multlined}
\label{eq:8.14.3}
}
\end{equation}
Repeating this argument we can express this term as 
\begin{equation}
{\multlinegap=120pt
\begin{multlined}
\frac{1}{2 \pi i} \res_{z_0} \res_u z_1\dots,z_n
F(z_0,\dots,z_{j-1},u,z_j,\dots,z_n,q)P_1\left( \frac{u}{w},q  \right) \\ \tr_M
\sigma(a_0,z_0)\dots X(a_{j-1},z_{j-1})X(a,u)X(a_j,z_j)\dots X(a_n,z_n)q^{L_0} \\ 
=
\frac{1}{2 \pi i} \res_{z_0} \res_u z_1\dots,z_n F_>(z_0,\dots,z_{j-1},u,z_j,\dots,z_n,q)P_1\left( \frac{u}{w},q  \right) \\ \tr_M \sigma(a_0,z_0)\dots X(a_n,z_n)X(a,u) q^{L_0} \\
+
\sum_{i =j}^n \sum_k \sum_{m \geq 0} \res_{z_0} z_1\dots,z_n
F_{vi,k-m,>}(z_0,\dots,z_n,q)P_{m+1}\left( \frac{z_i}{w},q  \right) \\ \tr_M
\sigma(a_0,z_0)\dots X(a_{[k]}a_i,z_i)\dots X(a_n,z_n)q^{L_0} .
\end{multlined}
\label{eq:8.14.4}
}
\end{equation}
Similarly, using \eqref{eq:borcherds-modified-3} and repeating the same procedure we can express the second term on the right hand side of \eqref{eq:8.14.1} as 
\begin{equation}
\begin{multlined}
-
\frac{1}{2 \pi i} \res_{z_0} \res_u z_1\dots,z_n
F(z_0,\dots,z_{j-1},u,z_j,\dots,z_n,q)\left(P_1\left( \frac{uq}{w},q  \right) -
2\pi i \right) \\ \tr_M \sigma(a_0,z_0)\dots X(a_{j-1},z_{j-1})X(a,u)X(a_j,z_j)\dots X(a_n,z_n)q^{L_0} 
\\ 
= - \frac{1}{2 \pi i} \res_{z_0} \res_u z_1 \dots z_n F_>(z_0,\dots,z_{j-1},u,z_j,\dots,z_n,q) \left( P_1\left( \frac{u q}{w},q \right) - 2 \pi i \right) \\ 
\tr_M \sigma(a_0,z_0) X(a,u) X(a_1,z_1)\dots X(a_n,z_n) q^{L_0} \\ 
+ \sum_{i=1}^{j-1} \sum_k \sum_{m \geq 0} \res_{z_0} z_1\dots z_n
F_{vi,k-m,>}(z_0,\dots,z_n,q) \left( P_{m+1}\left( \frac{z_i q}{w},q\right) - 2 \pi i \delta_{m,0} \right) \\
\tr_M \sigma(a_0,z_0) \dots X(a_{[k]}a_i,z_i) \dots X(a_n,z_n) q^{L_0} .
\end{multlined}
\label{eq:8.14.5}
\end{equation}
Using Proposition \ref{prop:borcherds-modified-b}, the first term on the right hand side of \eqref{eq:8.14.5} can be written as 
\begin{equation}
\begin{multlined}
 - \frac{1}{2 \pi i} \res_{z_0} \res_u z_1 \dots z_n F_>(z_0,\dots,z_{j-1},u,z_j,\dots,z_n,q) \left( P_1\left( \frac{u q}{w},q \right) - 2 \pi i \right) \\ 
\tr_M \sigma(a_0,z_0) X(a,u) X(a_1,z_1)\dots X(a_n,z_n) q^{L_0} \\ 
=- \frac{1}{2 \pi i} \res_{z_0} \res_u z_1 \dots z_n F_>(z_0,\dots,u,z_j,\dots,z_n,q) \left( P_1\left( \frac{u q}{w},q \right) - 2 \pi i \right) \\ 
\tr_M \Bigl( X(a,u)\sigma(a_0,z_0) + \sigma(a,u)X(a_0,z_0) \Bigr) X(a_1,z_1)\dots X(a_n,z_n) q^{L_0} \\ 
+ \frac{1}{2 \pi i} \res_{z_0} \res_u z_1 \dots z_n F_>(z_0,\dots,u,z_j,\dots,z_n,q) \left( P_1\left( \frac{u q}{w},q \right) - 2 \pi i \right) \\ 
\tr_M  X(a_0,z_0)\sigma(a,u) X(a_1,z_1)\dots X(a_n,z_n) q^{L_0} \\
+ \sum_k \sum_{m \geq 0} \res_{z_0} z_1\dots z_n
F_{v0,k-m,>}(z_0,\dots,z_n,q) \left( P_{m+1}\left( \frac{z_0 q}{w},q\right) - 2 \pi i \delta_{m,0} \right) \\
\tr_M \sigma(a_{[k]}a_0,z_0)X(a_1,z_1) \dots X(a_n,z_n) q^{L_0} .
\end{multlined}
\label{eq:8.14.6}
\end{equation}
Commuting $X(a,u)$ with $q^{L_0}$ using \eqref{eq:q0-xoperators} in the first term
of \eqref{eq:8.14.6} we cancel the first term in \eqref{eq:8.14.4} and remains
\[ 
 \res_{z_0} \res_u z_1 \dots z_n F_>(z_0,\dots,u,z_j,\dots,z_n,q) \tr_M
\sigma(a_0,z_0) X(a_1,z_1)\dots X(a_n,z_n) X(a,u) q^{L_0}.
\]
 Commuting
$X(a_0,z_0)$ with $q^{L_0}$ in the second term, we can write this term as
\[
\begin{multlined}
 \frac{1}{2 \pi i} \res_{z_0} \res_u z_1 \dots z_n F_>(z_0,\dots,u,z_j,\dots,z_n,q) \left( P_1\left( \frac{u q}{w},q \right) - 2 \pi i \right) \\ 
\tr_M  X(a_0,z_0)\sigma(a,u) X(a_1,z_1)\dots X(a_n,z_n) q^{L_0}  \\
=
 \frac{1}{2 \pi i} \res_{z_0} \res_u z_1 \dots z_n
F_{z_0,u,z_1,\dots,z_n}(q^{-1} z_0,z_1,\dots,u,z_j,\dots,z_n,q) \left( P_1\left( \frac{u q}{w},q \right) - 2 \pi i \right) \\ 
\tr_M \sigma(a,u) X(a_1,z_1)\dots X(a_n,z_n) X(a_0,z_0) q^{L_0}.
\end{multlined}
\]
Noting that $F_{z_0,u,z_1,\dots,z_n}(q^{-1}z_0,\dots,u,z_j,\dots,z_n,q) =
F_{u,z_1,\dots,z_n,z_0}(z_0,\dots,u,z_j,\dots,z_n)$ this last
expression equals
\[
\begin{multlined}
\frac{1}{2 \pi i} \res_{z_0} \res_u z_1 \dots z_n
F_{>}(z_0,\dots,u,z_j,\dots,z_n,q) \left( P_1\left( \frac{u q}{w},q \right) - 2 \pi i \right) \\ 
\tr_M \sigma(a,u) X(a_1,z_1)\dots X(a_n,z_n) X(a_0,z_0) q^{L_0}.
\end{multlined}
\]
Repeated use of Proposition \ref{prop:borcherds-modified-fourier} expresses this
term as 
\begin{equation}
\begin{multlined}
\frac{1}{2 \pi i} \res_{z_0} \res_u z_1 \dots z_n
F_{>}(z_0,\dots,u,z_j,\dots,z_n,q) \left( P_1\left( \frac{u q}{w},q \right) - 2 \pi i \right) \\ 
\tr_M \sigma(a,u)X(a_0,z_0) X(a_1,z_1)\dots X(a_n,z_n) q^{L_0} \\
- \sum_{i=1}^n \sum_k \res_{u} z_1\dots z_n F_{0ik,>}(z_1,\dots,u,z_j,\dots,z_n,q)  \left( P_1\left( \frac{u q}{w},q \right) -
2 \pi i \right)\\  \tr_M \sigma(a,u)X(a_1,z_1)\dots X(a_{0[k]}a_i,z_i)\dots
X(a_n,z_n)q^{L_0}.
\end{multlined}
\label{eq:8.14.7}
\end{equation}
The second term is the last term of \eqref{eq:8.14-prop1} and the first term cancels
the second summand on the first term in \eqref{eq:8.14.6}, proving the
Proposition.  
\end{proof}
We will need the following generalization of Proposition \ref{prop:actual-trace-prop}:
\begin{prop} Let $a,a_0,\dots,a_n \in V$, $n \geq 1$, $1 \leq j \leq n+1$ and $f(t_0,\dots,t_{j-1},s,t_j,\dots,t_n,\tau) \in \cF_{n+2}$. With the notation of \ref{no:prop-9.15-ordering}, the following holds:
\begin{equation}
\begin{multlined}
+ \sum_{i=0}^{n-1} \sum_k \sum_{m \geq 0} (-1)^m  F^n_1 \left( a_0,\dots,  a_{[k]}a_i,\dots,a_n, F_{vi,k-m}(z_0,\dots,z_n,q)P_{m+1}\left( \frac{z_n}{z_i},q \right) \right) \\ 
+ \sum_k F^n_1 \Bigl( a_0,\dots,a_{[k]}a_n, F_{vn,k+1}(z_0,\dots,z_n,q) \Bigr) \\ 
- \pi i \sum_k F^n_1 \Bigl( a_0,\dots,a_{[k]}a_n, F_{vn,k}(z_0,\dots,z_n,q) \Bigr) \\ 
- \sum_k \sum_{j \geq 1}  F^n_1 \Bigl( a_0,\dots,a_{[k]}a_n, G_{2j}(q) F_{vn,k+1-2j}(z_0,\dots,z_n,q) \Bigr) \\ 
-\sum_{i=1}^n \sum_k F^n_1 \left( a,a_1,\dots,a_{0[k]}a_i,\dots,a_n,F_{0ik,z_0,\dots,z_n}(z_1,\dots,z_{j-1},z_0,z_j,\dots,z_n,q) P_1\left( \frac{z_n}{z_0},q \right) \right) = \\ 
\res_{z_0} \res_u z_1\dots z_n F_>(z_0,\dots,z_{j-1},u,z_{j+1},\dots,z_n,q)
\tr_M X(a,u)  \sigma(a_0,z_0)X(a_1,z_1)\dots X(a_n,z_n) q^{L_0}.
\label{eq:actual-trace-propb}
\end{multlined}
\end{equation}
\label{prop:actual-trace-propb}
\end{prop}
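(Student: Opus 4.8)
The plan is to follow the same strategy used in the proof of Proposition \ref{prop:actual-trace-prop}, adapted to the situation where one of the field insertions is at a variable $u$ (corresponding to the variable $s$) rather than at one of the $z_i$. First I would note that the left hand side of \eqref{eq:actual-trace-propb} is built from the same ingredients as \eqref{eq:actual-trace-prop}, namely a sum of $F_1^n$-terms obtained by applying the modified Borcherds identities of Proposition \ref{prop:borcherds-modified-b} and Corollary \ref{prop:borcherds-modified}. The new feature is that the functions $F_{vik}$ and the shifted $P_{m+1}(z_n/z_i,q)$ appear instead of $F_{ik}$, reflecting that we have first performed a Laurent expansion about $s=t_i$; by the conventions of \ref{no:prop-9.15-ordering} this is a purely bookkeeping change.

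The key steps, in order: (1) Apply \eqref{eq:borcherds-modified-2-b} to the $i=0$ summand in the first line of \eqref{eq:actual-trace-propb}, as in \eqref{eq:trace-1-1}, to write it as four residue-in-$u$ terms. (2) Apply \eqref{eq:borcherds-modified-2} to the summands $1 \leq i \leq n-1$, obtaining a telescoping sum whose surviving terms are the analog of \eqref{eq:trace-1-2-bis}. (3) Use Corollary \ref{prop:borcherds-modified} (the $\wp_1$, $\wp_2$, $\wp_k$ equations) on the second, third and fourth lines of the left hand side — these are precisely the terms packaging the Laurent coefficients $F_{vn,k+1}$, $F_{vn,k}$ and the $G_{2j}(q) F_{vn,k+1-2j}$ into the single field insertion $X(a_{[\zeta-\pi i]}a_n,z_n)$ evaluated via $P_1$; this mirrors \eqref{eq:trace1-2-a}. (4) Treat the last line on the left (the $\sum_{i=1}^n$ over $a_{0[k]}a_i$) by applying Theorem \ref{prop:borcherds-modified-fourier} and using $F(q^{-1}z_0,\dots)$, getting a telescoping sum as in \eqref{eq:third-term-trace}. (5) Collect everything: the first terms of the expansions cancel pairwise, and after commuting $X(a_0,z_0)$ and $X(a,u)$ through $q^{L_0}$ via \eqref{eq:q0-xoperators} and using the cyclicity of the trace, together with the identity $F_{z_0,u,z_1,\dots,z_n}(q^{-1}z_0,\dots) = F_{u,z_1,\dots,z_n,z_0}(z_0,\dots)$ for two Fourier expansions of the same elliptic function on the same domain, the only surviving term is $\res_{z_0}\res_u z_1\cdots z_n F_>(z_0,\dots,u,\dots,z_n,q)\tr_M X(a,u)\sigma(a_0,z_0)X(a_1,z_1)\cdots X(a_n,z_n)q^{L_0}$, which is the right hand side of \eqref{eq:actual-trace-propb}.

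The main obstacle will be step (5): keeping careful track of the domains of convergence of all the Fourier expansions that occur. Because the insertion $X(a,u)$ sits at an intermediate position $j$ (between $z_{j-1}$ and $z_j$ in modulus) and is moved to the far left or far right by the trace manipulations, one must repeatedly invoke the ``cyclic shift of modulus'' principle from \ref{no:fourier-n-variable} and the relabeling identities of the form $F_{z_0,u,\dots}(q^{-1}z_0,\dots) = F_{u,\dots,z_0}(z_0,\dots)$. A secondary subtlety is the $-2\pi i$ correction terms arising from the failure of $\wp_1$ (equivalently $P_1$) to be elliptic — these must be tracked through every application of Theorem \ref{prop:borcherds-modified-fourier} and of \eqref{eq:delta-log}, exactly as in \eqref{eq:rec-1-2} and \eqref{eq:trace1-2-a}, and they are what produce the $-\pi i$ and $G_{2j}(q)$ terms on the left hand side. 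Once these conventions are fixed, the computation is essentially a longer bookkeeping version of the proof of Proposition \ref{prop:actual-trace-prop}, with no new conceptual input.
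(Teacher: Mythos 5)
Your proposal follows the paper's own proof essentially step for step: the same application of \eqref{eq:borcherds-modified-2-b} to the $i=0$ summand, the same telescoping via Proposition \ref{prop:borcherds-modified-2} for $1 \leq i \leq n-1$, the same repackaging of the second through fourth lines as the $a_{[\zeta - \pi i]}a_n$ insertion via Corollary \ref{prop:borcherds-modified}, the same treatment of the last line by Theorem \ref{prop:borcherds-modified-fourier}, and the same final cancellations using \eqref{eq:q0-xoperators} and cyclicity of the trace. The points you flag as delicate (domains of Fourier expansion, the $-2\pi i$ corrections from $P_1$) are exactly the ones the paper's proof has to track, so the proposal is correct and matches the paper's argument.
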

\begin{proof}
Using \eqref{eq:borcherds-modified-2-b} we express the term $i=0$ in the first summand of \eqref{eq:actual-trace-propb} as 
\begin{equation}
\begin{multlined}
\frac{1}{ 2 \pi i } \res_{z_0} \res_u z_1 \dots z_n F_>(z_0,\dots,z_{j-1},u,z_j,\dots,z_n,q) P_1 \left( \frac{z_n}{u},q \right) \tr_M \sigma(a,u) X(a_0,z_0) \dots X(a_n,z_n)q^{L_0} \\ 
+ \frac{1}{ 2 \pi i } \res_{z_0} \res_u z_1 \dots z_n F_>(z_0,\dots,z_{j-1},u,z_j,\dots,z_n,q) P_1 \left( \frac{z_n}{u},q \right) \\  \tr_M X(a,u) \sigma(a_0,z_0) X(a_1,z_1) \dots X(a_n,z_n)q^{L_0} \\ 
-\frac{1}{ 2 \pi i } \res_{z_0} \res_u z_1 \dots z_n F_>(z_0,\dots,z_{j-1},u,z_j,\dots,z_n,q) P_1 \left( \frac{z_n}{u},q \right) \\  \tr_M X(a_0,z_0) \sigma(a,u) X(a_1,z_1) \dots X(a_n,z_n) q^{L_0}  \\ 
-\frac{1}{ 2 \pi i } \res_{z_0} \res_u z_1 \dots z_n F_>(z_0,\dots,z_{j-1},u,z_j,\dots,z_n,q) P_1 \left( \frac{z_n}{u},q \right)  \\ \tr_M \sigma(a_0,z_0) X(a,u) X(a_l,z_1) \dots X(a_n,z_n) q^{L_0}.
\label{eq:trace-1-1b}
\end{multlined}
\end{equation}
The remaining terms corresponding to  $ 1 \leq i \leq n-1$ can be written as follows using Proposition \ref{prop:borcherds-modified-2}
\begin{equation}
\begin{multlined}
\frac{1}{ 2 \pi i } \sum_{i=1}^{n-1} \res_{z_0} \res_u z_1 \dots z_n F_>(z_0,\dots,z_{j-1},u,z_j,\dots,z_n,q) P_1 \left( \frac{z_n}{u},q \right) \\ 
\tr_M \sigma(a_0,z_0) X(a_1,z_1) \dots X(a_{i-1},z_{i-1}) X(a,u) X(a_i,z_i) \dots X(a_n,z_n)q^{L_0} \\ 
-
\frac{1}{ 2 \pi i } \sum_{i=2}^{n-1} \res_{z_0} \res_u z_1 \dots z_n F_>(z_0,\dots,z_{j-1},u,z_j,\dots,z_n,q) P_1 \left( \frac{z_n}{u},q \right) \\ 
\tr_M \sigma(a_0,z_0) X(a_1,z_1) \dots X(a_{i},z_{i}) X(a,u) X(a_{i+1},z_{i+1}) \dots X(a_n,z_n)q^{L_0}.
\label{eq:trace-1-2b}
\end{multlined}
\end{equation}
This is a telescoping sum and the only two terms that survive are
\begin{equation}
\begin{multlined}
\frac{1}{ 2 \pi i } \res_{z_0} \res_u z_1 \dots z_n F_>(z_0,\dots,z_{j-1},u,z_j,\dots,z_n,q) P_1 \left( \frac{z_n}{u},q \right) \tr_M \sigma(a_0,z_0) X(a,u) X(a_1,z_1)\dots X(a_n,z_n)q^{L_0} \\ 
-\frac{1}{ 2 \pi i } \res_{z_0} \res_u z_1 \dots z_n F_>(z_0,\dots,z_{j-1},u,z_j,\dots,z_n,q) P_1 \left( \frac{z_n}{u},q \right) \\ 
\tr_M \sigma(a_0,z_0) X(a_1,z_1) \dots X(a_{n-1},z_{n-1}) X(a,u) X(a_n,z_n)q^{L_0}.
\label{eq:trace-1-2-bisb}
\end{multlined}
\end{equation}
The sum of the second, third and fourth terms of \eqref{eq:actual-trace-propb} can be written as follows using Proposition \ref{prop:borcherds-modified}:
\begin{equation}
{\multlinegap=100pt
\begin{multlined}
 \frac{1}{2 \pi i} \res_{z_0} \res_u z_1 \dots z_n F_>(z_0,\dots,z_{j-1},u,z_j,\dots,z_n,q) P_1 \left( \frac{z_n}{u},q \right) \\ \tr_M \sigma(a_0,z_0) X(a_1,z_1) \dots X(a_{n-1},z_{n-1}) X(a,u) X(a_n,z_n) q^{L_0} \\ 
- \frac{1}{2 \pi i} \res_{z_0} \res_u z_1 \dots z_n F_>(z_0,\dots,z_{j-1},u,z_j,\dots,z_n,q) \left( P_1\left( \frac{z_nq}{u},q \right)  - 2\pi i  \right) \\ 
\tr_M \sigma(a_0,z_0)X(a_1,z_1)\dots X(a_n,z_n)X(a,u)  q^{L_0} .
\label{eq:trace1-2-ab}
\end{multlined}
}
\end{equation}
We see that the first four terms of \eqref{eq:actual-trace-propb} add up to 
\begin{equation}
\begin{multlined}
\frac{1}{ 2 \pi i } \res_{z_0} \res_u z_1 \dots z_n F_>(z_0,\dots,z_{j-1},u,z_j,\dots,z_n,q) P_1 \left( \frac{z_n}{u},q \right) \\  \tr_M \sigma(a,u) X(a_0,z_0) \dots X(a_n,z_n)q^{L_0} \\ 
+ \frac{1}{ 2 \pi i } \res_{z_0} \res_u z_1 \dots z_n F_>(z_0,\dots,z_{j-1},u,z_j,\dots,z_n,q) P_1 \left( \frac{z_n}{u},q \right) \\   \tr_M X(a,u) \sigma(a_0,z_0) X(a_1,z_1) \dots X(a_n,z_n)q^{L_0} \\ 
-\frac{1}{ 2 \pi i } \res_{z_0} \res_u z_1 \dots z_n F_>(z_0,\dots,z_{j-1},u,z_j,\dots,z_n,q) P_1 \left( \frac{z_n}{u},q \right) \\ \tr_M X(a_0,z_0) \sigma(a,u) X(a_1,z_1) \dots X(a_n,z_n) q^{L_0}  \\ 
- \frac{1}{2 \pi i} \sum_{i=1}^n \res_{z_0} \res_u z_1 \dots z_n F_>(z_0,\dots,z_{j-1},u,z_j,\dots,z_n,q) \left( P_1\left( \frac{z_nq}{u},q \right)  - 2\pi i  \right) \\ 
\tr_M \sigma(a_0,z_0)\dots X(a_n,z_n)X(a,u)  q^{L_0} .
\label{eq:first+second-traceb}
\end{multlined}
\end{equation}
In order to compute the last term of the left hand side of \eqref{eq:actual-trace-propb} we use Proposition \ref{prop:borcherds-modified-fourier} to express this term as a telescoping sum and the two surviving terms are 
\begin{equation}
\begin{multlined}
-\frac{1}{2\pi i} \res_{z_0} \res_u  z_1\dots z_n F_>(z_0,\dots,z_{j-1},u,z_j,\dots,z_n,q) P_1\left( \frac{z_n}{u} \right) \\ \tr_M \sigma(a,u)X(a_0,z_0) X(a_2,z_2) \dots X(a_n,z_n) q^{L_0} \\ 
+ \frac{1}{2\pi i} \res_{z_0} \res_u  z_1\dots z_n F_>(z_0,\dots,z_{j-1},u,z_j,\dots,z_n,q) P_1\left( \frac{z_n}{u} \right) \\  \tr_M \sigma(a,u) X(a_1,z_1) \dots X(a_{n},z_{n}) X(a_0,z_0) q^{L_0} .
\label{eq:third-term-traceb}
\end{multlined}
\end{equation}
The first term of \eqref{eq:third-term-traceb} cancels the first term in
\eqref{eq:first+second-traceb}. Commuting $X(a_0,z_0)$ with $q^{L_0}$ using
\eqref{eq:q0-xoperators} in the third term of \eqref{eq:first+second-traceb} we cancel the second term of \eqref{eq:third-term-traceb}. Finally commuting $X(a,u)$ with $q^{L_0}$ in the second term of \eqref{eq:first+second-traceb} we see that this term plus the fourth term add up to 
\[
\res_{z_0} \res_u z_1 \dots z_n F_>(z_0,\dots,z_{j-1},u,z_j,\dots,z_n,q) \tr_M \sigma(a_0,z_0) X(a_1,z_1) \dots X(a_n,z_n) X(a,u) q^{L_0}.
\]
Finally commuting $X(a,u)$ with $q^{L_0}$ in this last expression we obtain the right hand side of \eqref{eq:actual-trace-propb}, proving the proposition.
\end{proof}
Combining Proposition \ref{prop:8.14} with Proposition
\ref{prop:actual-trace-propb} we obtain a recurrent formula expressing $n+1$
point functions in terms of $n$-point functions. This is the analog of
 \cite[Prop. 4.3.4]{zhu} in the degree $0$ case. 
\begin{prop} With the notation of Proposition \ref{prop:8.14} we have
\begin{multline}
F^{n+1}_1\Bigl(a_0,\dots,a_{j-1},a,a_j,\dots,a_n,F(z_0,\dots,z_{j-1},w,z_j,\dots,z_n,q)
\Bigr) =  \\ 
 \sum_{i=0}^{n-1} \sum_k \sum_{m \geq 0} (-1)^{m} F^n_1 \left( a_0,\dots, a_{[k]}a_i,
\dots,a_n, F_{vi,k-m}(z_0,\dots,z_n,q) P_{m+1}\left( \frac{z_n}{z_i},q
\right) \right) \\ 
+ \sum_k F^n_1 \Bigl( a_0,\dots,a_{[k]}a_n, F_{vn,k+1}(z_0,\dots,z_n,q) \Bigr) \\ 
- \pi i \sum_k F^n_1 \Bigl( a_0,\dots,a_{[k]}a_n, F_{vn,k}(z_0,\dots,z_n,q) \Bigr) \\ 
- \sum_k \sum_{j \geq 1}  F^n_1 \Bigl( a_0,\dots,a_{[k]}a_n, G_{2j}(q) F_{vn,k+1-2j}(z_0,\dots,z_n,q) \Bigr) \\ 
+ \sum_{i=0}^{j-1} \sum_k \sum_{m \geq 0} F^n_1 \left( a_0,\dots,a_{[k]}a_i,\dots,a_n,
F_{vi,k-m}(z_0,\dots,z_n,q) \left( P_{m+1}\left( \frac{z_i q}{w},q \right) - 2
\pi i \delta_{m,0} \right) \right) \\
+ \sum_{i=j}^{n} \sum_k \sum_{m \geq 0} F^n_1 \left( a_0,\dots,a_{[k]}a_i,\dots,a_n,
F_{vi,k-m}(z_0,\dots,z_n,q) P_{m+1}\left( \frac{z_i}{w},q \right) \right) \\ 
-\sum_{i=1}^n \sum_k F^n_1 \left( a, a_1,\dots, a_{0[k]}a_i,\dots,a_n, F_{0ik,z_0,\dots,z_n}(z_1,\dots,z_0,z_j,\dots,z_n,q) \right.  \\ \left.  \left( P_1\left( \frac{z_0 q}{w},q \right) + P_1\left( \frac{z_n}{z_0},q \right) - 2 \pi i \right) \right).
\label{eq:8.15-prop1}
\end{multline}
\label{prop:8.15}
\end{prop}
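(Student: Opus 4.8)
The plan is simply to substitute the identity of Proposition \ref{prop:actual-trace-propb} into the right-hand side of Proposition \ref{prop:8.14}. Both propositions are applied with the same data $a,a_0,\dots,a_n$, the same index $1\le j\le n+1$, and the same function $f=f(t_0,\dots,t_{j-1},s,t_j,\dots,t_n,\tau)\in\cF_{n+2}$, using the naming conventions for Laurent and Fourier coefficients established in \ref{no:prop-9.15-ordering}. Proposition \ref{prop:8.14} writes $F^{n+1}_1(a_0,\dots,a_{j-1},a,a_j,\dots,a_n,F)$ as the residue--trace term
\[
\res_{z_0}\res_u z_1\cdots z_n\, F_>(z_0,\dots,z_{j-1},u,z_j,\dots,z_n,q)\,\tr_M X(a,u)\sigma(a_0,z_0)X(a_1,z_1)\cdots X(a_n,z_n)q^{L_0}
\]
plus three explicit sums of $n$-point functionals. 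Proposition \ref{prop:actual-trace-propb}, on the other hand, has exactly this same residue--trace term as its right-hand side and a combination of five sums of $n$-point functionals as its left-hand side. Replacing the residue--trace term appearing in Proposition \ref{prop:8.14} by the left-hand side of Proposition \ref{prop:actual-trace-propb} therefore expresses $F^{n+1}_1$ entirely in terms of the functionals $F^n_1$.

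It then remains only to collect terms. The two sums involving $P_{m+1}(z_iq/w,q)$ and $P_{m+1}(z_i/w,q)$ coming from Proposition \ref{prop:8.14} appear unchanged in \eqref{eq:8.15-prop1}, and likewise the four sums $\sum(-1)^mF^n_1(\cdots)P_{m+1}(z_n/z_i,q)$, $\sum F^n_1(\cdots,F_{vn,k+1})$, $-\pi i\sum F^n_1(\cdots,F_{vn,k})$ and $-\sum\sum_{l\ge1}G_{2l}(q)F^n_1(\cdots,F_{vn,k+1-2l})$ coming from Proposition \ref{prop:actual-trace-propb} carry over verbatim. The only genuine combination occurs among the terms of the form $F^n_1(a,a_1,\dots,a_{0[k]}a_i,\dots,a_n,\,\cdot\,)$: Proposition \ref{prop:8.14} contributes $-\sum_{i,k}F^n_1(a,\dots,a_{0[k]}a_i,\dots,a_n,\,F_{0ik}(P_1(z_0q/w,q)-2\pi i))$ while Proposition \ref{prop:actual-trace-propb} contributes, once its residue--trace term has been moved to the right, $-\sum_{i,k}F^n_1(a,\dots,a_{0[k]}a_i,\dots,a_n,\,F_{0ik}P_1(z_n/z_0,q))$. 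Because $F^n_1$ is $\mathbb{C}$-linear in its last argument (it is a residue, cf.\ \eqref{eq:8.1}) and the module/field arguments agree, these merge into a single sum with coefficient $P_1(z_0q/w,q)+P_1(z_n/z_0,q)-2\pi i$, which is precisely the last line of \eqref{eq:8.15-prop1}.

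I expect the only real difficulty to be bookkeeping: one must check that in every term the Fourier expansions $F_{vik}$, $F_{0ik}$ and the various $P_k$'s are expanded in mutually compatible domains, matching the order of the field insertions according to Remark \ref{rem:ordering} and the conventions of \ref{no:prop-9.15-ordering}. In particular one should verify that the residue--trace term produced by each proposition is literally the same formal series (the same $F_>$ in the same variable order, with $u$ inserted in position $j$), and that $F_{0ik,z_0,\dots,z_n}(z_1,\dots,z_0,z_j,\dots,z_n,q)$ denotes the same object in both places. Once these notational identifications are pinned down, the proposition follows by inspection, with no further appeal to Borcherds-type identities or to properties of the trace.
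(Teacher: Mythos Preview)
Your proposal is correct and matches the paper's approach exactly: the paper's proof consists of the single sentence ``Combining Proposition \ref{prop:8.14} with Proposition \ref{prop:actual-trace-propb} we obtain a recurrent formula expressing $n+1$ point functions in terms of $n$-point functions,'' and you have carried out precisely this combination, including the correct observation that the only nontrivial merging is of the two $F_{0ik}$ sums into the single final term with coefficient $P_1(z_0q/w,q)+P_1(z_n/z_0,q)-2\pi i$.
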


\section{Convergence}\label{sec:series.exp}
In this section we identify conditions on the vertex algebra $V$ which guarantee
that the chiral homology $H_1^{\text{ch}}(V)$ has finite rank as an
$\mring$-module. This is the content of Proposition \ref{prop:H1.fin.gen}. Under
these finiteness hypotheses on $V$ we go on to prove that the chiral homology
$H_1^{\text{ch}}(E_\tau, V)$ forms a vector bundle with connection over the disc
$\{q \in \mathbb C \mid |q| < 1\}$ with regular singularity at $q=0$. We relate
series solutions of flat sections of (the dual of) chiral homology with the
Hochschild homology $\Hoch_1(\zhu(V))$ of the Zhu algebra and the Koszul
homology $\HK_1(A)$ of the associated graded $A = \gr V$ in
\ref{sec:Frob.descent} below, and deduce conditions on $V$ which guarantee
vanishing of $H_1^{\text{ch}}(E_\tau, V)$ for all $\tau \in \mathbb H$ in
Theorem \ref{thm:chiral.H1.vanishing}. Finally, in Theorem \ref{thm:convergence1}, we deduce convergence of first
trace functions under appropriate hypotheses on $V$. 
\begin{nolabel}
In this section we write $C_\bullet$ for the $1$-point chiral homology complex
$C_\bullet^{n=1}$ of Definition \ref{defn:chiral-homology-with-supports}.
According to \ref{no:one-coinv-does-nothing} we have
$H_i^{\text{ch}}(V)_{\text{tr}} \cong
H_i^{\text{ch}}(V) = H_i(C_\bullet^{n=1})$. We regard $C_\bullet$ as a complex of $\mring$-modules. As noted in \ref{nolabel:A.C.comparison} the specialization $\mathbb C \otimes_{\mring} H_i^{\text{ch}}(V)$ recovers the chiral homology $H_i^{\text{ch}}(E_\tau, V)$ of the elliptic curve $E_\tau$.

The relations $L_{-1} + \partial/\partial_{u} = 0$ in $C_2$ and $L_{-1} + \partial/\partial_{t_0} = 0$ in $C_1$ allow one to ``trade'' a partial derivative for a copy of $L_{-1}$, thus presenting $C_k$ as a quotient of
\[
V^{\otimes k+1} \otimes \cE_k, \quad \text{where} \quad \cE_k = \mathring{\mathbb{J}_*^{k}} / \left<\partial_{t_i} f(t_1,\ldots,t_k) \mid f \in \mathring{\mathbb{J}_*^{k}} \right>_{i=1,\ldots,k}.
\]
In order to proceed it is convenient to identify a finite set of generators of $\cE_k$. The rank of the $\mring$-module $\cE_k$ is equal to the $k^{\text{th}}$ Betti number of $(X \backslash 0)^{k} \backslash \Delta$, where $\Delta \subset X^k$ is the diagonal divisor. Explicit generators of $\cE_k$ were given in \cite{eh2018} and the differentials $d_1$ and $d_2$ written in terms of them \cite[Lemma 10.2]{eh2018}. We now recall these formulas for ease of reference. The space $\cE_0$ is generated by the class of the constant function $1$. The space $\cE_1$ is generated by the classes of the constant function $1$, and of the Weierstrass function $\wp(t_0)= \wp_2(t_0) + g_2(\tau)$. Finally $\cE_2$ is generated by five classes: those of $1$, $\wp(u-v)$, $\wp(u)\wp(v)$ as well as
\begin{align*}
\zuta(u, v) &= \zeta(u-v) + \zeta(v) - \zeta(u) \\
\text{and} \quad
\ZZZ(u, v) &= \wp(u-v) \zuta(u, v) + \frac{1}{2} \wp'(u-v).
\end{align*}
We now have:
\end{nolabel}
\begin{lem}[{\cite[Lemma 10.2]{eh2018}}]\label{lem:2.1.diff}
The following relations hold in $C_\bullet$:
\begin{align}
d_1\left(a \otimes a_1 \otimes 1 \right)
= {} & a_{(0)}a_1 \otimes 1, \label{eq:d1-part-0}
\\
d_1\left(a \otimes a_1 \otimes \wp(t_0) \right)
= {} & a_{(\wp)}a_1 \otimes 1. \label{eq:d1-part-wp}
\end{align}
and
\begin{align}
d_2\left(a\otimes b \otimes a_1 \otimes 1 \right)
= {} & \left( a \otimes b_{(0)}a_1 - a_{(0)}b \otimes a_1 -b \otimes a_{(0)}a_1 \right) \otimes 1, \label{eq:d2-i.1} \\
d_2\left( a \otimes b \otimes a_1 \otimes \wp(u-v) \right)
= {} & - a_{(\wp)}b \otimes a_1 \otimes 1  
 \label{eq:d2-i.2}\\
&\hspace{-1.9cm}+ \sum_{j \in \Z_+} \frac{1}{j!} \left( L_{-1}^j a \otimes b{(j)}a_1- L_{-1}^j b \otimes a{(j)}a_1 \right) \otimes \wp(t_0), \nonumber
\\
d_2\left(a \otimes b \otimes a_1 \otimes \wp(u) \right)
= {} & -b \otimes a_{(\wp)}a_1 \otimes \wp(t_0) 
 \label{eq:d2-i.3}\\
&+ \left( a \otimes b{_{(0)}}a_1 + b_{(0)}a \otimes a_1 \right) \otimes \wp(t_0), \nonumber
\\
d_2\left(- a \otimes b \otimes a_1 \otimes \zuta(u, v) \right)
= {} & \left( a_{(\zeta)}b \otimes a_1 - b \otimes a_{(\zeta)}a_1 - a \otimes b_{(\zeta)}a_1 \right) \otimes 1  
 \label{eq:d2-i.4}\\
&\hspace{-1.9cm}+ \sum_{j \in \Z_+} \frac{(-1)^j}{(j+1)!} L_{-1}^{j}\left( a_{(j+1)}b \right) \otimes a_1 \otimes \wp(t_0) \nonumber
\\
&\hspace{-1.9cm}- \sum_{j \in \Z_+} \frac{1}{(j+1)!} \left(L_{-1}^{j}a \otimes b(j+1)a_1 + L_{-1}^{j}b \otimes a(j+1)a_1 \right) \otimes \wp(t_0), \nonumber
\\
d_2\left(a \otimes b \otimes a_1 \otimes \ZZZ(u, v) \right)
= {} & -\sum_{j \in \Z_+} \frac{(-1)^j}{(j+1)!} L_{-1}^{j}\left( a_{(x^{j+1} \wp(x))}b \right) \otimes a_1 \otimes \wp(t_0) 
 \label{eq:d2-i.5}\\
&\hspace{-1.9cm}+\sum_{j \in \Z_+} \frac{1}{(j+1)!} \left(L_{-1}^{j}a \otimes b_{(x^{j+1}\wp(x))}a_1 + _{-1}^{j}b \otimes a_{(x^{j+1}\wp(x))}a_1 \right) \otimes \wp(t_0) \nonumber
\\
&\hspace{-1.9cm}+(2\pi i)^2 q \frac{d}{dq} \left( a_{(\zeta)}b \otimes a_1 - a \otimes b_{(\zeta)}a_1 - b \otimes a_{(\zeta)}a_1 \right) \otimes 1, \nonumber
\\
d_2\left( a  \otimes b \otimes a_1 \otimes \wp(u) \wp(v) \right)
= {} & \left( a \otimes b_{(\wp)}a_1 - b \otimes a_{(\wp)}a_1 \right) \otimes \wp(t_1)  \label{eq:d2-i.6} \\
&- \sum_{j \in \Z_+} \frac{(-1)^j}{(j+1)!} L_{-1}^{j}\left( a_{(x^{j+1} \wp'(x))}b \right) \otimes a_1 \otimes \wp(t_0) \nonumber
\\
&- \left(  a_{(\wp)}b \otimes a_1 - b_{(\wp)}a \otimes a_1 \right) \otimes \wp(t_0) 
-(2 \pi i)^{2} q \frac{d}{dq} a_{(\wp)}b \otimes a_1 \otimes 1. \nonumber
\end{align}
\end{lem}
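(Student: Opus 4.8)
\textbf{Proof strategy for Lemma \ref{lem:2.1.diff}.}

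The plan is to verify each displayed formula by a direct computation, specializing the general differentials \eqref{eq:d1-def} and \eqref{eq:boundary} to the explicit generating functions $1$, $\wp(t_0)$, $\wp(u-v)$, $\wp(u)$, $\zuta(u,v)$, $\ZZZ(u,v)$, $\wp(u)\wp(v)$ of the modules $\cE_k$, while systematically using the relations $L_{-1} + \partial/\partial t_0 = 0$ in $C_1$ and $L_{-1}^{(1)} + \partial/\partial u = 0$ in $C_2$ to trade derivatives for copies of $L_{-1}$. The two $d_1$ identities are immediate: for $f = 1$ the $(f)$-product reads off the coefficient of $t^{-1}$, which is the $0$-product, giving \eqref{eq:d1-part-0}; for $f = \wp(t_0)$ the $(f)$-product is by definition $a_{(\wp)}a_1$ after the identification $f(t_0,t_1) = \wp(t_0 - t_1)$, giving \eqref{eq:d1-part-wp}. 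Here one should note, as in \ref{nolabel:A.C.comparison}, that under $g \mapsto f$, $f(t_0,t_1) = g(t_0 - t_1)$, the expansion $f = \sum_k f_{01k}(t_1)(t_0-t_1)^k$ has coefficients that are the Laurent coefficients of $\wp(x)$ around $x = 0$, so $d_1$ collapses to the $\wp$-product.

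For the $d_2$ formulas one starts from \eqref{eq:boundary}, which for $n=1$ (after the identifications of \ref{nolabel:A.C.comparison}) takes the form \eqref{eq:d2.in.A}. The first identity \eqref{eq:d2-i.1} is the case $f \equiv 1$ and is direct. For \eqref{eq:d2-i.2}--\eqref{eq:d2-i.6} the main task is to compute the three relevant Laurent expansions of each function around $u - v = 0$, around $u = $ (the remaining point), and around $v = $ (the remaining point); then to feed these coefficients into \eqref{eq:d2-general-def}; and finally to rewrite any expansion coefficient that is \emph{not} one of the chosen generators (e.g. $\zeta(t_0)$, or derivatives $\wp'(t_0)$) back in terms of $1$ and $\wp(t_0)$ by trading $\partial/\partial t_0$ for $-L_{-1}$ modulo the defining relations of $\cE_1$. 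The skew-symmetry formula \eqref{eq:skew-symmetry}, and more conveniently Lemma \ref{lem:skew-symmetry}, is what converts terms of the form $b_{(k)}a$ into $\sum_j \tfrac{(-1)^j}{(j+1)!} L_{-1}^{j+1}(\cdots)$, which is exactly the source of the sums $\sum_{j}\tfrac{(-1)^j}{(j+1)!}L_{-1}^j(\cdots)$ appearing on the right-hand sides; the appearance of $q\,d/dq$ in \eqref{eq:d2-i.5} and \eqref{eq:d2-i.6} comes from differentiating $\wp$ and $\zeta$ with respect to $\tau$, using \eqref{eq:q-derivatives} and the fact that $\ZZZ$ and $\wp(u)\wp(v)$ involve the non-modular combination $D$.

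Since Lemma \ref{lem:2.1.diff} is quoted verbatim from \cite[Lemma 10.2]{eh2018}, the cleanest route is simply to cite that reference, and this is what I would do; the statement is recorded here only for ease of reference. If a self-contained derivation is wanted, the main obstacle is purely bookkeeping: carefully matching the three Laurent expansions of each of $\zuta$, $\ZZZ$, $\wp(u)\wp(v)$ against \eqref{eq:d2-general-def} and keeping track of the trading relations, especially the interplay between the $\partial/\partial u \leftrightarrow L_{-1}^{(1)}$ trade and the skew-symmetrization $b_{(k)}a \rightsquigarrow a_{(k)}b$, which together produce the $L_{-1}^j$-towers. There is no conceptual difficulty beyond Borcherds' identity and the residue manipulations \eqref{eq:residue-trading-exp}, but the algebra is lengthy; I would organize it by first tabulating the Laurent data and then assembling each formula term by term.

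\begin{proof}
See \cite[Lemma 10.2]{eh2018}. The formulas are obtained by specializing the differentials \eqref{eq:d1-general-def} and \eqref{eq:d2-general-def} to the generators of $\cE_0$, $\cE_1$, $\cE_2$ listed above, using the identifications of Remark \ref{nolabel:A.C.comparison}, the skew-symmetry relation of Lemma \ref{lem:skew-symmetry} to symmetrize the products $b_{(k)}a$, the trading relations \eqref{eq:residue-trading-exp} to replace partial derivatives by copies of $L_{-1}$ modulo the defining relations of $\cE_\bullet$, and \eqref{eq:q-derivatives} for the $\tau$-derivatives of $\wp$ and $\zeta$.
\end{proof}
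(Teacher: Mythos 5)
Your proposal is correct and matches the paper exactly: the paper gives no proof of this lemma, simply citing \cite[Lemma 10.2]{eh2018}, which is precisely what you do. Your supplementary sketch of how the verification would go (specializing \eqref{eq:d1-general-def} and \eqref{eq:d2-general-def} to the generators of $\cE_k$, using skew-symmetry and the $L_{-1}$-trading relations) is consistent with how the cited computation is organized.
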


\begin{nolabel}
We now recall the notion of standard filtration \cite{li05} on a conformal vertex algebra, and use it to define an increasing filtration on the complex $C_\bullet$. Let $\{a^i | i \in I\}$ be a strong generating set of the vertex algebra $V$. We suppose that the strong generators are of homogeneous conformal weight $\D(a^i)$. The standard filtration on $V$, relative to this choice, is the increasing filtration $\{G_p V\}$ in which $G_p V$ is the span of the vectors
\begin{align*}
a^{i_1}_{(-n_1)}\cdots a^{i_r}_{(-n_r)}\vac
\end{align*}
where $r \geq 0$,  $a^1, \ldots, a^r \in V$ and $n_1, \ldots, n_r \in \Z_{> 0}$ satisfying $\sum_j \D(a^{i_j}) \leq p$. The following properties are easily verified: If $a \in G_pV$ and $b \in G_{p'}V$ then
\begin{align*}
a_{(n)}b &\in G_{p+p'}V \qquad \text{for all $n \in \Z$}, \\
a_{(n)}b &\in G_{p+p'-1}V \qquad \text{if $n \geq 0$}, \\
L_{-1}a &\in G_p V.
\end{align*}
For $p \in \Z_+$ we define $G_p C_{k}$ to be the image in $C_{k}$ of the subspace of $ V^{\otimes k+1} \otimes \mathring{\mathbb{J}}^k_*$ spanned by
\begin{align*}
\bigcup_{\sum p_i \leq p} G_{p_1}V \otimes \cdots \otimes G_{p_{k+1}}V \otimes \mathring{\mathbb{J}}^k_*.
\end{align*}
This defines an increasing filtration of $C_\bullet$ by $\mring$-modules, using here that the filtration $G$ itself is invariant under $L_{-1}$.
\end{nolabel}

\begin{nolabel}
The associated graded $A = \gr_G V$ carries a commutative algebra structure with product induced by the normally ordered product $a \otimes b \mapsto a_{(-1)}b$, indeed a Poisson vertex algebra structure. We now have
\[
\gr_G C_k \cong A^{\otimes k+1} \otimes \mathring{\mathbb{J}}^k_*,
\]
with differentials in the associated graded given by
\begin{align}
d_1\left( a \otimes a_1 \otimes 1 \right) &= 0, \label{eq:G.filtered.d1.1} \\
d_1\left( a \otimes a_1 \otimes \wp(t_0) \right) &= a_{(-2)}a_1 \otimes 1, \label{eq:G.filtered.d1.wp}
\end{align}
and (see Lemma \ref{lem:2.1.diff})
\begin{align}
d_2\left(a \otimes b \otimes a_1 \otimes 1 \right) 
= {} & 0,\label{eq:G.filtered.d2.1}
\\
d_2\left(a \otimes b \otimes a_1 \otimes \wp(u-v) \right)
= {} & - a_{(-2)}b \otimes a_1 \otimes 1\label{eq:G.filtered.d2.2} \\
d_2\left(a \otimes b \otimes a_1 \otimes \wp(u) \right)
= {} & -b \otimes a_{(-2)}a_1 \otimes 1\label{eq:G.filtered.d2.3} \\
d_2\left(-a \otimes b \otimes a_1 \otimes \zuta(u, v) \right) 
= {} & \left( a_{(-1)}b \otimes a_1 - b \otimes a_{(-1)}a_1 - a \otimes b_{(-1)}a_1 \right) \otimes 1 \label{eq:G.filtered.d2.4}\\
d_2\left( a \otimes b \otimes a_1 \otimes \ZZZ(u, v) \right) 
= {} & -\left( a_{(-1)}b \otimes a_1 - a \otimes b_{(-1)}a_1 - b \otimes a_{(-1)}a_1 \right) \otimes \wp(t_0) \label{eq:G.filtered.d2.5}\\
d_2\left(a \otimes b \otimes a_1 \otimes \wp(u)\wp(v) \right)
= {} & \left( a \otimes b_{(-2)}a_1 - b \otimes a_{(-2)}a_1 \right) \otimes \wp(t_0).\label{eq:G.filtered.d2.6}
\end{align}
To simplify the subsequent discussion we introduce a new complex $\widetilde{C}_\bullet = \widetilde{C}_\bullet^{(1)} \oplus \widetilde{C}_\bullet^{(2)}$, defined as follows:
\begin{align*}
\widetilde{C}_0^{(1)} &= 0 \\
\widetilde{C}_1^{(1)} &= A^{\otimes 2} \otimes \mathbb{C} [1] \\
\widetilde{C}_2^{(1)} &= A^{\otimes 3} \otimes \left( \mathbb{C} [1] \oplus
\mathbb{C} [\wp(u)] \oplus \mathbb{C}[\wp(u-v)] \oplus \mathbb{C}[\zuta(u,v)]
\right),
\end{align*}
with differentials given by the formulas \eqref{eq:G.filtered.d1.1} and \eqref{eq:G.filtered.d2.1}-\eqref{eq:G.filtered.d2.4}, and 
\begin{align*}
\widetilde{C}_0^{(2)} &= A \otimes \mathbb{C} [1] \\
\widetilde{C}_1^{(2)} &= A^{\otimes 2} \otimes \mathbb{C} [\wp(u)] \\
\widetilde{C}_2^{(2)} &= A^{\otimes 3} \otimes \left( \mathbb{C}[\ZZZ(u, v)]
\oplus \mathbb{C}[\wp(u)\wp(v)] \right),
\end{align*}
with differentials given by the formulas \eqref{eq:G.filtered.d1.wp}, \eqref{eq:G.filtered.d2.5} and \eqref{eq:G.filtered.d2.6}. In these formulas $\mathbb{C} [\wp(u)]$ is to be understood as the $\mathbb{C}$-vector space with basis consisting of the formal symbol $[\wp(u)]$, etc. We emphasize that $\widetilde{C}_\bullet$ is defined over $\mathbb{C}$ rather than $\mring$.
\begin{lem}\label{lem:assoc.gr.complex}
The differentials defined above make $\widetilde{C}_\bullet$ into a complex. There exists a surjection of complexes $\widetilde{C}_\bullet \otimes_{\mathbb{C}} \mring \rightarrow \gr_G C_\bullet$ which induces a surjection of homology groups in degrees $0$ and $1$.
\end{lem}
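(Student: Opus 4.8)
The plan is to verify directly that $\widetilde{C}_\bullet$ is a complex by checking $d_1 \circ d_2 = 0$ on each of the generators of $\widetilde{C}_2$, and then to produce the comparison map. For the first claim, one notes that $\widetilde{C}_\bullet$ splits as a direct sum $\widetilde{C}_\bullet^{(1)} \oplus \widetilde{C}_\bullet^{(2)}$, so it suffices to treat each summand separately. In the summand $\widetilde{C}^{(1)}_\bullet$ the differential $d_1$ is zero (by \eqref{eq:G.filtered.d1.1}), so $d_1 \circ d_2 = 0$ is automatic there; one only needs that $d_2$ indeed lands in $\widetilde{C}_1^{(1)} = A^{\otimes 2} \otimes \mathbb{C}[1]$, which is visible from the formulas \eqref{eq:G.filtered.d2.1}--\eqref{eq:G.filtered.d2.4}. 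In the summand $\widetilde{C}^{(2)}_\bullet$ one computes $d_1 d_2$ on $[\ZZZ(u,v)]$ and $[\wp(u)\wp(v)]$ directly from \eqref{eq:G.filtered.d1.wp}, \eqref{eq:G.filtered.d2.5}, \eqref{eq:G.filtered.d2.6}: applying $d_1$ to $d_2(a \otimes b \otimes a_1 \otimes \ZZZ(u,v)) = -(a_{(-1)}b \otimes a_1 - a \otimes b_{(-1)}a_1 - b \otimes a_{(-1)}a_1) \otimes \wp(t_0)$ gives, by \eqref{eq:G.filtered.d1.wp}, $-( (a_{(-1)}b)_{(-2)}a_1 - a_{(-2)}(b_{(-1)}a_1) - b_{(-2)}(a_{(-1)}a_1) ) \otimes 1$, and this vanishes in $A$ because the associated graded product is commutative and associative and the bracket $\{-,-\}$ (i.e.\ the $0$-th product) enters with the right sign; similarly $d_1 d_2$ on $[\wp(u)\wp(v)]$ vanishes. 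So $\widetilde C_\bullet$ is a complex.

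Next I would construct the surjection $\widetilde{C}_\bullet \otimes_{\mathbb{C}} \mring \to \gr_G C_\bullet$. On the level of spaces, $\gr_G C_k \cong A^{\otimes k+1} \otimes \mathring{\mathbb{J}}^k_*$, and the point is that the $\mring$-module $\mathring{\mathbb{J}}^k_*$ is generated, modulo the classes imposed in the definition of $\cE_k$ (derivatives of elliptic functions), by the explicit finite set of classes listed in \ref{no:one-coinv-does-nothing}'s surrounding paragraph: $1$ in degree $0$; $1, \wp(t_0)$ in degree $1$; and $1, \wp(u-v), \wp(u)\wp(v), \zuta(u,v), \ZZZ(u,v)$ in degree $2$. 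Thus there is a surjection $\widetilde{C}_k \otimes_{\mathbb C} \mring \twoheadrightarrow \gr_G C_k$ sending each formal symbol to the class of the corresponding function tensored with the identity on $A^{\otimes k+1}$; that this is a chain map is precisely the content of Lemma \ref{lem:2.1.diff} passed to the associated graded, i.e.\ the formulas \eqref{eq:G.filtered.d1.1}--\eqref{eq:G.filtered.d2.6}, which were chosen to match the differentials of $\widetilde{C}_\bullet$ on the nose. Since each $\widetilde C_k \otimes \mring \to \gr_G C_k$ is surjective, the induced maps on homology $H_k(\widetilde C_\bullet \otimes \mring) \to H_k(\gr_G C_\bullet)$ are surjective in every degree, in particular in degrees $0$ and $1$: indeed surjectivity of a chain map in each degree forces surjectivity of every cycle by a diagram chase, hence surjectivity on homology.

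The main subtlety --- and the step to be careful about --- is the claim that the generators listed actually generate $\mathring{\mathbb{J}}^k_*$ as an $\mring$-module \emph{modulo derivatives}, i.e.\ that $\cE_k$ has the asserted generators; this is where one invokes \cite{eh2018} (the rank of $\cE_k$ being the $k$-th Betti number of $(X\setminus 0)^k \setminus \Delta$, computed there, together with the explicit representatives). One must also confirm that the two summands $\widetilde C^{(1)}$ and $\widetilde C^{(2)}$ are honestly closed under the differentials as written --- e.g.\ that $d_2[\ZZZ(u,v)]$ and $d_2[\wp(u)\wp(v)]$ land in $A^{\otimes 2}\otimes \mathbb{C}[\wp(u)] = \widetilde C_1^{(2)}$ and that the $q\tfrac{d}{dq}$ terms present in \eqref{eq:d2-i.5}, \eqref{eq:d2-i.6} of the unfiltered complex drop out in the associated graded (they do, since differentiating $q$-dependent coefficients does not change $G$-degree but the leading terms come from the $(-2)$-products, lowering $G$-degree is what survives) --- a quick bookkeeping check on the $G$-filtration degrees of each term. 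Everything else is routine.
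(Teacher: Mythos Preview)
Your verification that $\widetilde{C}_\bullet$ is a complex is essentially correct and matches the paper's approach, though your justification for the vanishing of $(a_{(-1)}b)_{(-2)}a_1 - a_{(-2)}b_{(-1)}a_1 - b_{(-2)}a_{(-1)}a_1$ and $a_{(-2)}b_{(-2)}a_1 - b_{(-2)}a_{(-2)}a_1$ in $A$ is vaguer than the paper's (which derives these identities directly from the Borcherds identity and then passes to the associated graded).

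There is, however, a genuine gap in your argument for the surjection on homology. You assert that ``surjectivity of a chain map in each degree forces surjectivity of every cycle by a diagram chase, hence surjectivity on homology.'' This is false in general. If $f: A_\bullet \to B_\bullet$ is a degreewise surjection and $b \in B_n$ is a cycle, you can lift $b$ to some $a \in A_n$, but then $da$ lies in the kernel $K_{n-1}$, not necessarily in $0$. A simple counterexample: take $A_\bullet = (\mathbb{Z} \xrightarrow{\mathrm{id}} \mathbb{Z})$ and $B_\bullet = (\mathbb{Z}/2 \to 0)$ in degrees $1,0$ with the obvious surjection; then $H_1(A) = 0$ but $H_1(B) = \mathbb{Z}/2$.

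The paper's argument supplies exactly the missing ingredient: it observes that the map $\widetilde{C}_0 \otimes_{\mathbb{C}} \mring \to \gr_G C_0$ is an \emph{isomorphism} (both sides are $A \otimes \mring$, since $C_0 = V \otimes \bJ_*^1 = V \otimes \mring$), so the kernel of the surjection vanishes in degree $0$. One then applies the long exact sequence associated to $0 \to K_\bullet \to \widetilde{C}_\bullet \otimes \mring \to \gr_G C_\bullet \to 0$: since $H_0(K_\bullet) = 0$, the map on $H_1$ is surjective, and the map on $H_0$ is surjective for trivial reasons. Equivalently, in your diagram chase language: a lift of a $1$-cycle has boundary in $K_0 = 0$, so it is itself a cycle. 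Your argument can be repaired by inserting this observation, but as written it claims surjectivity on $H_k$ for \emph{all} $k$, which does not follow and is not asserted in the lemma.
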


\begin{proof}
The first claim is essentially a direct consequence of the construction of $\widetilde{C}_\bullet$. However, since it will not take much space, we check that $d^2=0$ in $\widetilde{C}_\bullet$ directly. For $\widetilde{C}_\bullet^{(1)}$ there is nothing to check. For $\widetilde{C}_\bullet^{(2)}$ we need to check that
\begin{align*}
(a_{(-1)}b)_{(-2)}c - a_{(-2)}b_{(-1)}c - b_{(-2)}a_{(-1)}c
\qquad \text{and} \qquad 
a_{(-2)}b_{(-2)}c - b_{(-2)}a_{(-2)}c,
\end{align*}
vanish in $A$. For this we use Borcherds identity \eqref{eq:borcherds-def} which, in terms of modes, asserts that
\begin{align*}
\sum_{j \in \Z_+} \binom{m}{j} (a_{(n+j)}b)_{(m+k-j)}a_1 = \sum_{j \in \Z_+}
(-1)^j \binom{n}{j} \left(a_{(m+n-j)} b_{(k+j)} - (-1)^n b_{(n+k-j)} a_{(m+j)}
\right) a_1,
\end{align*}
for all $a, b, a_1 \in V$ and $m, k, n \in \Z$. Putting $k = m = n = -1$ yields
\begin{align*}
\sum_{j \in \Z_+} (-1)^j (a_{(-1+j)}b)_{(-2-j)} a_1 = \sum_{j \in \Z_+} \left(
a_{(-2-j)} b_{(-1+j)} a_1 + b_{(-2-j)} a_{(-1+j)} a_1 \right) =
a_{(-2)}b_{(-1)}a_1 + b_{(-2)}a_{(-1)}a_1.
\end{align*}
In the associated graded we discard terms $a_{(j)}b$ for $j \in \Z_+$, and the identity becomes
\begin{align*}
(a_{(-1)}b)_{(-2)}a_1 = a_{(-2)}b_{(-1)}a_1 + b_{(-2)}a_{(-1)}a_1,
\end{align*}
proving the first of the two relations. For the other we use the commutator formula
\begin{align*}
[a_{(-2)}, b_{(-2)}]a_1 = \sum_{j \in \Z_+} \binom{-2}{j}
(a_{(j)}b)_{(-4-j)}a_1,
\end{align*}
which, again, vanishes in the associated graded.

The surjection $\widetilde{C}_\bullet \otimes_{\mathbb{C}} \mring \rightarrow \gr_G C_\bullet$ is the obvious one. Since $C_0 = V \otimes \mathbb{J}_*^0 \cong V \otimes \mring$ the kernel vanishes in degree $0$. Passing to the associated long exact sequence, it follows that the induced maps in homology are surjections as claimed.
\end{proof}

We now consider the commutative algebra $A = \gr_G V$ with derivation induced by $L_{-1}$. Since in general $Y(L_{-1}a, z) = \partial_z Y(a, z)$ we have $a_{(-2)}b = (L_{-1}a) \cdot b$. First we note that $\widetilde{C}^{(2)}_1$ modulo the image of \eqref{eq:G.filtered.d2.5} coincides with $\Hoch_1(A) \cong \Omega_{A/\mathbb{C}}$. We now verify that the differential \eqref{eq:G.filtered.d2.6} coincides with the differential in the Koszul complex \eqref{eq:d2.HP2} and conclude that $H_{1}(\widetilde{C}^{(2)}_\bullet) \cong \HK_{1}(A)$. We remark that the Koszul homology of $A = \gr V$ appeared in a similar way in \cite{eh2018} upon passage to an associated graded.

We now turn to $H_1(\widetilde{C}_\bullet^{(1)})$. We recall Zhu's $C_2$-algebra
$R_V$, briefly described in Section \ref{no:C2.def}. The quotient of
$\widetilde{C}^{(1)}_1 = A \otimes A$ by the images of
\eqref{eq:G.filtered.d2.2} and \eqref{eq:G.filtered.d2.3} is just $R_V \otimes
R_V$. The quotient by the image of the differential \eqref{eq:G.filtered.d2.4}
is precisely the module of K\"{a}hler differentials $\Hoch_1(R_V) \cong \Omega_{R_V/\mathbb{C}}$.

It follows from Lemma \ref{lem:assoc.gr.complex} and the remarks above that $H_1(\gr_G C_\bullet)$ is a quotient of
\begin{align*}
\mring \otimes_{\mathbb{C}} \left( \HK_{1}(A) \oplus \Hoch_1(R_V) \right).
\end{align*}
If we assume finite dimensionality of the term in parentheses then we will obtain a finiteness condition on chiral homology. We now prove a strengthening of this result.
\end{nolabel}

\begin{prop}\label{prop:H1.fin.gen}
Let $V$ be a strongly finitely generated conformal vertex algebra, $A = \gr V$ its associated graded and $R_V$ its $C_2$-algebra. Suppose that $\dim \HK_{1}(A) < \infty$. If $\dim \Omega_{R_V} < \infty$, or more generally if $\dim \HP_1(R_V) < \infty$, then the chiral homology group $H_{1}(C_\bullet)$ is finitely generated as a $\mring$-module.
\end{prop}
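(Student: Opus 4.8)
The plan is to combine the standard filtration $G$ on $C_\bullet$ with the Noetherianity of $\mring = \mathbb{C}[G_4, G_6]$. First I would record that $\{G_p C_\bullet\}$ is an exhaustive, bounded below ($G_{-1}C_\bullet = 0$) filtration by $\mring$-submodules which is compatible with the differentials, so that the spectral sequence of the filtered complex converges and $\gr_G H_1(C_\bullet)$ is a subquotient of $H_1(\gr_G C_\bullet)$. Since $\mring$ is a Noetherian polynomial ring and the $\mring$-action preserves the $G$-filtration, a filtered $\mring$-module whose associated graded is finitely generated is itself finitely generated (lift finitely many homogeneous generators and induct on $G$-degree, exactly as in the degree $0$ case of Proposition \ref{prop:H0.fin.gen}). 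It therefore suffices to prove that $H_1(\gr_G C_\bullet)$ is a finitely generated $\mring$-module.

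Next I would invoke Lemma \ref{lem:assoc.gr.complex}: the surjection $\widetilde{C}_\bullet \otimes_{\mathbb{C}} \mring \twoheadrightarrow \gr_G C_\bullet$ induces a surjection in degree-$1$ homology, and since $\mring$ is flat over $\mathbb{C}$ this shows $H_1(\gr_G C_\bullet)$ is a quotient of $\mring \otimes_{\mathbb{C}} \bigl( H_1(\widetilde{C}^{(1)}_\bullet) \oplus H_1(\widetilde{C}^{(2)}_\bullet) \bigr)$. The second summand is computed as in the discussion following Lemma \ref{lem:assoc.gr.complex}: after identifying $\widetilde{C}^{(2)}_1$ modulo the image of \eqref{eq:G.filtered.d2.5} with $\Omega_{A/\mathbb{C}} = \Hoch_1(A)$, the remaining differential \eqref{eq:G.filtered.d2.6} is precisely the Koszul differential \eqref{eq:d2.HP2} of the differential algebra $(A, L_{-1})$, whence $H_1(\widetilde{C}^{(2)}_\bullet) \cong \HK_1(A)$. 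For the first summand, $\widetilde{C}^{(1)}_0 = 0$, so $H_1(\widetilde{C}^{(1)}_\bullet) = \coker d_2$; quotienting $\widetilde{C}^{(1)}_1 = A \otimes A$ by the images of \eqref{eq:G.filtered.d2.2} and \eqref{eq:G.filtered.d2.3} yields $R_V \otimes R_V$, and then by the image of \eqref{eq:G.filtered.d2.4} yields $\Omega_{R_V/\mathbb{C}} = \Hoch_1(R_V)$. Thus $H_1(\gr_G C_\bullet)$ is a quotient of $\mring \otimes_{\mathbb{C}} \bigl( \HK_1(A) \oplus \Omega_{R_V/\mathbb{C}} \bigr)$, which settles the case $\dim \HK_1(A) < \infty$, $\dim \Omega_{R_V/\mathbb{C}} < \infty$.

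To obtain the sharper statement, with $\HP_1(R_V)$ in place of $\Omega_{R_V/\mathbb{C}}$, one must retain information discarded in $\gr_G$: the differentials in Lemma \ref{lem:2.1.diff} have subleading ($G$-degree-lowering) components built from the products $a_{(0)}b$, i.e.\ from the Poisson bracket of $R_V$, and these components realise the Poisson boundary operator $d_2^{\mathrm{HP}} : \wedge^2 \Omega_{R_V/\mathbb{C}} \to \Omega_{R_V/\mathbb{C}}$ of \eqref{eq:d2.HP}. The plan is a two-step filtered argument: first reduce by $G$ as above, and then, on the subcomplex responsible for the $\Omega_{R_V/\mathbb{C}}$-summand, impose a second filtration by Poisson (equivalently $q$-) degree whose associated graded in degree $1$ is a quotient of $\HP_1(R_V)$. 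This exhibits $H_1(C_\bullet)$ as a subquotient of $\mring \otimes_{\mathbb{C}} \bigl( \HK_1(A) \oplus \HP_1(R_V) \bigr)$, hence finitely generated over $\mring$ once $\dim \HK_1(A) < \infty$ and $\dim \HP_1(R_V) < \infty$.

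The step I expect to be the main obstacle is this second reduction: one must verify that the lower-order correction terms in Lemma \ref{lem:2.1.diff} — in particular the $(2\pi i)^2 q\tfrac{d}{dq}$-terms of \eqref{eq:d2-i.5}--\eqref{eq:d2-i.6} and the full $G_{2k}(q)$-expansions of the $\wp_k$-products — reproduce the Poisson differential exactly on $\Omega_{R_V/\mathbb{C}}$ and contribute nothing beyond $\HP_1(R_V)$, and that the two-step spectral sequence so obtained converges. The remaining steps are bookkeeping with the explicit differentials already recorded above.
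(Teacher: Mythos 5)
Your first two paragraphs follow the paper's proof exactly: the $G$-filtration spectral sequence, the Noetherian lifting argument, the surjection from $\mring \otimes_{\mathbb{C}} H_\bullet(\widetilde{C}_\bullet)$, and the identifications $H_1(\widetilde{C}^{(2)}_\bullet) \cong \HK_1(A)$ and $H_1(\widetilde{C}^{(1)}_\bullet) \cong \Omega_{R_V}$. This correctly disposes of the case $\dim \Omega_{R_V} < \infty$.

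The refinement to $\HP_1(R_V)$ — the actual content of the ``more generally'' clause — is where your proposal has a genuine gap: you announce a plan (``a second filtration by Poisson, equivalently $q$-, degree'') and explicitly flag its key verification as an expected obstacle, but you do not carry it out, and the mechanism you sketch points at the wrong terms. The $(2\pi i)^2 q\frac{d}{dq}$-contributions in \eqref{eq:d2-i.5}--\eqref{eq:d2-i.6} and the $G_{2k}(q)$-tails of the $\wp_k$-products are not what produce the Poisson differential. What does produce it is that the components $a_{(0)}b$ in \eqref{eq:d1-part-0} and \eqref{eq:d2-i.1} drop $G$-degree by exactly one, and therefore survive as the induced differential $d^{(1)}$ on the $E^1$ page of the \emph{same} $G$-spectral sequence. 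No second filtration is needed: one simply passes to the $E^2$ page. Concretely, $d^{(1)}$ restricted to $\mring \otimes \Omega_{R_V}$ is the Poisson differential $\partial_1^{\mathrm{P}}$ of \eqref{eq:d1.HP2}, and the classes $a \otimes b \otimes a_1 \otimes 1$, which are $d_2$-closed in $\gr_G C_2$ by \eqref{eq:G.filtered.d2.1}, map under $d^{(1)}$ into $\mring \otimes \Omega_{R_V}$ via \eqref{eq:d2-i.1}, realizing $\partial_2^{\mathrm{P}}$ of \eqref{eq:d2.HP}.

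There is also a cross-term you do not account for: the degree-$1$ part of the $E^1$ page is $\mring \otimes (\Omega_{R_V} \oplus \HK_1(A))$ with \emph{both} summands mapping to $\mring \otimes R_V$ (by $\partial_1^{\mathrm{P}}$ and by the $\wp$-product map $\partial_1$ respectively), so $\ker(d^{(1)}_1)$ is strictly larger than $\ker(\partial_1^{\mathrm{P}}) \oplus \ker(\partial_1)$. The paper controls this with the exact sequences
\[
0 \rightarrow \ker(\partial_1^{\mathrm{P}}) \rightarrow \ker(d^{(1)}_1) \rightarrow \operatorname{im}(\partial_1^{\mathrm{P}}) \cap \operatorname{im}(\partial_1) \rightarrow 0
\quad\text{and}\quad
0 \rightarrow \HP_1(R_V) \rightarrow \frac{\ker(d^{(1)}_1)}{\operatorname{im}(\partial_2^{\mathrm{P}})} \rightarrow \operatorname{im}(\partial_1^{\mathrm{P}}) \cap \operatorname{im}(\partial_1) \rightarrow 0,
\]
and the right-hand term is bounded by $\dim \HK_1(A)$ because $\operatorname{im}(\partial_1)$ is a quotient of $\HK_1(A)$. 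This yields the rank bound $\dim\HK_1(A) + \dim\HP_1(R_V)$ on $\bigoplus_{p-q=1}E^2_{p,q}$ and hence finite generation. Without this analysis your claim that $H_1(C_\bullet)$ is a subquotient of $\mring \otimes (\HK_1(A) \oplus \HP_1(R_V))$ is unsubstantiated.
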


\begin{proof}
The homology groups of $C_\bullet$ are computed by the spectral sequence associated with the filtration $G$. More precisely $H_\bullet(C_\bullet)$ acquires a filtration, which we also call $G$, defined by $G_p H_n(C_\bullet) = \text{Im}(H_n(G_p C_\bullet) \rightarrow H_n(C_\bullet))$, and there exist isomorphisms
\begin{align}\label{eq:SS.isom}
\gr_G H_{n}(C_\bullet) \cong \bigoplus_{p-q=n} E^\infty_{p, q}.
\end{align}
Since $G$ is compatible with the $\mring$-module structure of $C_\bullet$, all
morphisms in the spectral sequence and in particular the isomorphism
\eqref{eq:SS.isom} are morphisms of $\mring$-modules. In particular $\gr_G
H_{1}(C_\bullet)$ is a subquotient of $H_{1}(\gr_G C_\bullet)$ and hence of
\begin{align*}
\mring \otimes_{\mathbb{C}} \left( \HK_{1}(A) \oplus \Hoch_1(R_V) \right).
\end{align*}
Suppose $\dim \Hoch_1(R_V) < \infty$. Since $\mring$ is a Noetherian ring, it follows that $\gr_G H_{1}(C_\bullet)$ is a Noetherian, and hence finitely generated, $\mring$-module. In general let $R$ be a ring and $M$ an $R$-module with increasing filtration $F$ exhaustive and bounded below. If $\gr_F M$ is finitely generated as an $R$-module then so is $M$, as can be proved by a straightforward induction. We conclude that $H_{1}(C_\bullet)$ is a finitely generated $\mring$-module.

Now we suppose $\dim \HP_1(R_V) < \infty$ (dropping the hypothesis $\dim \Hoch_1(R_V) < \infty$). We will show that $\bigoplus_{p-q=1} E^2_{p, q}$ is finitely generated as a $\mring$-module.

The total complex of the $E^1$ page is $H_\bullet(\gr_G C_\bullet)$ with differential which we denote $d^{(1)}$. By Lemma \ref{lem:assoc.gr.complex} the groups $H_\bullet(\gr_G C_\bullet)$ receive surjections from $\mring \otimes H_\bullet(\widetilde{C}_\bullet)$ in degrees $0$ and $1$. The following diagram presents $\mring \otimes H_\bullet(\widetilde{C}_\bullet)$:
\begin{align}\label{diag.page.E1}
\xymatrix{
 & \mring \otimes \Om_{R_V} \ar@{->}[dl]_{\partial_1^{\text{P}}} & \mring \otimes H_{2}(\widetilde{C}_\bullet^{(1)}) \ar@{->}[dl] \ar@{->}[l] \\
\mring \otimes R_V & \mring \otimes \HK_{1}(A) \ar@{->}[l]^{\partial_1} & \mring \otimes H_{2}(\widetilde{C}_\bullet^{(2)}) \ar@{->}[ul] \ar@{->}[l], \\
}
\end{align}
the top row being $H_{\bullet}(\widetilde{C}_\bullet^{(1)})$ and the bottom row $H_{\bullet}(\widetilde{C}_\bullet^{(2)})$. The arrows here denote the components of $d^{(1)}$. For example $\partial_1^{\text{P}}$ is induced by \eqref{eq:d1-part-0} and thus coincides with \eqref{eq:d1.HP2} for the Poisson algebra $R_V$, while $\partial_1$ is induced by \eqref{eq:d1-part-wp} the $\wp$-product. The kernel of $d^{(1)}_1$ fits into an exact sequence
\begin{align}\label{eq:ker.d1}
0 \rightarrow \ker(\partial_1^{\text{P}}) \rightarrow \ker(d^{(1)}_1) \rightarrow \operatorname{im}(\partial_1^\text{P}) \cap \operatorname{im}(\partial_1) \rightarrow 0,
\end{align}
the arrows sending $x$ to $(x, 0)$ and $(x, y)$ to $\partial_1(y)$, respectively.

By equation \eqref{eq:G.filtered.d2.1} terms of the form $a \otimes b \otimes a_1 \otimes 1$ are closed in $\widetilde{C}_{2}^{(1)}$, and so survive in $H_{2}(\widetilde{C}_\bullet^{(1)})$. The differential of such terms is given by \eqref{eq:d2-i.1} which we see lies entirely in $\mring \otimes \Omega_{R_V}$ and is given by
\[
a \otimes b_{(0)}a_1 - a_{(0)}b \otimes a_1 - b \otimes a_{(0)}a_1.
\]
The image of these terms coincides with the image $\operatorname{im}(\partial_2^{\text{P}})$ of the differential in the Poisson homology complex of $R_V$ as in \eqref{eq:d2.HP}. Therefore we have a surjection
\begin{align*}
\frac{\ker(d^{(1)}_1)}{\operatorname{im}(\partial_2^{\text{P}})} \twoheadrightarrow \frac{\ker(d^{(1)}_1)}{\operatorname{im}(d^{(1)}_2)} \cong \bigoplus_{p-q=1} E^2_{p, q}.
\end{align*}
The exact sequence \eqref{eq:ker.d1} induces the following exact sequence
\begin{align}\label{eq:coho.d1}
0 \rightarrow \HP_1(R_V) \rightarrow \frac{\ker(d^{(1)}_1)}{\operatorname{im}(\partial_2^{\text{P}})} \rightarrow \operatorname{im}(\partial_1^\text{P}) \cap \operatorname{im}(\partial_1) \rightarrow 0.
\end{align}
The rank of the middle term is thus bounded above by $\dim \HK_{1}(A) + \dim \HP_1(R_V)$. Once again it follows that $H_{1}(C_\bullet)$ is finitely generated, as required.
\end{proof}

Proposition \ref{prop:H1.fin.gen} above is in the same spirit as the following
finiteness result on $H_0^{\text{ch}}(V)$. Under the hypothesis $\dim {R_V} <
\infty$ ($C_2$-cofiniteness) Proposition \ref{prop:H0.fin.gen} was proved by Zhu
\cite{zhu} by an induction on conformal weight in $C_0^{n=1} = V$. It was
observed by Arakawa and Kawasetsu \cite{arakawa-kawasetsu} that Zhu's proof
requires only the weaker hypothesis of $\dim \HP_0(R_V) < \infty$. 
\begin{prop}\label{prop:H0.fin.gen}
Let $V$ be a conformal vertex algebra. If $\dim {R_V} < \infty$, or more generally if $\dim \HP_0(R_V) < \infty$, then $H_0^{\text{ch}}(V)$ is finitely generated as a $\mring$-module.
\end{prop}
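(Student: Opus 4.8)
The plan is to run, one homological degree lower, the associated-graded and spectral-sequence argument used to prove Proposition \ref{prop:H1.fin.gen}; in degree $0$ that argument becomes much shorter. The statement is originally Zhu's \cite{zhu} in the case $\dim R_V < \infty$ and Arakawa--Kawasetsu's \cite{arakawa-kawasetsu} in the case $\dim \HP_0(R_V) < \infty$, and Zhu's induction on conformal weight in $C_0^{n=1} = V$ furnishes a self-contained alternative to what follows.

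First I would fix a strong generating set of $V$, equip $C_\bullet = C_\bullet^{n=1}$ with the increasing filtration $G$ by $\mring$-submodules described above, and pass to $\gr_G C_\bullet$, whose differentials are those of the formulas \eqref{eq:G.filtered.d1.1}--\eqref{eq:G.filtered.d2.6}. In degrees $0$ and $1$ the relevant part of $\gr_G C_\bullet$ is $A^{\otimes 2} \otimes (\mring[1] \oplus \mring[\wp]) \to A \otimes \mring$, and by \eqref{eq:G.filtered.d1.1} and \eqref{eq:G.filtered.d1.wp} the image of this map is the $\mring$-span of the elements $a_{(-2)}a_1 = (L_{-1}a)\cdot a_1$; hence $H_0(\gr_G C_\bullet)$ is a quotient of $\mring \otimes (A/((L_{-1}A)\cdot A)) = \mring \otimes R_V$ (equivalently, invoke Lemma \ref{lem:assoc.gr.complex} together with $\widetilde{C}_0^{(1)} = 0$ and $H_0(\widetilde{C}_\bullet^{(2)}) \cong R_V$). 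When $\dim R_V < \infty$ this already finishes the proof: $\mring \otimes R_V$ is a finitely generated module over the Noetherian ring $\mring$, hence so is the subquotient $\gr_G H_0(C_\bullet)$, and therefore so is $H_0(C_\bullet)$, since $G$ is exhaustive and bounded below.

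For the general case $\dim \HP_0(R_V) < \infty$ I would pass to the $E^2$-page of the spectral sequence of the filtered complex $C_\bullet$, exactly as for Proposition \ref{prop:H1.fin.gen}. The components of the $E^1$-differential $d^{(1)}$ landing in degree $0$ are precisely the two arrows of the diagram \eqref{diag.page.E1} targeting $\mring \otimes R_V$: the map $\partial_1^{\mathrm P} : \mring \otimes \Omega_{R_V} \to \mring \otimes R_V$ induced by \eqref{eq:d1-part-0}, which is the Poisson-homology differential \eqref{eq:d1.HP2} of $R_V$ (it sends $a\,dx \mapsto \{a, x\}$, coming from the $(0)$-product), and the map $\partial_1 : \mring \otimes \HK_1(A) \to \mring \otimes R_V$ induced by the $\wp$-product \eqref{eq:d1-part-wp}. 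Hence the degree-$0$ part of $E^2$ is a quotient of $\operatorname{coker}(\partial_1^{\mathrm P} \oplus \partial_1)$, and a fortiori of $\operatorname{coker}(\partial_1^{\mathrm P}) = \mring \otimes \HP_0(R_V)$. Since no differential $d^{(r)}$ with $r \geq 2$ leaves total degree $0$, each subsequent page in degree $0$ is a quotient of the previous one, so $\gr_G H_0(C_\bullet) = \bigoplus_p E^\infty_{p,-p}$ is a quotient of $\mring \otimes \HP_0(R_V)$; finite generation of $H_0(C_\bullet)$ then follows as before.

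I do not anticipate a genuine obstacle, since the argument is a simplification of the already-established Proposition \ref{prop:H1.fin.gen}. The one point that must actually be checked rather than quoted is that in degree $0$ only the cokernel of the Poisson differential $\partial_1^{\mathrm P}$ is needed, and nothing about $\HK_1(A)$ or $\Omega_{R_V}$ itself --- this is exactly what makes $\dim \HP_0(R_V) < \infty$ sufficient, in contrast to the degree-$1$ statement --- and it holds because $\widetilde{C}_0^{(1)} = 0$, so the whole degree-$0$ part of $\gr_G C_\bullet$, and of the spectral sequence, comes from the subcomplex $\widetilde{C}_\bullet^{(2)}$. Convergence of the spectral sequence and the identification $H_0(\widetilde{C}_\bullet^{(2)}) \cong R_V$ are routine.
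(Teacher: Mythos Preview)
Your argument is correct. The paper itself does not give a proof of this proposition; it simply records that the $C_2$-cofinite case is Zhu's \cite{zhu}, established by an induction on conformal weight in $C_0^{n=1}=V$, and that Arakawa--Kawasetsu \cite{arakawa-kawasetsu} observed the same induction goes through under the weaker hypothesis $\dim \HP_0(R_V)<\infty$. You acknowledge this, and then instead adapt the filtered/spectral-sequence machinery of Proposition~\ref{prop:H1.fin.gen} to degree~$0$.

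The two approaches differ in character. Zhu's induction is elementary and self-contained: one shows directly that every element of $V$ can be reduced, modulo the image of $d_1$, to an $\mring$-combination of finitely many vectors, inducting on conformal weight. Your approach is structural: you read off $H_0(\gr_G C_\bullet)$ as a quotient of $\mring\otimes R_V$ via Lemma~\ref{lem:assoc.gr.complex} and the identification $H_0(\widetilde C_\bullet^{(2)})\cong R_V$ already used in diagram~\eqref{diag.page.E1}, and then, for the $\HP_0$ refinement, pass to $E^2$ and use that the component $\partial_1^{\mathrm P}$ of $d^{(1)}$ is exactly the Poisson differential. What your route buys is uniformity with the degree-$1$ argument and no need to reinvent Zhu's induction; what Zhu's route buys is that it avoids the spectral sequence entirely. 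Both are short in degree~$0$ precisely because, as you note, $\widetilde C_0^{(1)}=0$ forces the whole degree-$0$ contribution to come from $\widetilde C_\bullet^{(2)}$, so only the cokernel of $\partial_1^{\mathrm P}$---that is, $\HP_0(R_V)$---matters.
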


Damiolini et. al. \cite{damiolini2020conformal} extended Zhu's results to
to prove finite dimensionality of $H^{\ch}_0(X, \cA_V)$ for an arbitrary smooth
curve $X$, where $\cA_V$ is the chiral algebra associated to the $C_2$--cofinite
vertex algebra $V$. It would be interesting to know if finite dimensionality of
$\HK_\bullet(A)$ and $\HP_\bullet(R_V)$ sufice to guarantee finite dimensionality
of higher chiral homology in arbitrary genera.

\begin{lem} \label{lem:finite-dim}
Let $V$ be a vertex algebra for which $H_i^{\text{ch}}(V)$, for $i=0$ or $i=1$, is finitely generated as a $\mring$-module, let $\tau \in \mathbb H$ and let $E_\tau = \mathbb C / \Z + \Z \tau$. Then
\[
\dim_{\mathbb C} H_i^{\text{ch}}(E_\tau, V) < \infty.
\]
\end{lem}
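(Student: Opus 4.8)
The plan is to deduce the statement purely formally from finite generation, using that the ring $\mring = \mathbb{M}_*$ of modular forms is Noetherian. Recall from \ref{no:modular-foms} that $\mring = \mathbb{C}[G_4(q), G_6(q)]$ is a polynomial ring in two variables, hence Noetherian by the Hilbert basis theorem. The elliptic curve $E_\tau = \mathbb{C}/(\Z + \Z\tau)$ determines a $\mathbb{C}$-point of $\Spec \mring$, namely the ring homomorphism $\mathrm{ev}_\tau : \mring \to \mathbb{C}$ sending $G_{2k}(q) \mapsto g_{2k}(\tau)$ (evaluation of the corresponding modular form at $\tau$, which converges by \ref{no:eisenstein}); this equips $\mathbb{C}$ with the structure of an $\mring$-module.

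First I would invoke the identification, recalled at the beginning of this section from \ref{nolabel:A.C.comparison}, that the specialization of the complex $C_\bullet$ along $\mathrm{ev}_\tau$ computes the chiral homology of $E_\tau$, so that $H_i^{\text{ch}}(E_\tau, V) \cong \mathbb{C} \otimes_{\mring} H_i^{\text{ch}}(V)$. Then, writing $x_1, \dots, x_r$ for a finite generating set of the $\mring$-module $H_i^{\text{ch}}(V)$ (which exists by hypothesis), the elements $1 \otimes x_1, \dots, 1 \otimes x_r$ span $\mathbb{C} \otimes_{\mring} H_i^{\text{ch}}(V)$ as a $\mathbb{C}$-vector space; hence $\dim_{\mathbb{C}} H_i^{\text{ch}}(E_\tau, V) \le r < \infty$.

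There is essentially no obstacle in this argument --- the only point requiring attention is the identification $H_i^{\text{ch}}(E_\tau, V) \cong \mathbb{C} \otimes_{\mring} H_i^{\text{ch}}(V)$, which is already in place from the comparison with \cite{eh2018}. All the real work is contained in Propositions \ref{prop:H1.fin.gen} and \ref{prop:H0.fin.gen}, which supply the finite-generation hypothesis; combining the present lemma with those propositions yields the desired finite-dimensionality of $H_1^{\text{ch}}(E_\tau, V)$ (resp. $H_0^{\text{ch}}(E_\tau, V)$) under the stated conditions on $\HK_1(A)$ and $\HP_\bullet(R_V)$.
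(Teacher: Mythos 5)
Your proof is correct and follows essentially the same route as the paper: identify $H_i^{\text{ch}}(E_\tau, V)$ with the base change $\mathbb{C} \otimes_{\mring} H_i^{\text{ch}}(V)$ along the evaluation map at $q = e^{2\pi i\tau}$, and observe that the images of a finite generating set span the result over $\mathbb{C}$. (The Noetherianity of $\mring$ is not actually needed for this step, only the finite generation hypothesis.)
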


Indeed the dimension is bounded above by the rank of $H_i^{\text{ch}}(V)$ as a $\mring$-module.

\begin{proof}
The chiral homology of $E_\tau$ is obtained specializing $q$ to $e^{2\pi i \tau} \in \mathbb C$, i.e., as the tensor product
\[
H_i^{\text{ch}}(E_\tau, V) \cong \mathbb C \otimes_{\mring} \HchiralV,
\]
with $\mring$-action on $\mathbb C$ given by the evaluation map sending $G_k \in \mathbb{QM_*}$ to $G_k(e^{2\pi i\tau}) \in \mathbb{C}$ for all $k$. It is immediate that for any fixed $\tau \in \mathbb H$ the vector space $H_i^{\text{ch}}(E_\tau, V)$ is finite dimensional over $\mathbb C$, with dimension bounded above by the rank of $H_i^{\text{ch}}(V)$ as a $\mring$-module.
\end{proof}

\begin{thm} \label{thm:convergence2}
Let $V$ be a finitely strongly generated conformal vertex algebra, $A = \gr_F V$ its associated graded and $R_V = A^0$. Let $E_\tau = \mathbb C / \Z + \Z \tau$ be a smooth elliptic curve. If $V$ satisfies (1) $\dim \HP_1(R_V) < \infty$ and (2) that
the kernel of the canonical surjection $J R_V \twoheadrightarrow A$ is finitely generated as a differential ideal, then 
\[
\dim H^{\text{ch}}_1(E_\tau, V) < \infty.
\]
\end{thm}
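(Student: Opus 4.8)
This theorem is essentially a corollary of the machinery already assembled in this section. The strategy is to verify that the two stated hypotheses are precisely the two inputs needed to invoke Proposition \ref{prop:H1.fin.gen}, then pass from finite generation over $\mring$ to finite dimensionality over $\mathbb C$ via Lemma \ref{lem:finite-dim}. First I would unwind hypothesis (2): by Proposition \ref{prop:Koszul.finiteness}(b), finite generation of the kernel $I$ of the surjection $JR_V \twoheadrightarrow A$ as a differential ideal yields an injection $\HK_1(A) \hookrightarrow I/(J \cdot TI)$, and finite generation of $I$ as a differential ideal means exactly that $I/(J \cdot TI)$ is finitely generated as a $J$-module; but since the grading forces this quotient to be concentrated in bounded degree with finite-dimensional graded pieces (the degree-$0$ part $R_V = A^0$ being of finite type, as $V$ is strongly finitely generated), one concludes $\dim_{\mathbb C} \HK_1(A) < \infty$. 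Together with hypothesis (1), $\dim \HP_1(R_V) < \infty$, these are exactly the hypotheses of Proposition \ref{prop:H1.fin.gen}.

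\textbf{The main step.} Applying Proposition \ref{prop:H1.fin.gen} gives that $H_1^{\text{ch}}(V) = H_1(C_\bullet)$ is finitely generated as an $\mring$-module. Here one should note that Proposition \ref{prop:H1.fin.gen} is stated with $A = \gr_G V$ the standard-filtration associated graded, whereas the hypotheses of the present theorem involve $A = \gr_F V$, the Li-filtration associated graded; I would remark that these two associated graded algebras carry the same Koszul homology in the relevant degree — indeed this is implicit in the discussion preceding Proposition \ref{prop:H1.fin.gen}, where the Koszul homology of $\gr_G V$ is identified, and in \ref{no:1kosz}, where $\HK_1$ of $\gr_F V$ is related to the kernel of \eqref{eq:1surjection} — so that the finiteness input is insensitive to the choice. (More carefully: $R_V = A^0$ is the same for both filtrations, and the arc algebra $JR_V$ surjects onto both; the kernel being finitely generated as a differential ideal is a statement about $R_V$ and its arc space that does not see the ambient vertex algebra beyond $R_V$.)

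\textbf{Conclusion.} Once $H_1^{\text{ch}}(V)$ is known to be finitely generated over $\mring$, Lemma \ref{lem:finite-dim} applies directly: the chiral homology of the specific elliptic curve $E_\tau$ is the specialization
\[
H_1^{\text{ch}}(E_\tau, V) \cong \mathbb C \otimes_{\mring} H_1^{\text{ch}}(V),
\]
obtained by evaluating the quasimodular forms at $q = e^{2\pi i \tau}$, and this is finite dimensional with dimension bounded above by the $\mring$-rank of $H_1^{\text{ch}}(V)$. This completes the proof.

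\textbf{Expected obstacle.} The only genuinely non-bookkeeping point is the reconciliation of the two associated graded algebras $\gr_F V$ and $\gr_G V$ in hypothesis versus in Proposition \ref{prop:H1.fin.gen}, and the translation of "finitely generated differential ideal" into "$\dim \HK_1 < \infty$" through Proposition \ref{prop:Koszul.finiteness}; I expect the graded-pieces-finite-dimensional argument (needed to promote finite generation as a $J$-module to finite total dimension of $\HK_1$) to be the subtlest piece, relying on the $\Z_+$-grading of $A$ with finite-dimensional homogeneous components, which follows from strong finite generation of $V$.
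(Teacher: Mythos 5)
Your proposal follows exactly the paper's route: hypothesis (2) is converted into $\dim \HK_1(A) < \infty$ via Proposition \ref{prop:Koszul.finiteness}, Proposition \ref{prop:H1.fin.gen} then gives finite generation of $H_1^{\text{ch}}(V)$ over $\mring$, and Lemma \ref{lem:finite-dim} specializes to the fixed curve $E_\tau$ — which is precisely the paper's three-line proof. Your extra glosses (reconciling $\gr_F V$ with $\gr_G V$, and deriving $\dim \HK_1(A) < \infty$ from finite generation of the differential ideal) address points the paper leaves implicit and do not change the argument, though the stated reason for the latter is imprecise: $R_V$ being of finite type does not by itself give finite-dimensional graded pieces of $I/(J\cdot TI)$, and the paper simply delegates this implication to Proposition \ref{prop:Koszul.finiteness} and the discussion in \ref{no:1kosz}.
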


\begin{proof}
By Proposition \ref{prop:Koszul.finiteness} the second of our hypotheses implies that $\dim \HK_1(A) < \infty$. Therefore the hypotheses of Proposition \ref{prop:H1.fin.gen} are satisfied and hence, by Lemma \ref{lem:finite-dim}, the conclusion follows.
\end{proof}

\begin{nolabel}
In fact the dimension of $H_i^{\text{ch}}(E_\tau, V)$ is uniformly bounded above
by the rank of $H_i^{\text{ch}}(V)$ as a $\mring$-module. Below, using
techniques from the theory of differential equations and $\mathcal D$-modules,
we will obtain stronger results on the dimension of chiral homology. The chiral
complexes $\qmring \otimes_{\mathbb{M}_*} C_\bullet^{n}$ come equipped with
connections defined by equations \eqref{eq:connection_c0} and
\eqref{eq:connection_2}. Inspecting these formulas we see that the connections
are regular with regular singular point $q=0$. Indeed, upon restricting to the
case of $n=1$ and passing to homology $H_i^{\text{ch}}(V)_{\text{tr}} \cong H_i^{\text{ch}}(V)$, these connections take the form
\begin{align*}
\nabla_\tau = (2\pi i)^2 q \frac{\partial}{\partial q} - H^{(i)},
\end{align*}
with $H^{(i)}$ as in equations \eqref{eq:H0.op.def} and \eqref{eq:H1.op.def} manifestly regular in $q$. It follows that $H_i^{\text{ch}}(V)$ are regular $\CD$-modules with regular singular point $q=0$. This $\CD$-module structure was exploited by Zhu in the case $i=0$ considered in \cite{zhu} to prove modular invariance of characters of $V$-modules for $V$ a rational $C_2$-cofinite conformal vertex algebra.
\end{nolabel}

\begin{lem}\label{lem:Hi.vector.bundle}
Let $i=0$ or $i=1$ and let $V$ be a vertex algebra satisfying the hypotheses of Proposition \ref{prop:H0.fin.gen} or, respectively, Proposition \ref{prop:H1.fin.gen}. Then the chiral homology $H_i^{\text{ch}}(E_\tau, V)$, for $\tau \in \mathbb H$ the upper half complex plane, defines a vector bundle with a flat connection.
\end{lem}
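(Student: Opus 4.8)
The plan is to combine the finiteness result (Proposition \ref{prop:H1.fin.gen}, or Proposition \ref{prop:H0.fin.gen} for $i=0$) with the $\mathcal D$-module structure just recorded, and invoke the standard fact that a coherent $\mathcal D$-module on the disc which is free as an $\mathcal{O}$-module restricts to a vector bundle with flat connection. Concretely: by Proposition \ref{prop:H1.fin.gen} (resp. \ref{prop:H0.fin.gen}) the homology $\HchiralV$ is a finitely generated module over $\mring = \mathbb{QM}_*$, and as noted above it carries the connection $\nabla_\tau = (2\pi i)^2 q\,\partial/\partial q - H^{(i)}$ with $H^{(i)}$ regular in $q$, making it a regular $\mathcal D$-module with singular point $q=0$. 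The content of the lemma is then that its fibres $H_i^{\text{ch}}(E_\tau,V) \cong \mathbb{C}\otimes_{\mring}\HchiralV$ have locally constant dimension as $\tau$ varies, and that the connection induces a flat connection on the resulting bundle.

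First I would reduce $\mring$-modules to $\mathbb{C}[[q]]$- (or convergent power series) modules. Since $\mring$ is a polynomial ring in $G_4,G_6$ (or $G_2,G_4,G_6$), and we only ever specialise $q$, it is harmless to complete/localize: set $\mathcal{O} = \mathbb{C}\{q\}$, the ring of germs of holomorphic functions at $q=0$, which is a discrete valuation ring, and consider $\mathcal{M} := \mathcal{O}\otimes_{\mring}\HchiralV$, a finitely generated $\mathcal{O}$-module equipped with the connection $\nabla_\tau$. Over a DVR a finitely generated module splits as free part plus torsion. The key step is to show the torsion part vanishes: this follows because $\nabla_\tau$ is a connection, i.e.\ satisfies $\nabla_\tau(fm) = (2\pi i)^2 q f' m + f\nabla_\tau(m)$, and a connection on a finitely generated module over a DVR forces the module to be torsion-free (the submodule killed by a power of $q$ would be preserved by $\nabla_\tau$, and on such a submodule the operator $(2\pi i)^2 q\partial/\partial q$ contributes nothing mod lower order, forcing it to be zero by a weight/filtration argument, or more cleanly: a $\mathcal{D}$-module coherent over $\mathcal{O}$ with $\mathcal{O}$ a DVR is automatically $\mathcal{O}$-locally free, since torsion would violate the Leibniz rule). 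Hence $\mathcal{M}$ is free over $\mathcal{O}$ of some rank $r$, and by generic flatness / Nakayama the dimension $\dim_{\mathbb{C}} H_i^{\text{ch}}(E_\tau,V)$ equals $r$ for all $\tau$ in a neighbourhood of $i\infty$; repeating the argument at an arbitrary $\tau_0\in\mathbb{H}$ (where the connection from \eqref{eq:connection_c0}, \eqref{eq:connection_2} is regular, having no singularities away from $q=0$) gives local constancy everywhere, so the $H_i^{\text{ch}}(E_\tau,V)$ assemble into a holomorphic vector bundle on $\mathbb{H}$.

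It then remains to observe that $\nabla_\tau$ descends to a well-defined connection on this bundle: this is exactly the content of Proposition \ref{prop:dnabla-kills}, which says $\nabla_\tau$ induces an endomorphism of $H_i^{\text{ch}}(V^{\otimes n})$, together with the compatibility relations \eqref{eq:non-flat-connection}, \eqref{eq:non-flat-connection-3} which in the case $n=1$ degenerate (there are no points $t_1,\dots,t_{n-1}$ to move) to the statement that $\nabla_\tau$ is an honest flat connection. I expect the main obstacle to be the torsion-freeness step: one must be careful that $H_i^{\text{ch}}(V)$ could a priori acquire $q$-torsion from the spectral sequence in the proof of Proposition \ref{prop:H1.fin.gen}, and the argument that the connection kills torsion needs the mild input that the relevant operator is a genuine connection over the base — this is where the regularity of $H^{(i)}$ in $q$ is used. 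Once torsion-freeness is in hand, everything else is the standard dictionary between vector bundles with connection and locally free $\mathcal{D}$-modules, and the proof concludes.
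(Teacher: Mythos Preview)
Your approach is correct and essentially the same as the paper's. The paper's proof is a three-line argument: the $\qmring$-module $H_i^{\text{ch}}(V)$ defines, after analytification, a quasicoherent sheaf on $\mathbb{H}$ which is coherent by Lemma~\ref{lem:finite-dim}; one then invokes \cite[Proposition 8.8]{katz70} to conclude that an $\mathcal{O}$-coherent $\mathcal{D}$-module is locally free, hence a vector bundle with flat connection. Your torsion-freeness argument (that a connection on a finitely generated module over a DVR forces the torsion submodule to vanish, since on a finite-dimensional $\mathbb{C}$-vector space the relation $[\partial_q,(q-q_0)]=1$ is incompatible with nilpotent $(q-q_0)$) is exactly the content of that cited result, spelled out rather than black-boxed. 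One minor slip: you write $\mring = \mathbb{QM}_*$, but in the paper $\mring$ denotes $\mathbb{M}_*$ and $\qmring$ denotes $\mathbb{QM}_*$; this does not affect the argument.
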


\begin{proof}
The $\qmring$-module $H_i^{\text{ch}}(V)$ defines a quasicoherent sheaf over
$\mathbb H$, after analytification. By {\cite[Proposition 8.8]{katz70}} an
$\cO$-quasicoherent $\CD$-module is locally free. This sheaf is coherent by
Lemma \ref{lem:finite-dim}, hence a vector bundle with flat connection.
\end{proof}

\begin{nolabel}
We now study flat sections of the dual of chiral homology (Definition \ref{defn:chiral-cohomology-flat-sections}) in the case $n=1$. These are elements of
\[
H_i^{\text{ch}}(V)^* = \Hom_{\qmring}(\qmring \otimes_{\mring} H_i^{\text{ch}}(V), \cF),
\]
where $\cF  = \cF_0 \cong \cF_1$ is the ring of holomorphic functions on $\mathbb H$, which satisfy the differential equation \eqref{eq:flatness-0}. We work in the analytic topology. Before proceeding we recall a classical result from the theory of ordinary differential equations (ODE).
\end{nolabel}

\begin{thm}[{\cite[Chapter III, Theorem 1.3.1]{Haefliger-D-mod}}]\label{thm:Fuchs-theorem}
Suppose $A(q)$ is an $N \times N$ matrix whose entries are analytic functions in the domain $\{ q \in \mathbb C \mid 0 < |q| < \epsilon\}$ and has a simple singularity at $q=0$. Then a fundamental system of solutions of the ODE
\begin{align}\label{eq:ODE.reg.sing.type}
w'(q) = \frac{A(q)}{q} w(q),
\end{align}
is given by
\begin{align}\label{eq:Frob.exp}
W(q) = U(q)\exp\left(\log(q)M\right),
\end{align}
for some constant matrix $M$, and $U(q)$ an $N \times N$ matrix of functions analytic in $\{ q \in \mathbb C \mid |q| < \epsilon\}$.
\end{thm}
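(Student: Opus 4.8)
The plan is to follow the classical route for regular singular ODEs, Theorem~\ref{thm:Fuchs-theorem} being a standard fact recalled here only for convenience. Write $A(q) = \sum_{k \geq 0} A_k q^k$, convergent on the disc $\{|q| < \epsilon\}$, and put $D^{*} = \{0 < |q| < \epsilon\}$. First I would invoke the existence theorem for linear ODEs with holomorphic coefficients to produce a fundamental solution $\Phi$, holomorphic and invertible on the universal cover of $D^{*}$, that is, a multivalued holomorphic invertible $N \times N$ matrix on $D^{*}$. Continuing $\Phi$ once counterclockwise around $q = 0$ replaces $\Phi(q)$ by $\Phi(q) C$ for a constant $C \in \GL_N(\mathbb{C})$, the monodromy. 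I would then choose a matrix $M$ with $e^{2 \pi i M} = C$ (possible since $C$ is invertible) and set $U(q) = \Phi(q) \exp(-\log(q) M)$. Because $\log(q e^{2\pi i}) = \log q + 2\pi i$ one gets $U(q e^{2\pi i}) = \Phi(q) C \exp(-2\pi i M)\exp(-\log(q)M) = \Phi(q)\exp(-\log(q)M) = U(q)$, so $U$ is single-valued and holomorphic on $D^{*}$, and $W = U\exp(\log(q)M)$ is by construction a fundamental system of the stated shape.

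The remaining, and more delicate, step is to show that $U$ extends holomorphically across $q = 0$. Here I would use crucially that the singularity is \emph{simple}: a Gr\"onwall-type estimate applied to $\tfrac{d}{dq}\Phi = q^{-1} A(q) \Phi$ along radial paths yields a bound $\|\Phi(q)\| \leq C_0 |q|^{-\rho}$ on each sector at the origin, for suitable constants $C_0, \rho > 0$, and hence $\|U(q)\|$ grows at most polynomially, so $U$ extends \emph{meromorphically} to $q=0$. To upgrade this to holomorphicity one performs, if needed, a \emph{shearing transformation}: decomposing $\mathbb{C}^N$ along the generalized eigenspaces of $C$, one picks a block matrix $\Gamma$ with integer eigenvalues, commuting with that decomposition, so that replacing $M$ by $M + \Gamma$ and $U$ by $U q^{\Gamma}$ strictly lowers the pole order of $U$ while leaving the class of $W$ unchanged; finitely many iterations make $U$ holomorphic on $\{|q| < \epsilon\}$. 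I expect this shearing bookkeeping --- equivalently, bounding and then cancelling the pole of $U$ at the origin --- to be the main obstacle.

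As a more explicit alternative I would instead plug the ansatz $W(q) = \bigl(\sum_{k \geq 0} U_k q^k\bigr)\exp(\log(q)M)$ directly into \eqref{eq:ODE.reg.sing.type}. Using $\tfrac{d}{dq}\exp(\log(q)M) = q^{-1} M\exp(\log(q)M)$, the equation becomes $q U'(q) + U(q) M = A(q) U(q)$; comparing constant terms forces $U_0 M = A_0 U_0$, so one normalizes $U_0 = I$, $M = A_0$, and comparing coefficients of $q^k$ for $k \geq 1$ gives the recursion $k U_k - \ad(A_0)(U_k) = \sum_{i=1}^{k} A_i U_{k-i}$. The operator $X \mapsto k X - \ad(A_0)(X)$ is invertible unless two eigenvalues of $A_0$ differ by the positive integer $k$, so the $U_k$ are uniquely determined provided $A_0$ has no two eigenvalues differing by a positive integer; this resonance is the only obstruction, and it is cleared by the same preliminary shearing transformation. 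Convergence of $\sum_k U_k q^k$ on $\{|q| < \epsilon\}$ then follows by the method of majorants, from Cauchy estimates for the $A_k$ together with the bound $\|(k - \ad(A_0))^{-1}\| = O(1/k)$. In both approaches the only non-formal content sits in the resonance/shearing step flagged above.
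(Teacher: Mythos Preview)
The paper does not prove this theorem at all: it is quoted verbatim from \cite[Chapter III, Theorem 1.3.1]{Haefliger-D-mod} as a classical result in the Fuchs--Frobenius theory of ordinary differential equations with regular singular points, and is used as a black box in the subsequent discussion of flat sections. Your sketch is a correct outline of the standard proof (monodromy plus logarithm plus shearing, or equivalently the direct recursive construction with resonance handling), and either route would suffice; but since the paper simply cites the result, there is nothing to compare against.
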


\begin{nolabel}
An equivalent formulation is obtained upon expanding \eqref{eq:Frob.exp} relative to a basis of eigenvectors, namely: any solution $w(q)$ of the ODE \eqref{eq:ODE.reg.sing.type} in a connected simply connected domain in $\{q : 0 < |q| < \epsilon\}$ is given by a convergent series of the form
\begin{align}\label{eq:Frob.exp.2}
w_i(q) = \sum_{p=0}^N \log(q)^p \sum_{j} q^{\lambda_{j}} \sum_{n=0}^\infty U^{(i)}_{pjn} q^n.
\end{align}
In general we refer to such an expression as a \emph{Frobenius expansion}. In the present case the complex numbers $\lambda_{j}$ are essentially the eigenvalues of the matrix $M$. It follows, moreover, that a formal series solution $w(q)$ of \eqref{eq:ODE.reg.sing.type} of the form \eqref{eq:Frob.exp.2}, is convergent.
\end{nolabel}

\begin{nolabel}\label{sec:Frob.descent}
We now suppose, for either $i=0$ or $i=1$, that $H_i^{\text{ch}}(V)$ is finitely generated as a $\mring$-module, and we let $\{\alpha_1, \ldots, \alpha_N \}$ be a set of generators of the finitely generated $\qmring$-module $\qmring \otimes_{\mring} H_i^{\text{ch}}(V)$. We denote by $A(q)$ a matrix (with entries in $\qmring \subset \mathbb C[[q]]$) of the operator $\tfrac{1}{(2\pi i)^2} \nabla_\tau$ relative to the generators. We remark that $A(q)$ is not uniquely prescribed in this way since $\{\alpha_1, \ldots, \alpha_N \}$ is not necessarily a basis.

Let $S \in H^i_{\text{ch}}(V)^{*\nabla}$ be a flat section of the dual of chiral homology and, for $k = 1, \ldots, N$, write $S_k(\tau) = S(\alpha_k, \tau)$. Here, as always, $q = e^{2\pi i \tau}$. We denote by $w(\tau)$ the column vector with entries given by the functions $S_k(\tau)$. Then the differential equation \eqref{eq:flatness-0} implies that $w(\tau)$ is a solution of the ODE
\begin{align*}
\frac{d w}{dq} = \frac{A(q)}{q} w.
\end{align*}
The entries of $A(q)$ are formal power series with radius of convergence $\epsilon = 1$, we may thus apply Theorem \ref{thm:Fuchs-theorem} to deduce that the functions $S_k(\tau)$ possess convergent Frobenius expansions of the form \eqref{eq:Frob.exp.2}.

For any $\alpha \in H_i^{\text{ch}}(V)$ we write $\alpha = \sum_{k=0}^N f_k(q) \alpha_k$ with coefficients $f_k(q) \in \mring$, so that $S(\alpha, \tau) = \sum_{k=0}^N f_k(q) w_k(\tau)$, and
\begin{align}\label{eq:Frob.exp.S}
S(\alpha, \tau) = \sum_{p=0}^N \log(q)^p \sum_{j} q^{\lambda_{j}} \sum_{n=0}^\infty U_{pjn}(\alpha) q^n,
\end{align}
where now $U_{pjn} : H_i^{\text{ch}}(V) \rightarrow \mathbb C$ are $\mathbb C$-linear maps. We shall call a coefficient $U_{pj0}$ a \emph{leading coefficient} if $\lambda_j$ is minimal in the sense that $\lambda_j-1$ does not appear in the middle sum in \eqref{eq:Frob.exp.S}.

Let $U$ be a leading coefficient of $S$ and let $\alpha \in H_i^{\text{ch}}(V)$ be a class which vanishes at $q=0$, i.e., which vanishes in $\mathbb C \otimes_{\qmring} H_i^{\text{ch}}(V)$ where the $\qmring$-action on $\mathbb C$ is the evaluation map sending $G_k \in \qmring$ to $G_k(0) \in \mathbb{C}$ for all $k$. Since $S$ is $\qmring$-linear it is clear that $U(\alpha) = 0$. Thus $U$ descends to a $\mathbb C$-linear map
\[
U : H_i^{\text{ch}}(V, q=0) \rightarrow \mathbb C.
\]
\end{nolabel}

\begin{nolabel}
It is instructive to recall Zhu's analysis of the connection on chiral homology in degree $0$. We first recall the definition and basic properties of the Zhu algebra $\zhu(V)$ \cite{zhu}. Let 
\begin{align}\label{eq:p.zeta.limits}
\begin{split}
f(z) &= 2\pi i \frac{e^{2\pi i z}}{e^{2\pi i z}-1} = \zeta(z, q=0) + \pi i, \\
g(z) &= (2\pi i)^2 \frac{e^{2\pi i z}}{(e^{2\pi i z}-1)^2} = \wp(z, q=0),
\end{split}
\end{align}
and define, for all $a, b \in V$, the bilinear products
\begin{align*}
a \circ b &= \res_w w^{-2} (1+w)^{\D(a)} a(w)b \, dw = a_{[g]}b, \\
\quad \text{and} \quad
a * b &= \res_w w^{-1}(1+w)^{\D(a)} a(w)b \, dw = a_{[f]}b.
\end{align*}
Then $\zhu(V)$ is defined to be the quotient vector space $V / V \circ V$
equipped with the associative product $a \otimes b \mapsto a * b$. This is an
associative algebra with unit $\vac$. The following formula \cite[equation
(3.15)]{eh2019} for the commutator in $\zhu(V)$ will be useful\footnote{We
remark that this equation appears in \cite[p. 296]{zhu} with a typo.}
\begin{align}\label{eq:Zhucomm}
a * b - b * a = 2\pi i \, a_{[0]}b \pmod{V_{[g]}V},
\end{align}
where in the right hand side we have used the vertex algebra structure $(V, Y[\cdot,z])$ of
\ref{no:zhu-alter-vertex}.  
Let $M = \bigoplus_{n=0}^\infty M_{n}$ be a positive energy $V$-module graded by generalized eigenspaces $M_{n}$ for $L_0$ of eigenvalue $\lambda+n$ for some constant $\lambda \in \mathbb C$. Then $M_0$ acquires the structure of a $\zhu(V)$-module via the assignment $[a] m = a_0 m$ for all $a \in V$ and $m \in M_0$.
\end{nolabel}

\begin{nolabel}
Now let $S \in H_0^{\text{ch}}(V)^{*\nabla}$ be a flat section of the dual of
zeroth chiral homology of the vertex algebra $(V, Y[\cdot,z])$. Concretely this amounts to a map
\begin{align*}
S : \qmring \otimes V \rightarrow \cF_0,
\end{align*}
which annihilates elements of the form
\begin{align}\label{eq:one.point.vanishes.on}
\begin{split}
a_{[0]}a_1 \quad
\text{and} \quad a_{[\wp]}a_1 = a_{[-2]}a_1 + \sum_{k \geq 2} (2k-1) G_{2k}(q) a_{[2k-2]}a_1,
\end{split}
\end{align}
for all $a, a_1 \in V$, and which satisfies the ODE
\begin{align*}
(2\pi i)^2 q\frac{d}{dq} S(a, \tau) = S(L_{[\zeta]}a, \tau),
\end{align*}
for all $a \in V$.
Let $U$ be a leading coefficient of $S$. As shown above $U$ descends to an element of the $\mathbb C$-linear dual of
\[
H_0^{\text{ch}}(V, q=0) \cong \Hoch_0(\zhu(V)).
\]
More concretely $U$ annihilates the restrictions of elements \eqref{eq:one.point.vanishes.on} at $q=0$, namely
\begin{align}\label{eq:one.point.vanishes.on.restricted}
a_{[0]}b  \quad
\text{and} \quad a_{[g]}b \quad \text{where} \quad g(t) = \wp(t, q=0) = (2\pi i)^2 \frac{e^{2\pi i t}}{(e^{2\pi i t}-1)^2}.
\end{align}
It follows that $U$ descends to a linear function on $\zhu(V) / [\zhu(V), \zhu(V)] = \Hoch_0(\zhu(V))$.
\end{nolabel}

\begin{nolabel}
We now recall some results from \cite{eh2018} on the structure of $H_1^{\text{ch}}(V, q=0)$ and its relation to the Hochschild homology $\Hoch_1(\zhu(V))$. In loc. cit. we identified a subcomplex $B_\bullet$ of the chiral complex $C_\bullet^{n=1}(q=0)$ such that the quotient complex computes Hochschild homology of $\zhu(V)$ in degrees $0$ and $1$. The associated graded complex $\gr_G B_\bullet$ is closely related to $\widetilde{C}^{(2)}_\bullet$ introduced above, and its homology is computed by a spectral sequence whose first page is the Koszul homology $\HK_\bullet(\gr V)$. Summarizing:
\end{nolabel}

\begin{prop}[{\cite{eh2018}}]\label{prop:eh2018.summary}
Let $V$ be a conformal vertex algebra. There exists a surjection
\[
\rho : H_1^{\text{ch}}(V, q=0) \rightarrow \Hoch_1(\zhu(V)),
\]
and a surjection
\[
\HK_1(\gr V) \rightarrow \ker(\rho).
\]
In particular if $V$ is classically free then $H_1^{\text{ch}}(V, q=0) \cong \Hoch_1(\zhu(V))$.
\end{prop}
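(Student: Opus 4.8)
The plan is to extract this statement from the structural results of \cite{eh2018}, using the complexes and filtrations already set up in the present excerpt as the dictionary. First I would recall the subcomplex $B_\bullet \subset C_\bullet^{n=1}(q=0)$ from \cite{eh2018} with the property that the quotient complex $C_\bullet^{n=1}(q=0)/B_\bullet$ computes $\Hoch_\bullet(\zhu(V))$ in degrees $0$ and $1$; this is precisely where the Zhu-algebra products $a\circ b = a_{[g]}b$ and $a*b = a_{[f]}b$ enter, via the $q=0$ specializations \eqref{eq:p.zeta.limits} of $\zeta$ and $\wp$. The short exact sequence of complexes
\[
0 \longrightarrow B_\bullet \longrightarrow C_\bullet^{n=1}(q=0) \longrightarrow C_\bullet^{n=1}(q=0)/B_\bullet \longrightarrow 0
\]
gives a long exact homology sequence. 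In degree $1$ this reads
\[
H_1(B_\bullet) \longrightarrow H_1^{\text{ch}}(V,q=0) \xrightarrow{\ \rho\ } \Hoch_1(\zhu(V)) \longrightarrow H_0(B_\bullet) \longrightarrow H_0^{\text{ch}}(V,q=0) \longrightarrow \Hoch_0(\zhu(V)) \longrightarrow 0,
\]
and the surjectivity of $\rho$ will follow once one checks that the connecting map $\Hoch_1(\zhu(V))\to H_0(B_\bullet)$ is zero, equivalently that $H_0(B_\bullet)\to H_0^{\text{ch}}(V,q=0)$ is injective — this is part of the construction in \cite{eh2018} (the quotient by $B_\bullet$ does not change $H_0$, since $B_0 = 0$ in the relevant presentation, or more precisely $B_\bullet$ is supported in degrees $\geq 1$). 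So the first key point is: $\rho$ is surjective with $\ker\rho = \operatorname{im}\bigl(H_1(B_\bullet)\to H_1^{\text{ch}}(V,q=0)\bigr)$, hence a quotient of $H_1(B_\bullet)$.

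Next I would bound $H_1(B_\bullet)$ via the standard filtration $G$. As recalled in the excerpt, $\gr_G$ of the chiral complex is $A^{\otimes\bullet+1}\otimes\mathring{\mathbb J}^\bullet_*$ with differentials \eqref{eq:G.filtered.d1.1}–\eqref{eq:G.filtered.d2.6}, and the piece $\widetilde{C}^{(2)}_\bullet$ is exactly the part governing $\wp$-products, i.e.\ the $(-2)$-products $a_{(-2)}b = (L_{-1}a)\cdot b$. The computation already carried out just before Proposition \ref{prop:H1.fin.gen} identifies $\widetilde C^{(2)}_1$ modulo the image of \eqref{eq:G.filtered.d2.5} with $\Omega_{A/\mathbb C} = \Hoch_1(A)$, and the differential \eqref{eq:G.filtered.d2.6} with the Koszul differential \eqref{eq:d2.HP2}, so $H_1(\widetilde C^{(2)}_\bullet)\cong \HK_1(A)$ with $A = \gr V = \gr_G V$. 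The remaining step is to match $\gr_G B_\bullet$ with $\widetilde C^{(2)}_\bullet$ (up to the usual harmless discrepancies — the $S_2$-coinvariants, and the $\mathbb C$ versus $\mring$ coefficients, which are set to $q=0$ here). Running the spectral sequence of the filtration $G$ on $B_\bullet$, its $E^1$-page in the relevant spot is $H_1(\gr_G B_\bullet)\cong \HK_1(\gr V)$, and since homology of a bounded-below filtered complex is a subquotient of the $E^1$ (in fact here a quotient, by the shape of the differentials), we get a surjection $\HK_1(\gr V)\twoheadrightarrow H_1(B_\bullet)\twoheadrightarrow \ker\rho$. Composing gives the desired surjection $\HK_1(\gr V)\to\ker\rho$. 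The final sentence is then immediate: if $V$ is classically free then $\HK_1(\gr V) = \HK_1(A) = 0$ by \cite{eh2018} (recalled in \ref{no:1kosz}), so $\ker\rho = 0$ and $\rho$ is an isomorphism.

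The main obstacle I anticipate is bookkeeping rather than conceptual: making the identification $\gr_G B_\bullet \simeq \widetilde C^{(2)}_\bullet$ precise, i.e.\ pinning down exactly which generators of $\mathring{\mathbb J}^k_*$ (among $1,\wp(u),\wp(u-v),\zuta,\ZZZ,\wp(u)\wp(v)$) land in $B_\bullet$ versus its complement at $q=0$, and verifying that the $q\to 0$ limits of the Eisenstein-series coefficients in the $\wp_k$- and $\zeta$-products collapse the chiral differentials to precisely the Koszul and Hochschild differentials of $\zhu(V)$ and of $A$. Since all of this is carried out in detail in \cite{eh2018}, the proof here can be short: it is essentially a matter of citing loc.\ cit.\ for the subcomplex $B_\bullet$, invoking the long exact sequence above for surjectivity of $\rho$, and invoking the spectral sequence computation of $H_1(\gr_G B_\bullet) = \HK_1(\gr V)$ for the bound on $\ker\rho$.
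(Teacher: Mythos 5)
Your proposal follows exactly the route the paper itself sketches in the paragraph preceding the proposition (and which is carried out in detail in the cited reference): the subcomplex $B_\bullet$ of $C_\bullet^{n=1}(q=0)$ whose quotient computes $\Hoch_\bullet(\zhu(V))$ in degrees $0$ and $1$, the long exact sequence identifying $\ker\rho$ with the image of $H_1(B_\bullet)$, and the spectral sequence of the standard filtration $G$ whose first page yields a surjection $\HK_1(\gr V) \twoheadrightarrow H_1(B_\bullet)$. The only slip is the parenthetical claim that $B_0=0$: for $B_\bullet$ to be a subcomplex one needs $B_0 \supseteq d_1(B_1) = V\circ V$, and the vanishing of the connecting map comes from $H_0(B_\bullet)=0$ (since $B_0$ equals $d_1(B_1)$) rather than from $B_0=0$ --- a harmless correction that does not affect the argument.
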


\begin{nolabel}
Dual to the surjection $\rho$ we have an injection
\[
\iota : \Hoch_1(\zhu(V))^* \rightarrow H_1^{\text{ch}}(V)^*.
\]
Since in degree $1$ the subcomplex $B_\bullet$ consists of classes of the form $a \otimes a_1 \otimes \wp(t_0)$, the morphism $\iota$ is simply evaluation on homology classes associated with constant function, that is
\begin{align*}
\iota(\varphi)[a \otimes a_1 \otimes 1] &= \varphi(a \otimes a_1), \\
\iota(\varphi)[a \otimes a_1 \otimes \wp(t_0)] &= 0.
\end{align*}
Suppose now that $V$ is classically free, i.e., that $\HK_1(\gr V)$, so that
$\iota$ becomes an isomorphism. It would be very interesting in general to
determine the extent to which leading coefficients of flat sections $S \in
H_1^{\text{ch}}(V)^{*\nabla}$ are controlled by the Hochschild homology $\Hoch_1(\zhu(V))$. We hope to study this question in future work.
\end{nolabel}

\begin{thm}\label{thm:chiral.H1.vanishing}
Let $V$ be a strongly finitely generated conformal vertex algebra, $A = \gr V$ its associated graded and $R_V$ its $C_2$-algebra. Suppose that $V$ is classically free, that $\dim \HP_1(R_V) < \infty$, and that $\Hoch_1(\zhu(V)) = 0$. Then the chiral homology $H_1^{\text{ch}}(V)$ vanishes.
\end{thm}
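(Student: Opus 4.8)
The plan is to read off the rank of the vector bundle of chiral homologies from its fibre at the nodal degeneration $q=0$, and to identify that fibre with $\Hoch_1(\zhu(V))$ via the results of \cite{eh2018}.

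First I would dispose of the finiteness hypotheses. Since $V$ is classically free, Proposition \ref{prop:Koszul.finiteness}(a) gives $\HK_1(A)=0$, which is in particular finite dimensional; combined with $\dim \HP_1(R_V)<\infty$ and the strong finite generation of $V$, Proposition \ref{prop:H1.fin.gen} then shows that $\HchiralV$ is a finitely generated $\mring$-module. After extending scalars to $\qmring$ and passing to homology in the case $n=1$, the connection of \ref{no:connections} reduces (by \ref{no:one-coinv-does-nothing} and the computation preceding Lemma \ref{lem:Hi.vector.bundle}) to $\nabla_\tau = (2\pi i)^2 q\,\partial_q - H^{(1)}$ with $H^{(1)}$ as in \eqref{eq:H1.op.def}, manifestly regular in $q$. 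Thus $\qmring\otimes_{\mring}\HchiralV$ is an $\mathcal O$-coherent $\CD$-module on the disc $\{|q|<1\}$ with a regular singularity at $q=0$; by \cite[Proposition 8.8]{katz70} it is locally free there, hence a vector bundle on the full disc whose restriction to the punctured disc is the bundle $H_1^{\text{ch}}(E_\tau, V)$ of Lemma \ref{lem:Hi.vector.bundle} and whose fibre at $q=0$ is $H_1^{\text{ch}}(V, q=0)$. In particular its rank $N$ satisfies $N = \dim_{\mathbb C} H_1^{\text{ch}}(V, q=0) = \dim_{\mathbb C} H_1^{\text{ch}}(E_\tau, V)$ for all $\tau\in\HH$.

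Next I would compute the fibre. Since $V$ is classically free, Proposition \ref{prop:eh2018.summary} gives $H_1^{\text{ch}}(V, q=0)\cong \Hoch_1(\zhu(V))$, which vanishes by hypothesis, so $N=0$ and the bundle is zero: $H_1^{\text{ch}}(E_\tau, V)=0$ for every $\tau$. Equivalently, by the Frobenius descent of \ref{sec:Frob.descent} the leading coefficients of any flat section $S\in H_1^{\text{ch}}(V)^{*\nabla}$ descend to linear functionals on $H_1^{\text{ch}}(V, q=0)$, which is zero; since a nonzero solution of a regular-singular system has a nonvanishing leading coefficient, there are no nonzero flat sections, giving $N=0$ again (this is the route used for the convergence statements, cf. Theorem \ref{thm:convergence1}). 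Finally, $\HchiralV$ is a finitely generated $\mring$-module whose scalar extension to $\qmring$ is locally free of rank $0$ on the disc containing both the cusp $q=0$ and the points $q=e^{2\pi i\tau}$; a short finiteness argument, using that the associated connection is regular at the cusp and that $\HchiralV$ is moreover graded by conformal weight, upgrades this to $\HchiralV=0$.

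The genuinely new work is only the bookkeeping in the first two paragraphs; the two substantive inputs are imported. I expect the main obstacle to be legitimizing the dimension count across the nodal degeneration: one must know both that the connection on $\HchiralV$ is regular at $q=0$, so that the bundle extends with the expected fibre rather than jumping, and the isomorphism $H_1^{\text{ch}}(V,q=0)\cong\Hoch_1(\zhu(V))$ of Proposition \ref{prop:eh2018.summary}, whose proof in \cite{eh2018} via the subcomplex $B_\bullet$ and its associated graded is the deepest ingredient; a fully self-contained account would have to reproduce it.
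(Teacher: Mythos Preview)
Your proposal is correct and matches the paper's proof in all essentials: both deduce $\HK_1(A)=0$ from classical freeness, invoke Proposition \ref{prop:H1.fin.gen} for finite generation, identify the $q=0$ fibre with $\Hoch_1(\zhu(V))$ via Proposition \ref{prop:eh2018.summary}, and conclude vanishing. The paper goes directly by your second route---a nonzero flat local section would have a nonzero leading coefficient, which descends to a functional on $H_1^{\text{ch}}(V,q=0)\cong\Hoch_1(\zhu(V))=0$, contradiction---without passing through your first ``rank of the extended bundle'' formulation.

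On that first route, one caution: the Katz result gives local freeness only on the punctured disc, not across the regular singular point, so the equality $N=\dim H_1^{\text{ch}}(V,q=0)$ is not automatic. What you actually need, and what suffices, is the inequality $N\le\dim H_1^{\text{ch}}(V,q=0)$, which follows from upper semicontinuity of fibre dimension (or Nakayama applied to the coherent sheaf at $q=0$). Since the right-hand side is zero this gives $N=0$ anyway. You correctly flag this issue in your closing paragraph, and your Frobenius-descent alternative is exactly how the paper handles it.
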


\begin{proof}
Since $V$ is classically free we have $\dim \HK_{1}(A) = 0$, so the conditions of Proposition \ref{prop:H1.fin.gen} are satisfied, $H_1^{\text{ch}}(V)$ is a finitely generated $\mring$-module. Moreover by Proposition \ref{prop:eh2018.summary} we have an isomorphism
\[
H_1^{\text{ch}}(V, q=0) \similarrightarrow \Hoch_1(\zhu(V)).
\]
Suppose $H_1^{\text{ch}}(V) \neq 0$, and let $S \in H_1^{\text{ch}}(V)^{*\nabla}$ be a nonzero flat local section. Then the Frobenius expansion of $S$ possesses some nonzero leading coefficient $U$ which descends to a linear function on $\Hoch_1(\zhu(V))$. Since $\Hoch_1(\zhu(V)) = 0$ we have an immediate contradiction, and so $H_1^{\text{ch}}(V) = 0$ as required.
\end{proof}
\begin{nolabel}
We remark on the hypotheses of Theorem \ref{thm:chiral.H1.vanishing}. The
hypothesis $\dim \HP_1(R_V) < \infty$ is relatively mild, and is analogous to
the condition $\dim \HP_0(R_V) < \infty$ which is weaker than
$C_2$-cofiniteness. The condition that $\Hoch_1(\zhu(V)) = 0$ is satisfied
whenever $\zhu(V)$ is semisimple, for example, and in particular whenever the
vertex algebra $V$ is rational. The condition of classical freeness is satisfied
for many vertex algebras, and the question of validity or failure seems to be a subtle and interesting question. We discuss this further in Section \ref{sec:examples} below on examples.
\end{nolabel}
\begin{thm} \label{thm:convergence1}
Let $V$ be a vertex algebra of central charge $c$, satisfying the hypotheses of Proposition \ref{prop:H1.fin.gen}. Let $M$ be a $V$-module and $E$ a self-extension of $M$, and let
\[
a_0 \otimes a_1 \otimes F(z_0, z_1)
\]
be the Fourier series expansion of a $1$-cycle $a_0 \otimes a_1 \otimes f(t_0, t_1)$ in the chiral homology complex $C^{n=1}_\bullet$ \eqref{eq:homology-def}. Putting $q = e^{2\pi i \tau}$ where $\tau$ lies in the upper half complex plane, the trace function
\begin{align*}
F_1^1(a_0 \otimes a_1 \otimes F(z_0, z_1, q))q^{-\frac{c}{24}} = \res_{z_0} z_1 F(z_0,
z_1, q) \tr_M \sigma(a_0, z_0) X(a_1, z_1) q^{L_0 - \frac{c}{24}},
\end{align*}
converges to a holomorphic function of $\tau$, and is independent of $t_1$. 
\end{thm}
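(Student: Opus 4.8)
The plan is to mimic Zhu's strategy, as outlined in the introduction (\ref{no:connection1}): exhibit the formal power series $F_1^1(a_0 \otimes a_1 \otimes F(z_0,z_1,q))q^{-c/24}$ as a component of a vector-valued solution of an ODE with a regular singular point at $q=0$, then invoke the Frobenius--Fuchs theorem. First I would observe that by Lemma \ref{lem:residue-trading} and Corollary \ref{cor:sum-of-omega}, the trace function is independent of $z_1$ whenever $a_0 \otimes a_1 \otimes f$ is a cycle; so we may fix $z_1=1$ (equivalently $t_1=0$) throughout, which disposes of the ``independent of $t_1$'' assertion once convergence is established. Then I would appeal to Theorem \ref{thm:cor-der-dq}: since $a_0 \otimes a_1 \otimes f \in \ker d_1$, the last term in \eqref{eq:dq-der-1} vanishes, and the differential equation expresses $(2\pi i)^2(q\frac{d}{dq} - \frac{c}{24})F_1^1(a_0,a_1,F)$ as a sum of terms of the form $F_1^1(b_0, b_1, G)$ where $b_0 \otimes b_1 \otimes g$ ranges over cycles obtained from $a_0 \otimes a_1 \otimes f$ by the chiral differentials and the operator $D$, with coefficients in $\qmring$.

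The key step is to package these into a finite-dimensional system. By Proposition \ref{prop:H1.fin.gen}, under the stated hypotheses $H_1^{\text{ch}}(V) = H_1(C_\bullet^{n=1})$ is finitely generated as a $\mring$-module; extending scalars, $\qmring \otimes_{\mring} H_1^{\text{ch}}(V)$ is finitely generated over $\qmring$, say by classes $[\alpha_1], \ldots, [\alpha_N]$ of cycles $\alpha_k = a_0^k \otimes a_1^k \otimes f^k$. Because $F_1^1$ vanishes on boundaries (Proposition \ref{prop:new-deriv-generic}) and is $\mathbb{J}^1_*$-linear (hence $\qmring$-compatible via Remark \ref{rem:extend-scalars}), the functional $\alpha \mapsto F_1^1(\alpha)$ descends, as a formal-power-series-valued map, to $\qmring \otimes_{\mring} H_1^{\text{ch}}(V)$. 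Applying \eqref{eq:dq-der-1} to each generator $\alpha_k$ and re-expressing the right-hand side in terms of the $[\alpha_j]$ (using that $\nabla_\tau$ preserves homology, Proposition \ref{prop:dnabla-kills}, and the explicit formula \eqref{eq:nabla-1-n=1} for $\nabla_\tau$ in degree $1$, $n=1$), we obtain
\[
(2\pi i)^2 q\frac{d}{dq}\, w(q) = A(q)\, w(q),
\]
where $w(q)$ is the column vector with entries $F_1^1(\alpha_k)q^{-c/24}$ and $A(q)$ is an $N \times N$ matrix with entries in $\qmring \subset \mathbb{C}[[q]]$, hence holomorphic in $|q| < 1$. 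This is exactly an ODE of the form \eqref{eq:ODE.reg.sing.type} with a simple singularity at $q=0$. By Theorem \ref{thm:Fuchs-theorem} (the Frobenius--Fuchs theorem), any formal solution of such a system is automatically convergent in $\{0 < |q| < 1\}$; since our $w(q)$ is a formal solution, each $F_1^1(\alpha_k)q^{-c/24}$ converges to a holomorphic function of $\tau$ (with a Frobenius expansion \eqref{eq:Frob.exp.2}, a priori possibly with $\log q$ and fractional powers). Finally, for the given cycle $a_0 \otimes a_1 \otimes f$, writing $[a_0 \otimes a_1 \otimes f] = \sum_k c_k(q)[\alpha_k]$ with $c_k(q) \in \mring$ and using again that $F_1^1$ vanishes on boundaries and is $\qmring$-linear, we get $F_1^1(a_0,a_1,F)q^{-c/24} = \sum_k c_k(q) F_1^1(\alpha_k)q^{-c/24}$, a convergent holomorphic function of $\tau$.

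The main obstacle I anticipate is the bookkeeping in the reduction step: verifying carefully that the right-hand side of \eqref{eq:dq-der-1}, after applying it to a chosen set of cycle-representatives $\alpha_k$, can genuinely be rewritten as a $\qmring$-linear combination of the $F_1^1(\alpha_j)$. This requires knowing (i) that the operations appearing --- the $\tilde{\omega}_{[m]}$-products, multiplication by $\wp_k$, and the operator $\overline{D}$/$D$ --- send cycles to cycles modulo boundaries and stay within the finitely generated homology module, which follows from Proposition \ref{prop:dnabla-kills} and the fact that $\nabla_\tau$ is precisely built from these ingredients (\eqref{eq:nabla-1-n=1}); and (ii) that $F_1^1$ respects all these identifications, which is Proposition \ref{prop:new-deriv-generic} together with $\mathbb{J}^1_*$-linearity. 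One subtlety worth flagging: the term $F_1^1(\tilde\omega, b_{[f]}c, P_1(\cdot))$ in \eqref{eq:dq-der-1} has an argument that is \emph{not} the Fourier expansion of an elliptic function, but it vanishes precisely because $b \otimes c \otimes f \in \ker d_1$ --- so one must make sure the cycle condition is propagated correctly when building the system. None of this is deep, but it is where care is needed; once the ODE is in hand, the convergence is immediate from Frobenius--Fuchs, and the $t_1$-independence was already secured at the outset via Corollary \ref{cor:sum-of-omega}.
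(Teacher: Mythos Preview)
Your proposal is correct and follows essentially the same route as the paper's own proof: $z_1$-independence via Corollary \ref{cor:sum-of-omega}, the differential equation from Theorem \ref{thm:cor-der-dq} (with the non-elliptic term dropping out on cycles), descent to the finitely generated $\qmring$-module $\qmring \otimes_{\mring} H_1^{\text{ch}}(V)$ via Propositions \ref{prop:new-deriv-generic} and \ref{prop:H1.fin.gen}, and convergence from the Frobenius--Fuchs theory of regular singular ODEs. Your write-up is in fact more explicit about the bookkeeping than the paper's brief proof, which simply refers back to the set-up in \ref{sec:Frob.descent}.
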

\begin{proof}
By Corollary \ref{cor:sum-of-omega} the series $F_1^1(a_0 \otimes a_1 \otimes
F(z_0, z_1, q))$, which \emph{a priori} depends on $z_1$ and $q$, is independent
of $z_1$. In Theorem \ref{thm:cor-der-dq} it is shown that $F_1^1$ satisfies the
differential equation \eqref{eq:dq-der-1}. This differential equation coincides
formally with \eqref{eq:flatness-0} under the identification $z_1 = e^{2\pi i
t_1}$ (the extra term involving $\frac{c}{24}$ in \eqref{eq:dq-der-1} is
cancelled by the factor $q^{-\frac{c}{24}})$, and in particular $q=0$ is a regular singular point.

The fact that $F_1^1$ annihiliates coboundaries is proved in Proposition \ref{prop:new-deriv-generic}. Since we have restricted $F_1^1$ to $1$-cycles it descends, as in Proposition \ref{prop:H1.fin.gen}, to a morphism from a finitely generated $\qmring$-module $\qmring\otimes_{\mring} H_1^{\ch}(V)$. Choosing generators produces a finite rank system of ODEs with regular singular point $q=0$, thus as above the series defining $F_1^1(a_0 \otimes a_1 \otimes F(z_0, z_1, q))$ converges in simply connected subdomains of $0 < |q| < 1$.
\end{proof}
\section{Self-extensions and the first chiral homology} \label{sec:formal-to-conformal}
In section \ref{sec:higher.traces} we associated to a vertex algebra $V$, its
module $M$ and its self--extension $E$, a family of linear functionals $F_1^n$.
In section \ref{sec:series.exp} we proved that under certain finiteness
conditions on $V$ these linear functionals converge in the case $n=1$ to
holomorphic functions of $\tau$. In this section we prove the convergence of
these functionals in the general $n$ case. We show that the limits can be
analytically extended to meromorphic elliptic functions and that these
extensions, after a suitable normalization factor and exponential change of
coordinates, constitute a degree $1$ conformal block on the torus. 
\begin{nolabel} Let $V$ be a vertex algebra, $M$ its module and $\Psi$ an
$\End(M)$--valued derivation of $V$.  Let $a_0,\dots,a_n$ in $V$ and $f \in
\bJ^{n+1}_*$ and consider $\alpha = a_0 \otimes \dots \otimes a_n \otimes f \in
C^n_1$. Suppose $\alpha$ is a cycle, that is $d_1 \alpha = 0$.  Let $F(z_0,\dots,z_n,q)$
be the Fourier expansion of $f$ in the domain $|qz_0| < |z_n| < \dots < |z_0|$
as in \ref{no:fourier-n-variable}. Consider the linear functional $F_1^n
(a_0,\dots,a_n, F(z_0,\dots,z_n,q))$ defined by \eqref{eq:8.1}. 
\begin{thm*}
Suppose 
that the finiteness conditions of Proposition \ref{prop:H1.fin.gen} are satisfied,
then $F_1^n$ is convergent; Moreover, let $\tilde{F}_1^n(a_0,\dots,a_n, F(z_0,\dots,z_n,q))$ be its meromorphic continuation and put
\begin{equation}
S^n_1 \Bigl( a_0,\dots,a_n, f;t_1,\dots,t_n,\tau \Bigr) =  \tilde{F}_1^n \Bigl(
a_0,\dots,a_n, F \left( z_0,\dots,e^{2 \pi i t_n},e^{2 \pi i \tau} \right) \Bigr) q^{- \frac{c}{24}}.
\label{eq:10.1}
\end{equation}
Then 
the family $S_1 = \{ S_1^n \}_{n \geq 1}$ satisfies the genus--one
property for the vertex algebra $(V, Y[\cdot,t], \tilde{\omega}, \vac)$. 
\end{thm*}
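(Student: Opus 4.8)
The plan is to verify the four defining properties of the genus-one property (Definition \ref{defn:conf-1-blocks}) for the normalized family $\{S_1^n\}$. The essential point is that convergence has, in effect, already been reduced to the $n=1$ case proved in Theorem \ref{thm:convergence1}, so one should first upgrade that argument to general $n$ and then transfer the formal identities of Section \ref{sec:higher.traces} to the analytic setting by analytic continuation. I would organize the proof as follows.

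\emph{Step 1: Convergence.} Mirror the proof of Theorem \ref{thm:convergence1}. Having restricted to cycles $\alpha = a_0 \otimes \dots \otimes a_n \otimes f \in \ker d_1$, Proposition \ref{prop:8-trace-d2} shows $F_1^n$ annihilates boundaries, so it descends to a $\bJ^n_*$-linear (indeed $\cF_n$-linear, after extending scalars) functional on the finitely generated $\qmring$-module $\qmring \otimes_{\mring} H^{\text{ch}}_1(V^{\otimes n})$; here one uses Proposition \ref{prop:H1.fin.gen} together with the observation (as in \ref{no:moving-points}--\ref{no:one-coinv-does-nothing} and Proposition \ref{prop:H1.fin.gen}, which treats $n=1$) that finite generation persists for all $n$ because the complexes $C^n_\bullet$ carry the same Li filtration argument. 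Then Lemma \ref{lem:9-sum-of-deriv} expresses the $z_i$-derivatives, $1\le i\le n$, in terms of values of $F_1^n$, and Theorem \ref{thm:last-diff-eq} gives the $q$-derivative: together these realize $F_1^n$ as a flat section for a connection with regular singularity at $q=0$ on a vector bundle over $\mathbb{H}\times(\text{configuration space})$. Fixing $z_1,\dots,z_n$ and invoking Theorem \ref{thm:Fuchs-theorem} (Frobenius--Fuchs) as in \ref{sec:Frob.descent} yields convergence in simply connected subdomains of $0<|q|<1$ and hence a meromorphic continuation $\tilde F_1^n$ in all variables, with the prescribed pole locus (poles only along diagonals) because the connection coefficients $\owp_k(t_n-t_i)$, $P_k$ have exactly those singularities.

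\emph{Step 2: The genus-one properties.} Property (1), flatness in $H^{\text{ch}}_1(V^{\otimes n})^{*\nabla}$: vanishing on $d_2$-boundaries is Proposition \ref{prop:8-trace-d2}; vanishing on the relations \eqref{eq:c1-quotb2} and well-definedness under \eqref{eq:quotient-moving} follow from Lemma \ref{lem:residue-trading} (equation \eqref{eq:8.trading-2}) and Lemma \ref{lem:9-sum-of-deriv}; the connection equation \eqref{eq:flatness-0} is precisely Theorem \ref{thm:last-diff-eq} after the substitution $z_i=e^{2\pi i t_i}$ and absorbing the $c/24$ into the $q^{-c/24}$ factor (note the last term of Theorem \ref{thm:last-diff-eq} vanishes on cycles). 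Property (2), $S_1^n(\vac,a_1,\dots) = 0$: since $\sigma(\vac,z_0) = z_0^{-1}\Psi(z_0^{L_0}\vac, z_0) = z_0^{-1}\Psi(\vac,z_0)$ and $\Psi$, being an $\End(M)$-valued derivation, satisfies $\Psi(\vac,z) = 0$ (this follows from the derivation axiom with $a=\vac$, or directly from $Y^E(\vac,z)=\id$), the trace in \eqref{eq:8.1} vanishes identically. Property (3), $S_n$-symmetry \eqref{eq:s-symmetric}: this is the cyclicity/commutativity of the trace combined with the locality of the modified vertex operators $X(\cdot,z)$ — reorganizing the product $X(a_1,z_1)\cdots X(a_n,z_n)$ inside the trace matches the $S_n$-action of \ref{no:symmetric-group} on the Fourier expansion of $f$, which one checks term by term. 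Property (4), the insertion formula \eqref{eq:insertion-conf-2}: this is exactly Proposition \ref{prop:8.15} (equivalently Proposition \ref{prop:8.14} combined with Proposition \ref{prop:actual-trace-propb}), once one matches the Fourier-space statement there, involving the $P_k$ series and the telescoping of the zero-mode trace, with the $\wp_k$-, $\zeta$-, $g_{2j}$-coefficient statement \eqref{eq:insertion-conf-2} via the relation \eqref{eq:Pk.wpk.relation} and its quasi-periodicity refinements recorded in \ref{no:ellitptic-functions}.

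\emph{Main obstacle.} The genuinely delicate point is bookkeeping of domains of convergence: throughout Section \ref{sec:higher.traces} the identities are proved as equalities of formal power series obtained by comparing \emph{different} Fourier expansions $F_>$, $F_{z_0,u,z_1,\dots}$, etc., of the same elliptic function in overlapping annular domains (with the sole non-elliptic exception $P_1$ versus $\wp_1$). To pass from the formal identity to the statement that the analytic continuations agree, I must check that all the expansions involved actually converge on a common open region and that the continuation $\tilde F_1^n$ built from the $q$-Frobenius expansion is compatible with the $z_i$-expansions — i.e.\ that the various one-variable residue manipulations (commuting $X(\cdot,u)$ past $q^{L_0}$, relabeling $u\leftrightarrow z_0$, cyclic rotations of the annulus) are legitimate on the analytic functions and not merely on formal series. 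This is the same difficulty Zhu faced in degree $0$ and is handled the same way: establish convergence first (Step 1), then invoke the identity theorem for analytic functions to promote each formal identity of Section \ref{sec:higher.traces} to the corresponding identity of meromorphic functions, taking care that the region where everything is simultaneously defined is nonempty (it is, being a product of nested annuli $|qz_0|<|z_n|<\dots<|z_0|$ intersected with $0<|q|<1$).
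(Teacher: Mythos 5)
Your proposal assembles the right ingredients (Propositions \ref{prop:new-deriv-generic}, \ref{prop:8-trace-d2}, \ref{prop:8.14}, \ref{prop:actual-trace-propb}, \ref{prop:8.15}, Theorems \ref{thm:cor-der-dq} and \ref{thm:last-diff-eq}, and the Frobenius--Fuchs argument), and your treatment of properties (1), (2) and (4) matches the paper. The structural difference, and the one genuine gap, is in how convergence for $n \geq 2$ is obtained.

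You propose to re-run the argument of Theorem \ref{thm:convergence1} for each $n$, which requires $H_1^{\text{ch}}(V^{\otimes n})$ to be finitely generated over an appropriate Noetherian ring for all $n$. But Proposition \ref{prop:H1.fin.gen} is proved only for the $1$-point complex $C_\bullet^{n=1}$ over $\mring$; for $n \geq 2$ the homology is a module over $\bJ^n_*$ (e.g.\ $\bJ^2_* \cong \mathbb{M}[\wp_2,\wp_3,\wp_4]$), the flatness equations become a system in several variables $q, z_1,\dots,z_{n-1}$, and neither the finite generation nor the multivariable regular-singularity analysis is established anywhere in the paper. Your parenthetical ``finite generation persists for all $n$ because the complexes carry the same Li filtration argument'' is an assertion, not a proof, and it is precisely the hard point you would have to supply. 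The paper avoids this entirely by inducting on $n$: the insertion formula \eqref{eq:8.15-prop1} expresses $F_1^{n+1}$ as a finite combination of $F_1^n$ evaluated on elements that are themselves cycles in $C_1^n$ (this is the content of \ref{no:ins-well-defined} and \ref{no:converge-n-func}), so convergence for all $n$ follows from the single ODE argument at $n=1$ --- exactly as Zhu does in degree $0$. You cite Proposition \ref{prop:8.15} only to verify property (4); you should be using it as the engine of the convergence proof. Relatedly, your argument that the continuation lies in $\cF_n$ (bi-periodicity, poles only on diagonals) is too quick: ``the connection coefficients have those singularities'' does not give ellipticity. The paper derives ellipticity from the symmetry \eqref{eq:symmetry-traces} (itself a consequence of the $j$-independence of the right side of \eqref{eq:8.15-prop1}) together with the ellipticity of the right-hand side of the insertion formula proved in \ref{no:ins-well-defined}. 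With the convergence step rerouted through the induction, the rest of your outline goes through.
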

\label{thm:convergence3}
\end{nolabel}

The proof of this Theorem will be given in the next sub-sections. Notice that b) in Definition \ref{defn:conf-1-blocks} is
automatic. We proceed by induction in $n$. In \ref{no:induction-proof} we consider the
$n=1$ case. In \ref{no:converge-n-func} we prove that $F^{n}_1$ converges for
all $n$. In \ref{no:symmetry-traces} we prove that $S^{n}_1$ are well defined
on cycles in $C^{n}_1$. In \ref{no:traces-ellitpic-11} we prove that $S^n_1$ are
elliptic functions and in \ref{no:in-coh-class} that $S^{n}_1$ are flat with
respect to the connection on the moduli space of marked elliptic curves. 
\begin{nolabel}We want to prove that $S^n_1$ is a well defined flat section of
the dual to chiral homology, that is $S^n_1 \in H^{ch}_1(V^{\otimes
n})^{*\nabla}$. We will proceed by induction on $n$. Let us prove first the
$n=1$ case. Notice that c) in definition \ref{defn:conf-1-blocks} is empty in
this case.  Let $\alpha \in C^1_1$ be
such that $d_1 \alpha = 0$.  The fact that
$S^1_1( \alpha; t_1,\tau) \in \cF_1$,
that is, it is independent of $t_1$,
follows immediately from Theorem \ref{thm:convergence1}. The fact that it
vanishes if $\alpha = d_2 \beta$ for $\beta \in C^1_2$ follows from Proposition
\ref{prop:new-deriv-generic}, hence $S_1^1$ defines a section of the dual of
chiral homology $H^{\text{ch}}_1(V)^*$. The fact that it is a flat section, that
is, that it satisfies \eqref{eq:flatness-0}, follows at once from Theorem
\ref{thm:cor-der-dq} after the coordinate change $q = e^{2 \pi i \tau}$ and $z_j
= e^{2 \pi i t_j}$. The extra $\tfrac{c}{24}$ in the left hand side of \eqref{eq:dq-der-1}
is cancelled with the factor $q^{-\frac{c}{24}}$ in \eqref{eq:10.1}. 
\label{no:induction-proof}
\end{nolabel}
\begin{nolabel} \label{no:symmetry-traces} Let us now assume that $S^n_1$ is a
well defined flat section in $H^\text{ch}_1(V^{\otimes n})^{*\nabla}$ and that
it satisfies c) in \ref{defn:conf-1-blocks}. We will
prove that so is $S^{n+1}_1$ and that it satisfies the insertion formula
\eqref{eq:insertion-conf-2}. We start by proving that the linear functionals
converge and satisfy the insertion formula \eqref{eq:insertion-conf-2}.  

Assume the notation of Proposition \ref{prop:8.14}, so that we have
$a_0,\dots,a_n, a \in V$, $1 \leq j \leq n+1$ and $f =
f(t_0,\dots,t_{j-1},s,t_{j},\dots,t_n, \tau) \in \cF_{n+2}$. Suppose that
\[ \alpha = a_0 \otimes \dots a_{j-1} \otimes a \otimes a_{j} \otimes \dots
\otimes a_n \otimes f \in C^{n+1}_1, \]
is a cycle, $d_1 \alpha = 0$. 
In the same way as in \ref{no:ins-well-defined} 
we notice that the right hand side of \eqref{eq:8.15-prop1} consists of $F^n_1$ applied to
elements in $C^n_1$ that are themselves cycles, therefore they are convergent by
induction.
\label{no:converge-n-func}
\end{nolabel}
\begin{nolabel} 
The meromorphic continuation of the limit in the right hand side of \eqref{eq:8.15-prop1} is independent of $j$.
Therefore it follows from Proposition \ref{prop:8.15} that 
for $\sigma \in S_n$ we have  
\begin{equation}
 \tilde{F}_1^n\left( a_0, a_{\sigma(1)},\dots, a_{\sigma(n)}, F\left( z_0,
z_{\sigma(1)},\dots,z_{\sigma(n)},q
\right)\right) = \tilde{F}_1^n \left( a_0,\dots,a_n, F\left( z_0,\dots,z_n,q
\right) \right). 
\label{eq:symmetry-traces}
\end{equation}
Thus $S^{n+1}_1$ satisfies \eqref{eq:s-symmetric}. The
fact that $S^{n+1}_1$ vanishes on elements like \eqref{eq:c1-quotb2} follows
from \eqref{eq:8.trading-2}. 
\end{nolabel}
\begin{nolabel}
For $1 \leq j \leq n$ we have from \eqref{eq:symmetry-traces} that
\begin{multline*} 
S^{n+1}_1\Bigl(a_0,\dots, a_{j-1}, a, a_{j+1},\dots,a_n,
f(t_0,\dots,t_{j-1},t,t_{j+1},\dots,t_n,\tau);
t_1,\dots,t_{j-1},t,t_{j+1},\dots,t_n,\tau\Bigr) = \\ 
S^{n+1}_1\Bigl(a_0,a,a_1,\dots, a_n,
f(t_0,t,t_1,\dots,t_n,\tau);
t,t_1,\dots,t_n , \tau\Bigr)
\end{multline*}
We proved in \ref{no:ins-well-defined} that the right hand side is an elliptic function in
$\cF_{n}$, it follows that
the left hand side is elliptic as well. 
\label{no:traces-ellitpic-11}
\end{nolabel}
\begin{nolabel}The fact that $S^{n+1}_1$ vanishes on boundaries Follows from
Proposition \ref{prop:8-trace-d2}. The fact that $S^{n+1}_1$ satisfies the
differential equations \eqref{eq:connection-tn} follows from \eqref{eq:lem821}.
And finally the fact that $S^{n+1}_1$ satisfies \eqref{eq:flatness-0} follows
from Theorem \ref{thm:last-diff-eq}, thus proving that $S^{n+1}_1 \in
H^{\text{ch}}_1(V^{\otimes n})^{*\nabla}$ and we have established the induction
step. 
\label{no:in-coh-class}
\end{nolabel}

\section{Examples} \label{sec:examples}

In this section we work out the constructions and results introduced in this
paper in the context of several specific examples. Firstly we derive
consequences of the finiteness and vanishing results on chiral homology of
Section \ref{sec:series.exp} above for rational vertex algebras. Secondly we work out a simple example of higher trace functions for the Heisenberg vertex algebra.

\begin{nolabel}
The condition that $V$ be classically free is satisfied by universal enveloping
vertex algebras such as the Heisenberg vertex algebra, the universal affine
vertex algebra $V^k(\g)$ at arbitrary level, their quantum Hamiltonian
reductions, that is the universal affine $W$ algebras $W^k(\g,f)$, and the universal Virasoro vertex algebra $\vir^c$ at arbitrary central charge. The question of classical freeness of rational vertex algebras is quite subtle: the boundary minimal models $\vir_{2, 2k+1}$ are rational and classically free, but other minimal models such as the Ising model $\vir_{3, 4}$ are not. It is also known that the rational affine vertex algebras $V_1(\mathfrak{g})$ are classically free, as are $V_k(\mathfrak{sl}_2)$ for $k \in \Z_+$. This latter statement is believed to generalize from $\mathfrak{sl}_2$ to other semisimple Lie algebras $\g$. We now prove the following corollary of Theorem \ref{thm:chiral.H1.vanishing}.
\end{nolabel}

\begin{cor} \label{cor:examples}
The chiral homology $H_1^{\text{ch}}(E_\tau, V)$ of the smooth elliptic curve $E_\tau = \mathbb C / \Z + \Z \tau$ with coefficients in the vertex algebra $V$ vanishes in the following cases:
\begin{itemize}
\item[(a)] $V = \vir_{2, 2s+1}$ is a boundary Virasoro minimal model,

\item[(b)] $V = V_k(\mathfrak{sl}_2)$ is the simple affine vertex algebra at level $k \in \Z_+$,

\item[(c)] $V = V_1(\mathfrak{g})$ is the simple affine vertex algebra at level $1$, for $\mathfrak{g}$ a simple Lie algebra.
\end{itemize}
\end{cor}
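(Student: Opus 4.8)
The plan is to apply Theorem~\ref{thm:chiral.H1.vanishing} in each of the three cases, so the task reduces to verifying, for each $V$ listed, that (i) $V$ is strongly finitely generated, (ii) $V$ is classically free, (iii) $\dim \HP_1(R_V) < \infty$, and (iv) $\Hoch_1(\zhu(V)) = 0$. Properties (i) and (iv) are the easiest: each of these vertex algebras is rational (the boundary minimal models $\vir_{2,2s+1}$, the $V_k(\mathfrak{sl}_2)$ for $k \in \Z_+$, and the $V_1(\mathfrak{g})$ are all rational $C_2$-cofinite), so $\zhu(V)$ is a finite-dimensional semisimple associative algebra; by the classical Morita invariance of Hochschild homology (or directly, since a semisimple algebra is separable) $\Hoch_i(\zhu(V)) = 0$ for $i \geq 1$, giving (iv). Strong finite generation (i) is standard for affine vertex algebras and Virasoro, and follows for the simple quotients since they are $C_2$-cofinite (a $C_2$-cofinite vertex algebra is strongly finitely generated by any lift of a spanning set of $R_V$).

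Next I would address (iii). Since $V$ is $C_2$-cofinite in all three cases, $R_V$ is a finite-dimensional commutative algebra, hence $\Omega_{R_V}$ is finite-dimensional, and a fortiori $\HP_1(R_V)$, being a subquotient of $\Omega_{R_V}$, is finite-dimensional. So (iii) is immediate from $C_2$-cofiniteness and requires no separate argument. This means the entire weight of the corollary rests on property (ii), classical freeness, which is exactly the point flagged in the discussion preceding the corollary: $\HK_1(\gr V) = 0$, equivalently the canonical surjection $JR_V \twoheadrightarrow \gr V$ is an isomorphism.

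The classical freeness statements I would invoke from the literature rather than reprove: for the Heisenberg and universal enveloping type algebras classical freeness is elementary (the PBW basis of $\gr V^k(\g)$ matches the arc algebra $J R_{V^k(\g)} = J(\Sym \g)$), but the subtlety is passing to the \emph{simple} quotients. For case (a), the boundary Virasoro minimal models $\vir_{2,2s+1}$ are classically free; this is known (the associated scheme is a point and the singular support is the full arc space $J(\Spec \C) = \Spec \C$, cf. the Feigin--Frenkel / Arakawa--van Ekeren type results on principal nilpotent $W$-algebras). For case (b), classical freeness of $V_k(\mathfrak{sl}_2)$ at $k \in \Z_+$ is established in the literature (van Ekeren--Heluani and earlier work of Andrews--Calinescu--van Ekeren--Heluani / Li), and for case (c) classical freeness of $V_1(\g)$ for simple $\g$ is likewise known. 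I would cite these precisely and then simply note that with (i)--(iv) in hand, Theorem~\ref{thm:chiral.H1.vanishing} yields $H_1^{\text{ch}}(V) = 0$, and therefore by Lemma~\ref{lem:finite-dim} (or directly, since the $\mring$-module $H_1^{\text{ch}}(V)$ vanishes) $H_1^{\text{ch}}(E_\tau, V) = \C \otimes_{\mring} H_1^{\text{ch}}(V) = 0$ for every $\tau \in \HH$.

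The main obstacle is thus not any new computation but the correct sourcing and, where needed, verification of classical freeness for the three families; in particular one must be careful that classical freeness is a property of the \emph{simple} vertex algebra (not its universal cover), since for the universal algebras it is trivial but the relations defining the simple quotient could in principle fail to lift compatibly to the arc space. For $\vir_{2,2s+1}$, $V_k(\mathfrak{sl}_2)$, and $V_1(\g)$ this compatibility is exactly the content of the cited results — the singular ideal of the simple quotient is generated as a differential ideal by the images of the singular vectors, and the leading terms of those singular vectors generate the defining ideal of the associated scheme — so once those references are in place the corollary follows formally. I would therefore structure the proof as a short paragraph dispatching (i), (iii), (iv) uniformly via rationality and $C_2$-cofiniteness, followed by three short case-by-case citations for (ii), and a concluding sentence invoking Theorem~\ref{thm:chiral.H1.vanishing} and the specialization $q \mapsto e^{2\pi i \tau}$.
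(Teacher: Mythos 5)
Your proposal is correct and follows essentially the same route as the paper: dispatch $\Hoch_1(\zhu(V))=0$ via rationality/semisimplicity, $\dim\HP_1(R_V)<\infty$ via $C_2$-cofiniteness, reduce everything to classical freeness of the simple quotient, and cite the known classical-freeness results (the paper cites \cite{eh2018} for (a) and (b), the latter relying on \cite{meurman-primc}, and \cite{efeigin} for (c)) before invoking Theorem \ref{thm:chiral.H1.vanishing}. Your added remarks on strong finite generation and on why classical freeness of the simple quotient is the genuinely nontrivial point are accurate but not needed beyond what the paper records.
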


\begin{proof}
For $V$ any of the vertex algebras listed in the statement, $\zhu(V)$ is semisimple since $V$ is rational, and hence $\Hoch_1(\zhu(V)) = 0$. We also have $\dim \HP_1(R_V) < \infty$ as these vertex algebras are all $C_2$-cofinite.

For all three parts it suffices, therefore, to verify that $V$ is classically free. Parts (a) and (b) were established in \cite{eh2018} (in the case of (b) relying heavily on results of \cite{meurman-primc}). Classical freeness of $V_1(\mathfrak{g})$ is proved in \cite{efeigin}. The assertions now follow from Theorem \ref{thm:chiral.H1.vanishing}.
\end{proof}
\begin{rem} For $\g$ of $ADE$ type, item c) above is a particular case of a
stronger result of Gaitsgory for lattice vertex algebras
\cite{gaitsgory-lattice} (see also \cite[\S 4.9]{beilinsondrinfeld}).
\label{rem:lattice}
\end{rem}

In a similar manner we prove the following.

\begin{cor}
The chiral homology $H_1^{\text{ch}}(E_\tau, V)$ of the smooth elliptic curve $E_\tau = \mathbb C / \Z + \Z \tau$ with coefficients in the Ising model $\vir_{3, 4}$ is finite dimensional.
\end{cor}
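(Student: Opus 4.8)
The plan is to apply Proposition \ref{prop:H1.fin.gen} together with Lemma \ref{lem:finite-dim}, exactly as in the proof of Theorem \ref{thm:convergence2}, but without the hypothesis of classical freeness. First I would recall that the Ising model $\vir_{3,4}$ is a $C_2$-cofinite conformal vertex algebra, so in particular $\dim R_V < \infty$, whence $\dim \HP_1(R_V) < \infty$ (indeed all Poisson homology groups of a finite-dimensional Poisson algebra are finite dimensional). Thus condition (1) of Proposition \ref{prop:H1.fin.gen} is satisfied. It remains to check the Koszul finiteness condition $\dim \HK_1(A) < \infty$, where $A = \gr_F V$. By \ref{no:1kosz} (or Proposition \ref{prop:Koszul.finiteness}) this is equivalent to the kernel of the canonical surjection $J R_V \twoheadrightarrow A$ being finitely generated as a differential ideal.

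The key step, and the one where the Ising model differs from the examples in Corollary \ref{cor:examples}, is that $\vir_{3,4}$ is \emph{not} classically free, so $\HK_1(A) \neq 0$; one must nonetheless show this space is finite dimensional. The associated scheme $\Spec R_V$ of $\vir_{3,4}$ is a point (since $\dim R_V < \infty$ and $R_V$ is in fact a local Artinian ring), so $J R_V$ is the arc algebra of a finite-dimensional local algebra. The strategy is to produce an explicit finite set of differential-ideal generators for the kernel $I$ of $J R_V \twoheadrightarrow A$. Concretely, $R_{\vir_{3,4}} = \mathbb{C}[x]/(x^3)$ with $x$ the image of the conformal vector $\omega$ (up to normalization), so $J R_V = \mathbb{C}[x, x', x'', \dots]$ modulo the differential ideal generated by $x^3$; the singular support $A = \gr V$ is a proper quotient of this, and the extra relations defining $A$ are generated, as a differential ideal, by the single relation coming from the null vector of $\vir_{3,4}$ at conformal weight $6$ (the defining singular vector of the minimal model) reduced into $\gr V$. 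Checking that this one additional generator, together with $\partial$-derivatives, cuts out all of $I$ is a finite computation in low conformal weight, using the explicit structure of the Li filtration on $\vir_{3,4}$; alternatively one can invoke known descriptions of the arc space of $\Spec R_{\vir_{3,4}}$ and its relation to $\gr \vir_{3,4}$. Either way one concludes $I$ is finitely generated as a differential ideal, hence $\dim \HK_1(A) < \infty$.

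With both hypotheses of Proposition \ref{prop:H1.fin.gen} verified, that proposition gives that $H_1^{\text{ch}}(V)$ is finitely generated as an $\mring$-module, and then Lemma \ref{lem:finite-dim} yields $\dim_{\mathbb{C}} H_1^{\text{ch}}(E_\tau, V) < \infty$ for every $\tau \in \mathbb{H}$. I expect the main obstacle to be the explicit verification that the kernel of $J R_{\vir_{3,4}} \twoheadrightarrow \gr \vir_{3,4}$ is finitely generated as a differential ideal: unlike the classically free case, where this kernel is zero, here one must genuinely identify the failure of classical freeness and control it, which requires a hands-on analysis of the Li-filtration associated graded of the Ising model in the first several conformal weights. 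Everything downstream of that — invoking Proposition \ref{prop:H1.fin.gen} and Lemma \ref{lem:finite-dim} — is immediate.
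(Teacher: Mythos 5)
Your overall architecture matches the paper's: reduce to the two hypotheses of Proposition \ref{prop:H1.fin.gen} (equivalently Theorem \ref{thm:convergence2}), dispose of $\dim \HP_1(R_V) < \infty$ via $C_2$-cofiniteness, and concentrate the work in showing that the kernel $I$ of $J R_V \twoheadrightarrow A = \gr_F V$ is finitely generated as a differential ideal, which by Proposition \ref{prop:Koszul.finiteness} gives $\dim \HK_1(A) < \infty$. That much is exactly what the paper does. The problem is that the one step carrying all the content of the corollary is the one you get wrong.

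First, your identification of the generator of $I$ is incorrect. The symbol of the weight-$6$ singular vector of $\vir_{3,4}$ in $\gr_F \vir^{1/2}$ is $x^3$ (the leading term of the null vector is $L_{-2}^3\vac$), and the differential ideal generated by $x^3$ is precisely what is already quotiented out in forming $J R_V = J(\mathbb{C}[x]/(x^3))$. So the element you propose as the generator of $I$ is zero in $J R_V$; if it really generated $I$ you would conclude $I = 0$, i.e., that $\vir_{3,4}$ is classically free, contradicting your own (correct) statement that it is not. The actual generator of $I$ is a genuinely new element of higher conformal weight, arising from an element of the vertex algebra ideal whose symbol is \emph{not} in the differential ideal generated by $x^3$. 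Second, the claim that finite generation of $I$ "is a finite computation in low conformal weight" is not justified: to show a differential ideal is finitely generated one must control its graded components in \emph{all} conformal weights, and in this case that is done in \cite{aeh2020} by a nontrivial argument involving character computations and a Rogers--Ramanujan-type combinatorial identity, not by inspection of low weights. The paper's proof of the corollary consists precisely of citing this result of \cite{aeh2020} (that $I$ is generated by a single element as a differential ideal, whence $\dim \HK_1(A) = 1$) and then invoking Theorem \ref{thm:convergence2}. Everything downstream in your proposal is fine, but as written the key hypothesis is neither correctly stated nor actually established.
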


\begin{proof}
It was established in \cite{aeh2020} that, while $V = \vir_{3, 4}$ is not classically free, the kernel of $J R_V \rightarrow A = \gr_F V$ is generated as a differential ideal by a single element. Hence $\dim \HK_1(A) = 1$. Finite dimensionality now follows from Theorem \ref{thm:convergence2}.
\end{proof}
\begin{rem} The above proof shows that $\dim H^{ch}_1(E_\tau, \vir_{3,4}) \leq
1$. In \cite{aeh2020} we found an explicit generator for $\HK_1(A)$ in this
case, we do not know if the image of this element in $H^{ch}_1(E_\tau,
\vir_{3,4})$ vanishes or not. 
\label{rem:ising}
\end{rem}

\begin{nolabel}
We describe an example of higher trace function, associated with the Heisenberg vertex algebra. Let $\h = \mathbb C x$ be a one dimensional vector space with a non degenerate bilinear form $(x, x) = 1$, and let $\widehat{\h} = \h[t, t^{-1}] \oplus \mathbb C K$ be its affinization, i.e., the Lie algebra defined by
\[
[xt^m, xt^n] = m \delta_{m, -n} K, \qquad [K, \widehat{\h}] = 0.
\]
We write $x_m$ for $xt^m$. The vacuum module
\[
H = U(\widehat{\h}) \otimes_{U(\h[t] \oplus \mathbb C K)} \mathbb C v,
\]
in which $K v = v$ and $\h[t] v = 0$, carries the natural structure of a conformal vertex algebra with vacuum vector $\vac = 1 \otimes v$. This is the rank one Heisenberg vertex algebra. We identify $a_{(-1)}\vac = a_{-1}\vac$ with $a \in \h$, the vertex algebra structure is determined by the quantum fields $Y(a, z) = \sum_{n \in \Z} a_{n} z^{-n-1}$. The conformal vector is given by $\omega = x_{-1}x_{-1}\vac$ and has central charge $c = 1$.
\end{nolabel}

\begin{nolabel}
Let $D$ be a finite dimensional $\mathbb C[x]$-module, we consider the induced $\widehat{\h}$-module
\[
M_D = U(\widehat{\h}) \otimes_{U(\h[t] \oplus \mathbb C K)} D,
\]
in which the action of $x_m$ on $D$ for $m > 0$ is trivial, $K$ acts by $1$ and $x_0$ acts by $x$. Then $M_D$ is naturally a $H$-module. The one dimensional $\mathbb C[x]$-module $\mathbb C$ corresponds to $H$ itself.

Let us consider $D = \mathbb C[x] / (x^2)$ which is a non trivial extension
\[
0 \rightarrow \mathbb C \rightarrow D \rightarrow \mathbb C \rightarrow 0.
\]
The $H$-module $M_D$ is a self-extension of $H$ and possesses a PBW basis
\begin{align}\label{eq:H.basis}
x_{-N}^{\lambda_N} \cdots x_{-1}^{\lambda_1} x_0^{\lambda_0} \vac
\end{align}
in which $\lambda_i \in \Z_{\geq 0}$ for all $i$ and $\lambda_0 \leq 1$. The monomials for which $\lambda_0 = 1$ form a basis of the embedded copy $H \subset M_D$. 
\end{nolabel}

\begin{nolabel}
The Zhu algebra $\zhu(H)$ of $H$ is isomorphic to $\mathbb C[x]$. Indeed $x$ here is the image of $x_{-1}\vac$ in the quotient that defines $\zhu(H)$.
\end{nolabel}

\begin{nolabel}
We now consider the higher trace function
\[
F_1(a \otimes b \otimes f(t_0, t_1)) = \res_{z_0} z_1 \tr_{H} \sigma(a, z_0) X(b, z_1) F(z_0, z_1) q^{L_0}
\]
associated with the self-extension $M_D$ of $H$ described above, and a cycle $a \otimes
b \otimes f(t_0, t_1) \in C_1^{n=1}$. In particular if we put $f(t_0, t_1) = 1$ and $b = \vac$ then we have
\[
d_1(a \otimes b \otimes 1) = a_{(0)}\vac = 0.
\]
We may thus consider the higher trace functions 
\[
F_1(a \otimes \vac \otimes 1) = \tr_{H} \sigma(a)_0 \vac_0 q^{L_0} = \tr_{H} \sigma(a)_0 \vac_0 q^{L_0}.
\]
Since $x_0$ is central in $\widehat{\h}$ it follows that $x_0$ acts relative to the basis of monomials \eqref{eq:H.basis} of $M_D$ by increasing $\lambda_0=0$ to $\lambda_0=1$ and annihilating those monomials for which $\lambda_0=1$. Therefore $\sigma(x)_0$ is simply the identity on $H$. Hence
\begin{align*}
F_1(x \otimes \vac \otimes 1) = \tr_{H} q^{L_0} = \prod_{n=1}^\infty \frac{1}{1-q^n}.
\end{align*}
\end{nolabel}
\section{Conclusions} \label{sec:conclusion}
We constructed a complex that computes the chiral homology in degrees
$0$ and $1$ of an
elliptic curve $E_\tau$ with coefficients in a vertex algebra $V$ supported at
the marked point $0 \in E_\tau$. We generalized these complexes to
$n$-insertions of $V$ at the points $t_1,\dots,t_n \in E_\tau$. As the
collection $\{t_i  \}$ moves in $\Sym^n E_\tau$ and $\tau$ moves in $\mathbb{H}$
the homologies of these complexes acquire a flat connection. We give explicit
description of this connection in terms of an insertion of the conformal vector
and a Weierstrass $\zeta$ function, and showed that this connection has a
regular singular point as $\tau \rightarrow i \infty$. We found finiteness conditions on
the vertex algebra $V$ to guarantee that the first chiral homology of $E_\tau$
with coefficients in $V$ is finite dimensional. We defined the degree $1$ analog
of the space of conformal blocks, or $n$-point functions of a vertex algebra, as a compatible collection
of flat sections of the dual of the first chiral homology group with
coefficients in $n$ insertions of $V$. We prove the modular invariance of the
space of these higher analogs of $n$-point functions and show that
self-extensions of modules produce such compatible systems of flat sections. 

As a technical tool to achieve the above, we proved a version of Borcherds
formula, relating two different vertex algebra structures on $V$, as well as
elliptic functions and their Fourier expansions. We believe this theorem will be
of independent interest. 

There are a number of results which we have not achieved. In degree zero, Zhu
proved that the space of irreducible modules of $V$ provides a basis for the
dual of zeroth chiral homology under certain finiteness conditions of $V$. We do
not have an analogous statement in degree $1$. In the classically free case, we have an isomorphism 
$H_1^{\text{ch}}(V, q=0) \similarrightarrow \Hoch_1(\zhu(V))$. It would be interesting to find a way of inducing linear functionals on $\Hoch_1(\zhu(V))$ to first conformal blocks on the torus. The analogous statement in degree $0$ corresponds to inducing irreducible modules of $\zhu(V)$ to irreducible modules of $V$. 

Our approach follows Zhu's technique of restricting to the point at infinity $q=0$ in the moduli space of elliptic curves. More generally Miyamoto \cite{miyamoto-c2} has studied the zeroth chiral homology of non-rational,
$C_2$-cofinite vertex algebras, by using \emph{higher Zhu algebras}. These
algebras relate to non-reduced neighborhoods $\Spec
\mathbb{C}[q]/(q^n)$  of the point at infinity, as the Zhu algebra does in the
case $n=1$. It would be interesting to see if these algebras play a similar role
for chiral homology in degree $1$. 

There are two lines of research that are natural continuations for this work:
higher degree chiral homology and higher genera. The first seems to be an
immediate extension of the complexes constructed in this article. What remains
to be done is to determine the higher analogs of the linear functionals
associated to self-extensions of modules. Well known examples of $C_2$-cofinite
but non-rational vertex algebras, like the triplet vertex algebra, admit higher
self-extensions of modules (but not classes in $\text{Ext}^1$). It would be
interesting if these classes generate higher chiral homology classes on elliptic
curves. 

To generalize this work to higher genera a natural approach is to use the
techniques of \cite{damiolini2020conformal}. As mentioned in the introduction,
it would be quite interesting to
know if the finiteness conditions found in genus $1$ in this article suffice to
guarantee finite dimensionality at arbitrary genus. 
\def\cprime{$'$}

\end{document}